\theoremstyle{plain}
\newtheorem{thm}{Theorem}[section]
\newtheorem{lemma}[thm]{Lemma}
\newtheorem{cor}[thm]{Corollary}
\newtheorem{prop}[thm]{Proposition}
\theoremstyle{definition}
\newtheorem{remark}[thm]{Remark}
\newtheorem{exmp}[thm]{Example}
\newcommand{\set}[1]{\{ #1 \}}  %Set brackets
\newcommand{\alg}[1]{\langle #1 \rangle} %Algebra
\newcommand{\pair}[1]{\langle #1 \rangle} %Pair
\DeclarePairedDelimiterX{\abs}[1]{\lvert}{\rvert}{#1} %Cardinality
\newcommand{\meet}{\wedge} %meet
\newcommand{\join}{\vee} %join
\newcommand{\bigjoin}{\bigvee} %big join
\newcommand{\bigmeet}{\bigwedge} %big meet
\renewcommand{\phi}{\varphi} %\varphi for \phi
\newcommand{\var}[1]{\mathsf{#1}} %Varieties and classes of algebras
\newcommand{\DLM}{\var{DLM}} %distributive l-monoids
\newcommand{\Id}{\var{Id}\DLM} %idempotent distributive l-monoids
\newcommand{\Com}{\var{C}} %commutative distributive l-monoids
\newcommand{\Sem}{\var{Sem}} %semilinear distributive l-monoids
\newcommand{\OM}{\var{OM}} %ordered monoids
\newcommand{\IdOM}{\var{IdOM}} %idempotent ordered monoids
\newcommand{\relD}{\mathrel{\mathscr{D}}} %green relation
\DeclareMathOperator{\setminuss}{\backslash} %fix spacing for \setminus
\renewcommand{\setminus}{\setminuss} %fix spacing for \setminus
\newcommand{\Con}{\mathbf{Con}} %congruence lattice
\newcommand{\con}{\mathrm{Con}} %universe of congruence lattice
\newcommand{\End}{\mathrm{End}} %Endomorphisms
\newcommand{\Sg}{\mathbf{Sg}} %subalgebra
\newcommand{\sg}{\mathrm{Sg}} %universe of subalgebra 
\newcommand*{\bigboxplus}{%
  \DOTSB
  \mathop{\vphantom{\bigoplus}\mathpalette\matt@bigboxplus\relax}%
  \slimits@
}
\newcommand\matt@bigboxplus[2]{%
  \vcenter{\m@th\hbox{\resizebox{\widthof{$#1\bigoplus$}}{!}{$\boxplus$}}}%
}
\journal{}
\begin{document}

\begin{frontmatter}

\title{Semilinear idempotent  distributive \texorpdfstring{$\ell$}{l}-monoids}

\author[1]{Simon Santschi%
\fnref{fn1}}
\ead{simon.santschi@unibe.ch}
\affiliation[1]{organization= {Mathematical Institute, University of Bern}, addressline={Alpeneggstrasse 22},
postcode={3012}, city={Bern}, country={Switzerland}}
\fntext[fn1]{Supported by Swiss National Science Foundation grant 200021\textunderscore 184693.}

\begin{abstract}
We prove a representation theorem for totally ordered idempotent monoids via a nested sum construction. Using this representation theorem we obtain a characterization of the subdirectly irreducible members  of the variety of semilinear idempotent distributive $\ell$-monoids and a proof that its lattice of subvarieties is countably infinite. For the variety of commutative idempotent distributive $\ell$-monoids we give an explicit description of its lattice of subvarieties and show that each of its subvarieties is finitely axiomatized. Finally we give a characterization of which spans of totally ordered idempotent monoids have an amalgam in the class of totally ordered monoids, showing in particular that the class of totally ordered commutative idempotent monoids has the strong amalgamation property and that various classes of distributive $\ell$-monoids do not have the amalgamation property.  We also show that exactly seven non-trivial finitely generated subvarieties of the variety of semilinear idempotent distributive $\ell$-monoids have the amalgamation property; we are able to determine for all but three of its  subvarieties  whether they have the amalgamation property or not.
\end{abstract}

\begin{keyword}
Lattice-ordered monoids \sep  Subvariety lattice \sep Amalgamation \sep Local finiteness \sep Idempotent semigroups \MSC[2020] 06F05 \sep 06D75 \sep 08B15 \sep 08B26
\end{keyword}

\end{frontmatter}
 
%%%%%%%%%%%%%%%%%%%%%%%%%%%%%%%%%%%%%%%%%%%%%%%%%%%%%%%%%

\section{Introduction}\label{sec1}

Distributive $\ell$-monoids are monoids with a distributive lattice-order such that multiplication distributes over both binary meets and binary joins. They occur naturally as inverse-free reducts of lattice-ordered groups ($\ell$-groups)  and indeed have the same equational theory as the latter \cite{colacito2021}. More generally, distributive $\ell$-monoids occur as residual-free reducts of fully distributive residuated lattices (see e.g., \cite{Cardona2017}) and a systematic study of distributive $\ell$-monoids can further the investigation of these reducts and the corresponding implication-free fragments of substructural logics \cite{Galatos2007}  (see e.g, \cite{GarciaCerdana2013}). For example, it can help  answer the question whether two classes of fully distributive residuated  lattices can be distinguished by residual-free equations or quasi-equations.

Despite providing one of the most natural generalizations of $\ell$-groups, the structure theory of distributive $\ell$-monoids is not yet at a very sophisticated state, possibly because the tools and techniques of $\ell$-group theory do not extend well in the absence of inverses.
In particular, there does not yet exist  a good characterization of congruences of distributive $\ell$-monoids (but see \cite{Merlier1971,Bosbach1991,colacito2021}, where congruences are constructed from prime lattice ideals of distributive $\ell$-monoids) and not much is known about subdirectly irreducible members of this class to date (see \cite{Rep84}, covering the finite commutative case).
On the other hand, there exists a natural analogue of the representation theorem of Holland  \cite{Holland1963} of $\ell$-groups as subalgebras of  $\ell$-groups of automorphisms of chains. That is, in \cite{anderson_edwards_1984} Anderson and Edwards show that every distributive $\ell$-monoid embeds into the distributive $\ell$-monoid of endomorphisms of a chain. 
Note, however, that in \cite{Rep83} Repnitskii shows that the variety generated by the inverse-free reducts of abelian $\ell$-groups is not finitely axiomatizable, gives a recursive axiomatization for this variety, and proves that in contrast to the case of all distributive $\ell$-monoids, there are equations that hold in this variety but not in all commutative distributive $\ell$-monoids. Extending the result of Repnitskii, it is proved in \cite{colacito2021} that there are equations that hold in all inverse-free reducts of totally ordered $\ell$-groups that do not hold in all totally ordered distributive $\ell$-monoids.

In this paper, we consider the variety of semilinear idempotent distributive $\ell$-monoids, i.e., distributive $\ell$-monoids that are subdirect products of totally ordered  idempotent monoids. 
At the center of this paper lies the nested sum construction. 
The nested sum construction was considered by Aglianò and Montagna in \cite{Agliano2003} under the name `ordinal sum'  for totally ordered BL-algebras. It was generalized by Galatos in \cite{Galatos2004}. The nested sum construction has been used to obtain representation results in a range of settings, including totally ordered BL-algebras \cite{Agliano2003}, totally ordered $n$-contractive MTL-algebras \cite{Horcik2007},  semilinear commutative idempotent  residuated lattices \cite{Olson2008,Olson2012}, and \mbox{$^\star$-involutive} idempotent residuated chains \cite{Fussner2022a}.

In the current paper, we extend on one hand the theory of nested sum representations to totally ordered idempotent monoids, and on the other hand we use the nested sum construction to overcome some of the current deficits of the theory of distributive $\ell$-monoids. 
In particular, we give an explicit characterization of subdirect irreducibility in the variety of semilinear idempotent distributive $\ell$-monoids in terms of the nested sum representation which also leads to a better understanding and description of the subvariety lattices.
The nested sum representation is not only useful to further the structure theory, but also helps us better understand homomorphisms between totally ordered idempotent monoids. In particular, it is useful for 
establishing the amalgamation property for classes of idempotent distributive $\ell$-monoids, a fundamental algebraic property (see e.g., \cite{Kiss1983}) that has been studied in great depth for many varieties of  residuated lattices (see \cite{Metcalfe2014} for an overview and further references).

The paper is structured as follows. 
In \Cref{sec2}, we introduce the necessary definitions and preliminary results about distributive $\ell$-monoids that will be used in the later sections. 
In \Cref{sec3}, we lay the ground work for the paper. We first introduce the nested sum construction for the finite case and prove a representation theorem (\Cref{l: idemp ord monoid e-sum}) for finite totally ordered idempotent monoids in terms of nested sums, followed by some results obtained using this representation. Then we introduce the nested sum construction in the general case and prove a representation theorem (\Cref{thm:general e-sum decomposition}) for arbitrary totally ordered idempotent monoids.  Using the general representation theorem we characterize embeddings between totally ordered idempotent monoids with respect to their nested sum decomposition (\Cref{l:embeddings to summands}).

In \Cref{sec4}, we provide a characterization of the subdirectly irreducible members of the variety of semilinear idempotent distributive $\ell$-monoids in terms of nested sums first  for the finite case (\Cref{thm:subdirect-equivalence})  and then for the general case  (\Cref{thm:general subd equivalence}). The characterization in the finite case also yields a counting result for finite subdirectly irreducibles (\Cref{cor:counting subd}).

In \Cref{sec5}, we use the characterization of finite subdirectly irreducibles  to investigate the lattice of subvarieties of the variety of idempotent distributive $\ell$-monoids. Similarly to  \cite{Olson2012} we use  the nested sum representation to obtain a description of the lattice of subvarieties of the variety of semilinear idempotent distributive $\ell$-monoids, resulting in a proof that the lattice of subvarieties is countably infinite (\Cref{thm:subvariety lattice}). 

In \Cref{sec6}, we first specialize the results of \Cref{sec4} and \Cref{sec5} to the commutative case and then obtain an explicit description of the lattice of subvarieties of the variety of commutative idempotent distributive $\ell$-monoids (\Cref{thm:CId lattice}). Using the explicit description of its lattice of subvarieties we establish that every proper subvariety of the variety of commutative idempotent distributive $\ell$-monoids can be relatively axiomatized by a single equation (\Cref{thm:axiomatizations CId}). We conclude the section with a  brief discussion of some consequences of the results in the context of Sugihara monoids and commutative fully distributive residuated lattices. 

In \Cref{sec7}, we shift our focus to the amalgamation property for classes of semilinear idempotent distributive $\ell$-monoids. First, we use the nested sum representation to give a complete characterization of the spans of totally ordered idempotent monoids that have an amalgam in the class of totally ordered monoids  (\Cref{thm:amalgamation of chains}) in terms of not `restricting' to one of two forbidden spans. In particular, we show  that the class of totally ordered commutative idempotent monoids has the strong amalgamation property (\Cref{c:com amalgamation}) and the forbidden spans enable us to show that several classes of distributive $\ell$-monoids do  not have the amalgamation property. 
Finally, we show that exactly seven non-trivial finitely generated subvarieties of the variety of semilinear idempotent distributive $\ell$-monoids have the amalgamation property (\Cref{thm:subvariety amalgamation}), and establish the failure of this property for all but three non-finitely generated subvarieties (\Cref{c:amalgamation infinite}).

\section{Preliminaries}\label{sec2}

\paragraph{Conventions}
We assume that the reader is familiar with the basic notions of universal algebra which can be found for example in \cite{Burris_Sankappanavar_1981}. We mostly follow the notation of \cite{Burris_Sankappanavar_1981}. In particular  for an algebra $\bf A$ we denote its universe by $A$, for $S\subseteq A$ we denote the subalgebra of $\bf A$ generated by $S$ by $\Sg(S)$,  the congruence lattice of $\bf A$ by $\Con(\mathbf{A})$, the trivial congruence on $\bf A$ by  $\Delta_{ A}$, i.e., $\Delta_{A} = \set{\pair{a,a} \mid a \in A}$, and for $a,b\in A$, the principal congruence generated by $\set{\pair{a,b}}$ by $\Theta^{\bf A}(a,b)$. We write $\Theta(a,b)$ if the algebra is clear. 
For a class of algebras $\var{K}$ we denote by $H(\var{K})$, $S(\var{K})$, $P(\var{K})$, $P_U(\var{K})$, $P_S(\var{K})$, and $I(\var{K})$ the closure of $\var{K}$ under homomorphic images, subalgebras, products, ultraproducts, subdirect products, and isomorphic images, respectively. Moreover, we will denote by $V(\var{K})$ and $Q(\var{K})$ the variety and quasivariety generated by~$\var{K}$, respectively.  For a variety $\var{V}$ and a set $X$, we denote by $\mathbf{F}_\var{V}(X)$ the $\var{V}$-free algebra over $X$ and for $n\in \mathbb{N}$ we write $\mathbf{F}_\var{V}(n)$ for $\mathbf{F}_\var{V}(\set{1,\dots,n})$.

A \emph{distributive $\ell$-monoid}  is an algebra $\mathbf{M} = \alg{M,\meet,\join,\cdot,e}$ such that 
\begin{enumerate}[label = {(\arabic*)}]
\item $\alg{M,\meet,\join}$ is a distributive lattice,
\item $\alg{M,\cdot,e}$ is a monoid,
\item for all $a,b,c,d \in M$,
\begin{equation*}
a(b\join c) d = abd \join acd \quad \text{and} \quad a(b\meet c) d = abd \meet acd.
\end{equation*}
\end{enumerate}
As in (3) we will sometimes  write $ab$ for $a\cdot b$, we will drop unnecessary brackets if no confusion arises, and we will  assume that $\cdot$ binds stronger than $\meet$ and $\join$.

The class $\DLM$ of distributive $\ell$-monoids forms a variety (equational class). We call a distributive $\ell$-monoid \emph{idempotent} if its monoid reduct is idempotent, i.e., satisfies the equation $x^2 \approx x$, and \emph{commutative} if its monoid reduct is commutative. We denote the variety of idempotent distributive $\ell$-monoids by $\Id$ and the variety of commutative idempotent distributive $\ell$-monoids by $\Com\Id$.

\begin{exmp}
An \emph{$\ell$-group} is an algebra $\mathbf{L} = \alg{L,\meet,\join,\cdot,{}^{-1},e}$ such that $\alg{L,\meet,\join}$ is a lattice, $\alg{L,\cdot,{}^{-1},e}$ is a group, and $a\leq b$ implies $cad \leq cbd$ for all $a,b,c,d\in L$, where $\leq$ is the lattice order of $\bf L$. It is well-known that the lattice reduct of an $\ell$-group is distributive and that products distribute over meets and joins. Hence, the inverse-free reducts of $\ell$-groups are distributive $\ell$-monoids.  
\end{exmp}

\begin{exmp}
Let $\alg{\Omega,\leq}$ be a chain, i.e., a totally ordered set.
Then the set $\textnormal{End}(\alg{\Omega,\leq})$ of all order-preserving endomorphisms on $\Omega$ with composition~$\circ$ and point-wise lattice-order gives rise to the distributive $\ell$-monoid $\mathbf{End}(\alg{\Omega,\leq}) = \alg{\textnormal{End}(\Omega), \meet,\join, \circ, id_\Omega}$.
\end{exmp}
In fact every distributive $\ell$-monoid can be seen as a subalgebra of an $\ell$-monoid of  endomorphisms on a chain.
Similarly to $\ell$-groups (see \cite{Holland1963}) there is a Holland-style representation theorem for distributive $\ell$-monoids.
\begin{thm}[\cite{anderson_edwards_1984}, see also \cite{Bosbach1988}]\label{thm:representation}
Every distributive $\ell$-monoid embeds into the $\ell$-monoid $\mathbf{End}(\alg{\Omega,\leq})$ of endomorphisms on some chain $\alg{\Omega,\leq}$.
\end{thm}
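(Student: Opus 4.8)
The plan is to follow the pattern of Holland's theorem: first reduce to separating a single strict pair by a homomorphism into the endomorphism $\ell$-monoid of a chain, and then assemble all these chains into one. Since an injective homomorphism of distributive $\ell$-monoids is automatically order-reflecting (from $f(x)\leq f(y)$ one gets $f(x\meet y)=f(x)\meet f(y)=f(x)$, hence $x\meet y=x$ by injectivity, i.e.\ $x\leq y$), it is a lattice embedding and therefore an embedding of $\ell$-monoids. Thus it suffices to produce, for each pair $a,b\in M$ with $a\not\leq b$, a chain $\alg{\Omega_{a,b},\leq}$ together with a homomorphism $\Psi_{a,b}\colon \mathbf{M}\to\mathbf{End}(\alg{\Omega_{a,b},\leq})$ satisfying $\Psi_{a,b}(a)\not\leq\Psi_{a,b}(b)$: the induced homomorphism into $\prod_{a\not\leq b}\mathbf{End}(\alg{\Omega_{a,b},\leq})$ is then injective. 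Finally I would glue the chains by taking $\Omega$ to be their ordinal sum (fix a well-ordering of the index set and place each block entirely below the next); every block-diagonal family of endomorphisms is then order-preserving on $\Omega$, and the resulting block-diagonal map exhibits $\prod_{a\not\leq b}\mathbf{End}(\alg{\Omega_{a,b},\leq})$ as a sub-$\ell$-monoid of $\mathbf{End}(\alg{\Omega,\leq})$, completing the embedding.

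For the core step I would represent $\mathbf{M}$ on prime lattice filters. Right multiplication $\rho_c\colon x\mapsto xc$ is a lattice endomorphism of $\alg{M,\meet,\join}$ by the distribution laws, and $\rho_b\circ\rho_a=\rho_{ab}$, so passing to preimages $\Psi(c)(Q):=\rho_c^{-1}(Q)=\{x\in M : xc\in Q\}$ turns this anti-homomorphism into an honest monoid homomorphism $\Psi$, with $\Psi(e)=\mathrm{id}$. If $Q$ is a prime filter of the lattice reduct then so is $\Psi(c)(Q)$, and the distribution laws give $\Psi(a\join b)(Q)=\Psi(a)(Q)\cup\Psi(b)(Q)$ and $\Psi(a\meet b)(Q)=\Psi(a)(Q)\cap\Psi(b)(Q)$. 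Hence, as soon as the relevant prime filters form a chain under inclusion, $\Psi$ becomes a homomorphism into the endomorphism $\ell$-monoid of that chain, because on a chain of filters union and intersection coincide with join and meet. To separate $a\not\leq b$ I would use the Prime Filter Theorem to choose a prime filter $P$ with $a\in P$ and $b\notin P$; then $e\in\Psi(a)(P)\setminus\Psi(b)(P)$, witnessing $\Psi(a)(P)\not\subseteq\Psi(b)(P)$, and the orbit $\Omega_{a,b}=\{\rho_c^{-1}(P) : c\in M\}$ is closed under the action since $\Psi(c)(\rho_a^{-1}(P))=\rho_{ca}^{-1}(P)$.

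The main obstacle is exactly the linearization: the orbit of $P$ is closed under the action but need not be totally ordered by inclusion for an arbitrary prime filter $P$ (one checks directly that the $\ell$-monoid laws do not by themselves forbid incomparable filters $\{x:xm\in P\}$ and $\{x:xn\in P\}$). The heart of the proof is therefore to isolate the correct class of prime filters---the analogue of the prime convex $\ell$-subgroups in Holland's representation---whose orbits are chains, and to show that such a filter separating $a$ from $b$ always exists. I expect this to require choosing $P$ maximal in a suitable sense relative to $b$ (a ``value''), and proving that this maximality forces the orbit to be linearly ordered while still separating $a$ and $b$. This step is where the full distributive $\ell$-monoid structure must be exploited, and it is made delicate by the absence of a usable congruence theory for distributive $\ell$-monoids noted in the introduction.
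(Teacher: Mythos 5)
The paper itself gives no proof of this theorem: it is imported from Anderson--Edwards \cite{anderson_edwards_1984} (see also \cite{Bosbach1988}), so your attempt can only be measured against the literature, and your skeleton is essentially the standard argument. The reductions are sound: an injective homomorphism of distributive $\ell$-monoids is automatically order-reflecting, so it suffices to separate each pair $a \nleq b$; the ordinal sum of the chains $\Omega_{a,b}$ does realize $\prod_{a\nleq b}\mathbf{End}(\alg{\Omega_{a,b},\leq})$ block-diagonally inside a single $\mathbf{End}(\alg{\Omega,\leq})$ (blocks are convex, so pointwise meets and joins are computed blockwise); and the translate action $\Psi(c)(Q)=\set{x \mid xc\in Q}$ on prime filters is the right device, with $\Psi(cd)=\Psi(c)\circ\Psi(d)$ under the convention $(f\circ g)(\omega)=f(g(\omega))$ (under the opposite convention, use left translations instead). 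One small point you gloss over: $\Psi(c)(Q)$ can be empty or all of $M$, so the orbit consists of possibly improper ``prime filters''; this is harmless, since $\emptyset$ and $M$ are comparable to everything and your identities $\Psi(a\join b)(Q)=\Psi(a)(Q)\cup\Psi(b)(Q)$ and $\Psi(a\meet b)(Q)=\Psi(a)(Q)\cap\Psi(b)(Q)$, as well as monotonicity, persist for them.

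The genuine gap is the step you declare to be the heart and leave unproved---and your diagnosis of it is mistaken. You claim that the $\ell$-monoid laws ``do not by themselves forbid'' incomparable translates $\set{x \mid xm\in P}$ and $\set{x \mid xn\in P}$; in fact they do. In every distributive $\ell$-monoid,
\[
xc \meet yd \;\leq\; (x\join y)c \meet (x\join y)d \;=\; (x\join y)(c\meet d) \;=\; x(c\meet d)\join y(c\meet d) \;\leq\; xd \join yc,
\]
using only the distribution laws. If $\Psi(c)(P)$ and $\Psi(d)(P)$ were $\subseteq$-incomparable, there would exist $x,y$ with $xc\in P$, $xd\notin P$, $yd\in P$, $yc\notin P$; then $xc\meet yd\in P$ since $P$ is a filter, while $xd\join yc\notin P$ since $P$ is prime, contradicting the inequality because $P$ is up-closed. (The same argument applies verbatim to the improper members of the orbit.) Hence the orbit of \emph{every} prime filter under the maps $\Psi(c)$ is a chain: no analogue of values, no maximality argument, and no congruence theory is needed, and any prime filter with $a\in P$, $b\notin P$---which exists by the prime filter theorem---already completes your construction. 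Note that the inequality above is precisely the instance $z_1=w_1=e$ of the semilinearity equation of \Cref{l:semilinear}; the full two-sided equation fails in general (otherwise every distributive $\ell$-monoid would be semilinear), and incomparability is indeed possible for two-sided translates $\set{x \mid cxd\in P}$, which is presumably what misled you---but your construction only ever uses one-sided translates.
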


We call a distributive $\ell$-monoid $\mathbf{M} = \alg{M,\meet,\join,\cdot,e}$ an \emph{ordered monoid} if its lattice order is a total order. As there is a one-to-one correspondence, we will also consider ordered monoids as relational structures $\mathbf{M} = \alg{M,\cdot, e,\leq}$, where $\alg{M,\cdot,e}$ is a monoid, $\leq$ is a total order on $M$,  and for all $a,b,c,d\in M$, $a\leq b$ implies $cad \leq cbd$. We note that  a map between ordered monoids is a homomorphism if and only if it is a monoid homomorphism and it is order-preserving. We denote the class of ordered monoids by $\OM$ and the class of idempotent ordered monoids by $\IdOM$. We will also denote the class of commutative idempotent ordered monoids by $\Com\IdOM$.

We call a distributive $\ell$-monoid \emph{semilinear}  if it is contained in the variety $\Sem\DLM$ generated by the class $\OM$. We denote the variety of semilinear idempotent  distributive $\ell$-monoids by $\Sem\Id$. Note that in the literature semilinear distributive $\ell$-monoids are also called \emph{representable} (see e.g., \cite{colacito2021}) following the nomenclature for $\ell$-groups. 

Recall that an algebra $\bf A$ is called \emph{congruence-distributive} if its congruence lattice $\Con(\mathbf{A})$ is a distributive lattice and a class of algebras is called \emph{congruence-distributive} if all of its members are congruence-distributive. It is well-known that lattices are congruence-distributive and hence every algebra with a lattice reduct is also congruence-distributive. In particular, every distributive $\ell$-monoid is congruence-distributive. We also recall the following useful result by Jónsson about subdirectly irreducible algebras in congruence-distributive varieties:
\begin{thm}[Jónsson's Lemma {\cite[Theorem 3.3.]{Jonsson1967}}]\label{thm:jonsson}
Let $\var{K}$ be a class of algebras such that $V(\var{K})$ is congruence-distributive. Then every subdirectly irreducible algebra in $V(\var{K})$ is contained in $HSP_U(\var{K})$ and $V(\var{K}) = IP_SHSP_U(\var{K})$.
\end{thm}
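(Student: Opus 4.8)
The plan is to run the classical ultrafilter argument, pinpointing where congruence-distributivity is actually needed. First I would invoke Birkhoff's theorem $V(\var{K})=HSP(\var{K})$ (see \cite{Burris_Sankappanavar_1981}) to write an arbitrary subdirectly irreducible $\mathbf{A}\in V(\var{K})$ as $\mathbf{A}\cong\mathbf{B}/\theta$, where $\mathbf{B}\leq\prod_{i\in I}\mathbf{A}_i$ with each $\mathbf{A}_i\in\var{K}$ and $\theta$ is the kernel of the quotient map. Subdirect irreducibility of $\mathbf{A}$ makes $\theta$ completely meet-irreducible in $\Con(\mathbf{B})$, so in particular $\theta\neq B\times B$. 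Since $\mathbf{B}\in V(\var{K})$ and $V(\var{K})$ is congruence-distributive, $\Con(\mathbf{B})$ is a distributive lattice, whence $\theta$ is \emph{meet-prime}: $\alpha\wedge\beta\subseteq\theta$ implies $\alpha\subseteq\theta$ or $\beta\subseteq\theta$. For each $J\subseteq I$ I would then consider the congruence $\theta_J$ on $\mathbf{B}$ induced by the projection onto $\prod_{i\in J}\mathbf{A}_i$; writing $J^{c}=I\setminus J$, these satisfy $\theta_{\emptyset}=B\times B$, $\theta_{I}=\Delta_{B}$, the antitone law $J\subseteq K\Rightarrow\theta_{K}\subseteq\theta_{J}$, and the identity $\theta_{J\cup K}=\theta_{J}\wedge\theta_{K}$.

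The next goal is to produce an ultrafilter $U$ on $I$ with $\theta_J\subseteq\theta$ for every $J\in U$. Set $F=\{J\subseteq I:\theta_J\subseteq\theta\}$; it is upward closed, contains $I$, and omits $\emptyset$ (because $\theta\neq B\times B$). By Zorn's Lemma I would choose a filter $U\subseteq F$ maximal under inclusion, starting from the filter $\{I\}\subseteq F$. The main obstacle --- and the heart of the proof --- is to show that maximality forces $U$ to be an ultrafilter. I expect to argue by contradiction: if some $J\subseteq I$ had $J\notin U$ and $J^{c}\notin U$, then by maximality neither the filter generated by $U\cup\{J\}$ nor that generated by $U\cup\{J^{c}\}$ can lie in $F$, which yields $K_1,K_2\in U$ with $\theta_{J\cap K_1}\not\subseteq\theta$ and $\theta_{J^{c}\cap K_2}\not\subseteq\theta$. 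Putting $K=K_1\cap K_2\in U$, the antitone law gives $\theta_{J\cap K}\not\subseteq\theta$ and $\theta_{J^{c}\cap K}\not\subseteq\theta$, while $(J\cap K)\cup(J^{c}\cap K)=K$ together with $\theta_{J\cup K}=\theta_J\wedge\theta_K$ gives $\theta_{J\cap K}\wedge\theta_{J^{c}\cap K}=\theta_K\subseteq\theta$; meet-primeness of $\theta$ then forces one of $\theta_{J\cap K},\theta_{J^{c}\cap K}$ into $\theta$, a contradiction. This dichotomy is exactly the place where congruence-distributivity is indispensable.

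Finally, with $U$ an ultrafilter contained in $F$, the union $\psi=\bigcup_{J\in U}\theta_J$ is a congruence on $\mathbf{B}$ (the family $\{\theta_J:J\in U\}$ is upward directed since $U$ is closed under intersection) satisfying $\psi\subseteq\theta$, and the canonical map sends $\mathbf{B}/\psi$ isomorphically onto a subalgebra of the ultraproduct $(\prod_{i\in I}\mathbf{A}_i)/U$. Hence $\mathbf{A}\cong\mathbf{B}/\theta$ is a homomorphic image of $\mathbf{B}/\psi$, so $\mathbf{A}\in HSP_U(\var{K})$, proving the first assertion. For the identity $V(\var{K})=IP_SHSP_U(\var{K})$, the inclusion $\supseteq$ is immediate because $V(\var{K})$ is closed under each of the operators involved, while $\subseteq$ follows by writing any member of $V(\var{K})$ as a subdirect product of subdirectly irreducibles (Birkhoff's subdirect representation theorem) and applying the first assertion to each factor.
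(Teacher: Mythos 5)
The paper does not prove this statement; it is quoted directly from J\'onsson's 1967 paper, so the only benchmark is the classical argument itself. Your proposal correctly reproduces that classical ultrafilter proof in full: meet-primeness of $\theta$ via congruence-distributivity, the maximal filter inside $F=\set{J\subseteq I : \theta_J\subseteq\theta}$ forced to be an ultrafilter by the dichotomy argument, the directed union $\psi=\bigcup_{J\in U}\theta_J$ embedding $\mathbf{B}/\psi$ into the ultraproduct, and the subdirect representation theorem for the identity $V(\var{K})=IP_SHSP_U(\var{K})$ --- all steps are sound and in the standard order.
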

In this paper we will often use a special case of \Cref{thm:jonsson}.
\begin{cor}[{\cite[Corollary 3.4.]{Jonsson1967}}]\label{l:jonsson}
If $\var{K}$ is a finite set of  finite algebras such that  $V(\var{K})$ is congruence-distributive,  then the subdirectly irreducible algebras of $V(\var{K})$ are in $HS(\var{K})$, and $V(\var{K}) = IP_S(HS(\var{K}))$.
\end{cor}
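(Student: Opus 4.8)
The plan is to deduce this corollary from Jónsson's Lemma (\Cref{thm:jonsson}) by showing that, when $\var{K}$ is a finite set of finite algebras, the ultraproduct operator contributes nothing new: concretely, I would establish the key inclusion $P_U(\var{K}) \subseteq I(\var{K})$. Granting this, every occurrence of $P_U$ in \Cref{thm:jonsson} can be replaced by $I$, and since $I(\var{K}) \subseteq S(\var{K})$ yields $HSI(\var{K}) = HS(\var{K})$, the conclusions $HSP_U(\var{K}) \subseteq HS(\var{K})$ and $V(\var{K}) = IP_S(HS(\var{K}))$ would follow after a short bookkeeping argument with the class operators.

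To prove $P_U(\var{K}) \subseteq I(\var{K})$, I would argue in two steps. First, let $\prod_{i \in I} \mathbf{B}_i / U$ be an ultraproduct with each $\mathbf{B}_i \in \var{K}$ and $U$ an ultrafilter on $I$. Writing $\var{K} = \set{\mathbf{A}_1, \dots, \mathbf{A}_n}$, the sets $I_j = \set{i \in I \mid \mathbf{B}_i = \mathbf{A}_j}$ for $1 \le j \le n$ partition $I$ into finitely many pieces, so exactly one of them, say $I_{j_0}$, lies in $U$; by the standard behaviour of ultraproducts under restriction to a set belonging to the ultrafilter, $\prod_{i \in I} \mathbf{B}_i / U$ is isomorphic to an ultrapower of the single algebra $\mathbf{A}_{j_0}$. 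Second, I would show that any ultrapower $\mathbf{A}^J / U'$ of a finite algebra $\mathbf{A}$ is isomorphic to $\mathbf{A}$ itself: the diagonal map is always an embedding, and it is surjective because for any $f \colon J \to A$ the finitely many fibres $f^{-1}(a)$ (for $a \in A$) partition $J$, so one of them lies in $U'$, which forces $f$ to be $U'$-equivalent to a constant map. Combining the two steps gives $P_U(\var{K}) \subseteq I(\var{K})$.

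Finally, I would assemble the conclusions. For the statement about subdirect irreducibles, \Cref{thm:jonsson} places every subdirectly irreducible member of $V(\var{K})$ in $HSP_U(\var{K})$, and the key inclusion upgrades this to $HS(\var{K})$. For the representation of the whole variety, \Cref{thm:jonsson} gives $V(\var{K}) = IP_SHSP_U(\var{K})$; the inclusion $P_U(\var{K}) \subseteq I(\var{K})$ then yields $IP_SHSP_U(\var{K}) \subseteq IP_S(HS(\var{K}))$, while the trivial inclusions $\var{K} \subseteq P_U(\var{K})$ and $HS(\var{K}) \subseteq V(\var{K})$, together with closure of $V(\var{K})$ under $I$ and $P_S$, give the reverse containment, so the two classes coincide. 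I do not expect a genuine obstacle here; the only points requiring care are the correct use of the ultrafilter property to single out $I_{j_0}$ and the verification that the reduction to an ultrapower is legitimate, both of which are standard facts about ultraproducts of a finite family of finite structures.
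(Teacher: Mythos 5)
Your proposal is correct: the paper states this result as a citation of Jónsson's Corollary 3.4 without proof, and your derivation from \Cref{thm:jonsson} via the collapse $P_U(\var{K}) \subseteq I(\var{K})$ --- partitioning the index set among the finitely many algebras of $\var{K}$ to reduce to an ultrapower, then using finiteness of the algebra to show the diagonal embedding is onto --- is exactly the standard argument by which Jónsson's corollary follows from his Theorem 3.3. The operator bookkeeping you sketch ($SI \subseteq IS$, $HI = H$, and closure of $V(\var{K})$ under $I$, $P_S$, $H$, $S$ for the reverse containment) goes through without difficulty.
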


The class  $\OM$ of ordered monoids is a positive universal class, i.e., closed under $HSP_U$. So, by \Cref{thm:jonsson}, we get the following result.
\begin{prop}\label{cor:subdirect irred in Sem}\leavevmode
Every subdirectly irreducible algebra in $\Sem\DLM$ is totally ordered and a distributive $\ell$-monoid is semilinear if and only if it is isomorphic to a subdirect product of ordered monoids.
\end{prop}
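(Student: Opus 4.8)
The plan is to apply Jónsson's Lemma (\Cref{thm:jonsson}) with $\var{K} = \OM$. By definition $\Sem\DLM = V(\OM)$, and since every distributive $\ell$-monoid is congruence-distributive, $V(\OM)$ is a congruence-distributive variety, so the hypotheses of \Cref{thm:jonsson} are met. The key preliminary observation is that, because $\OM$ is a positive universal class, it is closed under $H$, $S$, and $P_U$, and hence $HSP_U(\OM) = \OM$. The only part of this closure that deserves a word of justification is closure under $H$: a surjective homomorphism of distributive $\ell$-monoids preserves binary meets and joins and is therefore order-preserving, so the image of a chain under such a map is again totally ordered.

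For the first assertion, \Cref{thm:jonsson} tells us that every subdirectly irreducible algebra of $\Sem\DLM = V(\OM)$ lies in $HSP_U(\OM)$. Since $HSP_U(\OM) = \OM$, every such algebra is an ordered monoid and in particular totally ordered, as claimed.

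For the equivalence, I would use the second conclusion of \Cref{thm:jonsson}, which gives $\Sem\DLM = V(\OM) = IP_SHSP_U(\OM) = IP_S(\OM)$, again using $HSP_U(\OM) = \OM$. Unwinding the operators $I$ and $P_S$, the identity $\Sem\DLM = IP_S(\OM)$ states precisely that a distributive $\ell$-monoid is semilinear if and only if it is isomorphic to a subdirect product of members of $\OM$, i.e., of ordered monoids. (Alternatively, the forward direction can be obtained from Birkhoff's subdirect representation theorem together with the first assertion, and the backward direction from the fact that $\Sem\DLM$, being a variety, is closed under $I$ and $P_S$; but routing both directions through the second clause of Jónsson's Lemma is cleaner.)

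Since the statement is essentially a direct consequence of \Cref{thm:jonsson} applied to the positive universal class $\OM$, I do not expect a genuine obstacle. The only mildly delicate point is verifying that $\OM$ is closed under homomorphic images, which is exactly what the preceding remark on order-preservation of surjections settles, and this is already asserted in the paragraph introducing the result.
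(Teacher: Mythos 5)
Your proposal is correct and follows essentially the same route as the paper, which likewise derives both claims by applying Jónsson's Lemma (\Cref{thm:jonsson}) to the positive universal class $\OM$, using its closure under $HSP_U$. Your extra remark justifying closure under $H$ (surjective $\ell$-monoid homomorphisms preserve the lattice operations, hence map chains onto chains) is a correct elaboration of what the paper leaves implicit in calling $\OM$ positive universal.
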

It follows, in particular, that $\Sem\Id$ is the variety generated by $\IdOM$.
We have the following equational characterization of semilinearity, where for  terms $s$ and $t$ we denote by $s\leq t$ the equation $s\meet t \approx s$.

\begin{prop}[{\cite[Proposition 5.4.]{colacito2021}}, see also {\cite[Corollary 6.10]{Bosbach1988}}]\label{l:semilinear}
A distributive $\ell$-monoid  is semilinear if and only if  it satisfies the equation $z_1xz_2 \meet w_1yw_2 \leq z_1yz_2 \join w_1xw_2$.
\end{prop}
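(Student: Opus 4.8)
The plan is to prove the two implications separately, using \Cref{cor:subdirect irred in Sem} to replace ``semilinear'' by ``isomorphic to a subdirect product of totally ordered monoids.'' Write $\varepsilon$ for the equation $z_1xz_2 \meet w_1yw_2 \leq z_1yz_2 \join w_1xw_2$. For soundness I would first check that $\varepsilon$ holds in every chain: given an ordered monoid and an assignment, either $x \leq y$ or $y \leq x$; in the first case $z_1xz_2 \leq z_1yz_2$ and in the second $w_1yw_2 \leq w_1xw_2$, so in either case the left-hand meet lies below one of the two joinands, hence below the right-hand join. Since $\varepsilon$ is an equation holding in every member of $\OM$, it holds throughout $V(\OM) = \Sem\DLM$, so every semilinear distributive $\ell$-monoid satisfies $\varepsilon$.

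For the converse I would isolate a purely congruence-theoretic criterion. Because $\DLM$ is congruence-distributive, I claim that a distributive $\ell$-monoid $\mathbf M$ is a subdirect product of chains if and only if $\Theta(a\meet b, a) \cap \Theta(a \meet b, b) = \Delta_M$ for all $a, b \in M$. Necessity is immediate: in a subdirect representation $\mathbf M \hookrightarrow \prod_i \mathbf M_i$ with each $\mathbf M_i$ totally ordered, each projection kernel $\theta_i$ satisfies $a_i \leq b_i$ or $b_i \leq a_i$, hence contains $\Theta(a\meet b, a)$ or $\Theta(a\meet b, b)$, and so contains their intersection; as $\bigcap_i \theta_i = \Delta_M$, that intersection is trivial. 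For sufficiency I would run a Zorn argument: given $a \not\leq b$, pick a congruence $\theta$ maximal with respect to $(a\meet b, a) \notin \theta$, and show $\mathbf M/\theta$ is a chain. By maximality every non-trivial congruence of $\mathbf M/\theta$ collapses the image of $(a\meet b, a)$, so $\mathbf M/\theta$ is subdirectly irreducible with some monolith $\mu$; if $\mathbf M/\theta$ had an incomparable pair $c,d$ then $\Theta(c\meet d, c)$ and $\Theta(c\meet d, d)$ would both be non-trivial, hence both would contain $\mu$, contradicting the triviality of their intersection. The latter is available because $\mathbf M/\theta$, being a homomorphic image, still satisfies $\varepsilon$ and hence (by the final paragraph) the congruence identity. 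Collecting these chain quotients over all pairs $a \not\leq b$ yields the desired subdirect representation.

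It then remains to derive the congruence identity from $\varepsilon$, and this is the step I expect to be the main obstacle, precisely because $\DLM$ lacks a workable description of principal congruences. Fixing incomparable $a, b$, I would show that any pair $(u,v)$ with $u \leq v$ lying in both $\Theta(a\meet b, a)$ and $\Theta(a\meet b, b)$ must satisfy $u = v$. Via the easy lattice transpositions $\Theta(a\meet b, a) = \Theta(b, a\join b)$ and $\Theta(a\meet b, b) = \Theta(a, a\join b)$, the two congruences amount to collapsing $a$ and $b$ separately up to $a \join b$; the role of $\varepsilon$ is exactly to interchange the conjugated occurrences of $a$ and $b$ (through the parameters $z_1,z_2,w_1,w_2$) while trading a meet for a join, which I would use to sandwich $v$ between polynomial expressions forced to equal $u$. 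The delicate point is controlling how a collapse propagates through arbitrary unary polynomials in the absence of a congruence-generation formula; I would mitigate this by arguing inside the subdirectly irreducible quotient from the previous paragraph, where it suffices to refute a single incomparable pair, and by exploiting that $\varepsilon$ already encodes the comparison of $z_1xz_2$ with $z_1yz_2$ under conjugation.
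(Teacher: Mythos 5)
Your soundness direction is fine, and the general skeleton of the converse is sound: the equivalence between ``subdirect product of chains'' and the congruence identity $\Theta(a\meet b,a)\cap\Theta(a\meet b,b)=\Delta_M$, together with the Zorn argument producing a subdirectly irreducible quotient with monolith $\mu$, is correct as stated. But the proof has a genuine gap exactly where you flag it, and the proposed mitigation does not close it --- it is circular. In the subdirectly irreducible quotient, your monolith argument shows that an incomparable pair $c,d$ forces $\mu\subseteq\Theta(c\meet d,c)\cap\Theta(c\meet d,d)$; this is a contradiction only if you already know that intersection is trivial, which is precisely the congruence identity you deferred to the final paragraph. The final paragraph, in turn, proposes to establish that identity ``inside the subdirectly irreducible quotient,'' i.e., it leans on the previous paragraph. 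So the load-bearing step --- deriving from $\varepsilon$ alone that the two principal congruences intersect trivially --- is never carried out. And it is not a routine omission: a Mal'cev chain witnessing $(u,v)\in\Theta(a\meet b,a)$ passes through arbitrary interleavings of translations $z_1xz_2$ with lattice polynomials, and, as the paper itself notes in its introduction, no workable description of principal congruences of distributive $\ell$-monoids is available; $\varepsilon$ by itself gives no control over such chains, so the ``sandwiching'' step has no foundation.

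For comparison: the paper does not prove this proposition at all --- it quotes it from \cite{colacito2021} (see also \cite{Bosbach1988}) --- and the proof there sidesteps principal congruences entirely, using congruences built from prime lattice ideals (the construction the paper's introduction attributes to \cite{Merlier1971,Bosbach1991,colacito2021}). Given $a\not\leq b$, take a prime ideal $P$ of the distributive lattice reduct with $b\in P$ and $a\notin P$, and define $(c,d)\in\theta_P$ if and only if $ucv\in P \Leftrightarrow udv\in P$ for all $u,v\in M$. Distributivity of the product over $\meet$ and $\join$ together with primeness of $P$ shows $\theta_P$ is a congruence, and taking $u=v=e$ shows it separates $a$ from $b$. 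The equation $\varepsilon$ then forces the quotient to be a chain: if neither $(c\meet d,c)$ nor $(c\meet d,d)$ lies in $\theta_P$, primeness yields $z_1,z_2,w_1,w_2$ with $z_1cz_2\notin P\ni z_1dz_2$ and $w_1dw_2\notin P\ni w_1cw_2$, and instantiating $\varepsilon$ at $x=c$, $y=d$ gives $z_1cz_2\meet w_1dw_2\leq z_1dz_2\join w_1cw_2\in P$, whence $z_1cz_2\in P$ or $w_1dw_2\in P$ --- a contradiction. Intersecting the $\theta_P$ over all pairs $a\not\leq b$ gives the subdirect representation. If you want to rescue your congruence-theoretic framing, this prime-ideal congruence is the missing engine: it replaces the unobtainable analysis of principal congruences, and with it your Zorn scaffolding becomes unnecessary.
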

An important consequence of \Cref{l:semilinear} is the following result, which was first proved by Merlier in \cite{Merlier1971}.
\begin{cor}[{\cite[Corollary 2]{Merlier1971}}, {\normalfont see also} \cite{colacito2021}]
Every commutative distributive $\ell$-monoid is semilinear.
\end{cor}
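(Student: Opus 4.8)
The plan is to prove the statement by verifying the equational characterization of semilinearity from \Cref{l:semilinear}: it suffices to show that every commutative distributive $\ell$-monoid satisfies $z_1xz_2 \meet w_1yw_2 \leq z_1yz_2 \join w_1xw_2$. So let $\mathbf{M}$ be a commutative distributive $\ell$-monoid and fix arbitrary elements $x,y,z_1,z_2,w_1,w_2 \in M$; all that remains is to establish this single inequality in $\mathbf{M}$.

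First I would use commutativity to simplify the shape of the equation. Since the monoid reduct is commutative, we have $z_1xz_2 = (z_1z_2)x$, $w_1yw_2 = (w_1w_2)y$, $z_1yz_2 = (z_1z_2)y$, and $w_1xw_2 = (w_1w_2)x$. Writing $z = z_1z_2$ and $w = w_1w_2$, the target inequality collapses to $zx \meet wy \leq zy \join wx$, which no longer involves the two-sided multiplications and is therefore much easier to handle.

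The core of the argument is then the following chain of (in)equalities, obtained purely from lattice distributivity together with the distribution of $\cdot$ over $\meet$ and $\join$:
\[
zx \meet wy \;\leq\; z(x\join y)\meet w(x\join y) \;=\; (z\meet w)(x\join y) \;=\; (z\meet w)x \join (z\meet w)y \;\leq\; wx \join zy.
\]
The first inequality holds because $zx\meet wy \leq zx \leq z(x\join y)$ and $zx\meet wy \leq wy \leq w(x\join y)$ (using that multiplication is order-preserving); the two middle equalities use that $\cdot$ distributes over $\meet$ on the left and over $\join$; and the last inequality uses $z\meet w \leq w$ and $z\meet w \leq z$, so that $(z\meet w)x \leq wx$ and $(z\meet w)y \leq zy$. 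This yields exactly $zx\meet wy \leq zy\join wx$, completing the verification.

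I expect no serious obstacle here; the only real insight is in the middle step, where commutativity is essential. Without it one can still push $z_1xz_2\meet w_1yw_2$ below $z_1(x\join y)z_2 \meet w_1(x\join y)w_2$, but the two meetands retain distinct outer factors and cannot be merged into a single product of the form $(z\meet w)(x\join y)$. It is precisely the ability to gather all left and right multipliers into $z$ and $w$ that makes the distributive identity $z(x\join y)\meet w(x\join y) = (z\meet w)(x\join y)$ applicable, and this is where the hypothesis of commutativity does its work.
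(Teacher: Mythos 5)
Your proof is correct and follows exactly the route the paper intends: the paper states this corollary as a direct consequence of the equational characterization in \Cref{l:semilinear} (leaving the verification implicit), and your argument supplies precisely that verification, using commutativity to collapse $z_1xz_2$ and $w_1yw_2$ to one-sided products and then deriving $zx\meet wy \leq zy\join wx$ from distributivity of the product over $\meet$ and $\join$. Your closing observation about where commutativity is genuinely needed is also accurate.
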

Hence, in particular  $\Com\Id $ is the subvariety of $\Sem\Id$ consisting of its commutative members and is generated by $\Com\IdOM$.

For a distributive $\ell$-monoid $\mathbf{M} = \pair{M,\meet,\join,\cdot, e}$, we define its \emph{order dual}~$\mathbf{M}^\partial$ by  $\mathbf{M}^\partial = \pair{M,\join,\meet,\cdot, e}$. Taking the order dual of a distributive  $\ell$-monoid corresponds to reversing its lattice order, so in particular for an ordered monoid $\mathbf{M} = \alg{M,\cdot,e,{\leq}}$ we have  $\mathbf{M}^\partial = \alg{M,\cdot,e,{\geq}}$. For a term $t$ we define the \emph{dual}~$t^\partial$  of $t$ recursively by
\begin{itemize}
\item $x^\partial = x$ if $x$ is a variable;
\item $(u\cdot v)^\partial = u^\partial\cdot v^\partial$;
\item $(u\meet v)^\partial = u^\partial \join v^\partial$;
\item $(u\join v)^\partial = u^\partial \meet v^\partial$;
\end{itemize}
and we extend the notion of a dual to equations by setting $(t\approx s)^\partial \coloneqq t^\partial \approx s^\partial$. 

For a variety $\var{V}$ of distributive $\ell$-monoids we denote by $\var{V}^\partial$ the class consisting of the order duals of the algebras in $\var{V}$. It is clear that $\var{V}^\partial$ is also a variety axiomatized by the duals of the axioms of $\var{V}$. We call a variety $\var{V}$ \emph{self-dual} if $\var{V} = \var{V}^\partial$. 

\begin{lemma}\label{l:self-dual}
Let $\var{V}$ be a variety of distributive $\ell$-monoids axiomatized by a set~$\Sigma$ of equations. Then $\var{V}$ is self-dual if and only if $\var{V} \models \varepsilon^\partial$ for every  $\varepsilon \in \Sigma$.
\end{lemma}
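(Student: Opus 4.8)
The plan is to reduce the lemma to the single semantic fact that dualizing a term and dualizing an algebra cancel. I would first prove, by a routine structural induction on terms, that for every distributive $\ell$-monoid $\mathbf{M}$, every term $t$, and every valuation $\bar a$, the value of $t^\partial$ in $\mathbf{M}^\partial$ coincides with the value of $t$ in $\mathbf{M}$. The variable base case is immediate; the product step uses that $\cdot$ and $e$ are unchanged in passing to $\mathbf{M}^\partial$; and the meet and join steps use precisely that $\meet$ and $\join$ are interchanged both in forming $t^\partial$ and in forming $\mathbf{M}^\partial$, so the two interchanges cancel. Consequently, for every equation $\varepsilon$ one gets $\mathbf{M} \models \varepsilon$ iff $\mathbf{M}^\partial \models \varepsilon^\partial$; replacing $\varepsilon$ by $\varepsilon^\partial$ and using $(\varepsilon^\partial)^\partial = \varepsilon$ also gives $\mathbf{M} \models \varepsilon^\partial$ iff $\mathbf{M}^\partial \models \varepsilon$. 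This is exactly the content already recorded in the excerpt's observation that $\var{V}^\partial$ is the variety axiomatized by $\Sigma^\partial \coloneqq \set{\varepsilon^\partial \mid \varepsilon \in \Sigma}$, which I would take as the structural fact to build on.

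For the forward direction, assume $\var{V}$ is self-dual, so $\var{V} = \var{V}^\partial$. Since $\var{V}^\partial$ is axiomatized by $\Sigma^\partial$, it satisfies every equation $\varepsilon^\partial$ with $\varepsilon \in \Sigma$, and therefore so does $\var{V} = \var{V}^\partial$; this is the desired conclusion, so this direction is essentially by definition. For the converse, assume $\var{V} \models \varepsilon^\partial$ for every $\varepsilon \in \Sigma$. Then every member of $\var{V}$ satisfies all of $\Sigma^\partial$, so $\var{V} \subseteq \var{V}^\partial$ because $\var{V}^\partial$ is exactly the class of models of $\Sigma^\partial$. To promote this to equality I would use that class dualization is an inclusion-preserving involution: it is inclusion-preserving since $\mathbf{M} \mapsto \mathbf{M}^\partial$ is applied pointwise, and involutive since $(\mathbf{M}^\partial)^\partial = \mathbf{M}$. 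Applying $(\cdot)^\partial$ to $\var{V} \subseteq \var{V}^\partial$ yields $\var{V}^\partial \subseteq (\var{V}^\partial)^\partial = \var{V}$, and the two inclusions give $\var{V} = \var{V}^\partial$, i.e. $\var{V}$ is self-dual.

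The only genuinely computational step is the term induction underlying the core satisfaction-duality; once that is in place --- and indeed the excerpt already grants the equivalent statement that $\var{V}^\partial$ is axiomatized by $\Sigma^\partial$ --- the remainder is a formal manipulation of this axiomatization together with the involutivity of $(\cdot)^\partial$. I therefore do not anticipate any real obstacle beyond careful bookkeeping of the dualization, and one could even omit the induction entirely and present the lemma as a direct consequence of the granted axiomatization of $\var{V}^\partial$ plus the fact that $(\cdot)^\partial$ is an inclusion-preserving involution on classes.
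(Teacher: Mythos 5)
Your proof is correct, and it follows exactly the route the paper intends: the paper states this lemma without proof, treating it as immediate from the preceding remark that $\var{V}^\partial$ is a variety axiomatized by the duals of the axioms of $\var{V}$. Your term induction establishing $\mathbf{M} \models \varepsilon \iff \mathbf{M}^\partial \models \varepsilon^\partial$, together with the observation that $(\cdot)^\partial$ is an inclusion-preserving involution on classes, is precisely the routine verification the paper leaves implicit.
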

So we obtain the following corollary, where we note that the dual of the equation  from \Cref{l:semilinear} is a substitution instance of the original equation. 
\begin{cor}
 The varieties $\DLM$,  $\Id$, $\Com\Id$,   and  $\Sem\Id$ are self-dual. 
\end{cor}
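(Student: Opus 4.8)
The plan is to apply \Cref{l:self-dual}, which reduces self-duality of a variety to checking that the dual $\varepsilon^\partial$ of each of its axioms $\varepsilon$ holds in the variety. I would fix the following axiomatizations: $\DLM$ by the distributive-lattice laws, the monoid laws, and the two multiplication-distributivity laws; $\Id$ by these together with $x\cdot x\approx x$; $\Com\Id$ by the $\Id$ axioms together with $x\cdot y\approx y\cdot x$ (note that semilinearity need not be listed, as it follows from commutativity); and $\Sem\Id$ by the $\Id$ axioms together with the semilinearity equation of \Cref{l:semilinear}.

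First I would observe, directly from the recursive definition of $t^\partial$, that any equation whose two sides are built solely from variables and $\cdot$ equals its own dual, since $x^\partial = x$ and $(u\cdot v)^\partial = u^\partial\cdot v^\partial$. This disposes in one stroke of the monoid axioms (common to all four varieties), the idempotency axiom $x\cdot x\approx x$, and the commutativity axiom $x\cdot y\approx y\cdot x$. Next I would treat the lattice axioms of $\DLM$: since the defining laws of distributive lattices occur in dual pairs, the class of distributive lattices is self-dual, so the duals of the lattice axioms again hold in every distributive lattice and hence in $\DLM$. For the two remaining $\DLM$ axioms, a short calculation with the dualization clauses shows that the dual of $a(b\join c)d\approx abd\join acd$ is $a(b\meet c)d\approx abd\meet acd$ and conversely; each is thus the dual of the other, and both hold in $\DLM$.

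The only case requiring a little care is the semilinearity axiom for $\Sem\Id$. Writing it as $s\meet t\approx s$ with $s = z_1xz_2\meet w_1yw_2$ and $t = z_1yz_2\join w_1xw_2$, I would compute $s^\partial = z_1xz_2\join w_1yw_2$ and $t^\partial = z_1yz_2\meet w_1xw_2$, so that the dual equation $s^\partial\join t^\partial\approx s^\partial$ reads $z_1yz_2\meet w_1xw_2\leq z_1xz_2\join w_1yw_2$. This is precisely the original semilinearity equation with the variables $x$ and $y$ interchanged, hence a substitution instance of an axiom of $\Sem\Id$, and so it holds. Assembling these cases, \Cref{l:self-dual} yields self-duality for all four varieties. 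I expect this last step --- recognizing the dual of the semilinearity law as a substitution instance rather than literally an axiom --- to be the only non-mechanical point; everything else is bookkeeping with the clauses defining $(\cdot)^\partial$.
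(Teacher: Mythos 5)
Your proposal is correct and follows exactly the paper's route: the paper derives the corollary from \Cref{l:self-dual} with only the one-line remark that the dual of the semilinearity equation of \Cref{l:semilinear} is a substitution instance of the original (your $x\leftrightarrow y$ swap), all other axioms being handled by the same routine bookkeeping you describe. You have simply written out the verification the paper leaves implicit, including correctly unfolding $s\leq t$ as $s\meet t\approx s$ before dualizing.
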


Most of the varieties we consider in this paper are self-dual, so in the following we will use this fact to shorten some of the proofs by referring to duality. 

A variety $\var{V}$ is called \emph{locally finite} if every finitely generated algebra contained in $\var{V}$ is finite. For a variety $\var{V}$ of distributive $\ell$-monoids we denote by $\var{V}_\mathsf{m}$ the variety generated by the monoid reducts of $\var{V}$. Note that $\var{V}_\mathsf{m}$ is a variety of monoids.

\begin{lemma}\label{lemma:locally finite DLM}
A subvariety $\var{V}$ of $\DLM$ is locally finite if and only if the variety~$\var{V}_\mathsf{m}$ generated by the monoid reducts of the members of $\var{V}$ is locally finite.
\begin{proof}
First note that a variety is locally finite if and only if every finitely generated free algebra is finite. 

For the right-to-left direction assume that  $\var{V}_\mathsf{m}$ is locally finite. Then for every $n\in \mathbb{N}$ the free algebra $\mathbf{F}_{\var{V}_\mathsf{m}}(n)$ is finite and, using the distributivity of the lattice reduct and the distributivity of products over meets and joins, it is straightforward to see that every member of the free algebra $\mathbf{F}_\var{V}(n)$ corresponds to a meet of joins of members of  $\mathbf{F}_{\var{V}_\mathsf{m}}(n)$. Thus, since $\mathbf{F}_{\var{V}_\mathsf{m}}(n)$ is finite,  modulo the lattice axioms there are only finitely many different such terms. Hence also $\mathbf{F}_\var{V}(n)$ has to be finite for every $n\in \mathbb{N}$, i.e., $\var{V}$ is locally finite.

For the left-to-right direction note first that, by the definition of $\var{V}_\mathsf{m}$, for any monoid terms $s,t$  we have $\var{V} \models s\approx t$ if and only if $\var{V}_\mathsf{m} \models s \approx t$, so  $F_{\var{V}_\mathsf{m}}(n)$ can be  considered as a subset of $F_\var{V}(n)$. Now suppose that $\var{V}$ is locally finite. Then for every $n\in \mathbb{N}$ the free algebra  $\mathbf{F}_{\var{V}}(n)$ is finite and thus also  $\mathbf{F}_{\var{V}_\mathsf{m}}(n)$ is finite. Hence $\var{V}_\mathsf{m}$ is locally finite.
\end{proof}
\end{lemma}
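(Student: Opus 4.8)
The plan is to use the standard fact that a variety is locally finite if and only if all of its finitely generated free algebras are finite, so it suffices to relate the finiteness of $\mathbf{F}_\var{V}(n)$ to that of $\mathbf{F}_{\var{V}_\mathsf{m}}(n)$ for every $n \in \mathbb{N}$. For the direction assuming $\var{V}_\mathsf{m}$ locally finite, I would first argue that every element of $\mathbf{F}_\var{V}(n)$ admits a normal form. Concretely, starting from an arbitrary $\set{\meet,\join,\cdot,e}$-term in the variables $x_1,\dots,x_n$, I would repeatedly apply the distributivity of $\cdot$ over $\meet$ and $\join$ (axiom (3)) to push all products inward, so that the term becomes a lattice term built from pure monoid words; the distributivity of the underlying lattice then lets me rewrite it as a finite meet of finite joins of such monoid words. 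Modulo the equations of $\var{V}$, each monoid word represents an element of $\mathbf{F}_{\var{V}_\mathsf{m}}(n)$, which is finite by hypothesis, so only finitely many monoid words occur up to equivalence; hence, modulo the distributive lattice axioms, there are only finitely many distinct meets of joins of them, and $\mathbf{F}_\var{V}(n)$ is finite.

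For the converse, assuming $\var{V}$ locally finite, the key observation is that for monoid terms $s,t$ one has $\var{V} \models s \approx t$ if and only if $\var{V}_\mathsf{m} \models s \approx t$; this holds because $\var{V}_\mathsf{m}$ is by definition the variety generated by the monoid reducts of the members of $\var{V}$, so it satisfies precisely the monoid equations valid in $\var{V}$. Consequently $\mathbf{F}_{\var{V}_\mathsf{m}}(n)$ may be identified with the subset of $\mathbf{F}_\var{V}(n)$ consisting of the classes of monoid words, and finiteness of $\mathbf{F}_\var{V}(n)$ forces finiteness of $\mathbf{F}_{\var{V}_\mathsf{m}}(n)$, hence local finiteness of $\var{V}_\mathsf{m}$.

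I expect the main obstacle to be the normal-form step in the first direction: one must verify that the combined use of axiom (3) and the distributivity of the lattice genuinely reduces every term to a meet of joins of monoid words, and that the number of inequivalent such normal forms is bounded once the set of available monoid words is finite. Everything else I expect to be routine bookkeeping, with duality playing no essential role here.
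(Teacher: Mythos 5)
Your proposal is correct and follows essentially the same route as the paper: both directions rest on the same facts, namely that local finiteness reduces to finiteness of the $\mathbf{F}(n)$, that distributivity of $\cdot$ over $\meet,\join$ together with lattice distributivity puts every element of $\mathbf{F}_\var{V}(n)$ into meet-of-joins-of-monoid-words normal form, and that $\var{V}$ and $\var{V}_\mathsf{m}$ validate the same monoid equations so that $\mathbf{F}_{\var{V}_\mathsf{m}}(n)$ sits inside $\mathbf{F}_\var{V}(n)$. The normal-form step you flag as the main obstacle is exactly what the paper dispatches as ``straightforward,'' and your sketch of it is adequate.
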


\begin{thm}[\cite{green_rees_1952},  {\cite[Theorem 2]{McLean1954}}]
The variety of idempotent monoids is locally finite.
\end{thm}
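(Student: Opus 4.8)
My plan is to reduce the statement to the classical fact that the free band (free idempotent semigroup) on finitely many generators is finite. As recorded at the start of the proof of \Cref{lemma:locally finite DLM}, a variety is locally finite precisely when each of its finitely generated free algebras is finite, so it suffices to bound the size of the free idempotent monoid $\mathbf{F}(n)$ on an $n$-element set $X = \set{x_1,\dots,x_n}$. Every element of $\mathbf{F}(n)$ other than the identity $e$ is represented by a nonempty product of generators, and the nonempty products modulo idempotency are exactly the elements of the free band on $X$; moreover $e$ is distinct from all of these, as witnessed by the homomorphism sending every generator to the absorbing element of the two-element semilattice. Hence the whole problem becomes: show the free band on $X$ is finite. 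I would work with nonempty words over $X$ and the congruence $\sim$ generated by $u^2 \approx u$, and bound the number of $\sim$-classes.

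For a nonempty word $w$ let $c(w) \subseteq X$ be its \emph{content}, the set of occurring letters. Reading $w$ from the left, let $a(w)$ be the last letter to make its first appearance and let $\overrightarrow{w}$ be the prefix of $w$ preceding that appearance, so that $c(\overrightarrow{w}) = c(w) \setminus \set{a(w)}$; dually, reading from the right, let $b(w)$ be the letter whose final occurrence is leftmost and let $\overleftarrow{w}$ be the suffix following that occurrence, so that $c(\overleftarrow{w}) = c(w) \setminus \set{b(w)}$. The technical heart of the argument is the Green--Rees normal-form theorem: if $\abs{c(u)} = \abs{c(v)} = 1$ then $u \sim v$ iff $c(u) = c(v)$, while if both contents have at least two letters then
\[ u \sim v \iff c(u) = c(v),\ a(u) = a(v),\ b(u) = b(v),\ \overrightarrow{u} \sim \overrightarrow{v},\ \text{and}\ \overleftarrow{u} \sim \overleftarrow{v}. \]
Granting this, finiteness is immediate by induction on the content size: writing $G(k)$ for the number of $\sim$-classes with a fixed $k$-element content, a class with $\abs{c} = k \ge 2$ is determined by the data $(a(w), \overrightarrow{w}, b(w), \overleftarrow{w})$, so that $G(k) \le k^2\, G(k-1)^2$; since $G(1) = 1$, every $G(k)$ is finite and the free band has at most $\sum_{k=1}^{n} \binom{n}{k} G(k)$ elements.

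The main obstacle is the normal-form theorem itself, and in particular its sufficiency (``if'') direction. Necessity is comparatively routine: the quantities $c(\cdot)$, $a(\cdot)$, $b(\cdot)$ and the $\sim$-classes of $\overrightarrow{(\cdot)}$ and $\overleftarrow{(\cdot)}$ are readily checked to be unchanged by a single rewrite $p\,u^2\,q \to p\,u\,q$, hence are $\sim$-invariants. For sufficiency one must show that these invariants pin down a word up to $\sim$, and this is exactly where idempotency has to be used substantively. The key lemma I would isolate is an absorption identity such as $w\,p\,w \sim w$ whenever $c(p) \subseteq c(w)$, which allows the portion of a word lying strictly between its head $\overrightarrow{w}\,a(w)$ and its tail $b(w)\,\overleftarrow{w}$ to be normalised, so that two words sharing all the listed data can be rewritten to a common form. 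I expect the delicate point to be that this absorption lemma and the normal-form theorem must be proved together by a single induction on content size rather than separately; once that interleaving is arranged, the remaining steps are bookkeeping.
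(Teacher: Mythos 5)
Your proposal is correct and is, in substance, the very argument the paper is citing: the theorem is stated with references to Green--Rees and McLean and given no proof in the paper itself, and your reduction from idempotent monoids to the free band (with $e$ separated from the band elements via the two-element semilattice) followed by the content/normal-form induction and the bound $G(k)\le k^2\,G(k-1)^2$ is precisely the classical Green--Rees proof behind those citations. The one step you leave as a plan---the sufficiency direction of the normal-form theorem via the absorption identity $wpw \sim w$ for $c(p)\subseteq c(w)$---is the right key lemma and is standard: besides your proposed interleaved induction on content size, it also follows directly from McLean's theorem that every band is a semilattice of rectangular bands, since then $w$ and $w(wpw)w = wpw$ lie in the same rectangular $\mathscr{D}$-class where $xyx = x$ holds, so nothing in your outline would fail.
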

Hence, by \Cref{lemma:locally finite DLM}, we get some immediate corollaries.
\begin{cor}
Every variety of idempotent distributive $\ell$-monoids is locally finite.
\end{cor}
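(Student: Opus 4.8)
The plan is to reduce the statement to the already-cited local finiteness of the variety of idempotent monoids by way of \Cref{lemma:locally finite DLM}. Let $\var{V}$ be an arbitrary subvariety of $\Id$. By \Cref{lemma:locally finite DLM}, a subvariety of $\DLM$ is locally finite precisely when $\var{V}_\mathsf{m}$, the variety generated by the monoid reducts of its members, is locally finite; so it suffices to establish the local finiteness of $\var{V}_\mathsf{m}$.

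The key observation is that $\var{V}_\mathsf{m}$ is itself a variety of idempotent monoids. Indeed, every member of $\var{V}$ satisfies $x^2 \approx x$, hence each of its monoid reducts does as well. Since $x^2 \approx x$ is an equation in the monoid language alone, and equational satisfaction is preserved under $H$, $S$, and $P$, the generated variety $\var{V}_\mathsf{m}$ also satisfies $x^2 \approx x$. Therefore $\var{V}_\mathsf{m}$ is a subvariety of the variety of all idempotent monoids.

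The remaining step is to combine the cited theorem that the variety of idempotent monoids is locally finite (Green--Rees, McLean) with the standard fact that every subvariety of a locally finite variety is again locally finite: a finitely generated algebra in the subvariety is also a finitely generated algebra in the ambient variety, and hence finite. Applying this to $\var{V}_\mathsf{m}$ shows that $\var{V}_\mathsf{m}$ is locally finite, and \Cref{lemma:locally finite DLM} then yields that $\var{V}$ is locally finite, as required.

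There is essentially no difficult step here. The only point that calls for a moment of care is the transition from $\var{V}$ to $\var{V}_\mathsf{m}$, namely confirming that the generated variety of reducts stays within the idempotent monoids; but as noted this is immediate because idempotency is captured by a single equation in the monoid signature and therefore transfers from the generating monoids to the entire variety they generate.
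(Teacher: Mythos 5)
Your proof is correct and follows exactly the route the paper intends: the paper derives this corollary directly from \Cref{lemma:locally finite DLM} together with the Green--Rees/McLean theorem, and your careful verification that $\var{V}_\mathsf{m}$ is a subvariety of the variety of idempotent monoids (hence locally finite) is precisely the step the paper leaves implicit by calling the corollary ``immediate.''
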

Since every variety of  idempotent distributive $\ell$-monoids is locally finite, recursive axiomatizability yields  the decidability of their  quasi-equational theories.
\begin{cor}
Every recursively axiomatized subvariety of $\Id$ has a decidable quasi-equational theory. In particular the variety $\Sem\Id$ has a decidable quasi-equational theory.
\end{cor}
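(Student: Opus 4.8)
The plan is to show that the quasi-equational theory of such a variety $\var{V}$ is simultaneously recursively enumerable and co-recursively enumerable, hence recursive. For the recursive enumerability I would appeal to the completeness of quasi-equational (universal Horn) logic: since $\var{V}$ is recursively axiomatized, the quasi-equations valid in $\var{V}$ are exactly those derivable from the axioms, and derivability from a recursively enumerable set of premises is itself recursively enumerable. This half uses only recursive axiomatizability and not local finiteness.

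The substantive half is the recursive enumerability of the \emph{failing} quasi-equations, and here I would use that every subvariety of $\Id$ is locally finite (by the preceding corollary). The key point is a finite model property: a quasi-equation $\bigwedge_i s_i \approx t_i \to s \approx t$ with variables among $x_1,\dots,x_n$ fails in $\var{V}$ iff some $\mathbf{A} \in \var{V}$ carries an assignment of $x_1,\dots,x_n$ making all premises hold but not the conclusion. Replacing $\mathbf{A}$ by the subalgebra generated by the at most $n$ assigned elements yields a member of $\var{V}$ that still refutes the quasi-equation, and by local finiteness this subalgebra is finite. Thus a quasi-equation fails in $\var{V}$ iff it fails in some \emph{finite} member of $\var{V}$, and I would enumerate the failing quasi-equations by running over finite members of $\var{V}$ together with candidate assignments and testing the quasi-equation in each.

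The one point requiring care, and which I expect to be the main obstacle, is the effective recognition of finite members of $\var{V}$: the enumeration above works once membership of a finite algebra in $\var{V}$ is decidable. When $\var{V}$ is finitely axiomatized this is immediate, since one checks the finitely many axioms on the finite algebra; this applies verbatim to $\Sem\Id$, which by \Cref{l:semilinear} is axiomatized relative to $\Id$ by the single semilinearity equation and hence is finitely based, yielding the final clause. More conceptually, and giving an outright decision procedure rather than just co-enumerability, I would exploit that the ambient variety $\Id$ is finitely axiomatized and locally finite, so that the finite free algebras $\mathbf{F}_{\Id}(n)$ are effectively constructible; one realizes $\mathbf{F}_{\var{V}}(n)$ as the quotient of $\mathbf{F}_{\Id}(n)$ obtained by collapsing the instances of the extra axioms, and then reduces validity in $\var{V}$ to the finite computation checking whether $(s,t)$ lies in the congruence of $\mathbf{F}_{\var{V}}(n)$ generated by the pairs $(s_i,t_i)$. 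Combining the enumerable and co-enumerable halves gives decidability, with $\Sem\Id$ as the clean finitely based instance.
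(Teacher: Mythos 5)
Your main line reproduces exactly the argument the paper implicitly relies on: the corollary is stated there without proof, as an instance of the folklore fact that local finiteness plus recursive axiomatizability yields decidability, namely that validity of quasi-equations is recursively enumerable by completeness of (quasi-)equational logic over a recursive axiom set, while failure is recursively enumerable via the finite model property obtained by passing to the finitely generated, hence finite, subalgebra on the witnessing elements. Your treatment of the finitely axiomatized case is complete and correct, and it does cover the ``in particular'' clause: $\Sem\Id$ is finitely based outright (finitely many $\DLM$ axioms, $x^2 \approx x$, and the semilinearity equation of \Cref{l:semilinear}), so membership of a finite algebra is decidable and your enumeration of refutations goes through.

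The gap is in your final paragraph, the ``outright decision procedure'' for an arbitrary \emph{recursively} axiomatized $\var{V} \subseteq \Id$. Realizing $\mathbf{F}_{\var{V}}(n)$ as the quotient of $\mathbf{F}_{\Id}(n)$ by ``collapsing the instances of the extra axioms'' is not an effective construction when the axiom set is infinite: the congruence in question is the union of an effectively enumerable increasing chain of congruences of the finite algebra $\mathbf{F}_{\Id}(n)$, and while this chain stabilizes (the congruence lattice is finite), recursive axiomatizability gives no effective bound on the stage at which it stabilizes. Certifying that a pair $(u,v)$ is \emph{not} collapsed is exactly certifying $\var{V} \not\models u \approx v$, the complement of the r.e.\ set you are trying to co-enumerate, so the construction is circular: computing $\mathbf{F}_{\var{V}}(n)$ is equivalent to deciding the equational theory. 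Your fallback enumeration suffers the same defect in general: for an infinite recursive axiom set, membership of a finite algebra in $\var{V}$ is only co-r.e.\ (one can refute membership by finding a failed axiom, but never confirm it), so ``some finite member of $\var{V}$ refutes $q$'' is prima facie a $\Sigma_2$-type condition rather than r.e. As it stands, then, your proof establishes the corollary for finitely axiomatized subvarieties of $\Id$, including $\Sem\Id$, but not the full ``recursively axiomatized'' statement; closing that gap would require either decidability of membership of finite algebras in $\var{V}$ or an effective stabilization bound, neither of which follows from recursive axiomatizability alone. For what it is worth, the paper's own one-sentence justification (``local finiteness plus recursive axiomatizability'') glosses over precisely this same point, so your proposal is faithful to the paper's approach while being more explicit about where the real work lies.
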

\begin{cor}
Every subvariety of $\Sem\Id$ is generated by its finite subdirectly irreducible (totally ordered) members as a quasivariety.
\end{cor}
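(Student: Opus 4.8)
The plan is to compare quasi-equational theories. Fix a subvariety $\var{V}\subseteq\Sem\Id$ and let $\var{K}$ denote the class of its finite subdirectly irreducible members; by \Cref{cor:subdirect irred in Sem} every member of $\var{K}$ is totally ordered, so it suffices to prove $\var{V}=Q(\var{K})$. Since $\var{K}\subseteq\var{V}$ and every variety is in particular a quasivariety, the inclusion $Q(\var{K})\subseteq\var{V}$ is immediate. For the converse it is enough, as quasivarieties are exactly the classes axiomatized by quasi-equations, to show that every quasi-equation valid in $\var{K}$ is valid throughout $\var{V}$; equivalently, by contraposition, that any quasi-equation failing in some member of $\var{V}$ already fails in some member of $\var{K}$.

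Suppose then that a quasi-equation $\bigmeet_{j}(s_j\approx t_j)\Rightarrow s\approx t$ fails in some $\mathbf{A}\in\var{V}$, witnessed by an assignment of its finitely many variables to elements $a_1,\dots,a_n\in A$. First I would restrict to the finitely generated subalgebra $\mathbf{B}=\Sg(\set{a_1,\dots,a_n})$, which still refutes the quasi-equation and lies in $\var{V}$. Because $\var{V}$, being a subvariety of $\Id$, is locally finite (as established above), $\mathbf{B}$ is finite.

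I would then apply the subdirect representation theorem to embed $\mathbf{B}$ as a subdirect product $\mathbf{B}\leq\prod_{i}\mathbf{B}_i$ of subdirectly irreducible algebras. Each $\mathbf{B}_i$ is a homomorphic image of the finite algebra $\mathbf{B}$, hence finite and, since $\var{V}$ is closed under homomorphic images, a member of $\var{V}$; thus $\mathbf{B}_i\in\var{K}$. Composing the witnessing assignment with a projection $\pi_i\colon\mathbf{B}\to\mathbf{B}_i$ preserves every premise $s_j\approx t_j$, because equalities survive homomorphisms; and since $s\neq t$ holds in the subdirect product $\mathbf{B}$, these two elements must differ in some coordinate $i_0$, so that $\pi_{i_0}$ refutes the conclusion $s\approx t$. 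Hence the quasi-equation fails in $\mathbf{B}_{i_0}\in\var{K}$, which is what we wanted.

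The one indispensable ingredient is local finiteness: it is precisely what forces the finitely generated refuting subalgebra $\mathbf{B}$ to be finite, and thereby its subdirectly irreducible quotients to be finite members of $\var{V}$. The remaining step, transferring a failing quasi-equation to a subdirect factor, is routine and uses only that equational premises are preserved under homomorphisms while a disequality in a subdirect product is witnessed in a single coordinate.
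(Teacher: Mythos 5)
Your proof is correct and is exactly the argument the paper intends: the corollary is stated there without proof as an immediate consequence of local finiteness, and your write-up supplies the standard chain the author relies on (restrict a failing quasi-equation to the finitely generated, hence finite, subalgebra; take a Birkhoff subdirect decomposition into finite subdirectly irreducible quotients, which lie in $\var{V}$ by closure under $H$ and are totally ordered by \Cref{cor:subdirect irred in Sem}; transfer the failure to a coordinate). No gaps; you correctly identify local finiteness as the one non-routine ingredient.
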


Thus to describe subvarieties of $\Sem\Id$, it is enough to consider the finite subdirectly irreducible members of a given subvariety.

\section{Nested sums}\label{sec3}

For idempotent distributive $\ell$-monoids the product is much more restricted than in the general non-idempotent case.

\begin{lemma}[{\normalfont cf.} \cite{Merlier1981,Stanovsky2007}]\label{lemma:Idemp}
For every idempotent distributive $\ell$-monoid  $\bf M$  and $a,b \in M$ we have:
\begin{enumerate}[label = \textup{(\roman*)}]
\item $a\meet b \leq ab \leq a\join b$.
\item If $e\leq ab$, then $ab = a\join b$.
\item If $ab\leq e$, then $ab = a \meet b$.
\item If $\bf M$ is totally ordered, then $ab \in \set{a,b}$.
\end{enumerate}
\end{lemma}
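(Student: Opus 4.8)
The plan is to derive all four parts from idempotence together with distributivity of the product over binary meets and joins, and from the order-compatibility of multiplication. The latter is itself a consequence of the axioms: if $a\leq b$, i.e.\ $a\meet b = a$, then $cad = c(a\meet b)d = cad \meet cbd$, so $cad \leq cbd$; I will use this freely.

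For part (i), I would compute $(a\meet b)(a\meet b)$ by distributing the product over both meets. Writing $m = a\meet b$, one gets $m\cdot m = ma \meet mb = (a^2 \meet ba) \meet (ab \meet b^2)$, which by idempotence of $a$ and $b$ equals $a \meet ba \meet ab \meet b$. On the other hand idempotence of $m$ gives $m\cdot m = a\meet b$. Comparing, $a\meet b = (a\meet b) \meet ab \meet ba$, whence $a\meet b \leq ab$. The symmetric computation with $a\join b$, distributing over joins and using idempotence of the join, yields $a\join b = (a\join b)\join ab\join ba$, so $ab \leq a\join b$. This settles (i).

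For parts (ii) and (iii) I would invoke order-compatibility. Assuming $e\leq ab$, multiplying on the left by $a$ and on the right by $b$ gives $a = ae \leq a(ab) = a^2b = ab$ and $b = eb \leq (ab)b = ab^2 = ab$; hence $a\join b \leq ab$, which together with (i) forces $ab = a\join b$. Part (iii) is exactly the order dual of (ii), so it can be read off from the self-duality of the class of idempotent distributive $\ell$-monoids, or proved by the mirror-image computation: $ab\leq e$ gives $ab = a^2b \leq a$ and $ab = ab^2 \leq b$, so $ab \leq a\meet b$, and (i) closes the gap.

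Finally, part (iv) follows by a case split. In a totally ordered $\mathbf{M}$ the element $ab$ is comparable with $e$: if $e\leq ab$ then (ii) gives $ab = a\join b$, and in a chain $a\join b$ is the larger of $a,b$; if $ab\leq e$ then (iii) gives $ab = a\meet b$, the smaller of the two. Either way $ab \in \set{a,b}$. I do not expect a serious obstacle here; the one point demanding care throughout (i)--(iii) is that $\mathbf{M}$ need not be commutative, so one must keep $ab$ and $ba$ distinct and respect the side on which each factor is multiplied when expanding the products.
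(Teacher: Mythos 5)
Your proof is correct in all four parts: expanding $(a\meet b)(a\meet b)$ and $(a\join b)(a\join b)$ via the two-sided distributivity laws and idempotence yields $a\meet b = a\meet b\meet ab\meet ba$ and $a\join b = a\join b\join ab\join ba$, which give (i); the order-compatibility of multiplication, which you correctly derive from the distributivity axioms rather than assume, gives (ii) and (iii) by multiplying $e\leq ab$ (resp.\ $ab\leq e$) on the appropriate sides; and comparability of $ab$ with $e$ in a chain reduces (iv) to (ii) and (iii). One point of comparison: the paper itself gives no proof of this lemma, stating it as known with the citation ``cf.\ Merlier, Stanovsk\'y,'' so there is no in-paper argument to diverge from; your computation is the standard one found in those sources. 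Your explicit attention to noncommutativity --- keeping $ab$ and $ba$ distinct in the expansions, so that (i) in fact also delivers $a\meet b\leq ba\leq a\join b$ as a byproduct --- is precisely the care the general (non-commutative) setting requires, and the side-sensitive multiplications in (ii) and (iii) ($a(ab)=ab$ on the left, $(ab)b=ab$ on the right) are handled correctly.
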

Note that it follows from \Cref{lemma:Idemp} that for every idempotent ordered monoid $\bf M$ and subset $S\subseteq M$, the set $S\cup \set{e}$ is the universe of the subalgebra $\Sg(S)$ of $\bf M$ and $ab \in \set{a,b}$ for all $a,b\in S \cup \set{e}$. In the sequel we will use this fact without explicitly mentioning it.

\begin{lemma}\label{l:top-bot-cases}
Let $\mathbf{M}$ be a non-trivial idempotent ordered monoid with top element $\top$ and bottom element $\bot$. Then exactly one of the following holds:
\begin{enumerate}[label = \textup{(\arabic*)}]
\item  For all $a\in M$ we have  $\bot \cdot a = a\cdot \bot = \bot$.
\item For all $a\in M$ we have $\top \cdot a = a\cdot \top = \top$.
\item  For all $a\in M\setminus \set{\top,\bot}$ we have  $\bot \cdot a = a\cdot \bot = \bot$, $\top \cdot a = a\cdot \top = \top$,   $\bot \cdot \top = \bot$, and  $\top \cdot \bot = \top$.
\item For all $a\in M\setminus \set{\top,\bot}$  we have $\bot \cdot a = a\cdot \bot = \bot$, $\top \cdot a = a\cdot \top = \top$,   $\bot \cdot \top = \top$, and  $\top \cdot \bot = \bot$.
\end{enumerate}
\end{lemma}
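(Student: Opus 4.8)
The plan is to reduce the entire statement to a determination of the two products $\bot\top$ and $\top\bot$. By \Cref{lemma:Idemp}(iv) both of these lie in $\set{\bot,\top}$, so there are exactly four possibilities for the pair $(\bot\top,\top\bot)$, namely $(\bot,\bot)$, $(\bot,\top)$, $(\top,\bot)$, and $(\top,\top)$, and I will show that these force the cases (1), (3), (4), and (2) respectively. Since $\mathbf{M}$ is non-trivial we have $\bot\neq\top$, and each of the four cases pins down $(\bot\top,\top\bot)$ to one of these four distinct values (case (1) to $(\bot,\bot)$, case (2) to $(\top,\top)$, case (3) to $(\bot,\top)$, case (4) to $(\top,\bot)$); hence at most one case can hold. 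It therefore suffices to prove that each of the four values of $(\bot\top,\top\bot)$ forces the corresponding case, which simultaneously yields that at least one holds.

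The basic tool will be a handful of monotonicity bounds coming from idempotency and order-preservation. For every $a\in M$ we have $\bot=\bot\bot\leq\bot a\leq\bot\top$ and $\bot=\bot\bot\leq a\bot\leq\top\bot$, and dually $\top\bot\leq\top a\leq\top\top=\top$ and $\bot\top\leq a\top\leq\top$; moreover \Cref{lemma:Idemp}(i) together with (iv) give $\bot a,a\bot\in\set{\bot,a}$ and $\top a,a\top\in\set{a,\top}$. The two ``absorbing'' values are then immediate: if $\bot\top=\top\bot=\bot$ the first two bounds force $\bot a=a\bot=\bot$ for all $a$, which is case (1); and if $\bot\top=\top\bot=\top$ the last two bounds force $\top a=a\top=\top$ for all $a$, which is case (2).

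The heart of the argument is the mixed value $\bot\top=\bot$, $\top\bot=\top$, which I claim gives case (3). Here the bounds already force $\bot a=\bot$ and $\top a=\top$ for every $a$, so only $a\bot$ and $a\top$ for a strictly intermediate element $\bot<a<\top$ remain to be computed. I would first note that this value forces $\bot<e<\top$, since $e\in\set{\bot,\top}$ would make $\bot\top$ and $\top\bot$ equal, contradicting $\bot\neq\top$. I would then split on the position of $a$ relative to $e$. If $a>e$, then $a\top\geq a>e$, so by \Cref{lemma:Idemp}(ii) we get $a\top=a\join\top=\top$; assuming $a\bot=a$ would then give, by associativity, $a=a\bot=a(\bot\top)=(a\bot)\top=a\top=\top$, a contradiction, so $a\bot=\bot$. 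If $a<e$, then $a\bot\leq a<e$, so by \Cref{lemma:Idemp}(iii) we get $a\bot=a\meet\bot=\bot$; assuming $a\top=a$ would give $a=a\top=a(\top\bot)=(a\top)\bot=a\bot=\bot$, again a contradiction, so $a\top=\top$. The case $a=e$ is trivial, and this establishes case (3).

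Finally, the remaining value $\bot\top=\top$, $\top\bot=\bot$ would be handled by the left--right symmetric version of the previous paragraph, that is, by passing to the opposite monoid (which is again a totally ordered idempotent monoid with the same top and bottom, with left and right products interchanged), yielding case (4). Combined with the exclusivity noted at the start, this shows that exactly one of the four cases holds. I expect the main obstacle to be precisely the mixed value treated in the third paragraph: the monotonicity bounds by themselves do not determine $a\bot$ and $a\top$ on the intermediate elements, and one genuinely needs the interaction of associativity with parts (ii) and (iii) of \Cref{lemma:Idemp}, channelled through the position of $a$ relative to $e$, to exclude the unwanted value.
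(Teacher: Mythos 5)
Your proof is correct and takes essentially the same route as the paper: the two absorbing cases fall out of the monotonicity bounds, the mixed case $\bot\top=\bot$, $\top\bot=\top$ is settled by splitting on the position of $a$ relative to $e$ together with \Cref{lemma:Idemp} and the same associativity trick (e.g., $a\bot=a$ would give $a=a\bot\top=a\top=\top$), and the last case follows by left--right symmetry, with exclusivity from $\bot\neq\top$. The only differences are cosmetic: you invoke the opposite monoid explicitly where the paper says the case is ``symmetrical,'' and you prove the $\top$-statements directly from the bound $\top a\geq\top\bot$ where the paper appeals to order duality.
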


\begin{proof}
First note that if $\bot \cdot \top = \top \cdot \bot = \bot$, then  we get $\bot \cdot a = a\cdot \bot = \bot$ for all $a\in M$, since $\bot \cdot a \leq \bot \cdot \top$ and $a\cdot \bot \leq \top \cdot \bot$. Similarly if $\top \cdot \bot = \bot \cdot \top = \top$, then  we get $\top \cdot a = a\cdot \top = \top$ for all $a\in M$. Suppose that $\bot \cdot \top = \bot$ and $\top \cdot \bot = \top$ and let $a\in M\setminus\set{\bot,\top}$. We show $\bot \cdot a = a \cdot \bot = \bot$.
If $\bot <a\leq e$, then $\bot \cdot a = a\cdot \bot = \bot$, by \Cref{lemma:Idemp}. So, since $\mathbf{M}$ is totally ordered it suffices to consider the case $e \leq a < \top$. Then, by \Cref{lemma:Idemp}, $a\cdot \top = \top \cdot a = \top$. Hence, $\bot \cdot a = \bot \cdot \top \cdot a = \bot \cdot \top = \bot$. On the other hand if $a\cdot \bot = a$, then $a\cdot \top = a \cdot \bot \cdot \top = a\cdot \bot = a$, contradicting the fact that $a\cdot \top = \top$. Thus, also $a\cdot \bot = \bot$. That $\top \cdot a = a\cdot \top = \top$ for all $a\in M \setminus\set{\bot,\top}$ follows by duality. Moreover, the case where  $\bot \cdot \top = \top$ and $\top \cdot \bot = \bot$ is symmetrical. Finally, exactly one of the four cases holds, since in every non-trivial idempotent ordered monoid we have $\bot\neq \top$.
\end{proof}

\begin{exmp}\label{ex:simple chains}
Consider the algebras $\mathbf{C}_2$, $\mathbf{C}_2^\partial$, $\mathbf{G}_3$, and $\mathbf{D}_3$ which are defined as follows:
\begin{center}
\includegraphics{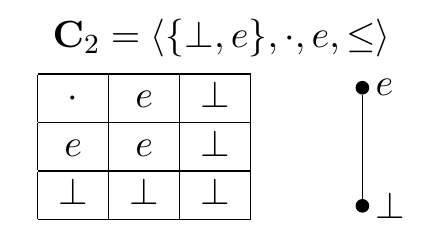}
\includegraphics{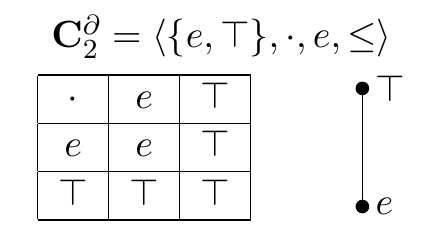}
\includegraphics{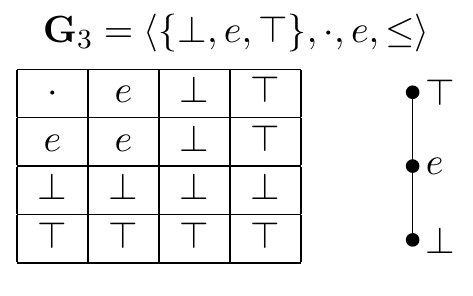}
\includegraphics{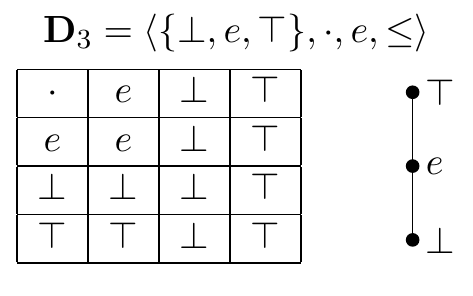}
\end{center}

The algebras $\mathbf{C}_2$, $\mathbf{C}_2^\partial$, $\mathbf{G}_3$, and $\mathbf{D}_3$ are idempotent ordered monoids. The notation of $\mathbf{C}_2^\partial$ makes sense, since it is isomorphic to the order dual of $\mathbf{C}_2$. The notation for $\mathbf{C}_2$ and $\mathbf{C}_2^\partial$ also indicates that the algebras are commutative and have two elements. The notation for  $\mathbf{G}_3$ indicates that it has three elements and that the non-identity elements are left-absorbing (French: `à gauche') and similarly $\mathbf{D}_3$ has three elements and the non-identity elements are right-absorbing (French: `à droite').
Also, the algebras $\mathbf{G}_3$ and $\mathbf{D}_3$ are connected. Indeed, we have $\mathbf{G}_3 \cong \alg{D_3,\ast, e, \leq}$, where $a\ast b := b\cdot a$.  Moreover, $\mathbf{C}_2$ and $\mathbf{C}_2^\partial$ are subalgebras of $\mathbf{G}_3$ and $\mathbf{D}_3$ and it is easy to see that  $\mathbf{C}_2$, $\mathbf{C}_2^\partial$, $\mathbf{G}_3$, and $\mathbf{D}_3$ are simple. Indeed, already their monoid reducts are simple. We also note that each of these four algebras corresponds to a different case of \Cref{l:top-bot-cases}. They are the minimal examples of each case. 
\end{exmp}
As discussed in \Cref{sec1}, nested sums were introduced by Galatos in \cite{Galatos2004} and since used to provide a plethora of structure theorems for classes of residuated lattices (see \cite{Agliano2003,Horcik2007,Olson2008,Olson2012}). They have been deployed especially heavily in the context of BL-algebras, where they are usually called ordinal sums \cite{Agliano2003}. Following the discussion in Fussner and Galatos \cite{Fussner2022a}, we adopt the terminology nested sum instead of ordinal sum.

Let $\bf M$ and $\bf N$ be idempotent ordered monoids, where we relabel the elements such that $M\cap N = \set{e}$.
We define the \emph{nested sum} of $\bf M $ and $\bf N$ by  $\mathbf{M} \boxplus \mathbf{N} = \alg{M\cup N, \cdot, e \leq}$, where $\cdot$ is the extension of the monoid operations on $\bf M$ and $\bf N$ with $a\cdot b= b\cdot a = a$ for all $a\in M\setminus\set{e}$ and $b\in N$, and $\leq$ is the least extension of  the orders of $\bf M$ and $\bf N$ that satisfies for all $a\in M\setminus\set{e}$ and $b\in N$ that $a\leq b$ if $a\leq_\mathbf{M} e$ and $b\leq a $ if $e \leq_\mathbf{M} a$. 

Intuitively this means that we replace the identity $e$ in $\bf M$ with $\bf N$ and extend the order and product in such a way that the elements of $\bf N$ behave with respect to elements of $\bf M$ like $e$.

\begin{exmp} 
Consider the algebras $\mathbf{C}_2$ and $\mathbf{G}_3$ of \Cref{ex:simple chains}, where we rename the element $\bot$ in $\mathbf{C}_2$ as $1$. The order of the nested sum $\mathbf{G}_3 \boxplus \mathbf{C}_2$ is drawn in \Cref{fig:nested sum example} and it has the following multiplication table: 
\begin{center}
\begin{tabular}{|c|c|c|c|c|}
	    \hline
         $\cdot$ & $e$&$1$& $\bot$ & $\top$   \\
         \hline
         $e$  &  $e$& $1$& $\bot$ & $\top$   \\
         \hline
         $1$  & $1$ &$1$   & $\bot$ & $\top$ \\
         \hline
         $\bot$ &$\bot$ & $\bot$ & $\bot$ & $\bot$ \\
         \hline
         $\top$ &$\top$ & $\top$ & $\top$ & $\top$ \\
         \hline
    \end{tabular}
\end{center}
It is no coincidence that we consider the algebras of \Cref{ex:simple chains}. We will see later that every finite idempotent ordered monoid is isomorphic to a nested sum of these four algebras. 
\begin{figure}
\centering
\includegraphics{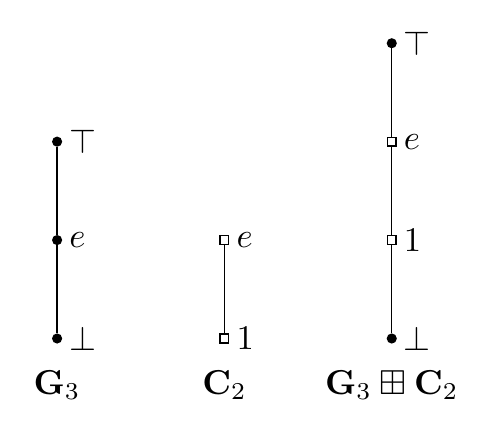}
\caption{The nested sum $\mathbf{G}_3 \boxplus \mathbf{C}_2$} 
\label{fig:nested sum example}
\end{figure}
\end{exmp}

\begin{lemma}[{\mdseries cf.} {\cite[Proposition 4.8]{Olson2008}}]\label{l:e-sum}
Let $\bf M$ and $\bf N$ be idempotent ordered monoids. Then  $\mathbf{M} \boxplus \mathbf{N}$ is an idempotent ordered monoid. Moreover,  $\bf M$ and~$\bf N$ embed  into $\mathbf{M} \boxplus \mathbf{N}$ via the inclusion maps.
\end{lemma}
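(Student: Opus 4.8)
The plan is to verify the four conditions defining an idempotent ordered monoid---that $\langle M\cup N,\cdot,e\rangle$ is a monoid, that it is idempotent, that $\leq$ is a total order, and that $\leq$ is compatible with $\cdot$---and then to check that the two inclusion maps are embeddings. Throughout I write $A=M\setminus\set{e}$ and $B=N\setminus\set{e}$, so that $M\cup N$ is the disjoint union $A\sqcup B\sqcup\set{e}$, and I record the multiplication rules: products of two elements of $M$ (resp.\ of two elements of $N$) are computed in $\mathbf{M}$ (resp.\ in $\mathbf{N}$), while $a\cdot b=b\cdot a=a$ whenever $a\in A$ and $b\in N$. A key point I use repeatedly is \Cref{lemma:Idemp}(iv): since $\mathbf{M}$ and $\mathbf{N}$ are totally ordered, the $\mathbf{M}$-product of two elements of $A$ lies in $\set{a,a'}\subseteq A$ and likewise the $\mathbf{N}$-product of two elements of $B$ lies in $B$; in particular such products never collapse to $e$.

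First I would treat the order. I claim the relation $\leq$ defined as the least extension is exactly the \emph{stacked} order in which $M^-\coloneqq\set{a\in M:a<_{\mathbf{M}}e}$ lies entirely below $N$, which lies entirely below $M^+\coloneqq\set{a\in M:e<_{\mathbf{M}}a}$, with the orders of $\mathbf{M}$ and $\mathbf{N}$ retained inside each block. This relation is manifestly a total order, it restricts to $\leq_{\mathbf{M}}$ on $M$ and to $\leq_{\mathbf{N}}$ on $N$, and it satisfies the stated cross-conditions; moreover every one of its comparabilities is forced from $\leq_{\mathbf{M}}$, $\leq_{\mathbf{N}}$ and the cross-conditions by transitivity, so it coincides with the least such extension. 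Hence $\leq$ is a total order extending both given orders. The monoid and idempotency conditions are then quick: $e$ is a two-sided identity by construction, and $x\cdot x=x$ holds because $x$ lies in $M$ or in $N$, where idempotency is assumed.

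The substantive computation is \emph{associativity}, $(xy)z=x(yz)$, which I would settle by cases on the types of $x,y,z$ in $\set{A,B,\set{e}}$. Any factor equal to $e$ drops out, reducing to a product of at most two elements, so I may assume $x,y,z\in A\cup B$. If all three lie in $A$ (resp.\ all in $B$) associativity follows from that of $\mathbf{M}$ (resp.\ $\mathbf{N}$), using the closure noted above to stay off $e$. In the remaining mixed cases the rule ``$A$ absorbs $N$'' makes both sides equal to the $\mathbf{M}$-product, in order, of the $A$-factors among $x,y,z$ (with the $B$-factors disappearing); I would verify the six patterns $ABA,BAB,AAB,ABB,BBA,BAA$ directly, each reducing to a one-line check. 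For \emph{compatibility} I would show that for every fixed $c$ the maps $x\mapsto cx$ and $x\mapsto xc$ are order-preserving, whence $a\leq b$ gives $ca\leq cb$ and then $cad\leq cbd$. For $c\in A$ left multiplication sends all of $N$ to $c$ and acts on $M$ by $\mathbf{M}$-left-multiplication; monotonicity within $M$ is inherited from $\mathbf{M}$, and the cross-comparisons reduce to $c\cdot_{\mathbf{M}}x\leq_{\mathbf{M}}c$ for $x\leq_{\mathbf{M}}e$ and $c\leq_{\mathbf{M}}c\cdot_{\mathbf{M}}x$ for $x\geq_{\mathbf{M}}e$, both instances of monotonicity in $\mathbf{M}$. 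For $c\in N$ left multiplication acts on $N$ by $\mathbf{N}$-left-multiplication and fixes every element of $A$, and the cross-comparisons follow directly from the stacked order since $M^-<N<M^+$. Right multiplication is handled symmetrically.

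Finally, for the embeddings, the inclusion $\iota_{\mathbf{M}}\colon\mathbf{M}\to\mathbf{M}\boxplus\mathbf{N}$ is clearly injective, it is a monoid homomorphism because the product of two elements of $M$ is by definition their $\mathbf{M}$-product, and it is an order-embedding because $\leq$ restricts to $\leq_{\mathbf{M}}$ on $M$; by the homomorphism criterion for ordered monoids recalled in \Cref{sec2}, $\iota_{\mathbf{M}}$ is therefore an embedding, and the same argument applies to $\iota_{\mathbf{N}}$. The main obstacle is purely bookkeeping: the associativity case analysis is the only place where something could genuinely go wrong, and the one genuinely necessary input there is \Cref{lemma:Idemp}(iv), which prevents the $\mathbf{M}$- or $\mathbf{N}$-product of two non-identity elements on the same side from becoming $e$ and thereby crossing into the other block.
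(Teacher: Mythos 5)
Your proof is correct. Note that the paper itself gives no proof of this lemma at all: it is stated with a ``cf.''\ citation to Proposition 4.8 of Olson's paper \cite{Olson2008}, which treats the analogous construction for (representable idempotent commutative) residuated lattices, so your argument supplies a self-contained verification that the paper delegates to the literature. Your case analysis is complete and sound: the identification of the least order extension with the stacked order $M^-<N<M^+$ is right (the three blocks partition $M\cup N$ since $e\in N$, every comparability in the stack is directly forced by the cross-conditions, and the union is already transitive); all six mixed associativity patterns check out; and the derivation of compatibility from monotonicity of the one-sided translations is the standard reduction. You also correctly isolate the one genuinely essential ingredient, namely \Cref{lemma:Idemp}(iv): in the pattern $AAB$, for instance, if the $\mathbf{M}$-product of the two $A$-factors could collapse to $e$, then $(xy)z=e\cdot z=z$ while $x(yz)=x\cdot_{\mathbf{M}}y=e$, so associativity would actually fail --- this shows the construction depends on total orderedness of the summands and not merely on idempotency, a point worth keeping in mind since it is invisible in the lemma's statement.
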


\begin{lemma}[{\mdseries cf.} {\cite[Proposition 3.3]{Olson2012}}]
Let $\mathbf{M}_1$, $\mathbf{M}_2$, and  $\mathbf{M}_3$ be idempotent ordered monoids. Then
\begin{equation*}
\mathbf{M}_1 \boxplus (\mathbf{M}_2 \boxplus \mathbf{M}_3) \cong (\mathbf{M}_1 \boxplus \mathbf{M}_2) \boxplus \mathbf{M}_3.
\end{equation*}
\end{lemma}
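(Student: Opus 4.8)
The plan is to show that the identity map on the common underlying set is the desired isomorphism. First I would fix a single relabelling of $M_1,M_2,M_3$ so that they pairwise intersect exactly in $\set{e}$; this is compatible with the definition of the nested sum, since forming $\mathbf{M}_2 \boxplus \mathbf{M}_3$ introduces no new common elements with $M_1$, and symmetrically for $\mathbf{M}_1 \boxplus \mathbf{M}_2$. With this convention both $\mathbf{M}_1 \boxplus (\mathbf{M}_2 \boxplus \mathbf{M}_3)$ and $(\mathbf{M}_1 \boxplus \mathbf{M}_2)\boxplus \mathbf{M}_3$ have underlying set $M_1 \cup M_2 \cup M_3$, and by \Cref{l:e-sum} both are idempotent ordered monoids. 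It therefore suffices to check that their monoid operations coincide and that their total orders coincide, for then the identity map is simultaneously a monoid isomorphism and an order isomorphism, hence an isomorphism of ordered monoids.

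For the operations, the key observation is the absorption behaviour built into the nested sum: in $\mathbf{M} \boxplus \mathbf{N}$ a non-identity element of the outer monoid $\mathbf{M}$ annihilates every element of the inner monoid $\mathbf{N}$ on both sides. Reading off the definition at both levels, I would verify that in \emph{both} iterated sums the product of $a \in M_i \setminus \set{e}$ and $b \in M_j \setminus \set{e}$ equals $a \cdot_{\mathbf{M}_i} b$ when $i = j$, and equals whichever of $a,b$ lies in the summand of smaller index when $i \neq j$ (with $e$ acting as the identity throughout). This is a finite case analysis over the nine combinations of $i,j \in \set{1,2,3}$; in each case one unfolds which monoid is inner and which is outer at each of the two levels and checks that the two values agree. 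The one point meriting attention is that $\mathbf{M}_1$ is the outermost and $\mathbf{M}_3$ the innermost summand in \emph{both} bracketings, which is exactly why the mixed products always return the factor of smaller index on both sides.

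For the order, I would use the block description of the nested-sum order. Writing $M_i^{<} = \set{a \in M_i : a <_{\mathbf{M}_i} e}$ and $M_i^{>} = \set{a \in M_i : a >_{\mathbf{M}_i} e}$, the definition shows that $\mathbf{M} \boxplus \mathbf{N}$ arranges its elements so that $M^{<}$ lies below all of $N$, which in turn lies below $M^{>}$, with the internal orders of $\mathbf{M}$ and $\mathbf{N}$ preserved. Applying this twice to each bracketing, I would show that both $\mathbf{M}_1 \boxplus (\mathbf{M}_2 \boxplus \mathbf{M}_3)$ and $(\mathbf{M}_1 \boxplus \mathbf{M}_2)\boxplus \mathbf{M}_3$ realize the same chain $M_1^{<} < M_2^{<} < M_3^{<} < e < M_3^{>} < M_2^{>} < M_1^{>}$, again with each summand's internal order intact. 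Hence the two orders coincide.

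I expect no genuine obstacle here: the statement is essentially bookkeeping, and its substance is entirely contained in the two structural observations above, namely that the outer summand annihilates the inner one for the product, and that the order is the linear sum of the (strict) negative and positive cones for the order. The only thing requiring care is the relabelling convention, ensuring that all three sums are genuinely defined on one and the same set so that the identity map is literally available as the candidate isomorphism; once that is fixed, the verification is routine.
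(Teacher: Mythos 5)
Your proof is correct. Note that the paper offers no proof of this lemma at all --- it is stated with only a pointer to the analogous Proposition 3.3 of Olson's paper on residuated lattices --- and your argument (fix one relabelling with $M_i \cap M_j = \set{e}$ pairwise so that both bracketings are defined on the common set $M_1 \cup M_2 \cup M_3$, then verify that they induce the same product table and the same block order $M_1^{<} < M_2^{<} < M_3^{<} < e < M_3^{>} < M_2^{>} < M_1^{>}$, so the identity map is an isomorphism) is precisely the routine verification the paper implicitly relies on, including the only point needing care, namely that the least order extension in the definition of $\boxplus$ is exactly this block order.
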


Since the nested sum is associative up to isomorphism, it makes sense to use the notation  $\mathbf{M}_1 \boxplus \cdots \boxplus \mathbf{M}_n$ for nested sums of idempotent ordered monoids $\mathbf{M}_1,\dots, \mathbf{M}_n$. To shorten the notation we will denote this nested sum also by $\bigboxplus_{i=1}^n \mathbf{M}_i$  and we set   $\bigboxplus_{i=1}^0 \mathbf{M}_i = \mathbf{0}$, where $\mathbf{0}$ is a fixed trivial algebra. So we assume that for all nested sums there is a fixed relabeling of the elements.  We note that $\mathbf{M} \boxplus \mathbf{0} \cong \mathbf{0} \boxplus \mathbf{M} \cong \mathbf{M}$ for all $\bf M$, i.e., $\mathbf{0}$ can be seen as the neutral element of the nested sum operation.

\begin{thm}\label{l: idemp ord monoid e-sum}
Each finite idempotent ordered monoid $\bf M$  is isomorphic to a unique nested sum $\bigboxplus^{n}_{i=1}\mathbf{M}_i$ of algebras $\mathbf{M}_i \in \set{\mathbf{C}_2, \mathbf{C}_2^\partial, \mathbf{G}_3,\mathbf{D}_3}$. 
\begin{proof}
We prove the claim by induction on the cardinality of $\bf M$. If $\abs{M} = 1$, then $ \mathbf{M} \cong \bigboxplus^{0}_{i=1}\mathbf{M}_i$. Suppose that $\abs{M} =  m>1$ and  the claim is already proved for all $k<m$. By \Cref{l:top-bot-cases}, there are four cases:
\begin{itemize}[leftmargin =*]
\item If $\bf M$ has a bottom element $\bot$ such that $\bot \cdot a = a\cdot \bot = \bot $ for all $a\in M$, then we have $ \mathbf{M} \cong \mathbf{C}_2 \boxplus \mathbf{L}$, where $\bf L$ is the subalgebra of $\bf M$ with the subuniverse $L = M\setminus \set{\bot}$.
\item If $\bf M$ has a top element $\top$ such that $\top \cdot a =  a\cdot \top =\top $ for all $a\in M$, then we have $ \mathbf{M} \cong \mathbf{C}_2^\partial \boxplus \mathbf{L}$, where $\bf L$ is the subalgebra of $\bf M$ with the subuniverse $L = M\setminus \set{\top}$.
\item If $\bf M$ has bottom and top  elements $\bot$ and $\top$ such that $\bot \cdot \top = \bot$ and $\top \cdot \bot = \top$, then we have $ \mathbf{M} \cong \mathbf{G}_3 \boxplus \mathbf{L}$, where $\bf L$ is the subalgebra of $\bf M$ with the subuniverse $L = M\setminus \set{\bot,\top}$.
\item If $\bf M$ has bottom and top  elements $\bot$ and $\top$ such that $\top \cdot \bot = \bot$ and $\bot \cdot \top = \top$, then we have $ \mathbf{M} \cong \mathbf{D}_3 \boxplus \mathbf{L}$, where $\bf L$ is the subalgebra of $\bf M$ with the subuniverse $L = M\setminus \set{\bot,\top}$. 
\end{itemize}
In each case we have $\abs{L} < m$, so the claim follows by the induction hypothesis. For uniqueness note that in every step the first term of the nested sum is unique, yielding the uniqueness for every $\mathbf{M}_i$.
\end{proof}
\end{thm}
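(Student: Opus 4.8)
The plan is to induct on $\abs{M}$, stripping off a single summand from the extremes of the chain at each step. The base case $\abs{M} = 1$ is immediate: the trivial algebra is the empty nested sum $\bigboxplus_{i=1}^0 \mathbf{M}_i = \mathbf{0}$. For the inductive step I assume $\abs{M} = m > 1$ and that the statement holds for all idempotent ordered monoids of smaller cardinality.

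The crucial input is \Cref{l:top-bot-cases}: the joint multiplicative behaviour of the top element $\top$ and the bottom element $\bot$ of $\mathbf{M}$ falls into exactly one of four mutually exclusive cases, and these four cases are precisely the behaviours of the extremal elements in $\mathbf{C}_2$, $\mathbf{C}_2^\partial$, $\mathbf{G}_3$, and $\mathbf{D}_3$. Accordingly, in each case I would delete from $M$ the extremal element(s) absorbed by the intended first summand — namely $\bot$ in case (1), $\top$ in case (2), and both $\bot$ and $\top$ in cases (3) and (4) — and let $\mathbf{L}$ be the resulting substructure. First I must check that $L$ is a subuniverse containing $e$: closure is immediate from \Cref{lemma:Idemp}(iv), since $ab \in \set{a,b}$, and $e \in L$ because in a non-trivial idempotent ordered monoid neither $\top$ nor $\bot$ can equal $e$ (an absorbing identity would force $\mathbf{M}$ to be trivial).

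It then remains to verify the isomorphism $\mathbf{M} \cong \mathbf{S} \boxplus \mathbf{L}$, where $\mathbf{S}$ is the corresponding simple algebra among $\mathbf{C}_2, \mathbf{C}_2^\partial, \mathbf{G}_3, \mathbf{D}_3$. Since the universe of $\mathbf{S} \boxplus \mathbf{L}$ is again $M$, the natural candidate is the identity map, and the required agreement of order and product is exactly what the relevant case of \Cref{l:top-bot-cases} records: the deleted extremal elements sit outside $\mathbf{L}$ in the order and act on the rest of $M$ by the prescribed absorption, which matches the definition of the nested sum. As $\abs{L} < m$, the induction hypothesis gives a nested sum decomposition of $\mathbf{L}$, and prepending $\mathbf{S}$ yields one for $\mathbf{M}$.

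For uniqueness, the four cases of \Cref{l:top-bot-cases} are exhaustive and mutually exclusive, so the leading summand $\mathbf{M}_1$ is determined by $\mathbf{M}$ alone and $\mathbf{L}$ is forced as the complementary substructure; iterating then determines every $\mathbf{M}_i$. I expect the only delicate point to be confirming that removing the extremal elements leaves the order and product on the interior undisturbed, so that $\mathbf{L}$ really is nested-sum-compatible; but this is guaranteed directly by the absorption identities recorded in \Cref{l:top-bot-cases}, so beyond this bookkeeping no serious obstacle arises.
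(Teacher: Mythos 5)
Your proposal is correct and follows essentially the same route as the paper: induction on $\abs{M}$, the four-case analysis of \Cref{l:top-bot-cases} to peel off the unique leading summand from $\set{\mathbf{C}_2, \mathbf{C}_2^\partial, \mathbf{G}_3,\mathbf{D}_3}$, and uniqueness from the exclusivity of the cases. The extra bookkeeping you supply (that $L$ is a subuniverse containing $e$, and that the identity map realizes $\mathbf{M} \cong \mathbf{S} \boxplus \mathbf{L}$) is exactly what the paper leaves implicit, and your verifications are sound.
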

From the uniqueness of the above  nested sum decomposition we obtain the following counting result:

\begin{cor}\label{thm:counting idemp chains}
The number $\mathbf{I}(n)$ of idempotent ordered monoids with $n\in \mathbb{N}\setminus\set{0}$ elements (up to isomorphism) is recursively defined by  $\mathbf{I}(1) = 1$,  $\mathbf{I}(2) = 2$, and
\begin{equation*}
\mathbf{I}(n) = 2\cdot\mathbf{I}(n-1) + 2\cdot \mathbf{I}(n-2) \quad (n>2).
\end{equation*}
Moreover,
\begin{equation*}
\mathbf{I}(n) = \frac{(1+\sqrt{3})^n - (1-\sqrt{3})^n}{2\sqrt{3}}.
\end{equation*}
\end{cor}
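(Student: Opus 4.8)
The plan is to establish the recurrence first and then solve it in closed form. By \Cref{l: idemp ord monoid e-sum}, every finite idempotent ordered monoid with $n$ elements is isomorphic to a unique nested sum $\bigboxplus_{i=1}^{k}\mathbf{M}_i$ with each $\mathbf{M}_i \in \set{\mathbf{C}_2,\mathbf{C}_2^\partial,\mathbf{G}_3,\mathbf{D}_3}$, so counting such monoids up to isomorphism amounts to counting such nested sum decompositions. The base cases are immediate: the one-element chain corresponds to the empty nested sum, giving $\mathbf{I}(1)=1$, while for $n=2$ the only possible first summand contributing two elements is a copy of $\mathbf{C}_2$ or $\mathbf{C}_2^\partial$ followed by the trivial algebra, giving $\mathbf{I}(2)=2$.

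For the recursive step with $n>2$, I would condition on the \emph{first} summand $\mathbf{M}_1$ of the unique decomposition, since the uniqueness in \Cref{l: idemp ord monoid e-sum} guarantees this is well-defined. There are exactly four choices. If $\mathbf{M}_1 \in \set{\mathbf{C}_2,\mathbf{C}_2^\partial}$, then $\mathbf{M}_1$ contributes one new element beyond $e$, so the remaining nested sum $\bigboxplus_{i=2}^{k}\mathbf{M}_i$ is a decomposition of an idempotent ordered monoid with $n-1$ elements; this gives $2\cdot \mathbf{I}(n-1)$ possibilities. If instead $\mathbf{M}_1 \in \set{\mathbf{G}_3,\mathbf{D}_3}$, then $\mathbf{M}_1$ contributes two new elements beyond $e$, so the tail decomposes an idempotent ordered monoid with $n-2$ elements, giving $2\cdot \mathbf{I}(n-2)$ possibilities. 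Since these four cases are mutually exclusive and exhaustive, summing yields
\begin{equation*}
\mathbf{I}(n) = 2\cdot\mathbf{I}(n-1) + 2\cdot \mathbf{I}(n-2) \qquad (n>2).
\end{equation*}

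To obtain the closed form I would solve this linear recurrence by the standard characteristic-equation method. The characteristic equation is $x^2 = 2x + 2$, with roots $x = 1 \pm \sqrt{3}$, so the general solution is $\mathbf{I}(n) = A(1+\sqrt{3})^n + B(1-\sqrt{3})^n$. Imposing the initial conditions $\mathbf{I}(1)=1$ and $\mathbf{I}(2)=2$ yields a linear system in $A$ and $B$; solving it gives $A = \tfrac{1}{2\sqrt{3}}$ and $B = -\tfrac{1}{2\sqrt{3}}$, whence
\begin{equation*}
\mathbf{I}(n) = \frac{(1+\sqrt{3})^n - (1-\sqrt{3})^n}{2\sqrt{3}}.
\end{equation*}
One should double-check that these constants also reproduce the correct value for the boundary between the recurrence and the base cases, but this is a routine verification.

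The conceptual content is entirely in the recurrence, and its only real subtlety is making sure that conditioning on the first summand genuinely produces a bijection between $n$-element decompositions and the disjoint union of the relevant smaller decompositions. This is where the \emph{uniqueness} clause of \Cref{l: idemp ord monoid e-sum} is essential: without it, different first summands could yield the same monoid and the cases would overcount. Since uniqueness is already established, the main obstacle is merely bookkeeping — correctly tracking that $\mathbf{C}_2$ and $\mathbf{C}_2^\partial$ each add one element while $\mathbf{G}_3$ and $\mathbf{D}_3$ each add two — rather than any genuine difficulty, and the closed-form derivation is a textbook computation.
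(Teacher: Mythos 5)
Your proof is correct and follows essentially the same route as the paper: both derive the recurrence by conditioning on the first summand of the unique nested sum decomposition from \Cref{l: idemp ord monoid e-sum} (with $\mathbf{C}_2,\mathbf{C}_2^\partial$ contributing one element and $\mathbf{G}_3,\mathbf{D}_3$ two), and both obtain the closed form by the standard characteristic-equation computation. Your write-up is in fact slightly more explicit than the paper's about why uniqueness of the decomposition prevents overcounting, but there is no substantive difference.
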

\begin{proof}
It is clear that the trivial algebra is the only $1$-element algebra and that $\mathbf{C}_2$ and $\mathbf{C}_2^\partial$ are the only $2$-element idempotent ordered monoids up to isomorphism. Now every idempotent ordered monoid with $n>2$ elements, by \Cref{l: idemp ord monoid e-sum}, is isomorphic to a nested sum $\mathbf{L} \boxplus \mathbf{M}$ where either $\abs{M} = n-1$ and $\mathbf{L} \in \set{\mathbf{C}_2, \mathbf{C}_2^\partial}$ or $\abs{M} = n-2$ and $\mathbf{L} \in \set{\mathbf{G}_3, \mathbf{D}_3}$. So, by the decomposition, it follows that 
$
\mathbf{I}(n) = 2\cdot\mathbf{I}(n-1) + 2\cdot \mathbf{I}(n-2)
$.
A standard calculation then yields the closed formula for $\mathbf{I}(n)$.
\end{proof}
\begin{remark}
The closed formula for $\mathbf{I}(n)$ is the same formula as in \cite{Gil-FerezJipsenMetcalfe2020} for the number of idempotent residuated chains with $n$ elements, but in \cite{Gil-FerezJipsenMetcalfe2020} the number of $n$-element idempotent residuated chains should be  $\mathbf{I}(n-1)$, since every finite idempotent residuated chain is of the form $\mathbf{C}_2 \boxplus \mathbf{M}$, where $\mathbf{M}$ is an idempotent ordered monoid, i.e., the number of idempotent residuated chains with~${n+1}$ elements is equal to the number of  idempotent ordered monoids with $n$ elements. So an alternative proof of \Cref{thm:counting idemp chains} can be obtained by using the result of \cite{Gil-FerezJipsenMetcalfe2020} about idempotent residuated chain.
\end{remark}

\begin{thm}\label{thm:e-sum homomorphisms}
Let $\mathbf{M}_1,\dots, \mathbf{M}_n$, $\mathbf{N}_1,\dots, \mathbf{N}_n$ be idempotent ordered monoids and for $i=1,\dots, n$ let $\phi_i \colon \mathbf{M}_i \to \mathbf{N}_i$ be a homomorphism with $\phi_i^{-1}(\set{e}) = \set{e}$ for $i=1,\dots,n-1$. Then the map $\phi\colon \bigboxplus_{i=1}^n \mathbf{M}_i \to \bigboxplus_{i=1}^n \mathbf{N}_i$ defined by ${\phi\!\!\restriction_{M_i}} = \phi_i$ is a  homomorphism with $\ker(\phi) = \bigcup_{i=1}^n \ker(\phi_i)$.
\end{thm}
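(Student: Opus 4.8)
The plan is to induct on $n$, reducing the statement to the binary case via the associativity of $\boxplus$. For $n\le 1$ the claim is trivial, so suppose $n\ge 2$ and write $\bigboxplus_{i=1}^n\mathbf{M}_i\cong \mathbf{M}_1\boxplus\mathbf{R}$ with $\mathbf{R}=\bigboxplus_{i=2}^n\mathbf{M}_i$, and likewise $\bigboxplus_{i=1}^n\mathbf{N}_i\cong \mathbf{N}_1\boxplus\mathbf{S}$ with $\mathbf{S}=\bigboxplus_{i=2}^n\mathbf{N}_i$. Since the summands meet only in $e$ and every $\phi_i$ fixes $e$, the map $\phi$ is well defined. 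Applying the induction hypothesis to $\phi_2,\dots,\phi_n$ (the hypothesis $\phi_i^{-1}(\set{e})=\set{e}$ holds for the required indices $2,\dots,n-1$) yields that the induced map $\psi\colon\mathbf{R}\to\mathbf{S}$ is a homomorphism with $\ker(\psi)=\bigcup_{i=2}^n\ker(\phi_i)$. It then suffices to prove the binary statement: if $\alpha\colon\mathbf{M}\to\mathbf{M}'$ and $\beta\colon\mathbf{N}\to\mathbf{N}'$ are homomorphisms of idempotent ordered monoids with $\alpha^{-1}(\set{e})=\set{e}$, then the combined map $\gamma\colon\mathbf{M}\boxplus\mathbf{N}\to\mathbf{M}'\boxplus\mathbf{N}'$ is a homomorphism with $\ker(\gamma)=\ker(\alpha)\cup\ker(\beta)$; applying this with $\alpha=\phi_1$ and $\beta=\psi$ (note $\phi_1^{-1}(\set{e})=\set{e}$ since $1\le n-1$) closes the induction.

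For the binary case I would argue as follows. The map $\gamma$ fixes $e$ and restricts to the monoid homomorphisms $\alpha$ and $\beta$ on the two summands, so the only products needing checking are the mixed ones. Let $a\in M\setminus\set{e}$ and $b\in N$. By the definition of the nested sum $a\cdot b=b\cdot a=a$, so $\gamma(a\cdot b)=\alpha(a)$, while $\gamma(a)\gamma(b)=\alpha(a)\beta(b)$ in $\mathbf{M}'\boxplus\mathbf{N}'$. Here the key point enters: because $a\ne e$ and $\alpha^{-1}(\set{e})=\set{e}$, we have $\alpha(a)\in M'\setminus\set{e}$, whence $\alpha(a)\beta(b)=\beta(b)\alpha(a)=\alpha(a)$ again by the nested sum product rule. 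For order-preservation, same-summand comparisons are handled by $\alpha$ and $\beta$; for a mixed comparison the order of $\mathbf{M}\boxplus\mathbf{N}$ gives $a\le b$ exactly when $a\le_\mathbf{M} e$ and $b\le a$ exactly when $e\le_\mathbf{M} a$. Since $\alpha$ is order-preserving and $e$-reflecting, $a<_\mathbf{M} e$ forces $\alpha(a)<_{\mathbf{M}'}e$ and dually, so the same defining clauses place $\alpha(a)$ and $\beta(b)$ in the correct order in the target.

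For the kernel, the inclusion $\ker(\alpha)\cup\ker(\beta)\subseteq\ker(\gamma)$ is immediate from $\gamma\restriction_M=\alpha$ and $\gamma\restriction_N=\beta$. For the converse I would suppose $\gamma(a)=\gamma(b)$ with $a,b$ not in a common summand, say $a\in M\setminus\set{e}$ and $b\in N\setminus\set{e}$. Then $\alpha(a)=\beta(b)$; but $\alpha(a)\in M'\setminus\set{e}$ (again by the hypothesis) while $\beta(b)\in N'$, and since $M'\cap N'=\set{e}$ this would force $\alpha(a)=e$, a contradiction. Hence every $\gamma$-identified pair lies in a single summand, giving $\ker(\gamma)\subseteq\ker(\alpha)\cup\ker(\beta)$. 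Translating back through the induction, $\ker(\phi)=\ker(\phi_1)\cup\ker(\psi)=\bigcup_{i=1}^n\ker(\phi_i)$.

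The main obstacle — really the crux of the statement — is the mixed product case and its kernel analogue, where one must see that the hypothesis $\phi_i^{-1}(\set{e})=\set{e}$ for the non-final summands is precisely what keeps an absorbing element both non-identity and absorbing in the target. Everything else is bookkeeping with the defining order and product of the nested sum. The one subtlety worth flagging is that the final summand $\mathbf{M}_n$ is exempt from the hypothesis, which is consistent because in the binary reduction the unrestricted map always occurs as the inner factor $\beta$, on which no $e$-reflecting condition is needed.
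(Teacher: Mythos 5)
Your proposal is correct and follows essentially the same route as the paper: the paper likewise reduces to the case $n=2$ (your induction via associativity of $\boxplus$ just makes that reduction explicit) and then verifies the mixed products, order-preservation, and the kernel identity using exactly the hypothesis $\phi_1^{-1}(\set{e})=\set{e}$ in the same way. Your additional observations --- that the final summand is exempt because it always plays the inner role in the binary reduction, and the explicit case analysis for $\ker(\gamma)\subseteq\ker(\alpha)\cup\ker(\beta)$ --- are consistent with, and slightly more detailed than, the paper's ``immediate'' kernel claim.
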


\begin{proof}
It suffices to consider the case $n=2$. Note that the map $\phi$ is well defined, since $\phi_1(e) =\phi_2(e) = e$ and $M_1\cap M_2 = \set{e}$.
First we show that $\phi$ is a monoid homomorphism. By definition we have $\phi(e) = e$. Moreover, $\phi_1$ and $\phi_2$ are monoid homomorphisms, so it suffices to take $a\in M_1\setminus\set{e}$ and $b\in M_2$, and therefore $ab = ba = a$.  Since $\phi_1^{-1}(\set{e}) = \set{e}$, we have  $\phi(a) \in  N_1\setminus\set{e}$ and clearly $\phi(b) \in M_2$. So we get $\phi(ab) = \phi(a) = \phi(a)\phi(b)$, by the definition of the nested sum. Hence $\phi$ is a monoid homomorphism. 
To see that $\phi$ is order-preserving it again suffices to consider $a\in M_1\setminus\set{e}$ and $b\in M_2$, since both $\phi_1$ and $\phi_2$ are order-preserving.  If $a\leq b$, then $a\leq_{\mathbf{M}_1} e$. So, since $\phi_1$ is order-preserving and  $\phi_1^{-1}(\set{e}) = \set{e}$, we have $\phi(a) \leq_{\mathbf{N}_1} e$, $\phi(a) \neq e$, and $\phi(b) \in N_2$, yielding $\phi(a) \leq \phi(b)$. Similarly if $b\leq a$, we get $\phi(b) \leq \phi(a)$.  Hence $\phi$ is a homomorphism. That we have $\ker(\phi) = \ker(\phi_1)\cup \ker(\phi_2)$ is immediate, since $\phi_1[L_1] \cap \phi_2[L_2] = \set{e}$ and $\phi_1^{-1}(\set{e}) = \set{e}$.
\end{proof}

\begin{cor}\label{cor:e-sum homomorphisms}
Let $\mathbf{M}_1,\dots, \mathbf{M}_n$, $\mathbf{N}_1,\dots, \mathbf{N}_n$ be idempotent ordered monoids and for $i=1,\dots, n$ let $\phi_i \colon \mathbf{M}_i \to \mathbf{N}_i$ be an $\ell$-monoid embedding. Then the map $\phi\colon \bigboxplus_{i=1}^n \mathbf{M}_i \to \bigboxplus_{i=1}^n \mathbf{N}_i$ defined by ${\phi\!\!\restriction_{M_i}} = \phi_i$ is an $\ell$-monoid embedding.
\end{cor}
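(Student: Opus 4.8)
The plan is to deduce the statement directly from \Cref{thm:e-sum homomorphisms}, so that the whole argument reduces to checking that the hypotheses of that theorem hold and that injectivity of the pieces forces injectivity of the glued map. First I would note that every $\ell$-monoid embedding $\phi_i$ is in particular an injective ordered-monoid homomorphism; since homomorphisms preserve the identity we have $\phi_i(e)=e$, and injectivity then yields $\phi_i^{-1}(\set{e})=\set{e}$ for \emph{every} $i$, hence a fortiori for $i=1,\dots,n-1$. Thus the hypotheses of \Cref{thm:e-sum homomorphisms} are satisfied, and that theorem already supplies that the glued map $\phi$ with ${\phi\!\!\restriction_{M_i}}=\phi_i$ is a homomorphism and computes $\ker(\phi)=\bigcup_{i=1}^n\ker(\phi_i)$.

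It then remains only to upgrade ``homomorphism'' to ``embedding''. Since each $\phi_i$ is injective, $\ker(\phi_i)=\Delta_{M_i}$, and therefore $\ker(\phi)=\bigcup_{i=1}^n\Delta_{M_i}$. A pair $\pair{a,b}$ lies in this union exactly when $a=b$ and $a\in M_i$ for some $i$; since every element of the universe of $\bigboxplus_{i=1}^n\mathbf{M}_i$ belongs to some $M_i$, this union is precisely the diagonal of that universe. Hence $\ker(\phi)$ is trivial and $\phi$ is injective.

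Finally I would record that injectivity suffices: by the remark in \Cref{sec2} that a map between ordered monoids is a homomorphism if and only if it is an order-preserving monoid homomorphism, the map $\phi$ furnished by \Cref{thm:e-sum homomorphisms} is in particular an $\ell$-monoid homomorphism between the (totally ordered) $\ell$-monoids $\bigboxplus_{i=1}^n\mathbf{M}_i$ and $\bigboxplus_{i=1}^n\mathbf{N}_i$, so being injective it is an $\ell$-monoid embedding. There is essentially no hard step here: the only points requiring a moment's care are the observation that injectivity of each $\phi_i$ secures the condition $\phi_i^{-1}(\set{e})=\set{e}$ demanded by \Cref{thm:e-sum homomorphisms}, and the observation that a union of diagonals over a cover of the universe is again the full diagonal, so that the kernel formula from that theorem translates immediately into injectivity of $\phi$.
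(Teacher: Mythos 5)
Your proof is correct and is essentially the paper's intended argument: the paper states \Cref{cor:e-sum homomorphisms} without proof as an immediate consequence of \Cref{thm:e-sum homomorphisms}, via exactly the deduction you give (injectivity of each $\phi_i$ yields $\phi_i^{-1}(\set{e})=\set{e}$, the kernel formula $\ker(\phi)=\bigcup_{i=1}^n\ker(\phi_i)$ collapses to the diagonal, and an injective order-preserving monoid homomorphism between totally ordered monoids is an $\ell$-monoid embedding). Your explicit checks of the two small points — that the hypothesis of \Cref{thm:e-sum homomorphisms} is only needed for $i=1,\dots,n-1$ but holds for all $i$, and that a union of diagonals over a cover of the universe is the full diagonal — are exactly the right details to make the deduction airtight.
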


Analogous to the case of similar constructions the nested sum can be generalized to infinite nested sums. Let $\mathbf{C} = \alg{C,\sqsubseteq}$ be a chain and let $\set{\mathbf{M}_c}_{c\in C}$ be a family of idempotent ordered monoids $\mathbf{M}_c = \alg{M_c,\cdot_c,e, \leq_c}$, where we relabel the elements such that $M_c \cap M_d = \set{e}$ for all $c\neq d$. We define the \emph{nested sum} $\bigboxplus_{c\in C} \mathbf{M}_c = \alg{\bigcup_{c\in C} M_c,\cdot,e,\leq}$, where 
\begin{itemize}
\item the product $\cdot$ extends $\cdot_c$ for all $c\in C$ such that for all $c,d \in C$ with $c \sqsubset d$ and  $a\in M_c\setminus\set{e}$, $b\in M_d$ we have $a\cdot b = b \cdot a = a$,
\item and the order  $\leq$ is the least extension of the orders $\leq_c$ such that for all $c,d\in C$ with $c\sqsubset d$ and $a \in M_c\setminus \set{e}$, $b\in M_d$ we have $a\leq b$ if $a\leq_c e$ and $b\leq a$ if $e\leq_c a$.
\end{itemize}
We set $\bigboxplus_{c\in \emptyset} \mathbf{M}_c = \mathbf{0}$. Note that for a non-empty subchain $\bf D$ of ~$\bf C$, if we restrict the operations of $\bigboxplus_{c\in C} \mathbf{M}_d$ to $\bigcup_{d\in D} M_d$, we obtain the nested sum $\bigboxplus_{d\in D} \mathbf{M}_d$.  The finite  nested sums that we considered  above  correspond to the cases where $\bf C$ is a finite chain. 
The next lemma generalizes \Cref{l:e-sum}.
\begin{lemma}
Let $\mathbf{C} = \alg{C,\sqsubseteq}$ be a chain and let $\set{\mathbf{M}_c}_{c\in C}$ be a family of idempotent ordered monoids. Then  the nested sum $\bigboxplus_{c\in C} \mathbf{M}_c$ is an idempotent ordered monoid. Moreover, $\mathbf{M}_c$ embeds into $\bigboxplus_{c\in C} \mathbf{M}_c$ via the inclusion map for every $c\in C$.
\end{lemma}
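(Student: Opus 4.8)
The plan is to deduce everything from the already-established finite case by a purely finitary argument, using the observation recorded just before the statement: for a non-empty subchain $\mathbf{D} = \alg{D,\sqsubseteq}$ of $\mathbf{C}$, restricting the product and the order of $\bigboxplus_{c\in C}\mathbf{M}_c$ to $\bigcup_{d\in D}M_d$ yields exactly the nested sum $\bigboxplus_{d\in D}\mathbf{M}_d$. When $D$ is finite this restriction is an idempotent ordered monoid by \Cref{l:e-sum} together with the associativity of the nested sum (by induction on $\abs{D}$). The key point is that each defining property of an idempotent ordered monoid — that $e$ is a two-sided identity, associativity, idempotency, reflexivity, antisymmetry, transitivity and totality of $\leq$, and the compatibility $a\leq b \Rightarrow cad\leq cbd$ — is a condition mentioning at most four elements. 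So, given any such instance involving elements $a_1,\dots,a_k$ with $a_i\in M_{c_i}$, I would set $D = \set{c_1,\dots,c_k}$, a finite subchain of $\mathbf{C}$, and verify the condition inside the finite idempotent ordered monoid $\bigboxplus_{d\in D}\mathbf{M}_d$.

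Carrying this out: first I would check that $\leq$ is a total order. For $a\in M_c$ and $b\in M_d$, both lie in $\bigcup_{x\in D}M_x$ with $D=\set{c,d}$, so comparability, reflexivity, and antisymmetry follow from the total order of the finite nested sum over $D$; transitivity for $a\in M_{c_1}$, $b\in M_{c_2}$, $c\in M_{c_3}$ follows from transitivity of the total order of $\bigboxplus_{d\in D}\mathbf{M}_d$ with $D=\set{c_1,c_2,c_3}$. For the monoid structure, closure is immediate from the definition of $\cdot$ (a product of an element of $M_c$ and an element of $M_d$ always lands in $M_c\cup M_d$); $e$ is a two-sided identity because $e\in M_c$ for every $c$ and $\cdot$ extends each $\cdot_c$; idempotency holds since each $\mathbf{M}_c$ is idempotent; and associativity for $a\in M_{c_1}$, $b\in M_{c_2}$, $c\in M_{c_3}$ is inherited from the finite nested sum over $D=\set{c_1,c_2,c_3}$. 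The compatibility law involves four elements and is inherited from the finite nested sum over the subchain of their indices. The embedding claim is then routine: the inclusion $\iota_c\colon \mathbf{M}_c \to \bigboxplus_{x\in C}\mathbf{M}_x$ is injective by construction, a monoid homomorphism because $\cdot$ extends $\cdot_c$ and fixes $e$, and an order-embedding because, by the restriction observation with $D=\set{c}$, the order $\leq$ restricted to $M_c$ is precisely $\leq_c$.

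I do not expect a genuine obstacle, and this is by design: the nested sum is a finitary construction, so no completeness or limit phenomena of the possibly infinite chain $\mathbf{C}$ ever intervene — every axiom is witnessed on a finite subchain. The one place that genuinely needs care, and which I regard as the crux, is justifying that the \emph{least} extension $\leq$ really agrees, on each finite $\bigcup_{d\in D}M_d$, with the total order of $\bigboxplus_{d\in D}\mathbf{M}_d$, rather than being some strictly smaller (possibly non-total) relation; this is exactly what legitimizes the reduction. It holds because the cross-comparison of two elements from distinct summands is determined solely by the relative $\sqsubseteq$-position of their two indices and by the position of each element relative to $e$ within its own summand — data that is identical in the finite and the infinite nested sums — so the comparison rule, and hence the restricted order, coincide.
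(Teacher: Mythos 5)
Your proposal is correct and takes essentially the same route as the paper, whose proof simply observes that every axiom to be checked involves only boundedly many summands and therefore follows from the finite nested sum case (the paper phrases this as ``at most three different algebras,'' implicitly splitting the compatibility law $a\leq b \Rightarrow cad\leq cbd$ into one-sided monotonicity steps, whereas you work with finite subchains of up to four indices --- an immaterial difference). Your extra care about the least extension $\leq$ restricting correctly to finite subchains addresses a point the paper itself only asserts in the remark preceding the lemma, so it is a welcome but not divergent addition.
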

\begin{proof}
Note that all the properties we need to check involve at most three different algebras in the nested sum. Thus the claim follows from the fact that for idempotent ordered monoids $\mathbf{M}_1,\mathbf{M}_2,\mathbf{M}_3$ the nested sum $\mathbf{M}_1\boxplus\mathbf{M}_2 \boxplus \mathbf{M}_3$ is an idempotent ordered monoid.
\end{proof}

Let $\bf M$ be an idempotent monoid. We consider the Green equivalence relation $\relD_{\bf M}$  defined by letting for $a,b\in M$
\[
 a \relD_{\bf M} b :\!\iff aba = a \text{ and } bab = b.
\]
 In \cite{Dubreil-Jacotin1971} Dubreil-Jacotin proved that an idempotent monoid $\bf M$ is totally orderable if and only if  for all $a,b\in M$ we have $ab \in \set{a,b}$ and every equivalence class of $\relD_{\bf M}$ contains at most two elements (see also \cite{Merlier1972,Merlier1976} or  \cite{Saito1974} for a characterization of totally orderable idempotent semigroups). 

\begin{lemma}[\cite{Dubreil-Jacotin1971}]\label{l:dubreil-jacotin}
Let $\bf M$ be an idempotent ordered monoid. For two distinct elements $a,b\in M$ the following are equivalent:
\begin{enumerate}[label = \textup{(\arabic*)}]
\item $a\relD_{\bf M} b$.
\item The subalgebra $\Sg(a,b)$  of $\bf M$  is isomorphic to $\mathbf{G}_3$ or $\mathbf{D}_3$.
\item $ab \neq ba$.
\end{enumerate}
Moreover, every equivalence class of $\relD_{\bf M}$ has at most two elements.
\end{lemma}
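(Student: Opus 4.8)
The plan is to establish the cycle of equivalences through condition $(3)$, since non-commutativity of $a$ and $b$ is the most directly computable property, and then read off the cardinality bound from the order-theoretic picture developed along the way. Throughout I would rely on the remark following \Cref{lemma:Idemp}: the subalgebra $\Sg(a,b)$ has universe $\set{a,b}\cup\set{e}$, and products of its elements stay inside $\set{a,b}$. A preliminary observation is that each of the three conditions forces $a,b\neq e$ (for instance, if $a=e$ then $ab=b=ba$, and an isomorphism onto a three-element algebra precludes $e\in\set{a,b}$), so that $\set{e,a,b}$ is genuinely a three-element chain; as $\bf M$ is totally ordered I would also assume without loss of generality that $a<b$.

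For $(1)\Leftrightarrow(3)$ I would argue purely in the monoid. If $ab=ba$, then idempotency gives $aba=a(ba)=a(ab)=ab$ and $bab=b(ab)=b(ba)=ba$, so $(1)$ would force $ab=a$ and $ba=b$, hence $a=ab=ba=b$, contradicting $a\neq b$; this proves $(1)\Rightarrow(3)$ contrapositively. Conversely, if $ab\neq ba$, then since $ab,ba\in\set{a,b}$ by \Cref{lemma:Idemp}(iv) we must have $\set{ab,ba}=\set{a,b}$, and in either case ($ab=a,\ ba=b$ or $ab=b,\ ba=a$) a one-line associativity computation yields $aba=a$ and $bab=b$, i.e.\ $(1)$.

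The bridge to $(2)$ is the \emph{order observation} that $ab\neq ba$ forces $e$ to lie strictly between $a$ and $b$. Indeed, neither element equals $e$; if both were above $e$ then \Cref{lemma:Idemp}(i) and (ii) give $ab=ba=a\join b$, and if both were below $e$ then (i) and (iii) give $ab=ba=a\meet b$ --- either way $ab=ba$. Hence, with $a<b$, we have $a<e<b$, so $\set{e,a,b}$ is the three-element chain with $e$ in the middle. The two possibilities $ab=a,\ ba=b$ and $ab=b,\ ba=a$ are then exactly the multiplication tables in which the non-identity elements are left-absorbing, respectively right-absorbing, so sending $a\mapsto\bot$, $e\mapsto e$, $b\mapsto\top$ exhibits $\Sg(a,b)$ as $\mathbf{G}_3$ or $\mathbf{D}_3$; this is $(3)\Rightarrow(2)$. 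For $(2)\Rightarrow(3)$ it suffices that in both $\mathbf{G}_3$ and $\mathbf{D}_3$ the two non-identity elements fail to commute, and that any isomorphism must fix $e$ and therefore carry $\set{a,b}$ onto those two elements.

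Finally, for the cardinality bound I would note that the $\relD_{\bf M}$-class of $e$ is $\set{e}$ (from $eae=a$, the relation $e\relD_{\bf M} a$ gives $a=e$), while any two distinct $\relD_{\bf M}$-related elements lie on opposite sides of $e$ by the order observation. Three pairwise $\relD_{\bf M}$-related distinct elements would, by pigeonhole, place two of them on the same side of $e$, contradicting this; hence no $\relD_{\bf M}$-class (which is a set of pairwise related elements) has more than two elements. I expect the order observation to be the crux: once non-commuting idempotents are known to straddle $e$, both the identification with $\mathbf{G}_3$/$\mathbf{D}_3$ and the two-element bound become immediate, and the remaining work is only the routine bookkeeping of the small multiplication tables.
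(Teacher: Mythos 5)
Your proposal is correct and takes essentially the same approach as the paper: both pivot on condition (3), use \Cref{lemma:Idemp} to show that non-commuting elements must straddle $e$ (giving the isomorphism with $\mathbf{G}_3$ or $\mathbf{D}_3$), and derive the two-element bound by the pigeonhole fact that among any three distinct elements two lie on the same side of $e$ and hence commute. The only differences are organizational---you prove $(1)\Leftrightarrow(3)$ and $(2)\Leftrightarrow(3)$ rather than the paper's cycle $(1)\Rightarrow(3)\Rightarrow(2)\Rightarrow(1)$, and you spell out steps the paper leaves terse.
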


\begin{proof}
 Let $a,b\in M$ be two distinct elements.
 
(1) $\Rightarrow$ (3): Suppose contrapositively that $ab = ba$, then we get $aba = baa = ba = bba = bab$. Hence $\pair{a,b}\notin {\relD_{\bf M}}$. 

(3) $\Rightarrow$ (2): Suppose $ab \neq ba$. Then clearly $a\neq e$ and $b\neq e$. So the subalgebra $\Sg(a,b)$ of $\bf M$ has universe $\set{a,b,e}$. Moreover, by \Cref{lemma:Idemp}, since $ab \neq ba$, we get $a< e < b$ or $b< e <a$. So $\Sg(a,b)$ is isomorphic to  $\mathbf{G}_3$ or $\mathbf{D}_3$.

(2) $\Rightarrow$ (1): Suppose that the subalgebra $\Sg(a,b)$ of $\bf M$ is isomorphic to $\mathbf{G}_3$ or $\mathbf{D}_3$. Then clearly $aba = a$ and $bab = b$, so $a\relD_{\bf M} b$.

That every equivalence class of $\relD_{\bf M}$ has at most two elements is immediate, by \Cref{lemma:Idemp} and the fact that for any three distinct elements of $\bf M$ either at least two are positive or at least two are negative, and thus do not generate $\mathbf{G}_3$ or $\mathbf{D}_3$.
\end{proof}

\begin{lemma}\label{l:D-classes commutative}
Let $\bf M$ be an idempotent ordered monoid and let  $a,b,c\in M$ be three distinct elements with $a \relD_{\bf M} b$. Then $ac = ca$ and $bc = cb$, and $ac = a$ if and only if $bc = b$.
\end{lemma}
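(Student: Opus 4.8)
The plan is to obtain the two commutativity statements directly from \Cref{l:dubreil-jacotin}, and then to prove the equivalence $ac = a \iff bc = b$ by a single associativity computation that avoids splitting into the cases $\Sg(a,b)\cong\mathbf{G}_3$ and $\Sg(a,b)\cong\mathbf{D}_3$.

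First I would settle $ac = ca$ and $bc = cb$. Since $a\relD_{\mathbf{M}} b$ and $a\neq b$, the $\relD_{\mathbf{M}}$-class of $a$ contains both $a$ and $b$; as every $\relD_{\mathbf{M}}$-class has at most two elements by \Cref{l:dubreil-jacotin}, this class is exactly $\set{a,b}$. Because $c$ is distinct from both $a$ and $b$, neither $c\relD_{\mathbf{M}} a$ nor $c\relD_{\mathbf{M}} b$ holds. Applying the equivalence of items (1) and (3) of \Cref{l:dubreil-jacotin} to the distinct pairs $\set{a,c}$ and $\set{b,c}$ then yields $ac = ca$ and $bc = cb$.

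For the equivalence I would first record the relevant restrictions. By \Cref{lemma:Idemp}(iv) we have $ac\in\set{a,c}$ and $bc\in\set{b,c}$, while $a\relD_{\mathbf{M}} b$ forces $ab\neq ba$ (by the equivalence of (1) and (3) in \Cref{l:dubreil-jacotin}) with $ab,ba\in\set{a,b}$, so that $\set{ab,ba}=\set{a,b}$. Since the hypotheses are symmetric in $a$ and $b$, it suffices to prove the single implication $ac = a \Rightarrow bc = b$. Assume $ac = a$; together with $ac = ca$ this gives $ca = a$. As $bc\in\set{b,c}$ and $b\neq c$, it is enough to exclude $bc = c$. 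Supposing $bc = c$ (hence also $cb = c$, as $bc = cb$), the computations $ab = (ac)b = a(cb) = ac = a$ and $ba = b(ca) = (bc)a = ca = a$ give $ab = ba = a$, contradicting $ab\neq ba$. Thus $bc = b$, and the converse follows by exchanging $a$ and $b$.

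The only real subtlety is this last step: the natural temptation is to distinguish whether $\Sg(a,b)\cong\mathbf{G}_3$ or $\Sg(a,b)\cong\mathbf{D}_3$ and argue separately, but forming the two products $acb$ and $bca$ and reducing them with the identities $ac=a$ and the assumed $bc=c$ collapses both $ab$ and $ba$ to $a$ simultaneously, producing the contradiction $ab=ba$ in a uniform way. Everything else is bookkeeping with \Cref{lemma:Idemp}(iv) and the commutativity established in the first step.
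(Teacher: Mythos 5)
Your proof is correct. The first half (the two commutativity claims) is exactly the paper's argument: the $\relD_{\mathbf{M}}$-class of $a$ is $\set{a,b}$ by the two-element bound in \Cref{l:dubreil-jacotin}, so $c$ is $\relD_{\mathbf{M}}$-related to neither $a$ nor $b$, and the equivalence of (1) and (3) there gives $ac=ca$ and $bc=cb$. For the equivalence $ac=a \iff bc=b$, however, you take a genuinely different route from the paper. The paper splits into the two cases $ab=a,\ ba=b$ and $ab=b,\ ba=a$ (i.e., according to whether $\Sg(a,b)$ is $\mathbf{G}_3$ or $\mathbf{D}_3$) and in each case computes directly, e.g.\ from $b=ba$ and $ac=a$ it gets $bc=bac=ba=b$, with the reverse implication via $a=ab$; no appeal to \Cref{lemma:Idemp}(iv) and no contradiction is needed. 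You instead run a uniform argument: from $a\relD_{\mathbf{M}} b$ you extract $ab\neq ba$, use \Cref{lemma:Idemp}(iv) to reduce to excluding $bc=c$, and then the two computations $ab=(ac)b=a(cb)=ac=a$ and $ba=b(ca)=(bc)a=ca=a$ collapse $ab$ and $ba$ to the same element, contradicting $ab\neq ba$. What your version buys is the elimination of the case distinction on the multiplication within the $\relD_{\mathbf{M}}$-class, at the mild cost of invoking \Cref{lemma:Idemp}(iv), arguing by contradiction, and a symmetry reduction for the converse; the paper's version is slightly more economical in its hypotheses (pure associativity computations once the case is fixed) but repeats the argument across two symmetric cases. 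Both are complete and both rest on the same two lemmas, so this is a matter of proof architecture rather than substance.
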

\begin{proof}
First note that, by \Cref{l:dubreil-jacotin}, every equivalence class of $\relD_{\bf M}$ contains at most two elements, so $\pair{a,c},\pair{b,c}\notin {\relD_{\bf M}}$ and we get $ac = ca$ and $bc = cb$.  Suppose now that $ab = a$ and $ba = b$. Then, if $ac  = a$, we get  $bc = bac = ba = b$. Conversely, if $bc = b$, then we get $ac = abc = ab = a$. 
For the case $ab = b$ and $ba = a$ the proof is symmetric, noting that $ac = ca$ and $bc = cb$.
\end{proof}
The next theorem generalizes \Cref{l: idemp ord monoid e-sum}.
\begin{thm}\label{thm:general e-sum decomposition}
Let $\bf M$ be an idempotent ordered monoid. Then we have $\mathbf{M} \cong \bigboxplus_{c\in C} \mathbf{M}_c$ for some chain $\mathbf{C} = \alg{C,\sqsubseteq}$ and $\mathbf{M}_c  \in \set{\mathbf{C}_2, \mathbf{C}_2^\partial, \mathbf{G}_3,\mathbf{D}_3}$. Moreover, this nested sum is unique over the algebras $\set{\mathbf{C}_2, \mathbf{C}_2^\partial, \mathbf{G}_3,\mathbf{D}_3}$.
\end{thm}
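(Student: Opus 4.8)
The plan is to take the index chain $C$ to be the set of Green $\relD_{\mathbf{M}}$-classes of $M \setminus \set{e}$, and to recover the summands as the subalgebras these classes generate together with $e$. By \Cref{l:dubreil-jacotin} every such class has one or two elements: a two-element class $\set{a,b}$ satisfies $\Sg(a,b) \cong \mathbf{G}_3$ or $\mathbf{D}_3$, while a one-element class $\set{a}$ generates $\Sg(a) = \set{e,a}$, isomorphic to $\mathbf{C}_2$ if $a < e$ and to $\mathbf{C}_2^\partial$ if $a > e$. So for each class $D$ I set $\mathbf{M}_D := \Sg(D \cup \set{e})$, always one of the four algebras. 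Since the $\relD_{\mathbf{M}}$-classes partition $M \setminus \set{e}$, the sets $M_D \setminus \set{e}$ partition $M \setminus \set{e}$ and $\set{e} \cup \bigcup_D (M_D \setminus \set{e}) = M$, so the underlying set is correct.

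Next I would order $C$. For distinct classes $D, D'$ pick $a \in D$ and $b \in D'$; since $a,b$ are not $\relD_{\mathbf{M}}$-related, \Cref{l:dubreil-jacotin} gives $ab = ba$, and \Cref{lemma:Idemp}(iv) gives $ab \in \set{a,b}$. I declare $D \sqsubset D'$ iff $ab = a$. \Cref{l:D-classes commutative} shows this is independent of the chosen representatives, so $\sqsubseteq$ is well defined; totality is immediate from $ab \in \set{a,b}$, and irreflexivity together with antisymmetry follow from $ab = ba$, since $ab = a$ and $ab = b$ cannot both hold for $a \neq b$. Transitivity is the short computation $af = (ac)f = a(cf) = ac = a$ whenever $ac = a$ and $cf = c$, using associativity and the representative $c \in D'$. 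Hence $\mathbf{C} = \langle C, \sqsubseteq \rangle$ is a chain.

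Then I would check that the identity map on $M$, viewed as $\iota\colon \bigboxplus_{D \in C} \mathbf{M}_D \to \mathbf{M}$, is an isomorphism. Multiplication inside a summand agrees because each $\mathbf{M}_D$ is a subalgebra of $\mathbf{M}$. For $a \in M_D \setminus \set{e}$ and $b \in M_{D'} \setminus \set{e}$ with $D \sqsubset D'$, the definition of $\sqsubset$ together with \Cref{l:D-classes commutative} yields $ab = ba = a$, matching the nested-sum product; and the nested-sum order is then forced by \Cref{lemma:Idemp}: from $ab = a < e$ we get $ab = a \meet b$, so $a \leq b$, while from $ab = a > e$ we get $ab = a \join b$, so $b \leq a$. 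Thus $\iota$ is an order-preserving monoid bijection between two total orders, hence an isomorphism, giving existence of the decomposition.

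For uniqueness I would argue that any decomposition $\bigboxplus_c \mathbf{N}_c$ of $\mathbf{M}$ must coincide with the one above: two elements are $\relD_{\mathbf{M}}$-related exactly when they are the non-identity elements of a $\mathbf{G}_3$- or $\mathbf{D}_3$-summand (by \Cref{l:dubreil-jacotin}, since elements of distinct summands commute), so the non-identity parts of the summands are forced to be the $\relD_{\mathbf{M}}$-classes, and the order of the index chain is forced by the cross-summand products exactly as above. I expect the main obstacle to be the bookkeeping needed to confirm that $\sqsubseteq$ is genuinely a total order and that all cross-class products collapse correctly — in particular showing $xy = x$ for every $x \in D$ and $y \in D'$ with $D \sqsubset D'$ when both classes have two elements, which requires applying \Cref{l:D-classes commutative} on both sides; everything else reduces to the three-summand associativity already used to establish that the infinite nested sum is an idempotent ordered monoid.
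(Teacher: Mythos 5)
Your proposal is correct and follows essentially the same route as the paper's proof: the index chain is the set of Green $\relD_{\mathbf{M}}$-classes of $M\setminus\set{e}$ ordered by the cross-class products, the summands are the subalgebras $\Sg([a])$ identified via \Cref{l:dubreil-jacotin}, well-definedness of $\sqsubseteq$ rests on \Cref{l:D-classes commutative}, and uniqueness is forced because any decomposition over $\set{\mathbf{C}_2,\mathbf{C}_2^\partial,\mathbf{G}_3,\mathbf{D}_3}$ must have its summands' non-identity parts equal to the $\relD_{\mathbf{M}}$-classes. The only difference is that you spell out the verification (transitivity, the two-element-class bookkeeping, and that the identity map is an order isomorphism) which the paper compresses into ``by construction.''
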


\begin{proof}
Let $C = \set{[a] \mid a\in M\setminus\set{e}}$, where $[a] = \set{b\in M \mid a\relD_{\bf M} b}$ for $a\in M$, and define the relation $\sqsubseteq$ on $C$ by 
\begin{equation*}
[a] \sqsubseteq [b] \iff ab = ba = a \text{ or } a \relD_{\bf M} b.
\end{equation*}
By \Cref{lemma:Idemp} (iv) and \Cref{l:D-classes commutative}, $\sqsubseteq$ is well-defined and  $\mathbf{C} = \alg{C,\sqsubseteq}$ is a chain. Define for $a\in M\setminus\set{e}$, the algebra  $\mathbf{M}_{[a]}$ to be the subalgebra $\Sg([a])$  of ~$\bf M$ with universe  $[a]\cup \set{e}$.  By \Cref{l:dubreil-jacotin}, we get for every $c\in C$ that~$\mathbf{M}_c$ is isomorphic to one of the algebras in  $ \set{\mathbf{C}_2, \mathbf{C}_2^\partial, \mathbf{G}_3,\mathbf{D}_3}$.
Moreover, by construction,  we get  $\mathbf{M} = \bigboxplus_{c\in C} \mathbf{M}_c$. The nested sum is unique over the algebras $\set{\mathbf{C}_2, \mathbf{C}_2^\partial, \mathbf{G}_3,\mathbf{D}_3}$, since, by \Cref{l:dubreil-jacotin}, for every nested sum $\mathbf{N} = \bigboxplus_{d\in D} \mathbf{N}_d$ over the algebras $\set{\mathbf{C}_2, \mathbf{C}_2^\partial, \mathbf{G}_3,\mathbf{D}_3}$ with $\mathbf{M} = \mathbf{N}$ we have for all   $a,b \in N\setminus \set{e}$ that  $a \relD_{\bf M} b$ if and only if $a,b\in N_d$ for some $d\in D$. This also implies that $\mathbf{D}$ and $\mathbf{C}$ are isomorphic.
 \end{proof}

\begin{remark}
Similar constructions also using the Green equivalence relation $\mathscr{D}$ were considered in \cite{Chen2009a, Chen2009b} for idempotent residuated chains and conic idempotent residuated lattices.
\end{remark}

Let $\mathbf{C}$, $\mathbf{D}$ be chains and  $\mathbf{M} = \bigboxplus_{c\in C} \mathbf{M}_c$, $\mathbf{N} = \bigboxplus_{d\in D} \mathbf{N}_d$ idempotent ordered monoids with $\mathbf{M}_c,\mathbf{N}_d\in \set{\mathbf{C}_2, \mathbf{C}_2^\partial, \mathbf{G}_3,\mathbf{D}_3}$. If $\phi\colon \mathbf{M} \to \mathbf{N}$ is an embedding, then for every $c\in C$, $\phi(\mathbf{M}_c)$ is a subalgebra of $\bf N$ and, by \Cref{l:dubreil-jacotin} and \Cref{thm:general e-sum decomposition},  there exists a unique $d\in C$ such that $\phi(M_c) \subseteq N_d$. Thus we get an injective map $f_\phi\colon C \to D$, where $f_\phi(c)$ is the unique element of $D$ such that $\phi(M_c) \subseteq N_{f_\phi(c)}$.

On the other hand if we have an order-embedding $f\colon \mathbf{C} \to \mathbf{D}$ such that for every $c\in C$, $\mathbf{M}_c$ is a subalgebra of $\mathbf{N}_{f(c)}$, then we can define the map $\phi_f\colon M \to N$, where $\phi_f(a) = a \in N_{f(c)}$ for $a\in M_c$. 

\begin{lemma}\label{l:embeddings to summands}
Let $\mathbf{C}$, $\mathbf{D}$ be chains,  $\mathbf{M} = \bigboxplus_{c\in C} \mathbf{M}_c$, $\mathbf{N} = \bigboxplus_{d\in D} \mathbf{N}_d$ idempotent ordered monoids with $\mathbf{M}_c,\mathbf{N}_d\in \set{\mathbf{C}_2, \mathbf{C}_2^\partial, \mathbf{G}_3,\mathbf{D}_3}$, $\phi \colon \mathbf{M} \to \mathbf{N}$ an embedding, and $f\colon \mathbf{C} \to \mathbf{D}$ an order-embedding such that for every $c\in C$,~$\mathbf{M}_c$ is a subalgebra of $\mathbf{N}_{f(c)}$.
\begin{enumerate}[label = \textup{(\arabic*)}]
\item The map $f_\phi\colon C \to D$ is an order-embedding such that for every $c\in C$,  $\mathbf{M}_c$ is a subalgebra of $\mathbf{N}_{f_\phi(c)}$. 
\item The map $\phi_f \colon M \to N$ is an $\ell$-monoid embedding.
\item We have  $\phi_{f_\phi} = \phi$ and  $f_{\phi_f} = f$.
\end{enumerate}
\end{lemma}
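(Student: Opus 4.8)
The plan is to treat the three items in the order (1), (2), (3), since (1) supplies the subalgebra inclusions that (3) uses, while (2) is essentially independent. Throughout I would lean on two facts. First, that $\phi$ and the subalgebra inclusions $\mathbf{M}_c \subseteq \mathbf{N}_{f(c)}$ preserve the nested-sum product and order, so that the behaviour of elements in distinct summands is pinned down by the absorbing rule ``$a b = b a = a$'' and the sign of $a$ relative to $e$. Second, the rigidity observation that among $\set{\mathbf{C}_2,\mathbf{C}_2^\partial,\mathbf{G}_3,\mathbf{D}_3}$ (see \Cref{ex:simple chains}) each algebra has at most one element below $e$ and at most one above $e$; hence whenever one of these four algebras embeds into another the embedding is \emph{unique}, since an order-embedding fixing $e$ must send the below-$e$ element to the below-$e$ element and the above-$e$ element to the above-$e$ element. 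Since $\mathbf M$ and $\mathbf N$ are totally ordered, I would also use that a map between them is an $\ell$-monoid embedding iff it is a monoid homomorphism and an order-embedding, reducing all structure checks to the product and the order.

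For (1), injectivity of $f_\phi$ is already in hand, and the subalgebra condition is immediate: by definition of $f_\phi$ the corestriction $\phi\!\restriction_{M_c}\colon \mathbf{M}_c \to \mathbf{N}_{f_\phi(c)}$ is an embedding, so $\mathbf{M}_c$ is a subalgebra of $\mathbf{N}_{f_\phi(c)}$. The order-embedding property is the one computation: given $c \sqsubset c'$, pick $a \in M_c\setminus\set{e}$ and $a' \in M_{c'}\setminus\set{e}$; the nested-sum rule gives $a a' = a$ in $\mathbf M$, hence $\phi(a)\phi(a') = \phi(a)$ with $\phi(a) \in N_{f_\phi(c)}\setminus\set{e}$ and $\phi(a') \in N_{f_\phi(c')}\setminus\set{e}$. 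Were $f_\phi(c') \sqsubset f_\phi(c)$, the nested-sum rule in $\mathbf N$ would instead force $\phi(a)\phi(a') = \phi(a')$, giving $\phi(a) = \phi(a')$ and contradicting injectivity of $\phi$; so $f_\phi(c) \sqsubset f_\phi(c')$. Since $f_\phi$ is then an injective order-preserving map of chains, it is automatically an order-embedding.

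For (2), I would verify that $\phi_f$ is a monoid homomorphism and an order-embedding by splitting into the cases ``$a,b$ lie in a common summand $M_c$'' and ``$a \in M_c\setminus\set{e}$, $b \in M_{c'}\setminus\set{e}$ with $c\neq c'$''. In the first case everything is inherited from the inclusion $\mathbf{M}_c \subseteq \mathbf{N}_{f(c)}$, which is itself an $\ell$-monoid embedding. In the second case, taking $c \sqsubset c'$, the equalities $a b = b a = a$ in $\mathbf M$ are matched on the $\mathbf N$-side because $f(c) \sqsubset f(c')$ (as $f$ is an order-embedding) and $\phi_f(a) \in N_{f(c)}\setminus\set{e}$, so the same absorbing rule applies; and the order comparison between $a$ and $b$ is governed by whether $a \le_c e$ or $e \le_c a$ in $M_c$, which the inclusion preserves, yielding $a \le b \iff \phi_f(a) \le \phi_f(b)$. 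Injectivity is clear: if $\phi_f(a) = \phi_f(b) \neq e$ the common value lies in a single summand $N_{f(c)} = N_{f(c')}$, forcing $c = c'$ by injectivity of $f$ and then $a = b$ by injectivity of the inclusion.

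The subtle part is (3). For $f_{\phi_f} = f$ there is nothing hard: $\phi_f(M_c) \subseteq N_{f(c)}$ by construction, and $f(c)$ is the unique index with this property, since any non-$e$ element of $\phi_f(M_c)$ lies in $N_{f(c)}$ and in no other summand (distinct summands meet only in $e$); hence $f_{\phi_f}(c) = f(c)$. The identity $\phi_{f_\phi} = \phi$ is where the identification must be handled with care, because $\phi_{f_\phi}$ is built from the canonical inclusion $\mathbf{M}_c \hookrightarrow \mathbf{N}_{f_\phi(c)}$ whereas $\phi$ is given. Here the rigidity observation closes the gap: both $\phi\!\restriction_{M_c}$ and the canonical inclusion are embeddings of $\mathbf{M}_c$ into $\mathbf{N}_{f_\phi(c)}$ between members of $\set{\mathbf{C}_2,\mathbf{C}_2^\partial,\mathbf{G}_3,\mathbf{D}_3}$, hence equal; as they also agree on $e$ and every element of $M$ lies in some $M_c$, we get $\phi_{f_\phi} = \phi$. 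I expect this uniqueness-of-base-embeddings step to be the main obstacle, in that it is the only point where one must look inside the four building blocks rather than argue formally from the nested-sum rules.
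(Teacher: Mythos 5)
Your proposal is correct and follows essentially the same route as the paper: for (1) your absorbing-element computation ($aa'=a$ in $\mathbf{M}$ versus the forced $\phi(a)\phi(a')=\phi(a')$ under a reversed image order, contradicting injectivity) is exactly the paper's argument, and for (2) the paper simply invokes \Cref{cor:e-sum homomorphisms} together with the observation that all checks are local to at most two summands, which is precisely the case analysis you carry out by hand. The only divergence is item (3), which the paper dismisses as ``immediate from the definition'': it implicitly identifies $\mathbf{M}_c$ with its image $\phi(M_c)\subseteq N_{f_\phi(c)}$, so that $\phi_{f_\phi}=\phi$ holds by construction, whereas you keep $\mathbf{M}_c$ abstract and use the rigidity fact that an embedding between members of $\set{\mathbf{C}_2,\mathbf{C}_2^\partial,\mathbf{G}_3,\mathbf{D}_3}$ is unique when it exists (at most one element on each side of $e$) to conclude that $\phi\!\!\restriction_{M_c}$ must coincide with the canonical inclusion. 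This is not a gap on your part but a harmless refinement: your rigidity step is exactly what licenses the identification the paper leaves tacit, and it is correct as stated.
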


\begin{proof}
(1) By definition of the map $f_\phi$, we already know that $f_\phi$ is injective and that~$\mathbf{M}_c$ is a subalgebra of $\mathbf{N}_{f_\phi(c)}$ for each $c\in C$.  So it remains to show that $f_\phi$ is order-preserving. Let $c_1,c_2 \in C$ such that $c_1 \sqsubset c_2$ and suppose for a contradiction that $f(c_2) \sqsubseteq f(c_1)$. Then for  $a\in M_{c_1}\setminus \set{e}$ and $b\in M_{c_2} \setminus \set{e}$ we have $\phi(a) = \phi(a\cdot b) =  \phi(a) \cdot \phi(b) = \phi(b)$. Hence $a=b$ a contradiction. Hence $f_\phi$ is order-preserving.
 
 (2) By \Cref{cor:e-sum homomorphisms} and since $f$ is an order-embedding, if $c_1,c_2 \in C$ with $c_1 \sqsubset c_2$, then the map $\phi_f$ restricted to $\mathbf{M}_{c_1} \boxplus \mathbf{M}_{c_2}$ is an $\ell$-monoid embedding into $\mathbf{N}_{f(c_1)} \boxplus \mathbf{N}_{f(c_2)}$. So, since checking whether $\phi$ is an $\ell$-monoid embedding involves at most two terms of the nested sum, $\phi_f$ is an $\ell$-monoid embedding. 
 
 (3) is immediate from the definition.
\end{proof}

\begin{cor}\label{cor:IS embedding}
Let $\mathbf{M} = \bigboxplus_{i=1}^m \mathbf{M}_i$, $\mathbf{N} = \bigboxplus_{j=1}^n \mathbf{N}_j$ be idempotent ordered monoids with $\mathbf{M}_i,\mathbf{N}_j\in \set{\mathbf{C}_2, \mathbf{C}_2^\partial, \mathbf{G}_3,\mathbf{D}_3}$. Then $\mathbf{M}$ embeds into $\mathbf{N}$ if and only if there exists an order-embedding $f\colon \set{1,\dots, m} \to \set{1,\dots,n}$ such that for every $i\in \set{1,\dots,m}$ we have  that $\mathbf{M}_i$ is a subalgebra of $\mathbf{N}_{f(i)}$.
\end{cor}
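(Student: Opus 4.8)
The plan is to deduce the statement directly from \Cref{l:embeddings to summands} by specializing the general chains $\mathbf{C}$ and $\mathbf{D}$ to the finite chains $\set{1,\dots,m}$ and $\set{1,\dots,n}$ equipped with their natural orders. As noted in the text, the finite nested sums $\bigboxplus_{i=1}^m \mathbf{M}_i$ and $\bigboxplus_{j=1}^n \mathbf{N}_j$ are precisely the instances of the general nested sum construction over these finite chains, so \Cref{l:embeddings to summands} applies verbatim once we identify $C$ with $\set{1,\dots,m}$ and $D$ with $\set{1,\dots,n}$.

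For the forward direction, I would assume that $\phi\colon \mathbf{M}\to\mathbf{N}$ is an embedding. Then the associated map $f_\phi\colon \set{1,\dots,m}\to\set{1,\dots,n}$ is defined, and by \Cref{l:embeddings to summands}(1) it is an order-embedding satisfying that $\mathbf{M}_i$ is a subalgebra of $\mathbf{N}_{f_\phi(i)}$ for every $i$. Taking $f\coloneqq f_\phi$ then furnishes the required order-embedding, so this direction is an immediate invocation of part (1).

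For the backward direction, I would assume an order-embedding $f\colon \set{1,\dots,m}\to\set{1,\dots,n}$ with $\mathbf{M}_i$ a subalgebra of $\mathbf{N}_{f(i)}$ for every $i$. Then the map $\phi_f$ is defined, and by \Cref{l:embeddings to summands}(2) it is an $\ell$-monoid embedding $\mathbf{M}\to\mathbf{N}$, witnessing that $\mathbf{M}$ embeds into $\mathbf{N}$. Thus both implications reduce to reading off the appropriate clause of the preceding lemma.

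The one point that deserves a word of care—and the closest thing to an obstacle—is that \Cref{l:embeddings to summands} lists $\phi$ and $f$ together as hypotheses, so I must observe that its two conclusions are in fact independent: part (1) is a statement constructed purely from $\phi$, and part (2) a statement constructed purely from $f$. Hence no circularity arises when I use only $\phi$ in one direction and only $f$ in the other. Beyond this, the sole verification is that on the finite linear orders $\set{1,\dots,m}$ and $\set{1,\dots,n}$ an order-embedding is exactly an injective order-preserving (equivalently strictly increasing) map, which is immediate; I therefore expect no substantive difficulty and the corollary to follow in essentially two lines.
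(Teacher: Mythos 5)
Your proposal is correct and matches the paper's intent exactly: the paper states this as an immediate corollary of \Cref{l:embeddings to summands}, with the forward direction given by $f_\phi$ via part (1) and the backward direction by $\phi_f$ via part (2), just as you argue. Your remark that the two parts of the lemma depend only on $\phi$ and on $f$ respectively, so no circularity arises, is a sound and worthwhile observation.
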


\section{Subdirectly irreducible algebras in {\sf SemIdDLM}}\label{sec4}

In this section we use the nested sum decomposition to give a characterization of the subdirectly irreducible members of $\Sem\Id$.
Note that in contrast to $\ell$-groups and residuated lattices not very much is known about congruences and subdirectly irreducible elements in varieties of distributive $\ell$-monoids. For the semilinear and idempotent case the nested sum representation helps us overcome this problem.

The next lemma shows that the nested sum decomposition of a finite subdirectly irreducible idempotent ordered monoid does not contain any consecutive occurrences of $\mathbf{C}_2$ or any consecutive occurrences of $\mathbf{C}_2^\partial$. 
\begin{lemma}\label{l:irred e-sum}
Every finite subdirectly irreducible idempotent ordered monoid is isomorphic to a nested sum $\bigboxplus^{n}_{i=1}\mathbf{M}_i$ with $\mathbf{M}_i \in \set{\mathbf{C}_2, \mathbf{C}_2^\partial, \mathbf{G}_3,\mathbf{D}_3}$ such that if  $\mathbf{M}_i = \mathbf{M}_{i+1}$, then $\mathbf{M}_i \in\set{ \mathbf{G}_3,\mathbf{D}_3}$ for every $i\in \set{1,\dots,n-1}$. 
\begin{proof}
Suppose that $\bf M$ is a finite idempotent ordered monoid. Then by \Cref{l: idemp ord monoid e-sum}, there is a unique nested sum decomposition $\mathbf{M} \cong \bigboxplus^{n}_{i=1}\mathbf{M}_i$ with  $\mathbf{M}_i \in \set{\mathbf{C}_2, \mathbf{C}_2^\partial, \mathbf{G}_3,\mathbf{D}_3}$. Suppose contrapositively that $\mathbf{M}_j = \mathbf{M}_{j+1}$ for some $j\in \set{1,\dots,n-1}$ with $\mathbf{M}_j \in \set{\mathbf{C}_2, \mathbf{C}_2^\partial}$. Let $a$ be the non-identity element  in  $\mathbf{M}_j$ and $b$ be the non-identity element in $\mathbf{M}_{j+1}$. Let $\mathbf{N}_1 =  \bigboxplus_{i=1}^{j-1}\mathbf{M}_i$, $\mathbf{N}_2 = \mathbf{M}_j\boxplus \mathbf{M}_{j+1}$, and $\mathbf{N}_3 = \bigboxplus_{k=j+2}^{n}\mathbf{M}_k$. 
Then $\mathbf{M} = \mathbf{N}_1\boxplus\mathbf{N}_2\boxplus\mathbf{N}_3$ and, by \Cref{thm:e-sum homomorphisms}, the homomorphism $\phi\colon \mathbf{N}_1\boxplus\mathbf{N}_2\boxplus\mathbf{N}_3 \to \mathbf{N}_1\boxplus\mathbf{M}_j\boxplus\mathbf{N}_3$ induced by the homomorphisms $\phi_1 = id_{\mathbf{N}_1}$, $\phi_2\colon \mathbf{N}_2 \to \mathbf{M}_j$ with $\phi_2(e) = e$, $\phi_2(a) =\phi_2(b) = a$, and $\phi_3 = id_{\mathbf{N}_3}$ has kernel $\ker(\phi) = \ker(\phi_1)\cup \ker(\phi_2) \cup \ker(\phi_3) = \Delta_{M} \cup\set{\pair{a,b},\pair{b,a}}$. Moreover, by \Cref{thm:e-sum homomorphisms}, the map $\psi \colon \mathbf{M} \to \bigboxplus^{n-1}_{i=1}\mathbf{M}_i$, induced by the homomorphisms $\psi_i = id_{\mathbf{M}_i}$ for $i=1,\dots,n-1$ and $\phi_n(c)= e$ for $c\in M_n$ is a homomorphism with kernel $\ker(\psi) = \Delta_{M} \cup M_n^2$. But then $\ker(\phi) \cap \ker(\psi) = \Delta_{ M}$, so $\bf M$ is not subdirectly irreducible.
\end{proof}
\end{lemma}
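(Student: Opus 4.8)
The plan is to prove the contrapositive: assuming the unique decomposition supplied by \Cref{l: idemp ord monoid e-sum} contains two consecutive \emph{equal} summands drawn from $\set{\mathbf{C}_2,\mathbf{C}_2^\partial}$, I would exhibit two non-trivial congruences on $\bf M$ whose intersection is $\Delta_{M}$. Since a non-trivial algebra is subdirectly irreducible precisely when it has a least non-trivial congruence, the existence of two non-trivial congruences meeting in $\Delta_{M}$ shows there is no monolith and hence that $\bf M$ is not subdirectly irreducible. Concretely, suppose $\mathbf{M}_j = \mathbf{M}_{j+1} \in \set{\mathbf{C}_2,\mathbf{C}_2^\partial}$ for some $j\in\set{1,\dots,n-1}$, and let $a$ and $b$ be the non-identity elements of $\mathbf{M}_j$ and $\mathbf{M}_{j+1}$ respectively; because the two summands are equal copies, $a$ and $b$ lie on the same side of $e$ and play identical roles.

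The first congruence identifies $a$ with $b$. Splitting the decomposition as $\mathbf{M} = \mathbf{N}_1 \boxplus (\mathbf{M}_j \boxplus \mathbf{M}_{j+1}) \boxplus \mathbf{N}_3$, I would define the surjection $\phi_2 \colon \mathbf{M}_j \boxplus \mathbf{M}_{j+1} \to \mathbf{M}_j$ collapsing $\set{a,b}$ onto the single non-identity element of $\mathbf{M}_j$; because $\mathbf{M}_j = \mathbf{M}_{j+1}$, this is a well-defined order-preserving monoid homomorphism satisfying $\phi_2^{-1}(\set{e}) = \set{e}$. Taking identities on $\mathbf{N}_1$ and $\mathbf{N}_3$ and feeding these three maps into \Cref{thm:e-sum homomorphisms} produces a homomorphism $\phi$ on $\bf M$ with $\ker(\phi) = \Delta_{M} \cup \set{\pair{a,b},\pair{b,a}}$.

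The second congruence collapses the top summand $\mathbf{M}_n$ to $e$. I would set $\psi_i = id_{\mathbf{M}_i}$ for $i<n$ and let $\psi_n$ send all of $M_n$ to $e$. The point of collapsing the \emph{last} summand is that the hypothesis $\phi_i^{-1}(\set{e}) = \set{e}$ of \Cref{thm:e-sum homomorphisms} is required only for $i<n$, so it is satisfied by the identity maps while $\psi_n$ is left unconstrained. This yields $\ker(\psi) = \Delta_{M} \cup M_n^2$, which is non-trivial since $\abs{M_n} \geq 2$.

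It then remains to compute $\ker(\phi)\cap \ker(\psi)$. The only non-diagonal pairs in $\ker(\phi)$ are $\pair{a,b}$ and $\pair{b,a}$ with $a \in M_j \setminus \set{e}$; since $j < n$ and distinct summands of a nested sum meet only in $e$, we have $a \notin M_n$, so $\pair{a,b} \notin M_n^2$. Hence $\ker(\phi)\cap\ker(\psi) = \Delta_{M}$, and as both kernels are non-trivial congruences, $\bf M$ fails to be subdirectly irreducible, completing the contrapositive. I expect the only genuinely delicate point to be the bookkeeping that lets \Cref{thm:e-sum homomorphisms} apply to both constructions at once — in particular the choice to collapse $\mathbf{M}_n$ rather than $\mathbf{M}_{j+1}$, which is what keeps the $\phi_i^{-1}(\set{e}) = \set{e}$ condition intact; verifying that $\phi_2$ is a homomorphism and that the two kernels meet trivially is then routine.
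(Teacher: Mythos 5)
Your proposal is correct and takes essentially the same route as the paper's own proof: the same contrapositive, the same decomposition $\mathbf{N}_1 \boxplus (\mathbf{M}_j \boxplus \mathbf{M}_{j+1}) \boxplus \mathbf{N}_3$ with $\phi_2$ collapsing $\set{a,b}$ via \Cref{thm:e-sum homomorphisms}, the same map $\psi$ collapsing the final summand $\mathbf{M}_n$ to $e$, and the same verification that $\ker(\phi)\cap\ker(\psi)=\Delta_M$. Your explicit remark that the hypothesis $\phi_i^{-1}(\set{e})=\set{e}$ is only required for $i<n$ --- which is precisely why the \emph{last} summand is the one collapsed --- makes visible a point the paper uses silently, and your check that $a\notin M_n$ (covering the case $j+1=n$) is sound.
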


\begin{lemma}\label{l:e-sum subd irred lemma}
Let $\mathbf{M} = \bigboxplus^{n}_{i=1}\mathbf{M}_i$  with $\mathbf{M}_i \in \set{\mathbf{C}_2, \mathbf{C}_2^\partial, \mathbf{G}_3,\mathbf{D}_3}$ such that if  $\mathbf{M}_i = \mathbf{M}_{i+1}$, then $\mathbf{M}_i \in\set{ \mathbf{G}_3,\mathbf{D}_3}$ for every $i\in \set{1,\dots,n-1}$. Let $a,b\in M$ such that $a\neq b $, and $a\cdot b = a$ or $b \cdot a =a$. Then $\Theta(a,b) = \Theta(a,e)$. 
\begin{proof}
We show the claim for  $a\cdot b = a$ (the case  $b \cdot a =a$ is symmetric).
Since $a\cdot b = a$ and $e\cdot b=b $ we have $\Theta(a,b) \subseteq \Theta(a,e)$. Conversely, if $a\leq e\leq b$ or $b\leq e\leq a$, then we are done, since lattice congruence classes are convex. Otherwise using the assumptions that $a\neq b$ and $a\cdot b = a$, we get that either $e<b<a$ or $a<b<e$. Without loss of generality we assume that $e<b<a$.  So for $a\in M_i$ and $b\in M_j$ we get $i<j$, since both $a$ and $b$ are strictly positive. 

If $\mathbf{M}_i \in \set{ \mathbf{G}_3,\mathbf{D}_3}$ or $\mathbf{M}_j \in \set{ \mathbf{G}_3,\mathbf{D}_3}$, then there exists $c \in M_i \cup M_j$ with $c<e<b<a$ such that $ac = a$ and $bc = c$, or $ca = a$ and $cb = c$, yielding $\pair{a,c} \in \Theta(a,b)$ and, by convexity, $\pair{a,e}\in \Theta(a,b)$. 

Otherwise $\mathbf{M}_i = \mathbf{M}_j = \mathbf{C}_2^\partial$. So, by assumption, $i+1\neq j$ and $\mathbf{M}_{i+1} \neq \mathbf{C}_2^\partial$, i.e.,  $\mathbf{M}_{i+1} \in \set{\mathbf{C}_2, \mathbf{G}_3,\mathbf{D}_3}$. But then, there exists a  $c\in M_{i+1}$ such that $c<e<b<a$, $ac = a$, and $bc= c$, yielding $\pair{a,c} \in \Theta(a,b)$. Thus, by convexity, we get $\pair{a,e} \in \Theta(a,b)$.
\end{proof}
\end{lemma}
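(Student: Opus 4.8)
The plan is to prove the two inclusions separately, the forward one being routine and the reverse one requiring the structural hypothesis on the summands. I treat the case $a\cdot b=a$, the case $b\cdot a=a$ being symmetric. For $\Theta(a,b)\subseteq\Theta(a,e)$, since $\pair{a,e}\in\Theta(a,e)$ and congruences are compatible with the product, multiplying on the right by $b$ gives $\pair{a\cdot b,\,e\cdot b}=\pair{a,b}\in\Theta(a,e)$, which yields the inclusion at once.

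For the reverse inclusion it suffices to show $\pair{a,e}\in\Theta(a,b)$. First I would dispose of the case where $e$ lies weakly between $a$ and $b$: as congruence classes of a lattice are convex and $\pair{a,b}\in\Theta(a,b)$, the element $e$ lies in the same block, giving $\pair{a,e}\in\Theta(a,b)$; this also absorbs the trivial case $b=e$. In the remaining case I would use $a\cdot b=a$ together with \Cref{lemma:Idemp} to pin down the order: in the positive case $a\cdot b=a$ forces $b\le a$, in the negative case $a\le b$, and $a\neq e$ since $a=e$ would force $b=e$. Hence, up to the duality $a<b<e$ versus $e<b<a$, I may assume $e<b<a$ with $a,b$ both strictly positive. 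Since each of $\mathbf{C}_2,\mathbf{C}_2^\partial,\mathbf{G}_3,\mathbf{D}_3$ has at most one strictly positive non-identity element, $a$ and $b$ lie in distinct summands $\mathbf{M}_i$ and $\mathbf{M}_j$, and the nested-sum order forces $i<j$.

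The core mechanism, uniform across all cases, is to produce a strictly negative witness $c<e$ that collapses with $a$: if I can find such a $c$ with $a\cdot c=a$ and $b\cdot c=c$ (or the left-handed analogue), then multiplying $\pair{a,b}$ by $c$ gives $\pair{a\cdot c,\,b\cdot c}=\pair{a,c}\in\Theta(a,b)$, and convexity of the block together with $c<e<a$ delivers $\pair{a,e}$. If either $\mathbf{M}_i$ or $\mathbf{M}_j$ is $\mathbf{G}_3$ or $\mathbf{D}_3$, that summand supplies a negative element $c$ with the required absorption, where the nested-sum product rules govern the interaction between summands and the left/right choice matches $\mathbf{G}_3$ against $\mathbf{D}_3$.

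The hard part will be the last case, where both $\mathbf{M}_i$ and $\mathbf{M}_j$ are forced to equal $\mathbf{C}_2^\partial$ (the only two-element summand with a positive non-identity element), so that neither summand contains a negative element to serve as $c$. This is precisely where the hypothesis forbidding consecutive copies of $\mathbf{C}_2^\partial$ is indispensable: it guarantees $i+1\neq j$ and $\mathbf{M}_{i+1}\neq\mathbf{C}_2^\partial$, whence $\mathbf{M}_{i+1}\in\set{\mathbf{C}_2,\mathbf{G}_3,\mathbf{D}_3}$ contains a negative element $c$ with $i<i+1<j$. The nested-sum rules then give $a\cdot c=a$ (as $a$ sits in the earlier summand $\mathbf{M}_i$) and $b\cdot c=c$ (as $c$ sits in a summand earlier than $\mathbf{M}_j$), and the same multiply-then-convexity step closes the argument. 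Throughout I would invoke duality to reduce the $a<b<e$ case to the one treated above.
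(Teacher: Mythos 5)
Your proposal is correct and takes essentially the same route as the paper's proof: the same right-multiplication argument for $\Theta(a,b)\subseteq\Theta(a,e)$, the same convexity and duality reductions to the case $e<b<a$, and the same mechanism of producing a negative witness $c<e$ (from $\mathbf{M}_i$ or $\mathbf{M}_j$ when one of them is $\mathbf{G}_3$ or $\mathbf{D}_3$, and otherwise from $\mathbf{M}_{i+1}$ using the ban on consecutive copies of $\mathbf{C}_2^\partial$) that is collapsed onto $a$ and then closed by convexity. Your extra justifications (why $a$ and $b$ lie in distinct summands with $i<j$, and the left/right multiplication choice matching $\mathbf{G}_3$ against $\mathbf{D}_3$) merely make explicit details the paper leaves implicit.
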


\begin{thm}\label{thm:subdirect-equivalence}
Let $\bf M$ be a non-trivial finite idempotent distributive $\ell$-monoid. Then the following are equivalent:
\begin{enumerate}[label =\textup{(\arabic*)}]
\item $\bf M$ is subdirectly irreducible.
\item $\mathbf{M} \cong \bigboxplus^{n}_{i=1}\mathbf{M}_i$ for some $n\in \mathbb{N}\setminus\set{0}$ with $\mathbf{M}_i \in \set{\mathbf{C}_2, \mathbf{C}_2^\partial, \mathbf{G}_3,\mathbf{D}_3}$ such that if  $\mathbf{M}_i = \mathbf{M}_{i+1}$, then $\mathbf{M}_i \in\set{ \mathbf{G}_3,\mathbf{D}_3}$ for every $i\in \set{1,\dots,n-1}$.
\item $\Con(\mathbf{M})$ is a chain.
\end{enumerate}
\begin{proof}
The implication (1) $\Rightarrow$ (2) follows from \Cref{l:irred e-sum} and the implication (3) $\Rightarrow$ (1) is clear, since $\bf M$ is finite. 

For the implication (2) $\Rightarrow$ (3) assume that (2) holds  and consider two principal congruences $\Theta(a,b)$ and $\Theta(c,d)$ for $a,b,c,d \in M$. If $a=b$ or $c=d$, then we have $\Theta(a,b) = \Delta_{M}$ or $\Theta(c,d)= \Delta_{M}$, yielding $\Theta(a,b)\subseteq\Theta(c,d)$  or  $\Theta(c,d)\subseteq\Theta(a,b)$. So suppose that $a\neq b$ and $c\neq d$. Then, by \Cref{lemma:Idemp}, $a\cdot b \in \set{a,b}$ and $c \cdot d \in \set{c,d}$. Thus, by \Cref{l:e-sum subd irred lemma}, $\Theta(a,b) \in \set{\Theta(a,e),\Theta(b,e)}$ and $\Theta(c,d) \in \set{\Theta(c,e),\Theta(d,e)}$. 
Without loss of generality we may assume that $\Theta(a,b) = \Theta(a,e)$ and $\Theta(c,d) = \Theta(c,e)$. There are two cases, either $a\cdot c = a$ or $a\cdot c = c$. If $a\cdot c = a$, we get $\pair{a,c} = \pair{a\cdot c, e\cdot c} \in \Theta(a,e)$, so $\Theta(c,e) \subseteq \Theta(a,e)$.  If $a\cdot c = c$, we get $\pair{a,c} = \pair{a\cdot e,a\cdot c} \in \Theta(c,e)$, so $\Theta(a,e) \subseteq \Theta(c,e)$. Hence we have either $\Theta(a,b) \subseteq \Theta(c,d)$ or $\Theta(c,d) \subseteq \Theta(a,b)$. So $\Con(\mathbf{M})$ is a chain.
\end{proof}
\end{thm}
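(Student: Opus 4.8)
The plan is to prove the equivalence of the three conditions in \Cref{thm:subdirect-equivalence} by establishing the cycle (1) $\Rightarrow$ (2) $\Rightarrow$ (3) $\Rightarrow$ (1), since this lets each implication draw on the most convenient prior result. The implication (1) $\Rightarrow$ (2) is essentially immediate from \Cref{l:irred e-sum}, which already says that any finite subdirectly irreducible idempotent ordered monoid has a nested sum decomposition with no two consecutive $\mathbf{C}_2$'s or $\mathbf{C}_2^\partial$'s; one only needs to note that a finite subdirectly irreducible member of $\Sem\Id$ is totally ordered (by \Cref{cor:subdirect irred in Sem}) and hence an idempotent ordered monoid. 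The implication (3) $\Rightarrow$ (1) is trivial: a finite algebra whose congruence lattice is a chain has a unique atom above $\Delta_M$, hence a monolith, and is therefore subdirectly irreducible.

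The substantive work lies in (2) $\Rightarrow$ (3), where the goal is to show $\Con(\mathbf{M})$ is a chain. Since $\mathbf{M}$ is finite, every congruence is a join of principal congruences, so it suffices to show that any two principal congruences $\Theta(a,b)$ and $\Theta(c,d)$ are comparable. First I would reduce every nontrivial principal congruence to one of the form $\Theta(x,e)$: if $a\neq b$ then by \Cref{lemma:Idemp}(iv) we have $ab\in\set{a,b}$, and the normal form hypothesis on the decomposition lets us invoke \Cref{l:e-sum subd irred lemma} to conclude $\Theta(a,b)\in\set{\Theta(a,e),\Theta(b,e)}$. This is the key structural input — it is precisely where the ``no consecutive $\mathbf{C}_2$ or $\mathbf{C}_2^\partial$'' condition is used, so the verification of the hypotheses of \Cref{l:e-sum subd irred lemma} must be done carefully. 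After this reduction, comparing $\Theta(a,e)$ with $\Theta(c,e)$ becomes a short computation using idempotency: since $ac\in\set{a,c}$, in the case $ac=a$ one gets $\pair{a,c}=\pair{ac,ec}\in\Theta(a,e)$, so $\Theta(c,e)\subseteq\Theta(a,e)$, and symmetrically in the case $ac=c$.

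The main obstacle I anticipate is not the algebraic manipulation but making the reduction step fully rigorous, namely ensuring that \Cref{l:e-sum subd irred lemma} genuinely applies to every pair $a\neq b$ with $ab\in\set{a,b}$. The lemma is stated for pairs satisfying $a\cdot b=a$ or $b\cdot a=a$, so one should check that $ab\in\set{a,b}$ together with idempotency indeed produces such a relation (e.g.\ if $ab=b$ then $ba\in\set{a,b}$ and one argues on $ba$, or one passes to the symmetric case). A secondary subtlety is the convexity argument inside \Cref{l:e-sum subd irred lemma} itself, which relies on lattice congruence classes being convex intervals; since we are free to assume that lemma, the remaining task is purely bookkeeping. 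Overall the proof is short, with the conceptual weight carried entirely by \Cref{l:irred e-sum} and \Cref{l:e-sum subd irred lemma}, and the final assembly reduces to the observation that two congruences of the form $\Theta(x,e)$, $\Theta(y,e)$ are always comparable via the product $xy\in\set{x,y}$.
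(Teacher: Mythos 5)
Your proposal is correct and matches the paper's proof essentially step for step: (1) $\Rightarrow$ (2) via \Cref{l:irred e-sum}, (3) $\Rightarrow$ (1) from finiteness, and (2) $\Rightarrow$ (3) by reducing principal congruences to the form $\Theta(x,e)$ via \Cref{l:e-sum subd irred lemma} and comparing them through the product $xy \in \set{x,y}$, with the same computations $\pair{a,c} = \pair{ac,ec}$ and $\pair{a,c} = \pair{ae,ac}$. The subtlety you flag about applying \Cref{l:e-sum subd irred lemma} when $ab = b$ is handled exactly as you suggest, by symmetry in $a$ and $b$, which is what the paper's conclusion $\Theta(a,b) \in \set{\Theta(a,e),\Theta(b,e)}$ encodes.
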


Arguing similarly as in \Cref{thm:counting idemp chains}, yields  the following counting result:

\begin{cor}\label{cor:counting subd}
The number $\mathbf{S}(n)$ of subdirectly irreducible idempotent ordered monoids with $n\in \mathbb{N}\setminus\set{0}$ elements (up to isomorphism) is recursively defined by $\mathbf{S}(1) = 1$, $\mathbf{S}(2) = 2$,  $\mathbf{S}(3) = 4$, and
\begin{equation*}
\mathbf{S}(n) = \mathbf{S}(n-1) + 2\mathbf{S}(n-2) + 2\mathbf{S}(n-3) \quad (n>3).
\end{equation*}
\end{cor}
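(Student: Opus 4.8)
The plan is to reduce the counting problem to a purely combinatorial one about sequences, exactly as in the proof of \Cref{thm:counting idemp chains}, and then to set up and solve a small system of recurrences. By \Cref{thm:subdirect-equivalence} together with the uniqueness of the nested sum decomposition from \Cref{l: idemp ord monoid e-sum}, the subdirectly irreducible idempotent ordered monoids with $n$ elements are, up to isomorphism, in bijection with the finite sequences $(\mathbf{M}_1,\dots,\mathbf{M}_k)$ over $\set{\mathbf{C}_2,\mathbf{C}_2^\partial,\mathbf{G}_3,\mathbf{D}_3}$ satisfying the adjacency condition ``$\mathbf{M}_i = \mathbf{M}_{i+1}$ implies $\mathbf{M}_i \in \set{\mathbf{G}_3,\mathbf{D}_3}$''. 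Since each occurrence of $\mathbf{C}_2$ or $\mathbf{C}_2^\partial$ contributes one non-identity element and each occurrence of $\mathbf{G}_3$ or $\mathbf{D}_3$ contributes two, the nested sum $\bigboxplus_{i=1}^k \mathbf{M}_i$ has $1 + \sum_{i=1}^k(\abs{M_i}-1)$ elements. Thus $\mathbf{S}(n)$ is precisely the number of such admissible sequences of total weight $n$, and the base cases $\mathbf{S}(1)=1$, $\mathbf{S}(2)=2$, $\mathbf{S}(3)=4$ follow by direct enumeration (the empty sequence; $(\mathbf{C}_2)$ and $(\mathbf{C}_2^\partial)$; and $(\mathbf{G}_3),(\mathbf{D}_3),(\mathbf{C}_2,\mathbf{C}_2^\partial),(\mathbf{C}_2^\partial,\mathbf{C}_2)$).

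For the recurrence I would peel off the first summand, the only difficulty being that the adjacency condition couples $\mathbf{M}_1$ with $\mathbf{M}_2$, so one cannot simply delete $\mathbf{M}_1$ when it is a $\mathbf{C}_2$ or $\mathbf{C}_2^\partial$. To handle this I would introduce the auxiliary quantity $a(n)$ counting the admissible sequences of weight $n$ whose first summand is $\mathbf{C}_2$; the bijection interchanging $\mathbf{C}_2$ and $\mathbf{C}_2^\partial$ in every coordinate (which preserves both admissibility and weight and fixes $\mathbf{G}_3,\mathbf{D}_3$) shows that the number starting with $\mathbf{C}_2^\partial$ is also $a(n)$. A sequence starting with $\mathbf{G}_3$ or $\mathbf{D}_3$ imposes no restriction on its successor, so deleting this first summand yields a free admissible sequence of weight $n-2$; hence there are $2\mathbf{S}(n-2)$ such sequences, and classifying by the first summand gives $\mathbf{S}(n) = 2a(n) + 2\mathbf{S}(n-2)$ for $n\geq 3$. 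Finally, a sequence starting with $\mathbf{C}_2$ is obtained by prepending $\mathbf{C}_2$ to an admissible sequence of weight $n-1$ whose first summand is not $\mathbf{C}_2$, giving $a(n) = \mathbf{S}(n-1) - a(n-1)$ for $n\geq 2$.

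It then remains to eliminate $a$. Substituting $2a(n) = \mathbf{S}(n) - 2\mathbf{S}(n-2)$ and the analogous identity at $n-1$ into $2a(n) = 2\mathbf{S}(n-1) - 2a(n-1)$ gives, for $n\geq 4$,
\[
\mathbf{S}(n) - 2\mathbf{S}(n-2) = 2\mathbf{S}(n-1) - \bigl(\mathbf{S}(n-1) - 2\mathbf{S}(n-3)\bigr) = \mathbf{S}(n-1) + 2\mathbf{S}(n-3),
\]
which rearranges to the claimed recurrence $\mathbf{S}(n) = \mathbf{S}(n-1) + 2\mathbf{S}(n-2) + 2\mathbf{S}(n-3)$. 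I expect the main obstacle to be bookkeeping at the boundary: one must confirm that the relations $\mathbf{S}(n) = 2a(n)+2\mathbf{S}(n-2)$ and $a(n)=\mathbf{S}(n-1)-a(n-1)$ hold without correction terms in the range where they are invoked (i.e.\ for indices $\geq 3$, after the empty-sequence contribution has been absorbed into the separately verified base cases), so that the elimination is valid precisely for $n>3$. A cleaner but essentially equivalent alternative would be to phrase the entire argument as a transfer-matrix computation over the states $\set{\mathbf{C}_2,\mathbf{C}_2^\partial,\mathbf{G}_3\text{-}\mathbf{D}_3}$ and read off the characteristic recurrence, but the two-term system above is the most economical.
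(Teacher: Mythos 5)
Your proof is correct and takes essentially the same route as the paper, whose entire argument is the remark that one should argue as in \Cref{thm:counting idemp chains}, i.e.\ peel off the first summand of the unique nested sum decomposition given by \Cref{l: idemp ord monoid e-sum} and \Cref{thm:subdirect-equivalence}. Your auxiliary count $a(n)$ together with the identities $\mathbf{S}(n)=2a(n)+2\mathbf{S}(n-2)$ and $a(n)=\mathbf{S}(n-1)-a(n-1)$ is precisely the bookkeeping the adjacency constraint forces in that argument, and the elimination and boundary indices check out (e.g.\ $a(3)=1$ and $\mathbf{S}(4)=10$ agree with direct enumeration).
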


It follows from \Cref{thm:subdirect-equivalence}, that if the algebras $\mathbf{C}_2$ and $\mathbf{C}_2^\partial$ do not occur in  the nested sum decomposition of a non-trivial finite idempotent ordered monoid,  then it is subdirectly irreducible. As the next example shows this is no longer true in the infinite case.

\begin{exmp}
Consider the chain  $\omega$ of natural numbers and set for $n\in \omega$,  $\mathbf{M}_n = \mathbf{G}_3$ and $\mathbf{M} = \bigboxplus_{n \in \omega} \mathbf{M}_n$. Then, if we define for $n\in \omega$ the congruence $\theta_n = \Delta_{M} \cup (\bigcup_{m\geq n} M_m)^2$, we get $\bigcap_{n\in \omega} \theta_n = \Delta_{M}$. Hence $\mathbf{M}$ is not subdirectly irreducible. Thus, there are infinite nested sums that are not subdirectly irreducible even though they do not have any occurrence of $\mathbf{C}_2$ or $\mathbf{C}_2^\partial$ in their decomposition. 
\end{exmp}

The next theorem is the correct generalization of \Cref{thm:subdirect-equivalence}. 
\begin{thm}\label{thm:general subd equivalence}
Let  $\mathbf{M} = \bigboxplus_{c\in C} \mathbf{M}_c$ be an  idempotent ordered monoid such that for all $c\in C$, $\mathbf{M}_c  \in \set{\mathbf{C}_2, \mathbf{C}_2^\partial, \mathbf{G}_3,\mathbf{D}_3}$. Then the following are equivalent:
\begin{enumerate}[label=\textup{(\arabic*)}]
\item $\bf M$ is subdirectly irreducible.
\item $\mathbf{C}$ contains a maximal element and for all $d_1,d_2 \in C$ with $d_1\sqsubset d_2$ and $\mathbf{M}_{d_1}= \mathbf{M}_{d_2} \in \set{\mathbf{C}_2, \mathbf{C}_2^\partial}$ there exists  $b\in C$ such that $d_1\sqsubset  b \sqsubset d_2$ and $\mathbf{M}_b \neq \mathbf{M}_{d_1} $.
\item  $\Con(\mathbf{M})$ is a chain containing an atom.
\end{enumerate}
\end{thm}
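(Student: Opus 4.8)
The plan is to prove the cycle (3) $\Rightarrow$ (1) $\Rightarrow$ (2) $\Rightarrow$ (3). The implication (3) $\Rightarrow$ (1) is immediate: in a congruence lattice that is a chain an atom is the least non-trivial congruence, hence the monolith, so $\mathbf{M}$ is subdirectly irreducible. For the other two implications the guiding observation is that, by \Cref{thm:general e-sum decomposition} together with \Cref{thm:e-sum homomorphisms}, congruences of $\mathbf{M}$ can be analysed summand by summand: splitting $\mathbf{M}$ as $(\bigboxplus_{d \sqsubset c}\mathbf{M}_d)\boxplus(\bigboxplus_{c\sqsubseteq d}\mathbf{M}_d)$ and collapsing the inner (final-segment) part to $e$ yields, for each $c\in C$, a congruence $\theta_c = \Delta_M \cup (\bigcup_{c\sqsubseteq d} M_d)^2$. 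These tail congruences are the objects I would manipulate throughout.

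For (1) $\Rightarrow$ (2) I would argue contrapositively, splitting the failure of (2) into two cases. If $C$ has no maximal element, then since $\sqsubseteq$ is total every $c\in C$ has some $c'$ with $c \sqsubset c'$; choosing $c$ beyond the summands of any given pair of distinct elements shows that $\{\theta_c\}_{c\in C}$ separates all distinct pairs, so $\bigcap_{c\in C}\theta_c = \Delta_M$ while each $\theta_c \neq \Delta_M$, and $\mathbf{M}$ is not subdirectly irreducible (this is the abstract form of the $\omega$-indexed example). If instead the density clause fails, there are $d_1 \sqsubset d_2$ with $\mathbf{M}_{d_1} = \mathbf{M}_{d_2} \in \set{\mathbf{C}_2,\mathbf{C}_2^\partial}$ and the entire interval $[d_1,d_2]$ of that same type; writing $a,b$ for the non-identity elements of $\mathbf{M}_{d_1},\mathbf{M}_{d_2}$, I would check via the absorption identities of the nested sum that $\Theta(a,b) = \Delta_M \cup [a,b]^2$ and that this meets the tail congruence $\theta_{d_2}$ only in $\set{b}$, so $\Theta(a,b)\cap\theta_{d_2} = \Delta_M$ — again two non-trivial congruences with trivial intersection.

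The substantial implication is (2) $\Rightarrow$ (3). First I would upgrade \Cref{l:e-sum subd irred lemma} to the present infinite setting: for $a\neq b$ with $ab = a$ (or $ba = a$) one still has $\Theta(a,b) = \Theta(a,e)$. The only new point is the ``otherwise'' case $\mathbf{M}_{c_1} = \mathbf{M}_{c_2} = \mathbf{C}_2^\partial$ (or dually $\mathbf{C}_2$), where $a\in M_{c_1}$, $b\in M_{c_2}$, $c_1\sqsubset c_2$, and $e < b < a$; here the density clause of (2), rather than finite non-consecutiveness, supplies an index $c'$ with $c_1 \sqsubset c' \sqsubset c_2$ and $\mathbf{M}_{c'} \neq \mathbf{C}_2^\partial$, hence a negative element $z\in M_{c'}$ with $az = a$ and $bz = z$, and convexity then gives $(a,e)\in\Theta(a,b)$ since $z < e < a$. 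Granting this, every principal congruence equals some $\Theta(x,e)$, and the argument of \Cref{thm:subdirect-equivalence} shows these are pairwise comparable; as each congruence is the join of the principal congruences below it and these principals now form a chain (so the sets of principals below two congruences are downward closed, hence comparable), $\Con(\mathbf{M})$ is a chain.

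It remains to produce an atom, and this is where the maximal element $m = \max C$ enters. I would show that $\beta := \Delta_M \cup M_m^2$, the congruence collapsing the innermost summand $\mathbf{M}_m$, lies below every non-trivial congruence $\theta$: such a $\theta$ contains some $\Theta(x,e)$ with $x\neq e$, and since $\mathbf{M}_m$ is innermost its non-identity elements are the ones closest to $e$, so convexity forces the relevant non-identity element of $\mathbf{M}_m$ into the $e$-class of $\theta$, after which the absorption products together with the simplicity of $\mathbf{M}_m$ (\Cref{ex:simple chains}) force all of $M_m^2 \subseteq \theta$. Thus $\beta$ is the monolith and $\Con(\mathbf{M})$ is a chain with an atom. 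I expect this final step to be the main obstacle: unlike the finite case one cannot appeal to counting or a Jónsson-style reduction, so the atom must be located and verified directly, and the claim that collapsing a single non-identity element already collapses the whole innermost summand must be run case by case according to whether $\mathbf{M}_m$ is $\mathbf{C}_2$, $\mathbf{C}_2^\partial$, $\mathbf{G}_3$, or $\mathbf{D}_3$, using the specific products recorded in \Cref{lemma:Idemp} and \Cref{l:top-bot-cases}.
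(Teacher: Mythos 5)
Your proposal is correct. Implications (3)\,$\Rightarrow$\,(1) and (1)\,$\Rightarrow$\,(2) essentially coincide with the paper's: the paper likewise uses the tail congruences $\theta_c = \Delta_M \cup (\bigcup_{c\sqsubseteq d} M_d)^2$ when $\mathbf{C}$ has no maximal element, and in the density-failure case it pairs the congruence collapsing the bottoms (resp.\ tops) of the interval $[d_1,d_2]$ --- which is exactly your $\Theta(a,b) = \Delta_M \cup [a,b]^2$ --- against $\mu = \Delta_M \cup M_m^2$ rather than against your tail congruence $\theta_{d_2}$; either choice yields two non-trivial congruences with trivial intersection.

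Where you genuinely diverge is in (2)\,$\Rightarrow$\,(3). The paper does \emph{not} re-prove \Cref{l:e-sum subd irred lemma} in the infinite setting: given principal congruences $\Theta^{\mathbf{M}}(a_1,b_1)$ and $\Theta^{\mathbf{M}}(a_2,b_2)$, it uses the density clause to build a \emph{finite} subchain $D$ of $C$ containing the summands of the four generators, enlarged by separating indices so that $\mathbf{N} = \bigboxplus_{d\in D}\mathbf{M}_d$ has no consecutive occurrences of $\mathbf{C}_2$ or $\mathbf{C}_2^\partial$; then $\Con(\mathbf{N})$ is a chain by \Cref{thm:subdirect-equivalence}, the generating pairs are comparable inside $\mathbf{N}$ (via $\Theta^{\mathbf{N}}(a,b) \subseteq \Theta^{\mathbf{M}}(a,b)\cap N^2$), and comparability lifts to $\mathbf{M}$. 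You instead generalize the lemma directly, with density supplying the separating summand $c'$ where the finite proof used $\mathbf{M}_{i+1}$ (your verification there is sound: $z \in M_{c'}$ negative gives $az=a$, $bz=z$, hence $\pair{a,z}\in \Theta(a,b)$ and convexity finishes), reduce every principal congruence to the form $\Theta(x,e)$, and rerun the finite comparability computation; your down-set/join step extending comparability from principal to arbitrary congruences is correct and is precisely what the paper leaves implicit. For the atom, both proofs use $\Delta_M \cup M_m^2$, but the paper only checks atomhood (simplicity of $\mathbf{M}_m$ plus maximality of $m$; in a chain an atom is automatically the monolith), whereas you verify directly that it lies below every non-trivial congruence. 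That stronger claim holds and in fact needs none of the sign case-split you anticipate: if $\pair{x,e}\in\theta$ with $x\notin M_m$ and $u \in M_m\setminus\set{e}$, then $\pair{x,u} = \pair{xu,eu}\in\theta$, so $\pair{u,e}\in\theta$ by transitivity, and simplicity of $\mathbf{M}_m$ gives $M_m^2 \subseteq \theta$ (the case $x\in M_m$ is immediate from simplicity). In sum: your route is longer but self-contained and exhibits the monolith explicitly; the paper's is shorter because it recycles the finite theorem, at the cost of the finite-subchain and congruence-restriction bookkeeping.
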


\begin{proof}
The implication (3) $\Rightarrow$ (1) is clear.

(1) $\Rightarrow$ (2): Suppose that $\bf M$ is subdirectly irreducible. If $\bf C$ does not contain a maximal element, then we can define for every $c\in C$ the congruence $\theta_c = \Delta_M \cup (\bigcup_{c\sqsubseteq d} M_d)^2$ such that $\Delta_{ M} = \bigcap_{c\in C} \theta_c$, contradicting the fact that~$\bf M$ is subdirectly irreducible.
So $\bf C$ must contain a maximal element $m$ and we can define the congruence $\mu =  M_m^2 \cup \Delta_{M}$. Now suppose for a contradiction that there exist $d_1,d_2\in C$ with $d_1\sqsubset d_2$ such that for all $b\in [d_1,d_2]$, $\mathbf{M}_b = \mathbf{C}_2$. Then for $M_b\setminus\set{e} = \set{\bot_b}$ the relation  $\theta = \Delta_{ M} \cup \set{\pair{\bot_{b_1},\bot_{b_2}} \mid b_1,b_2 \in [d_1,d_2]}$ is a congruence of $\bf M$ and clearly $\mu \cap \theta = \Delta_{ M}$, a contradiction.  Dually we also get a contradiction if for all $b\in [d_1,d_2]$, $\mathbf{M}_b = \mathbf{C}_2^\partial$. Hence, for all $d_1,d_2 \in C$ with $d_1\sqsubset d_2$ and $\mathbf{M}_{d_1}= \mathbf{M}_{d_2} \in \set{\mathbf{C}_2, \mathbf{C}_2^\partial}$ there exists  $b\in C$ such that $d_1\sqsubset  b \sqsubset d_2$ and $\mathbf{M}_b \neq \mathbf{M}_{d_1} $.

(2) $\Rightarrow$ (3): Suppose that $\bf C$ contains a maximal element $m$ and for all $d_1,d_2 \in C$ with $d_1\sqsubset d_2$ and $\mathbf{M}_{d_2} = \mathbf{M}_{d_1} \in \set{\mathbf{C}_2, \mathbf{C}_2^\partial}$ there exists  $b\in C$ such that $d_1\sqsubset  b \sqsubset d_2$ and $\mathbf{M}_b \neq \mathbf{M}_{d_1} $. First note that $\mu = M_m^2 \cup \Delta_{M}$ is an atom in $\Con(\mathbf{M})$, since $\mathbf{M}_m$ is simple and $m$ is the maximal element of $\bf C$. Now let $\Theta^{\bf M}(a_1,b_1)$ and  $\Theta^{\bf M}(a_2,b_2)$ be  principal congruences for $a_1,a_2,b_1,b_2 \in M$.  Then, by assumption, there exists a \emph{finite} subchain $\bf D$ of $\bf C$ and nested sum $\mathbf{N} =  \bigboxplus_{d\in D} \mathbf{M}_d$  with $a_1,a_2,b_1,b_2 \in N$ such that $\bf N$ does not  contain consecutive occurrences of $\mathbf{C}_2$ or $\mathbf{C}_2^\partial$, respectively. But then, by \Cref{thm:subdirect-equivalence}, $\Con(\mathbf{N})$ is a chain and  we have either $\Theta(a_1,b_1)^{\bf M} \cap N^2 \subseteq \Theta^{\bf M}(a_2,b_2)\cap N^2$ or $\Theta^{\bf M}(a_2,b_2) \cap N^2 \subseteq \Theta^{\bf M}(a_1,b_1)\cap N^2$. Hence $\Theta^{\bf M}(a_1,b_1)  \subseteq \Theta^{\bf M}(a_2,b_2)$ or $\Theta^{\bf M}(a_2,b_2)  \subseteq \Theta^{\bf M}(a_1,b_1)$. Thus $\Con(\mathbf{M})$ is a chain containing an atom.
\end{proof}

\begin{cor}
Up to isomorphism the algebras $\mathbf{C}_2$, $\mathbf{C}_2^\partial$, $\mathbf{G}_3$, and $\mathbf{D}_3$ are exactly the simple semilinear idempotent distributive $\ell$-monoids.
\end{cor}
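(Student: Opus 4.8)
The plan is to reduce the statement to the nested sum structure theory and then identify the simple algebras as precisely the one-summand nested sums. For the easy inclusion I would invoke \Cref{ex:simple chains}, where it is recorded that each of $\mathbf{C}_2$, $\mathbf{C}_2^\partial$, $\mathbf{G}_3$, and $\mathbf{D}_3$ is a simple idempotent ordered monoid; since every idempotent ordered monoid lies in $\Sem\Id$ (the variety generated by $\IdOM$), all four are simple members of $\Sem\Id$.

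For the substantive direction, let $\mathbf{M}$ be a simple member of $\Sem\Id$. A simple algebra is non-trivial and subdirectly irreducible, so \Cref{cor:subdirect irred in Sem} forces $\mathbf{M}$ to be totally ordered, i.e. an idempotent ordered monoid. Then \Cref{thm:general e-sum decomposition} yields $\mathbf{M} \cong \bigboxplus_{c\in C}\mathbf{M}_c$ with each $\mathbf{M}_c \in \set{\mathbf{C}_2, \mathbf{C}_2^\partial, \mathbf{G}_3, \mathbf{D}_3}$, and the whole task collapses to showing that the index chain $\mathbf{C}$ has exactly one element.

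The key step is to produce a proper non-trivial congruence whenever $\abs{C}\geq 2$. Since $\mathbf{M}$ is subdirectly irreducible, the implication (1)$\Rightarrow$(2) of \Cref{thm:general subd equivalence} guarantees that $\mathbf{C}$ has a maximal element $m$, and the proof of that theorem already exhibits $\mu = \Delta_M \cup M_m^2$ as a congruence collapsing the top summand $\mathbf{M}_m$. If $\abs{C}\geq 2$ there is some $c\neq m$, whose non-identity element is $\mu$-related to nothing outside $M_m$, so $\mu \neq M^2$; and $\abs{M_m}\geq 2$ gives $\mu \neq \Delta_M$. Thus $\mu$ is a proper non-trivial congruence, contradicting simplicity. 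Hence $\abs{C}=1$ and $\mathbf{M}\cong\mathbf{M}_c$ is one of the four algebras, which completes the characterization.

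I expect no serious obstacle here: once simplicity is seen to force a single summand, the corollary is immediate. The only point requiring mild care is the claim that $\mu = \Delta_M \cup M_m^2$ is genuinely a congruence in the possibly infinite setting, since \Cref{thm:e-sum homomorphisms} is stated only for finite nested sums; I would sidestep this by citing the proof of \Cref{thm:general subd equivalence}, where $\mu$ is established as an atom of $\Con(\mathbf{M})$, rather than rebuilding the collapsing homomorphism from scratch.
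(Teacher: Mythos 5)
Your proposal is correct and follows exactly the route the paper intends: the corollary is stated without proof as an immediate consequence of \Cref{cor:subdirect irred in Sem}, \Cref{thm:general e-sum decomposition}, and \Cref{thm:general subd equivalence}, and your unfolding (simple $\Rightarrow$ subdirectly irreducible $\Rightarrow$ totally ordered, then the congruence $\mu = \Delta_M \cup M_m^2$ from the proof of \Cref{thm:general subd equivalence} forcing the index chain to be a singleton) is precisely that argument. Your closing caution about $\mu$ in the infinite setting is well placed and correctly resolved by citing where the paper establishes $\mu$ as a congruence, indeed an atom of $\Con(\mathbf{M})$.
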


\section{The lattice of subvarieties of {\sf SemIdDLM}}\label{sec5}
In this section we use the nested sum representation and the characterization of the finite subdirectly irreducibles to investigate the lattice of subvarieties of $\Sem\Id$.

Recall that a poset $\alg{P,\leq}$ is called \emph{well-founded} if it neither contains an infinite antichain nor an infinite descending chain. For two algebras $\bf A$ and~$\bf B$ we define the relation $\leq_{HS}$ ($\leq_{IS}$) by $\mathbf{A} \leq_{HS} \mathbf{B}$ if and only if $\mathbf{A}\in HS(\mathbf{B})$ ($\mathbf{A} \leq_{IS} \mathbf{B}$ if and only if $\mathbf{A}\in IS(\mathbf{B})$). Moreover, for a variety $\var{V}$ of finite type we denote by $\var{V}_\ast$ a set which contains exactly one algebra from each of the isomorphism classes of the finite subdirectly irreducible members of $\var{V}$. Then $\alg{\var{V}_\ast,\leq_{HS}}$  and $\alg{\var{V}_\ast,\leq_{IS}}$ are posets. The following result by Davey shows how to describe the lattice of subvarieties of $\var{V}$ using $\alg{\var{V}_\ast,\leq_{HS}}$ if $\var{V}$ is congruence-distributive and locally finite, where we recall that a complete lattice $\mathbf{D} = \alg{D,\meet,\join}$ is called \emph{completely distributive} if for all index sets $I, J$ and $\set{a_{i,j}}_{i\in I,j\in J} \subseteq D$ we have 
\begin{equation*}
\bigmeet\left\{ \bigjoin \set{a_{i,j} \mid j\in J } \mid i \in I\right\} = \bigjoin \left\{\bigmeet\set{a_{i,f(i)} \mid i\in I} \mid f\colon I \to J\right\}.
\end{equation*}

\begin{thm}[{\cite[Theorem 3.3]{Davey1979}}]\label{thm:subvariety lattice}
Let $\var{V}$ be a congruence-distributive, locally finite variety of finite type. Then the lattice of subvarieties of $\var{V}$  is completely distributive and is isomorphic to the lattice of order ideals of the poset $\pair{\var{V}_\ast,\leq_{HS}}$ via the map  $\mathcal{I} \mapsto V(\mathcal{I})$ mapping an order ideal of $\pair{\var{V}_\ast,\leq_{HS}}$ to the subvariety of $\var{V}$ that it generates.
\end{thm}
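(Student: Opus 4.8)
The plan is to exhibit two mutually inverse, inclusion-preserving maps between the subvariety lattice of $\var{V}$ and the lattice $\mathcal{O}$ of order ideals of $\pair{\var{V}_\ast,\leq_{HS}}$, and then to transfer complete distributivity from $\mathcal{O}$. In one direction I send a subvariety $\var{W}$ to $\var{W}_\ast := \set{\mathbf{A} \in \var{V}_\ast \mid \mathbf{A} \in \var{W}}$; in the other I send an order ideal $\mathcal{I}$ to the subvariety $V(\mathcal{I})$ it generates. That $\var{W}_\ast$ is an order ideal is immediate: if $\mathbf{A} \in \var{W}_\ast$ and $\mathbf{B} \leq_{HS} \mathbf{A}$ with $\mathbf{B}\in\var{V}_\ast$, then $\mathbf{B} \in HS(\mathbf{A}) \subseteq \var{W}$, whence $\mathbf{B} \in \var{W}_\ast$. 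Both assignments are visibly monotone, so once they are shown to be mutually inverse the resulting order isomorphism is automatically a lattice isomorphism.

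For the composite $\var{W} \mapsto \var{W}_\ast \mapsto V(\var{W}_\ast)$ I would prove $\var{W} = V(\var{W}_\ast)$. The inclusion $V(\var{W}_\ast) \subseteq \var{W}$ is clear, and for the converse it suffices to check that every equation valid in $V(\var{W}_\ast)$ is valid in $\var{W}$. If an equation fails in $\var{W}$, then, evaluating its variables and passing to the subalgebra they generate, which is finite by local finiteness, it already fails in some finite $\mathbf{B} \in \var{W}$; writing $\mathbf{B}$ as a subdirect product of finite subdirectly irreducible quotients $\mathbf{B}/\theta_k \in \var{W}$, the equation must fail in some coordinate $\mathbf{B}/\theta_k$, which is isomorphic to a member of $\var{W}_\ast$. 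Hence the equation fails in $V(\var{W}_\ast)$, giving $\var{W} \subseteq V(\var{W}_\ast)$.

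The main work, and the step I expect to be the crux, is the other composite $\mathcal{I} \mapsto V(\mathcal{I}) \mapsto V(\mathcal{I})_\ast$, where I must prove $V(\mathcal{I})_\ast = \mathcal{I}$. Only $V(\mathcal{I})_\ast \subseteq \mathcal{I}$ needs argument, so let $\mathbf{A}$ be a finite subdirectly irreducible in $V(\mathcal{I})$; I want a single $\mathbf{B} \in \mathcal{I}$ with $\mathbf{A} \leq_{HS} \mathbf{B}$, after which downward closure of $\mathcal{I}$ gives $\mathbf{A} \in \mathcal{I}$. Since $\mathbf{A}$ is $n$-generated it is a quotient of $\mathbf{F}_{V(\mathcal{I})}(n)$, which is finite by local finiteness and embeds subdirectly into a product of members of $\mathcal{I}$ (its projections being subalgebras $\mathbf{C}_j \in S(\mathbf{B}_j)$ with $\mathbf{B}_j \in \mathcal{I}$). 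Because this free algebra is finite, its congruence lattice is finite, so finitely many projections already separate points; thus $\mathbf{A} \in HSP(\set{\mathbf{C}_1,\dots,\mathbf{C}_r})$. Now the finite form of Jónsson's Lemma (\Cref{l:jonsson}) applies: as $\mathbf{A}$ is subdirectly irreducible in $V(\set{\mathbf{C}_1,\dots,\mathbf{C}_r})$, it lies in $HS(\mathbf{C}_{j_0})$ for some $j_0$, hence in $HS(\mathbf{B}_{j_0})$, i.e.\ $\mathbf{A} \leq_{HS} \mathbf{B}_{j_0}$. The delicate point is precisely the passage from the possibly infinite family $\mathcal{I}$ to a finite subfamily; local finiteness is what legitimizes this reduction and lets me use the finite rather than the ultraproduct version of Jónsson's Lemma. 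Alternatively one could argue directly from \Cref{thm:jonsson}, using that a finite subalgebra of an ultraproduct of the $\mathbf{B}_i$ embeds into some $\mathbf{B}_i$.

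Finally, having established the order isomorphism, I would record that the lattice $\mathcal{O}$ of order ideals of any poset is completely distributive: its meets and joins are intersections and unions, the complete distributive law holds in the ambient powerset lattice, and $\mathcal{O}$ is closed under arbitrary intersections and unions, so the law is inherited. Transporting this along the isomorphism shows that the subvariety lattice of $\var{V}$ is completely distributive, completing the proof.
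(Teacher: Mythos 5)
Your proposal is correct. Note that the paper itself gives no proof of this statement: it is imported verbatim from Davey (\Cref{thm:subvariety lattice} is cited as his Theorem~3.3), so there is nothing internal to compare against; your argument supplies exactly the standard ingredients underlying Davey's result. Each step checks out: $\var{W}_\ast$ is an order ideal because subdirect irreducibility is an intrinsic property of an algebra; $\var{W}=V(\var{W}_\ast)$ follows since a locally finite variety is generated by its finite subdirectly irreducible members (congruence-distributivity is not even needed here); and the crux $V(\mathcal{I})_\ast\subseteq\mathcal{I}$ is handled correctly by passing through the finite free algebra $\mathbf{F}_{V(\mathcal{I})}(n)\in ISP(\mathcal{I})$, using finiteness of its congruence lattice to cut down to finitely many subdirect factors, and then applying \Cref{l:jonsson} to place $\mathbf{A}$ in $HS(\mathbf{B}_{j_0})$ for a single $\mathbf{B}_{j_0}\in\mathcal{I}$. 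One small caveat on your sketched alternative via \Cref{thm:jonsson}: if $\mathbf{A}\in HSP_U(\mathcal{I})$, the subalgebra of the ultraproduct mapping onto $\mathbf{A}$ need not be finite a priori, so you must first use local finiteness to replace it by a finite subalgebra still mapping onto $\mathbf{A}$, and then finiteness of the type to express its diagram as a single first-order sentence before concluding that it embeds into some factor; your main route via the free algebra avoids this entirely. The closing step is also sound: the order ideals form a complete ring of sets (closed under arbitrary unions and intersections in the powerset), hence a completely distributive lattice, and mutually inverse monotone bijections between complete lattices transport this to the subvariety lattice.
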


\begin{cor}[{\cite[Corollary 4.3]{Olson2012}}]\label{l:well quasi-order finite subvariety lattice}
Let $\var{V}$ be a congruence-distributive, locally finite variety of finite type. Then the lattice of subvarieties of $\var{V}$ is countable if and only if $\pair{\var{V}_\ast,\leq_{HS}}$ is well-founded.
\end{cor}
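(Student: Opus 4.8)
The plan is to combine the explicit description of the subvariety lattice from \Cref{thm:subvariety lattice} with an elementary analysis of the order ideals of the poset $\pair{\var{V}_\ast,\leq_{HS}}$. By \Cref{thm:subvariety lattice} the lattice of subvarieties of $\var{V}$ is isomorphic to the lattice $\mathcal{O}$ of order ideals of $\pair{\var{V}_\ast,\leq_{HS}}$, so it suffices to prove that $\mathcal{O}$ is countable if and only if $\pair{\var{V}_\ast,\leq_{HS}}$ is well-founded. First I would record that, since $\var{V}$ is of finite type, for each $n$ there are only finitely many isomorphism types of algebras of cardinality $n$, and hence $\var{V}_\ast$ is a countable set.

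The crucial structural observation is that $\pair{\var{V}_\ast,\leq_{HS}}$ can never contain an infinite descending chain. Indeed, if $\mathbf{A}\leq_{HS}\mathbf{B}$ then $\mathbf{A}\in HS(\mathbf{B})$, whence $\abs{A}\leq\abs{B}$; and if $\abs{A}=\abs{B}$ then the subalgebra of $\mathbf{B}$ and the surjection onto $\mathbf{A}$ witnessing $\mathbf{A}\in HS(\mathbf{B})$ must be all of $\mathbf{B}$ and a bijection, so $\mathbf{A}\cong\mathbf{B}$. Thus a strictly descending chain in $\leq_{HS}$ would produce a strictly descending chain of positive integers, which is impossible. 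Consequently, for this particular poset well-foundedness is equivalent to the mere absence of an infinite antichain, and it remains to show that $\mathcal{O}$ is countable exactly when $\pair{\var{V}_\ast,\leq_{HS}}$ has no infinite antichain.

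For the two directions I would argue as follows. If there is an infinite antichain $A\subseteq\var{V}_\ast$, then for each $S\subseteq A$ the down-closure $\downarrow S$ is an order ideal satisfying $\downarrow S\cap A=S$ (because $A$ is an antichain), so $S\mapsto\downarrow S$ is injective and $\mathcal{O}$ has at least $2^{\aleph_0}$ elements. Conversely, if $\pair{\var{V}_\ast,\leq_{HS}}$ has no infinite antichain, then, together with the absence of infinite descending chains established above, it is a well-partial-order: for any order ideal $I$, its complement is an up-set whose set $M$ of minimal elements is a finite antichain, and the descending chain condition guarantees that every element of the complement lies above some element of $M$, so the complement equals $\uparrow M$ and $I$ is determined by $M$. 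Hence $I\mapsto M$ embeds $\mathcal{O}$ into the set of finite subsets of the countable set $\var{V}_\ast$, which is countable, so $\mathcal{O}$ is countable. Combining the two directions with the equivalence from the previous paragraph completes the proof.

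The main obstacle I anticipate is conceptual rather than computational: the naive poset statement ``$\mathcal{O}$ is countable if and only if the poset is well-founded'' is actually \emph{false} in general, since a single infinite descending chain yields only countably many order ideals. The argument therefore genuinely relies on the special feature that $\leq_{HS}$ admits no infinite descending chains, which is what collapses well-foundedness to the antichain condition in this setting. The only other point requiring care is the standard well-partial-order fact that every up-set is generated by its finitely many minimal elements, which is where both the finiteness of antichains and the descending chain condition are used simultaneously.
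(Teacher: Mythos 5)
Your proof is correct and complete. Note that the paper itself does not prove this corollary but cites \cite{Olson2012}; your argument is the expected one behind that citation: reduce via \Cref{thm:subvariety lattice} to counting order ideals, observe that $\leq_{HS}$ on finite algebras automatically satisfies the descending chain condition (the cardinality argument, using that a surjection between finite algebras of equal size is an isomorphism), so that the paper's notion of well-foundedness collapses to the absence of infinite antichains, and then count ideals in both directions (a power set of an infinite antichain injects into the ideal lattice; under the well-partial-order hypothesis each ideal is determined by the finite antichain of minimal elements of its complement). Your closing observation is also exactly right: the equivalence fails for arbitrary posets (an infinite descending chain is not well-founded in the paper's sense yet has only countably many ideals), so the automatic DCC for $\leq_{HS}$ is genuinely where the specific structure of $\var{V}_\ast$ enters, together with countability of $\var{V}_\ast$ coming from the finite type assumption.
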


As $\Sem\Id$ is a congruence-distributive, locally finite variety of finite type we want to use \Cref{thm:subvariety lattice} and \Cref{l:well quasi-order finite subvariety lattice}. By \Cref{l: idemp ord monoid e-sum}, we can assume that $\Sem\Id_\ast$ consists of the unique nested sum decompositions of the finite subdirectly irreducible algebras.

\begin{lemma}\label{l:IS=HS}
Every homomorphic image of an idempotent ordered monoid $\bf M$ is isomorphic to a subalgebra of $\bf M$.
\end{lemma}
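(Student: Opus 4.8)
The plan is to realise the homomorphic image as a quotient $\mathbf{M}/\theta$ and to produce an explicit embedding back into $\mathbf{M}$ from the nested sum decomposition together with \Cref{l:embeddings to summands}. First I would observe that $\mathbf{M}/\theta$ is again an idempotent ordered monoid: its lattice reduct is a quotient of a chain by a lattice congruence, hence again a chain. Writing $q\colon \mathbf{M}\to \mathbf{M}/\theta$ for the quotient map and fixing, via \Cref{thm:general e-sum decomposition}, decompositions $\mathbf{M} = \bigboxplus_{c\in C}\mathbf{M}_c$ and $\mathbf{M}/\theta = \bigboxplus_{c'\in C'}\mathbf{N}_{c'}$ over $\set{\mathbf{C}_2,\mathbf{C}_2^\partial,\mathbf{G}_3,\mathbf{D}_3}$, the whole problem reduces, by \Cref{l:embeddings to summands}(2), to constructing an order-embedding $f\colon C'\to C$ such that $\mathbf{N}_{c'}$ is a subalgebra of $\mathbf{M}_{f(c')}$ for every $c'\in C'$.

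To build $f$ I would analyse $q$ summand by summand. Since each $\mathbf{M}_c$ is simple (\Cref{ex:simple chains}), the restriction $\theta\!\restriction_{M_c}$ is either $\Delta_{M_c}$ or $M_c^2$; let $C''\subseteq C$ be the set of indices where it is $\Delta_{M_c}$, so that $q$ embeds $\mathbf{M}_c$ for $c\in C''$. Because $q$ preserves $\relD$, the (one or two) nonidentity elements of $q[M_c]$ are $\relD$-related in $\mathbf{M}/\theta$ and therefore lie in a single summand $\mathbf{N}_{g(c)}$; this defines a map $g\colon C''\to C'$, which is surjective since every nonidentity element of $\mathbf{M}/\theta$ has a nonidentity preimage lying in some non-collapsed summand.

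The heart of the argument is to show that $g$ is order-preserving and that $\mathbf{M}_c\cong \mathbf{N}_{g(c)}$ for every $c\in C''$. For the isomorphism, the key observation is that elements lying in distinct summands of $\mathbf{M}$ commute, so any two \emph{non-commuting} nonidentity elements of $\mathbf{M}/\theta$ have all their preimages in one and the same summand. Hence a summand $\mathbf{N}_{c'}$ isomorphic to $\mathbf{G}_3$ or $\mathbf{D}_3$, whose two nonidentity elements fail to commute (\Cref{l:dubreil-jacotin}), is the isomorphic image of a single $\mathbf{M}_c$ of the same type, while a summand isomorphic to $\mathbf{C}_2$ or $\mathbf{C}_2^\partial$ is covered only by summands of the matching type, since $\mathbf{C}_2$ and $\mathbf{C}_2^\partial$ have no proper nontrivial subalgebras. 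For order-preservation I would take $c_1\sqsubset c_2$ in $C''$ and nonidentity $a\in M_{c_1}$, $b\in M_{c_2}$; the defining relation of the nested sum gives $ab=ba=a$, hence $q(a)q(b)=q(b)q(a)=q(a)$, which forces $g(c_1)\sqsubseteq g(c_2)$ in $C'$ (with equality exactly when $q(a)=q(b)$, i.e.\ when the two summands are merged). Consequently the fibres of $g$ are convex and linearly ordered among themselves, so choosing one representative from each fibre yields an order-embedding $f\colon C'\to C$ that is a section of $g$ and satisfies $\mathbf{N}_{c'}\cong \mathbf{M}_{f(c')}$. Applying \Cref{l:embeddings to summands}(2) to $f$ then produces the desired $\ell$-monoid embedding $\mathbf{M}/\theta\hookrightarrow \mathbf{M}$.

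The main obstacle I anticipate is precisely the interplay between the two phenomena a quotient can exhibit---collapsing whole summands to $e$ and \emph{merging} consecutive copies of $\mathbf{C}_2$ (or of $\mathbf{C}_2^\partial$)---and verifying that neither can disturb the rigid summands $\mathbf{G}_3$ and $\mathbf{D}_3$. The commutativity/non-commutativity dichotomy recorded in \Cref{l:dubreil-jacotin} is exactly what prevents $\mathbf{G}_3$ and $\mathbf{D}_3$ summands from being created, split, or partially merged by $q$, and getting this bookkeeping right (rather than the routine check that $f$ induces an embedding) is where the real care is needed.
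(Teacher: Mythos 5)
Your proof is correct, but it takes a genuinely different and considerably heavier route than the paper's. You push the quotient $\mathbf{M}/\theta$ through the full nested-sum machinery: decompose both $\mathbf{M}$ and $\mathbf{M}/\theta$ via \Cref{thm:general e-sum decomposition}, use simplicity of the four basic summands to split $C$ into collapsed and non-collapsed indices, use the commutation dichotomy of \Cref{l:dubreil-jacotin} to show that a $\mathbf{G}_3$ or $\mathbf{D}_3$ summand of the quotient has all preimages of its two non-commuting elements in a single summand upstairs (in fact the preimages are unique, since $\relD$-classes have at most two elements), while a two-element summand of the quotient is covered only by summands of matching type, and then section the order-preserving surjection $g$ to obtain an order-embedding feeding into \Cref{l:embeddings to summands}(2). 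All of this checks out; the points to tighten are minor: when invoking \Cref{l:embeddings to summands}(2) you have $\mathbf{N}_{c'}\cong\mathbf{M}_{f(c')}$ rather than a literal subalgebra, so an identification-up-to-relabeling remark is needed; convexity of the fibres of $g$ is irrelevant, since any section of an order-preserving surjection onto a chain is automatically an order-embedding; and order-preservation of $g$ deserves its one-line contradiction spelled out, namely that $g(c_2)\sqsubset g(c_1)$ together with $q(a)q(b)=q(b)q(a)=q(a)$ would force $q(a)=q(b)$, a common nonidentity element of two distinct summands. The paper, by contrast, proves the lemma in a few lines with no decomposition at all: given a congruence $\theta$, it fixes a representative of each $\theta$-class (with $e$ representing its own class) and verifies directly --- using only convexity of congruence classes and \Cref{lemma:Idemp}(iv), via the case split $cd=c$ for all representatives versus $cd=d$ for some, the latter forcing $\pair{a,b}\in\theta$ --- that this choice function is an endomorphism of $\mathbf{M}$ with kernel $\theta$, whence the homomorphism theorem identifies $\mathbf{M}/\theta$ with its image. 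The paper's argument is shorter and self-contained; yours costs more but yields genuine structural by-products that the elementary proof does not: an explicit description of how quotients act on the nested-sum decomposition (whole summands collapse to $e$; $\mathbf{G}_3$ and $\mathbf{D}_3$ summands can be neither created, split, nor partially merged; only summands of the same two-element type can be merged), in effect a quotient-side companion to \Cref{l:embeddings to summands}.
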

\begin{proof}
We prove that for every congruence $\theta$ on $\bf M$, the quotient $\mathbf{M}/\theta$ is isomorphic to a subalgebra of $\bf M$.
For a congruence $\theta$ on $\bf M$ let $\phi\colon M \to M$  be a map  such that $\phi(e) =e$ and that maps every element to a fixed representative of its congruence class.  So in particular for the congruence class of $e$ the fixed representative is $e$. Since the congruence classes of $\theta$ are convex it is clear that $\phi$ is order-preserving. Let $a,b\in M$ with $a b = a$. If $cd = c$ for all $c,d \in M$ with $\pair{a,c},\pair{b,d}\in \theta$, then $\phi(a)\cdot \phi(b) = \phi(a)$. If $cd = d$ for some $c,d \in M$ with $\pair{a,c},\pair{b,d}\in \theta$,  then $\pair{a,d} = \pair{ab,cd} \in \theta$, yielding $\pair{a,b}\in \theta$. But then $\phi(a) = \phi(b)$, so $\phi(ab) = \phi(a) =  \phi(a)\phi(a) = \phi(a)\phi(b)$. Hence $\phi$ is a homomorphism and, by the definition of $\phi$ we have $\ker(\phi) = \theta$. Thus the claim follows by the homomorphism theorem.
\end{proof}

\begin{cor}\label{cor:HS=IS}
We have $\alg{\Sem\Id_\ast, \leq_{HS}} = \alg{\Sem\Id_\ast, \leq_{IS}}$.
\end{cor}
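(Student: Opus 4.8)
The plan is to observe that the inclusion of relations $\leq_{IS}\,\subseteq\,\leq_{HS}$ holds trivially, since $IS(\mathbf{B}) \subseteq HS(\mathbf{B})$ for every algebra $\mathbf{B}$, and then to establish the reverse inclusion using \Cref{l:IS=HS}. Since the posets $\alg{\Sem\Id_\ast, \leq_{HS}}$ and $\alg{\Sem\Id_\ast, \leq_{IS}}$ share the same underlying set, it suffices to prove that for all $\mathbf{A}, \mathbf{B} \in \Sem\Id_\ast$ the relation $\mathbf{A} \leq_{HS} \mathbf{B}$ implies $\mathbf{A} \leq_{IS} \mathbf{B}$.

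First I would recall that every member of $\Sem\Id_\ast$ is a finite subdirectly irreducible member of $\Sem\Id$, hence totally ordered by \Cref{cor:subdirect irred in Sem}; that is, each such algebra is an idempotent ordered monoid. Now suppose $\mathbf{A} \leq_{HS} \mathbf{B}$, so that $\mathbf{A} \cong \mathbf{C}/\theta$ for some subalgebra $\mathbf{C}$ of $\mathbf{B}$ and some congruence $\theta$ on $\mathbf{C}$. Since $\mathbf{B}$ is totally ordered and both the total order and idempotency are preserved under taking subalgebras, $\mathbf{C}$ is again an idempotent ordered monoid. Applying \Cref{l:IS=HS} to $\mathbf{C}$, the homomorphic image $\mathbf{C}/\theta$ is isomorphic to a subalgebra of $\mathbf{C}$, and hence to a subalgebra of $\mathbf{B}$. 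Therefore $\mathbf{A} \in IS(\mathbf{B})$, i.e., $\mathbf{A} \leq_{IS} \mathbf{B}$, completing the argument.

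There is no genuine obstacle here, as the whole content has been front-loaded into \Cref{l:IS=HS}, which collapses $HS$ to $IS$ for single idempotent ordered monoids. The only minor point to verify is that the intermediate subalgebra $\mathbf{C}$ of $\mathbf{B}$ is itself an idempotent ordered monoid so that the lemma applies, and this is immediate from the fact that subalgebras of totally ordered idempotent monoids remain totally ordered and idempotent.
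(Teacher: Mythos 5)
Your proof is correct and follows exactly the route the paper intends: the corollary is stated there without proof as an immediate consequence of \Cref{l:IS=HS}, and your argument supplies precisely the routine details—namely that members of $\Sem\Id_\ast$ are totally ordered by \Cref{cor:subdirect irred in Sem}, that a subalgebra $\mathbf{C}$ of such a $\mathbf{B}$ is again an idempotent ordered monoid, and that applying the lemma to $\mathbf{C}$ turns any homomorphic image of $\mathbf{C}$ into a subalgebra of $\mathbf{B}$.
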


Let $\pair{P,\leq}$ be a poset. We define the order $\leq^\ast$ on the set $\sigma(P)$ of finite sequences of  $P$ by 
\begin{align*}
\pair{p_1,\dots,p_n} \leq^\ast \pair{q_1,\dots,q_m} &:\!\iff \text{there exists an order-embedding } \\
&\quad\quad\ \ \  f\colon \set{1,\dots,n} \to \set{1,\dots,m}   \text{ such that } \\
&\quad\quad\ \ \ p_i \leq q_{f(i)} \text{ for all } i\in \set{1,\dots,n}.
\end{align*}
\begin{lemma}[Higman's Lemma {\cite[Theorem 4.3]{Higman1952}}]\label{l:higman}
If $\pair{P,\leq}$ is a well-founded  poset, then $\pair{\sigma(P),\leq^\ast}$ is a well-founded poset.
\end{lemma}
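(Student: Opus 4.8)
The plan is to prove Higman's Lemma in the form stated, namely that if $\pair{P,\leq}$ is well-founded then so is $\pair{\sigma(P),\leq^\ast}$. Recall that well-founded here means \emph{no infinite antichain and no infinite strictly descending chain}. I would separate these two conditions and handle them in turn, since the classical ``Nash-Williams'' argument via minimal bad sequences is the cleanest route and establishes both at once through the notion of a well-quasi-order (wqo). I would first observe that a poset is well-founded in the stated sense precisely when it is a wqo, i.e.\ when every infinite sequence $p_0,p_1,p_2,\dots$ contains indices $i<j$ with $p_i\leq p_j$. Indeed, a sequence with no such ``good'' pair would, by Ramsey-type reasoning, yield either an infinite antichain or an infinite descending chain, and conversely such configurations are themselves bad sequences; establishing this equivalence is the first step, because it lets me work with the single combinatorial property rather than two separate conditions.

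Granting the wqo reformulation, the heart of the argument is Nash-Williams' minimal bad sequence technique. Assume for contradiction that $\pair{\sigma(P),\leq^\ast}$ is \emph{not} a wqo, so a bad sequence of finite $P$-sequences exists. I would construct a \emph{minimal} bad sequence $(s_n)_{n\in\omega}$ by recursion: having chosen $s_0,\dots,s_{k-1}$, pick $s_k$ to be a sequence of minimal possible length among all $s$ such that $s_0,\dots,s_{k-1},s$ extends to a bad sequence. Each $s_k$ is nonempty (the empty sequence is $\leq^\ast$ below everything, so it cannot appear in a bad sequence past the first term), so write $s_k = \pair{a_k}\smallfrown t_k$, splitting off the first entry $a_k\in P$ and the tail $t_k\in\sigma(P)$. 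Since $P$ is a wqo, the sequence $(a_k)$ has an infinite ascending subsequence $a_{k_0}\leq a_{k_1}\leq\cdots$ with $k_0<k_1<\cdots$. I would then consider the sequence $s_0,\dots,s_{k_0-1},\,t_{k_0},\,t_{k_1},\,t_{k_2},\dots$, which is ``smaller'' than the minimal bad sequence at position $k_0$ because $t_{k_0}$ is shorter than $s_{k_0}$; by minimality it must therefore be \emph{good}.

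The contradiction is extracted from this goodness. A good pair in the new sequence falls into cases: a pair within the initial segment $s_0,\dots,s_{k_0-1}$ cannot exist (that segment is a subsequence of the original bad sequence), and a pair $s_i\leq^\ast t_{k_j}$ would give $s_i\leq^\ast t_{k_j}\leq^\ast s_{k_j}$ (the tail embeds into the full sequence), again contradicting badness of the original. Hence the good pair must be $t_{k_i}\leq^\ast t_{k_j}$ for some $i<j$; combining the witnessing order-embedding for the tails with the prepended inequality $a_{k_i}\leq a_{k_j}$ furnishes an order-embedding witnessing $s_{k_i}=\pair{a_{k_i}}\smallfrown t_{k_i}\ \leq^\ast\ \pair{a_{k_j}}\smallfrown t_{k_j}=s_{k_j}$, contradicting that the original sequence was bad. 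This contradiction shows no bad sequence exists, so $\pair{\sigma(P),\leq^\ast}$ is a wqo, hence well-founded.

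I expect the main obstacle to be not any single hard computation but the careful bookkeeping in the minimality argument: one must verify that the surgically modified sequence is genuinely admissible as a competitor at stage $k_0$ (so that minimality applies) and that every case of the resulting good pair is correctly ruled out or converted into a contradiction. A secondary technical point worth stating cleanly is the equivalence between ``well-founded'' (the paper's definition via antichains and descending chains) and ``well-quasi-ordered'', since the minimal-bad-sequence machinery is phrased most naturally for the latter; I would make sure to record that equivalence explicitly so the reduction is airtight.
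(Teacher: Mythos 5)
Your argument is correct, but there is nothing in the paper to compare it against: the paper does not prove this lemma, it imports it verbatim with a citation to Higman's 1952 article. What you have written is the now-standard Nash--Williams minimal bad sequence argument (which in fact postdates Higman's paper), and it goes through as you describe. The two technical points you flag are exactly the right ones to discharge explicitly. First, the equivalence of the paper's ``well-founded'' (no infinite antichain, no infinite strictly descending chain) with the wqo property: a bad sequence contains no pair $i<j$ with $p_i\leq p_j$, hence in particular no repetitions, so every pair is either strictly descending or incomparable, and two-color Ramsey yields an infinite strictly descending chain or an infinite antichain; the converse is immediate. Second, your extraction of an infinite subsequence with $a_{k_i}\leq a_{k_j}$ for all $i<j$ from $(a_k)$ is not just the wqo definition but the perfect-subsequence lemma, which again needs one application of Ramsey (or an iterated selection); since you invoked Ramsey for the first point anyway, this is fine, but it should be stated as a separate step. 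The case analysis of the good pair in the modified sequence $s_0,\dots,s_{k_0-1},t_{k_0},t_{k_1},\dots$ is complete and each case is correctly converted into a contradiction: for the mixed case one needs $i<k_0\leq k_j$ together with $t_{k_j}\leq^\ast s_{k_j}$ (witnessed by $m\mapsto m+1$), and for the tail--tail case the prepended embedding $g(1)=1$, $g(m+1)=f(m)+1$ together with $a_{k_i}\leq a_{k_j}$ witnesses $s_{k_i}\leq^\ast s_{k_j}$.

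Two micro-corrections. The empty sequence cannot occur \emph{anywhere} in a bad sequence, not merely ``past the first term'': if $s_k$ is empty then $s_k\leq^\ast s_{k+1}$ is already a good pair. And you should acknowledge that the construction of the minimal bad sequence uses dependent choice (at each stage one must pick a witness that the chosen finite prefix extends to a bad sequence); this is harmless in the present context but is the non-constructive heart of the method. For contrast, Higman's original proof is genuinely different: he characterizes wqo via the finite basis property and proves closure of that property under the formation of free algebras with finitely many compatible operations, of which finite words over $P$ are the special case needed here. His route buys a substantially more general theorem; yours buys brevity and self-containment.
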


\Cref{fig:subalgebra quasi-order} depicts the  order $\leq_{IS}$ restricted to  the set $\set{\mathbf{C}_2, \mathbf{C}_2^\partial, \mathbf{G}_3,\mathbf{D}_3}$, which is clearly well-founded.  The algebras $\mathbf{C}_2$ and $\mathbf{C}_2^\partial$ are incomparable with respect to $\leq_{IS}$, since they are not isomorphic, and the same holds for $\mathbf{G}_3$ and~$\mathbf{D}_3$. Moreover, the algebras $\mathbf{C}_2$ and $\mathbf{C}_2^\partial$ are subalgebras of the algebras $\mathbf{G}_3$ and $\mathbf{D}_3$.
So we have  $\mathbf{C}_2 \leq_{IS}\mathbf{G}_3$,  $\mathbf{C}_2 \leq_{IS}\mathbf{D}_3$, $\mathbf{C}_2^\partial \leq_{IS}\mathbf{G}_3$,  and $\mathbf{C}_2^\partial \leq_{IS}\mathbf{D}_3$.

\begin{figure}
\centering
\includegraphics{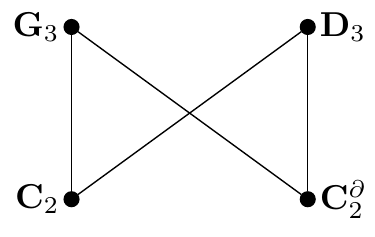}
\caption{The order $\leq_{IS}$ on the set $\set{\mathbf{C}_2, \mathbf{C}_2^\partial, \mathbf{G}_3,\mathbf{D}_3}$} 
\label{fig:subalgebra quasi-order}
\end{figure} 
Now if we consider the poset $\alg{\sigma(\set{\mathbf{C}_2, \mathbf{C}_2^\partial, \mathbf{G}_3,\mathbf{D}_3}),\leq_{IS}^\ast}$, by \Cref{cor:IS embedding}, it is immediate that for all  $\mathbf{M},\mathbf{N} \in \Sem\Id_\ast$ such that  $\mathbf{M} = \mathbf{M}_1\boxplus \cdots \boxplus \mathbf{M}_m $, $\mathbf{N} = \mathbf{N}_1\boxplus \cdots \boxplus \mathbf{N}_n$  with $ \mathbf{M}_i, \mathbf{N}_j \in \set{\mathbf{C}_2, \mathbf{C}_2^\partial, \mathbf{G}_3,\mathbf{D}_3}$ we have
\begin{equation}\label{eq:IS-higman}
\mathbf{M} \leq_{IS} \mathbf{N} \iff  \pair{\mathbf{M}_1,\dots,\mathbf{M}_m} \leq_{IS}^\ast \pair{\mathbf{N}_1,\dots,\mathbf{N}_n}.
\end{equation}
So, since restrictions of well-founded partial orders are well-founded,  \Cref{l:higman} yields that $\alg{\Sem\Id_\ast, \leq_{IS}}$ is well-founded. 
\begin{thm}
The lattice of subvarieties of $\Sem\Id$ is countably infinite.
\end{thm}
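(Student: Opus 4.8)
The plan is to split the statement into its two halves, countability and infinitude, and to assemble them from the results already in place.

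For countability I would invoke the machinery just developed. The variety $\Sem\Id$ is congruence-distributive (it has a lattice reduct), locally finite, and of finite type. The discussion preceding the theorem, together with \Cref{cor:IS embedding}, the equivalence \eqref{eq:IS-higman}, and Higman's Lemma (\Cref{l:higman}), shows that $\pair{\Sem\Id_\ast,\leq_{IS}}$ is well-founded; and by \Cref{cor:HS=IS} the posets $\pair{\Sem\Id_\ast,\leq_{HS}}$ and $\pair{\Sem\Id_\ast,\leq_{IS}}$ coincide, so $\pair{\Sem\Id_\ast,\leq_{HS}}$ is well-founded as well. \Cref{l:well quasi-order finite subvariety lattice} then yields at once that the lattice of subvarieties of $\Sem\Id$ is countable.

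For infinitude I would exhibit a strictly increasing $\omega$-chain of subvarieties. Set $\mathbf{A}_n = \bigboxplus_{i=1}^{n} \mathbf{G}_3$ for $n \geq 1$. Each $\mathbf{A}_n$ is a finite idempotent ordered monoid, and since every summand is $\mathbf{G}_3$, condition (2) of \Cref{thm:subdirect-equivalence} holds, so each $\mathbf{A}_n$ is subdirectly irreducible. By \Cref{cor:IS embedding}, $\mathbf{A}_m$ embeds into $\mathbf{A}_n$ if and only if $m \leq n$. Consequently $\mathbf{A}_n \in IS(\mathbf{A}_{n+1})$, whence $V(\mathbf{A}_1) \subseteq V(\mathbf{A}_2) \subseteq \cdots$. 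To see the inclusions are strict, suppose $\mathbf{A}_m \in V(\mathbf{A}_n)$ for some $m > n$. As $\mathbf{A}_m$ is a finite subdirectly irreducible algebra in the congruence-distributive variety $V(\mathbf{A}_n)$, Jónsson's Lemma (\Cref{l:jonsson}) gives $\mathbf{A}_m \in HS(\mathbf{A}_n)$; and by \Cref{l:IS=HS} every homomorphic image of a subalgebra of $\mathbf{A}_n$ is again isomorphic to a subalgebra of $\mathbf{A}_n$, so $HS(\mathbf{A}_n) = IS(\mathbf{A}_n)$ and hence $\mathbf{A}_m$ embeds into $\mathbf{A}_n$, contradicting $m > n$. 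Thus $V(\mathbf{A}_n) \subsetneq V(\mathbf{A}_{n+1})$ for every $n$, producing infinitely many distinct subvarieties.

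The main obstacle is not the existence of the witnessing chain but the verification of its strictness, where the decisive point is the collapse $HS = IS$ for idempotent ordered monoids furnished by \Cref{l:IS=HS}: one must note that subalgebras of the totally ordered algebra $\mathbf{A}_n$, and their homomorphic images, remain idempotent ordered monoids, so that \Cref{l:IS=HS} applies and membership $\mathbf{A}_m \in HS(\mathbf{A}_n)$ can be upgraded to an actual embedding, contradicting the clean incomparability statement of \Cref{cor:IS embedding}. Everything else is a routine assembly of the results established earlier in the section.
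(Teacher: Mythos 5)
Your proposal is correct, and its countability half is exactly the paper's argument: well-foundedness of $\pair{\Sem\Id_\ast,\leq_{IS}}$ via \Cref{cor:IS embedding}, the equivalence \eqref{eq:IS-higman} and Higman's Lemma, transferred to $\leq_{HS}$ by \Cref{cor:HS=IS}, and then \Cref{l:well quasi-order finite subvariety lattice}. Where you diverge is the infinitude half: the paper disposes of it in one line by combining \Cref{l:jonsson} with the counting result \Cref{cor:counting subd} (infinitely many finite subdirectly irreducibles force infinitely many subvarieties, since by J\'onsson a finitely generated congruence-distributive variety has only finitely many subdirectly irreducible members up to isomorphism), whereas you exhibit an explicit strictly increasing chain $V(\mathbf{A}_1)\subsetneq V(\mathbf{A}_2)\subsetneq\cdots$ with $\mathbf{A}_n=\bigboxplus_{i=1}^{n}\mathbf{G}_3$, verifying subdirect irreducibility through condition (2) of \Cref{thm:subdirect-equivalence} and strictness through \Cref{l:jonsson}, the collapse $HS(\mathbf{A}_n)=IS(\mathbf{A}_n)$ from \Cref{l:IS=HS}, and the incomparability criterion of \Cref{cor:IS embedding}; all of these steps check out. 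Your route is marginally longer but has two virtues: it avoids any dependence on the counting corollary, and it makes explicit the $HS=IS$ upgrade that the paper's terse argument leaves implicit (and which is genuinely needed to pass from ``finitely many subdirectly irreducibles'' to distinctness of varieties). As a bonus, your witnesses $\mathbf{A}_n$ are precisely the algebras $\mathbf{G}_{2n+1}$ that the paper later introduces in \Cref{sec7}, so the chain you build is implicitly present in the paper as well.
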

\begin{proof}
Since $\alg{\Sem\Id_\ast, \leq_{IS}}$ is well-founded, it follows from  \Cref{cor:HS=IS} and \Cref{l:well quasi-order finite subvariety lattice} that the lattice of subvarieties of $\Sem\Id$ is countable. That it is infinite follows from \Cref{l:jonsson} and the fact that, by \Cref{cor:counting subd},  $\Sem\Id$ contains infinitely many finite subdirectly irreducibles.
\end{proof}

\begin{remark}
By  \Cref{thm:subvariety lattice}, the lattice of subvarieties of $\Sem\Id$ is isomorphic to the  lattice of order ideals of $\alg{\Sem\Id_\ast, \leq_{IS}}$ via the map $\mathcal{I} \mapsto V(\mathcal{I})$. Moreover, by \Cref{thm:subdirect-equivalence}, we have a description of the finite subdirectly irreducible elements of $\Sem\Id$ in terms of the nested sum decomposition and, by the equivalence (\ref{eq:IS-higman}), we have a description of the order $\leq_{IS}$ on $\Sem\Id_\ast$ in terms of the nested sum decomposition. Hence, we get a more or less explicit description of the lattice of subvarieties of $\Sem\Id$ in terms of the nested sum decomposition of its finite subdirectly irreducible members.
\end{remark}

\section{The commutative case} \label{sec6}

In this section we apply the results of the previous two sections to the commutative case to get an explicit description of the lattice of subvarieties of $\Com\Id$. Using this description we obtain a finite axiomatization for every subvariety of $\Com\Id$.\footnote{The axiomatization was suggested to the author by Nick Galatos.}  

From the nested sum decomposition and the characterization of subdirectly irreducibles we immediately get the following result for the commutative case:
\begin{cor}\label{thm:e-sums commutative case}
Let $\mathbf{M}$ be a finite commutative idempotent ordered monoid. Then $\mathbf{M} \cong \bigboxplus^{n}_{i=1}\mathbf{M}_i$ with $\mathbf{M}_i \in \set{\mathbf{C}_2,\mathbf{C}_2^\partial}$. Moreover, $\bf M$ is subdirectly irreducible if and only if  for all $i\in \set{1,\dots,n-1}$ we have  $\mathbf{M}_i \neq \mathbf{M}_{i+1}$.
\end{cor}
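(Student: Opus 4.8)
The plan is to derive both assertions directly from the results already established for the general (possibly non-commutative) finite case, using commutativity only to rule out the two non-commutative summand types.

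For the decomposition, I would start from \Cref{l: idemp ord monoid e-sum}, which gives a unique nested sum $\mathbf{M} \cong \bigboxplus_{i=1}^n \mathbf{M}_i$ with each $\mathbf{M}_i \in \set{\mathbf{C}_2, \mathbf{C}_2^\partial, \mathbf{G}_3, \mathbf{D}_3}$. The key observation is that by \Cref{l:e-sum} each summand $\mathbf{M}_i$ embeds into $\mathbf{M}$ via the inclusion map, hence is (isomorphic to) a subalgebra of $\mathbf{M}$. Since a subalgebra of a commutative monoid is again commutative, while $\mathbf{G}_3$ and $\mathbf{D}_3$ are non-commutative, no summand can be $\mathbf{G}_3$ or $\mathbf{D}_3$; therefore $\mathbf{M}_i \in \set{\mathbf{C}_2, \mathbf{C}_2^\partial}$ for every $i$. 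Alternatively, one can invoke \Cref{l:dubreil-jacotin}: a $\mathbf{G}_3$ or $\mathbf{D}_3$ summand would supply two elements $a,b$ with $ab \neq ba$, contradicting commutativity of $\mathbf{M}$.

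For the characterization of subdirect irreducibility, I would simply specialize \Cref{thm:subdirect-equivalence}. That theorem states that $\mathbf{M}$ is subdirectly irreducible if and only if, in its decomposition, $\mathbf{M}_i = \mathbf{M}_{i+1}$ implies $\mathbf{M}_i \in \set{\mathbf{G}_3, \mathbf{D}_3}$ for every $i \in \set{1,\dots,n-1}$. By the first part, every $\mathbf{M}_i$ lies in $\set{\mathbf{C}_2, \mathbf{C}_2^\partial}$, so the consequent $\mathbf{M}_i \in \set{\mathbf{G}_3, \mathbf{D}_3}$ can never hold; the implication therefore holds exactly when its hypothesis always fails, i.e., precisely when $\mathbf{M}_i \neq \mathbf{M}_{i+1}$ for all $i \in \set{1,\dots,n-1}$.

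Since both parts reduce to citing prior results once the non-commutative summands are eliminated, I do not expect a genuine obstacle. The only point requiring care is the justification that $\mathbf{G}_3$ and $\mathbf{D}_3$ cannot occur, which is exactly where commutativity enters; the subalgebra embedding of \Cref{l:e-sum} (or, equivalently, \Cref{l:dubreil-jacotin}) handles this cleanly, and the rest is a direct specialization.
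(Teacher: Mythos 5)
Your proposal is correct and matches the paper's intended argument exactly: the paper presents this as an immediate consequence of \Cref{l: idemp ord monoid e-sum} and \Cref{thm:subdirect-equivalence}, with commutativity excluding $\mathbf{G}_3$ and $\mathbf{D}_3$ from the decomposition (each summand being a subalgebra of $\mathbf{M}$, or via \Cref{l:dubreil-jacotin}), whereupon the subdirect irreducibility condition specializes to $\mathbf{M}_i \neq \mathbf{M}_{i+1}$ precisely as you argue.
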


Furthermore, we get the following counting result:

\begin{cor}\label{c:count commutative}
There are up to isomorphism $2^{n-1}$  commutative idempotent ordered monoids of size $n\geq 1$. 
\begin{proof}
Note that every commutative idempotent ordered monoid of size $n\geq 1$ has a unique nested sum representation $\bigboxplus^{n-1}_{i=1}\mathbf{M}_i$ with $\mathbf{M}_i \in \set{\mathbf{C}_2,\mathbf{C}_2^\partial}$. So it is clear that there are $2^{n-1}$ idempotent ordered monoids of size $n\geq 1$. 
\end{proof}
\end{cor}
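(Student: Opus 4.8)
The plan is to exhibit a bijection between the isomorphism classes of commutative idempotent ordered monoids of size $n$ and the set of length-$(n-1)$ sequences over the two-element set $\set{\mathbf{C}_2,\mathbf{C}_2^\partial}$, and then merely count the latter. Everything needed is already in place: \Cref{thm:e-sums commutative case} tells us which summands can occur in the commutative case, and \Cref{l: idemp ord monoid e-sum} guarantees that the nested sum decomposition is unique.

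First I would invoke \Cref{thm:e-sums commutative case}: every finite commutative idempotent ordered monoid $\mathbf{M}$ decomposes as a nested sum $\bigboxplus_{i=1}^{k}\mathbf{M}_i$ with each $\mathbf{M}_i \in \set{\mathbf{C}_2,\mathbf{C}_2^\partial}$, and by \Cref{l: idemp ord monoid e-sum} this decomposition is unique. The crucial bookkeeping step is to relate the number of summands $k$ to the size $\abs{M}$: each summand $\mathbf{M}_i$ is two-element and contributes exactly its single non-identity element, while all summands share only the identity $e$, so $\abs{M} = k+1$. Hence a commutative idempotent ordered monoid of size $n$ corresponds to a sequence of exactly $n-1$ summands, the empty nested sum $\mathbf{0}$ covering the base case $n=1$ (where $2^{0}=1$ matches the unique trivial algebra).

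Conversely I would verify that every length-$(n-1)$ sequence over $\set{\mathbf{C}_2,\mathbf{C}_2^\partial}$ really does yield a commutative idempotent ordered monoid of size $n$: by \Cref{l:e-sum} together with associativity of $\boxplus$ it is an idempotent ordered monoid, it has $n$ elements by the count above, and it is commutative because $\mathbf{C}_2$ and $\mathbf{C}_2^\partial$ are commutative and the cross-summand products $a\cdot b = b\cdot a = a$ built into the nested sum are symmetric. Combined with the uniqueness of the decomposition (which gives injectivity) and \Cref{thm:e-sums commutative case} (which gives surjectivity), this shows that sending a sequence to its isomorphism class is a bijection.

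Finally the counting is immediate: there are exactly $2^{n-1}$ sequences of length $n-1$ over a two-element set, so there are $2^{n-1}$ isomorphism classes. I expect no genuine obstacle in this argument; the only point demanding a moment's care is the size-versus-length identity $\abs{M}=k+1$, which is precisely where the shared identity element $e$ must be correctly accounted for so as to avoid an off-by-one overcount.
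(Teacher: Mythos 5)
Your proof is correct and takes essentially the same approach as the paper, which also counts via the unique nested sum decomposition $\bigboxplus_{i=1}^{n-1}\mathbf{M}_i$ with $\mathbf{M}_i\in\set{\mathbf{C}_2,\mathbf{C}_2^\partial}$. You merely spell out the details the paper's two-line proof leaves implicit — the size bookkeeping $\abs{M}=k+1$, injectivity from uniqueness, and the converse check that every sequence yields a commutative example — all of which are sound.
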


\begin{remark}
Again \Cref{c:count commutative} also follows from the corresponding counting result about commutative idempotent residuated chains in \cite{Gil-FerezJipsenMetcalfe2020}. In \cite{Devillet2020}  a counting result is obtained for finite totally ordered commutative idempotent semigroups. 
\end{remark}

For $n>2$ we define inductively the algebras $\mathbf{C}_n$ and $\mathbf{C}_n^\partial$ by
\begin{align*}
\mathbf{C}_n &:= \mathbf{C}_2 \boxplus \mathbf{C}^\partial_{n-1} \\
\mathbf{C}_n^\partial &:= \mathbf{C}_2^\partial \boxplus \mathbf{C}_{n-1}
\end{align*}
and we set $\mathbf{C}_1 = \mathbf{C}_1^\partial = \mathbf{0}$. Note that these are exactly the commutative idempotent ordered monoids that do not contain consecutive occurrences of~$\mathbf{C}_2$ or $\mathbf{C}_2^\partial$.
Hence, \Cref{thm:e-sums commutative case} yields an explicit characterization of the subdirectly irreducible idempotent ordered monoids in the commutative case:
\begin{cor}
 For every $n>1$  the algebras $\mathbf{C}_n$ and $\mathbf{C}_n^\partial$ are up to isomorphism the only subdirectly irreducible commutative idempotent ordered monoids   with $n$ elements.
\end{cor}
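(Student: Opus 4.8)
The plan is to read off the statement directly from the classification in \Cref{thm:e-sums commutative case}, together with the uniqueness of the nested sum decomposition furnished by \Cref{l: idemp ord monoid e-sum}. First I would record the index bookkeeping: a commutative idempotent ordered monoid with $n>1$ elements has, by \Cref{thm:e-sums commutative case}, a unique decomposition $\bigboxplus_{i=1}^{n-1}\mathbf{M}_i$ with $\mathbf{M}_i\in\set{\mathbf{C}_2,\mathbf{C}_2^\partial}$, where the number of summands is $n-1$ because each two-element summand contributes exactly one non-identity element while the shared identity $e$ is counted once. By the subdirect-irreducibility criterion of \Cref{thm:e-sums commutative case}, such an algebra is subdirectly irreducible if and only if $\mathbf{M}_i\neq\mathbf{M}_{i+1}$ for all $i$, i.e. the sequence $\pair{\mathbf{M}_1,\dots,\mathbf{M}_{n-1}}$ alternates between $\mathbf{C}_2$ and $\mathbf{C}_2^\partial$.

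Next I would exploit that, over the two-element set $\set{\mathbf{C}_2,\mathbf{C}_2^\partial}$, an alternating sequence is completely determined by its first entry. Hence for each length $n-1$ there are exactly two such sequences: the one beginning with $\mathbf{C}_2$ and the one beginning with $\mathbf{C}_2^\partial$. It then remains to identify these two sequences with the recursively defined algebras $\mathbf{C}_n$ and $\mathbf{C}_n^\partial$. This I would do by a short induction on $n$: unfolding $\mathbf{C}_n=\mathbf{C}_2\boxplus\mathbf{C}_{n-1}^\partial$ and invoking the inductive description of $\mathbf{C}_{n-1}^\partial$ as the alternating sequence of length $n-2$ starting with $\mathbf{C}_2^\partial$ shows that $\mathbf{C}_n$ is the alternating sequence of length $n-1$ starting with $\mathbf{C}_2$, and symmetrically $\mathbf{C}_n^\partial$ is the alternating sequence starting with $\mathbf{C}_2^\partial$; the base case $n=2$ is immediate from $\mathbf{C}_1=\mathbf{C}_1^\partial=\mathbf{0}$.

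Finally I would conclude non-isomorphism. Since the nested sum decomposition over $\set{\mathbf{C}_2,\mathbf{C}_2^\partial,\mathbf{G}_3,\mathbf{D}_3}$ is unique by \Cref{l: idemp ord monoid e-sum}, two algebras with distinct decomposition sequences cannot be isomorphic; as the sequences of $\mathbf{C}_n$ and $\mathbf{C}_n^\partial$ differ already in their first entry, $\mathbf{C}_n\not\cong\mathbf{C}_n^\partial$. Combining the three observations yields that $\mathbf{C}_n$ and $\mathbf{C}_n^\partial$ are, up to isomorphism, precisely the subdirectly irreducible commutative idempotent ordered monoids with $n$ elements.

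I do not expect a genuine obstacle here: the whole argument is bookkeeping on top of \Cref{thm:e-sums commutative case}. The only points demanding a little care are matching the recursive definition of $\mathbf{C}_n,\mathbf{C}_n^\partial$ with the alternating-sequence description and keeping the index conventions consistent, since an $n$-element algebra corresponds to $n-1$ summands.
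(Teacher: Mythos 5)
Your proposal is correct and follows essentially the same route as the paper: the paper derives this corollary directly from the classification and irreducibility criterion for commutative nested sums (its Corollary~6.1), together with the remark that $\mathbf{C}_n$ and $\mathbf{C}_n^\partial$ are exactly the commutative idempotent ordered monoids with no consecutive occurrences of $\mathbf{C}_2$ or $\mathbf{C}_2^\partial$ in their decomposition. You merely make explicit what the paper leaves implicit --- the $n$-elements-versus-$(n-1)$-summands bookkeeping, the induction identifying the two alternating sequences with $\mathbf{C}_n$ and $\mathbf{C}_n^\partial$, and the appeal to uniqueness of the decomposition for non-isomorphism --- all of which is sound.
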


\begin{remark}
For $n\geq 2$ the algebras $\mathbf{C}_n$ are exactly the distributive $\ell$-monoid reducts of the finite Sugihara chains of length $n$ (for the definition of a Sugihara monoid see e.g., \cite{Raftery2007}). Indeed, the same is true for $n \geq 2$ and the unique extension of $\mathbf{C}_n$ with an implication.
\end{remark}

If we let $\Com\Id_\ast = \set{\mathbf{C}_n,\mathbf{C}_n^\partial \mid n\geq 2}$ and we consider $\alg{\Com\Id_\ast, \leq_{IS}}$, then it follows from \Cref{cor:IS embedding} that for $2 \leq m<n$ we have $\mathbf{C}_m \leq_{IS} \mathbf{C}_n$, $\mathbf{C}_m^\partial \leq_{IS} \mathbf{C}_n^\partial$,  $\mathbf{C}_m \leq_{IS} \mathbf{C}_n^\partial$, and  $\mathbf{C}_m^\partial \leq_{IS} \mathbf{C}_n$.  Thus  the poset $\alg{\Com\Id_\ast, \leq_{IS}}$ has the form represented by \Cref{fig:IS-order on CId}.

\begin{figure}
\centering 
\includegraphics{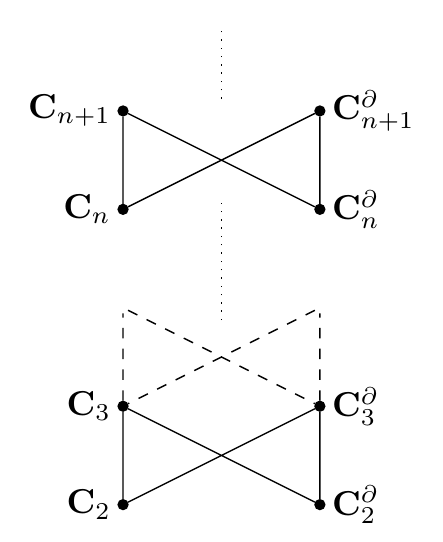}
\caption{The poset $\alg{\Com\Id_\ast, \leq_{IS}}$}
\label{fig:IS-order on CId}
\end{figure}

If for $X\subseteq \Com\Id_\ast$ we denote by ${\downarrow}  X$ the downwards closure of $X$ under $\leq_{IS}$, then it is clear from \Cref{fig:IS-order on CId} that every proper order ideal of  $\alg{\Com\Id_\ast, \leq_{IS}}$ is equal to either ${\downarrow}\set{\mathbf{C}_n}$, ${\downarrow}\set{\mathbf{C}_n^\partial}$ or ${\downarrow}\set{\mathbf{C}_n,\mathbf{C}_n^\partial}$ for some $n\in \mathbb{N}$. Thus \Cref{cor:HS=IS} together with \Cref{thm:subvariety lattice} yields the following characterization of the lattice of subvarieties of $\Com\Id$:

\begin{figure}
\centering 
\includegraphics{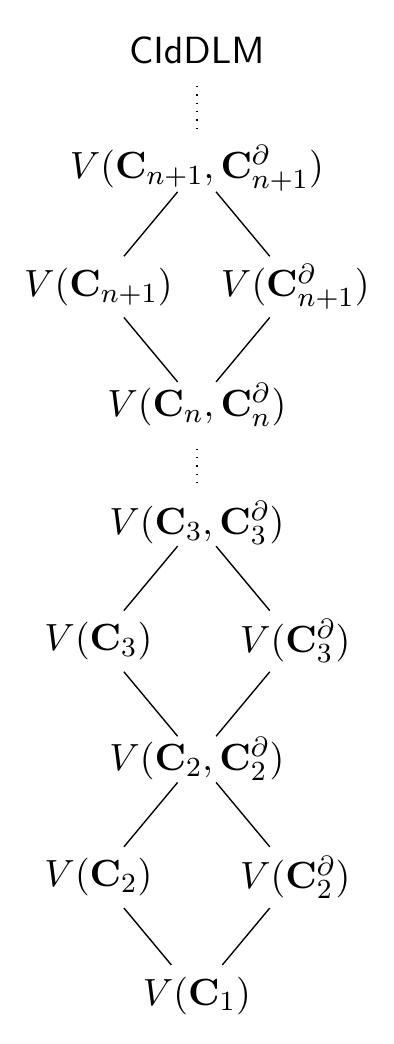}
\caption{The lattice of subvarieties of  $\Com\Id$}
\label{fig:Subvariety lattice of CId}
\end{figure}

\begin{thm}\label{thm:CId lattice}
The lattice of subvarieties of $\Com\Id$ is the lattice represented by \textnormal{\Cref{fig:Subvariety lattice of CId}}.
\end{thm}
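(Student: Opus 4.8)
The plan is to feed the structural facts established just before the statement into Davey's correspondence. First I would check that $\Com\Id$ meets the hypotheses of \Cref{thm:subvariety lattice}: it is congruence-distributive since it has a distributive lattice reduct, it is locally finite since it is a variety of idempotent distributive $\ell$-monoids, and it is plainly of finite type. Hence \Cref{thm:subvariety lattice} applies and the lattice of subvarieties of $\Com\Id$ is isomorphic, via $\mathcal{I} \mapsto V(\mathcal{I})$, to the lattice of order ideals of $\pair{\Com\Id_\ast,\leq_{HS}}$, which by \Cref{cor:HS=IS} coincides with $\pair{\Com\Id_\ast,\leq_{IS}}$. Recall that $\Com\Id_\ast = \set{\mathbf{C}_n,\mathbf{C}_n^\partial \mid n\geq 2}$, so it remains a purely order-theoretic task to enumerate the order ideals of the poset of \Cref{fig:IS-order on CId} and read off their inclusion order.

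The heart of the argument is this enumeration. Writing each subdirectly irreducible by its cardinality $n\geq 2$, the key feature coming from \Cref{cor:IS embedding} is that every element of level $m$ lies below both $\mathbf{C}_n$ and $\mathbf{C}_n^\partial$ whenever $m<n$. I would first observe that a \emph{proper} order ideal $\mathcal{I}$ cannot contain elements of unbounded level: otherwise downward closure would force every $\mathbf{C}_m$ and every $\mathbf{C}_m^\partial$ into $\mathcal{I}$, making $\mathcal{I}$ the whole poset. So for a nonempty proper ideal let $N$ be the largest level occurring in $\mathcal{I}$; inspecting which of the two level-$N$ elements $\mathbf{C}_N,\mathbf{C}_N^\partial$ belong to $\mathcal{I}$ and invoking downward closure then pins $\mathcal{I}$ down to exactly one of ${\downarrow}\set{\mathbf{C}_N}$, ${\downarrow}\set{\mathbf{C}_N^\partial}$, or ${\downarrow}\set{\mathbf{C}_N,\mathbf{C}_N^\partial}$. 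Together with the empty ideal (which maps to the trivial variety) this recovers the classification noted before the theorem.

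Finally I would verify that the inclusion order on these ideals is exactly the lattice of \Cref{fig:Subvariety lattice of CId}. Setting $A_n = {\downarrow}\set{\mathbf{C}_n}$, $B_n = {\downarrow}\set{\mathbf{C}_n^\partial}$ and $D_n = {\downarrow}\set{\mathbf{C}_n,\mathbf{C}_n^\partial}$, one checks that the empty ideal is covered by $A_2$ and $B_2$, that these are covered by their join $D_2 = A_2 \cup B_2$, and more generally that $D_n$ is covered by each of $A_{n+1}$ and $B_{n+1}$, whose join is $D_{n+1}$; the ascending chain $D_2 \subset D_3 \subset \cdots$ has the whole poset $\Com\Id_\ast$ as its directed union, furnishing $\Com\Id$ itself as the top element. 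Translating through $\mathcal{I} \mapsto V(\mathcal{I})$ sends $A_n$ to $V(\mathbf{C}_n)$, $B_n$ to $V(\mathbf{C}_n^\partial)$, and $D_n$ to $V(\mathbf{C}_n)\vee V(\mathbf{C}_n^\partial)$, so the subvariety lattice is precisely the one drawn. The only point requiring care is the bookkeeping in the second paragraph --- ruling out any unexpected ideals in the infinite poset --- but the level-boundedness observation reduces this to a finite case check rather than a genuine obstacle.
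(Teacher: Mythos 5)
Your proposal is correct and takes essentially the same route as the paper: Davey's theorem (\Cref{thm:subvariety lattice}) combined with \Cref{cor:HS=IS} reduces the statement to enumerating the order ideals of $\pair{\Com\Id_\ast,\leq_{IS}}$, which the paper declares ``clear'' from \Cref{fig:IS-order on CId}. Your level-boundedness argument merely makes explicit the case analysis the paper leaves to the reader, and it is sound.
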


Let us recall that a class of algebras  $\var{K}$ is called a  \emph{prevariety} if it is closed under $I$, $S$, and $P$ or equivalently $\var{K} = ISP(\var{K})$.  
\begin{cor}\label{cor:subv-Com}
Every non-trivial proper subvariety of $\Com\Id$ can be generated as a prevariety by two or fewer finite subdirectly irreducible algebras.
\end{cor}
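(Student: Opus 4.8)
The plan is to combine the explicit subvariety lattice of \Cref{thm:CId lattice} with the collapse of homomorphic images to subalgebras recorded in \Cref{l:IS=HS}. First I would fix a non-trivial proper subvariety $\var{W}$ of $\Com\Id$. By \Cref{thm:subvariety lattice} and \Cref{cor:HS=IS}, $\var{W}$ corresponds to a proper non-empty order ideal of $\alg{\Com\Id_\ast, \leq_{IS}}$, and the analysis preceding \Cref{thm:CId lattice} shows that every such ideal equals ${\downarrow}\set{\mathbf{C}_n}$, ${\downarrow}\set{\mathbf{C}_n^\partial}$, or ${\downarrow}\set{\mathbf{C}_n,\mathbf{C}_n^\partial}$ for some $n\geq 2$. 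Since $\mathbf{C}_m, \mathbf{C}_m^\partial \in IS(\mathbf{C}_n)$ and $\mathbf{C}_m, \mathbf{C}_m^\partial \in IS(\mathbf{C}_n^\partial)$ for all $m<n$, in each case $\var{W} = V(\var{G})$ for the set $\var{G}$ of maximal elements of the ideal, with $\abs{\var{G}} \leq 2$.

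It then remains to promote ``generated as a variety'' to ``generated as a prevariety''. Since $\var{G}$ is a finite set of finite algebras and $\Com\Id$ is congruence-distributive, \Cref{l:jonsson} gives $\var{W} = V(\var{G}) = IP_S(HS(\var{G}))$. The decisive step is to apply \Cref{l:IS=HS}: every member of $\var{G}$ is an idempotent ordered monoid, so every homomorphic image of a subalgebra of a member of $\var{G}$ is again isomorphic to a subalgebra of that member, i.e.\ $HS(\var{G}) = IS(\var{G})$. Substituting this gives $\var{W} = IP_S(IS(\var{G}))$, and since a subdirect product of subalgebras of members of $\var{G}$ embeds into a product of members of $\var{G}$, we have $IP_S(IS(\var{G})) \subseteq ISP(\var{G})$. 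Together with the trivial inclusion $ISP(\var{G}) \subseteq V(\var{G}) = \var{W}$ this yields $\var{W} = ISP(\var{G})$, so that $\var{W}$ is generated as a prevariety by the at most two finite subdirectly irreducible algebras in $\var{G}$.

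The one genuinely non-formal ingredient is the identity $HS(\var{G}) = IS(\var{G})$ supplied by \Cref{l:IS=HS}; this special feature of idempotent ordered monoids is precisely what collapses the distinction between the variety and the prevariety generated by a finite family of finite subdirectly irreducibles. Everything else is bookkeeping: reading the three shapes of proper ideals off \Cref{fig:IS-order on CId}, and the routine closure-operator manipulations showing $IP_S(IS(\var{G})) \subseteq ISP(\var{G})$. I would also note the boundary case $n=2$, where the ideals are the singletons $\set{\mathbf{C}_2}$, $\set{\mathbf{C}_2^\partial}$ and the pair $\set{\mathbf{C}_2,\mathbf{C}_2^\partial}$, so that the argument applies verbatim; the bound of two is needed only for the join-type ideals ${\downarrow}\set{\mathbf{C}_n,\mathbf{C}_n^\partial}$, since by \Cref{fig:IS-order on CId} no single $\mathbf{C}_k$ or $\mathbf{C}_k^\partial$ has both $\mathbf{C}_n$ and $\mathbf{C}_n^\partial$ among its subalgebras without also admitting a strictly larger subdirectly irreducible subalgebra.
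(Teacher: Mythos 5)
Your proof is correct and follows essentially the same route the paper intends for this corollary: the classification of non-trivial proper subvarieties as $V(\mathbf{C}_n)$, $V(\mathbf{C}_n^\partial)$, or $V(\mathbf{C}_n,\mathbf{C}_n^\partial)$ from \Cref{thm:CId lattice}, upgraded from variety to prevariety generation via \Cref{l:jonsson} combined with $HS(\var{G}) = IS(\var{G})$ from \Cref{l:IS=HS} and the routine inclusion $IP_S(IS(\var{G})) \subseteq ISP(\var{G})$ --- exactly the combination the paper itself invokes elsewhere (e.g.\ in the proof of \Cref{l:IS-FSI}). Your closing observation that two generators are genuinely needed for the ideals ${\downarrow}\set{\mathbf{C}_n,\mathbf{C}_n^\partial}$ is correct but not required for the stated bound of ``two or fewer''.
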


\begin{cor}
Every infinite subdirectly irreducible member of $\Com\Id$ generates $\Com\Id$ as a quasi-variety.
\end{cor}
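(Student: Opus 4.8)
The plan is to show $Q(\mathbf{A}) = \Com\Id$ for every infinite subdirectly irreducible $\mathbf{A} \in \Com\Id$. Since $\mathbf{A}$ belongs to the variety $\Com\Id$, the inclusion $Q(\mathbf{A}) \subseteq \Com\Id$ is automatic, so only the reverse inclusion requires work. By the corollary that every subvariety of $\Sem\Id$ is generated as a quasivariety by its finite subdirectly irreducible members, and since the finite subdirectly irreducible members of $\Com\Id$ are, up to isomorphism, exactly the chains $\mathbf{C}_n$ and $\mathbf{C}_n^\partial$ for $n\ge 2$ (cf.\ \Cref{thm:e-sums commutative case}), it suffices to prove that each $\mathbf{C}_n$ and each $\mathbf{C}_n^\partial$ lies in $Q(\mathbf{A})$. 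In fact I will establish the stronger statement that each such algebra embeds into $\mathbf{A}$, so that $\mathbf{C}_n, \mathbf{C}_n^\partial \in IS(\mathbf{A}) \subseteq Q(\mathbf{A})$.

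By \Cref{cor:subdirect irred in Sem}, $\mathbf{A}$ is an infinite commutative idempotent ordered monoid, so by \Cref{thm:general e-sum decomposition}, together with the fact that $\mathbf{G}_3$ and $\mathbf{D}_3$ are not commutative, we may write $\mathbf{A} \cong \bigboxplus_{c\in C}\mathbf{M}_c$ with each $\mathbf{M}_c \in \set{\mathbf{C}_2, \mathbf{C}_2^\partial}$ and $C$ infinite. Writing $\chi(c) \in \set{0,1}$ for the type of $\mathbf{M}_c$, call a finite subchain $c_1 \sqsubset \dots \sqsubset c_m$ of $\mathbf{C}$ \emph{alternating} if $\chi(c_i) \neq \chi(c_{i+1})$ for all $i$. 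The crux of the argument, and its main obstacle, is the combinatorial claim that $\mathbf{C}$ contains alternating subchains of every finite length. This is where subdirect irreducibility enters, via \Cref{thm:general subd equivalence}: suppose toward a contradiction that all alternating subchains have length at most $N$, and for $c \in C$ let $\ell(c)$ be the maximal length of an alternating subchain with greatest element $c$, so $\ell\colon C \to \set{1,\dots,N}$. If $d_1 \sqsubset d_2$ and $\chi(d_1) \neq \chi(d_2)$, then appending $d_2$ to a longest alternating subchain ending at $d_1$ gives $\ell(d_2) > \ell(d_1)$; hence any two comparable elements with the same $\ell$-value have the same type. If now $d_1 \sqsubset d_2$ with $\ell(d_1) = \ell(d_2)$, then $\chi(d_1) = \chi(d_2)$ and \Cref{thm:general subd equivalence} supplies some $b$ with $d_1 \sqsubset b \sqsubset d_2$ and $\chi(b) \neq \chi(d_1)$, forcing $\ell(d_1) < \ell(b) < \ell(d_2)$, a contradiction. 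Thus $\ell$ admits no two distinct comparable elements with equal value; as $\mathbf{C}$ is a chain, $\ell$ is injective, whence $\abs{C} \le N$, contradicting that $\mathbf{A}$ is infinite.

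Having established arbitrarily long alternating subchains, and noting that deleting the first element of an alternating subchain switches its starting type (so alternating subchains of each length beginning with either type exist), I finish by invoking the embedding criterion. Each $\mathbf{C}_n$ is the nested sum of $n-1$ summands alternating between $\mathbf{C}_2$ and $\mathbf{C}_2^\partial$ and beginning with $\mathbf{C}_2$, and dually for $\mathbf{C}_n^\partial$. An alternating subchain of $\mathbf{C}$ of length $n-1$ with the matching starting type yields an order-embedding $f$ of the finite index chain of $\mathbf{C}_n$ into $\mathbf{C}$ for which each summand of $\mathbf{C}_n$ equals, hence is a subalgebra of, $\mathbf{M}_{f(i)}$; by \Cref{l:embeddings to summands}(2) the induced map $\phi_f$ is an $\ell$-monoid embedding of $\mathbf{C}_n$ into $\mathbf{A}$, and likewise for $\mathbf{C}_n^\partial$. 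Therefore all finite subdirectly irreducible members of $\Com\Id$ embed into $\mathbf{A}$, giving $\Com\Id \subseteq Q(\mathbf{A})$ and hence $Q(\mathbf{A}) = \Com\Id$. Once the unbounded-alternation claim is in hand, the remainder is routine bookkeeping with the nested sum decomposition.
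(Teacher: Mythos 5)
Your proof is correct, but it takes a genuinely different route from the one the paper intends. The paper states this corollary without proof because it drops out of machinery already in place: by \Cref{thm:CId lattice} every proper subvariety of $\Com\Id$ is finitely generated, so by \Cref{l:jonsson} together with \Cref{l:IS=HS} its subdirectly irreducible members are all finite; hence an infinite subdirectly irreducible $\mathbf{A}$ satisfies $V(\mathbf{A}) = \Com\Id$, and then each $\mathbf{C}_n$, $\mathbf{C}_n^\partial$ lies in $HSP_U(\mathbf{A}) \subseteq ISP_U(\mathbf{A}) \subseteq Q(\mathbf{A})$ (using \Cref{l:IS=HS} again and closure of $\IdOM$ under ultraproducts), after which the corollary on quasivariety generation by finite subdirectly irreducibles finishes. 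You instead work bottom-up: via \Cref{cor:subdirect irred in Sem} and \Cref{thm:general e-sum decomposition} you write $\mathbf{A}$ as an infinite nested sum of copies of $\mathbf{C}_2$ and $\mathbf{C}_2^\partial$, extract arbitrarily long alternating subchains of the index chain by a potential-function argument whose key step --- comparable elements with equal $\ell$-value must have equal type, which the betweenness condition of \Cref{thm:general subd equivalence} then refutes --- is exactly where subdirect irreducibility enters, and conclude with \Cref{l:embeddings to summands}(2). The details check out: $\lvert C\rvert = \lvert A\rvert - 1$ is infinite, the deletion trick supplies both starting types, and $\mathbf{C}_n$ (resp.\ $\mathbf{C}_n^\partial$) is indeed the alternating nested sum of $n-1$ summands beginning with $\mathbf{C}_2$ (resp.\ $\mathbf{C}_2^\partial$). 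What your approach buys is a fully explicit and slightly stronger conclusion, $\set{\mathbf{C}_n, \mathbf{C}_n^\partial \mid n\geq 2} \subseteq IS(\mathbf{A})$, with no appeal to ultraproducts or to the subvariety lattice; what the paper's route buys is brevity, since after \Cref{thm:CId lattice} the statement is essentially free. (The two arguments in fact converge: a finite algebra embeddable into an ultrapower of $\mathbf{A}$ embeds into $\mathbf{A}$, so the Jónsson route also yields your embedding statement, just non-constructively.)
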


Let us for every natural number  $n\geq 2$ rename the elements of $\mathbf{C}_n$ such that $\mathbf{C}_n = \alg{\set{e,1,\dots, n-1}, \cdot, e, \leq}$ with
\begin{equation}\label{eq:Cn}
C_n =  \begin{cases}
\set{n-1 <  \dots < 3 < 1 < e < 2 <  4 < \dots < n-2 } & n \text{ even}, \\
\set{n-1 <  \dots < 4 < 2 < e < 1 <  3 < \dots < n-2 } & n \text{ odd},
\end{cases}
\end{equation}
 and $m \cdot k = \max_{\mathbb{N}}(m,k)$, and dually $\mathbf{C}_n^\partial = \alg{\set{e,1,\dots, n-1}, \cdot, e,\leq}$ with
\begin{equation}\label{eq:Cnd}
C_n^\partial =  \begin{cases}
\set{n-2 < \dots < 4 < 2 < e < 1 <  3 < \dots < n-1 } & n \text{ even}, \\
\set{n-2  < \dots < 3 < 1 < e < 2 <  4 < \dots < n-1 } & n \text{ odd},
\end{cases}
\end{equation}
and $m \cdot k = \max_{\mathbb{N}}(m,k)$. We define the equations $\sigma_n$ for $n\geq 2$ inductively as follows: we set $\sigma_2  \coloneqq ( x_1 \leq e) $ and if $\sigma_n = (s_n \leq t_n)$ is  already defined, we set
\begin{equation*}
\sigma_{n+1} \coloneqq \begin{cases}
s_n \meet x_{n-1}x_n \leq t_n & \text{if $n+1$ is even}, \\
s_n \leq t_n \join x_{n-1}x_n & \text{if $n+1$ is odd}.
\end{cases}
\end{equation*}
So for $n \geq 3$  we have
\begin{equation*}
\sigma_n = \begin{cases}
x_1 \meet x_2x_3 \meet  \dots \meet x_{n-2}x_{n-1} \leq e \join x_1x_2 \join \dots \join x_{n-3}x_{n-2} & \text{$n$ even},\\
x_1 \meet x_2x_3 \meet  \dots \meet x_{n-3}x_{n-2} \leq e \join x_1x_2 \join \dots \join x_{n-2}x_{n-1} & \text{$n$  odd}.
\end{cases}
\end{equation*}

\begin{lemma}\label{l:axiom}
\hfill
\begin{enumerate}[label =\textup{(\roman*)}]
\item Let  $n\geq 2$ be even. Then we have $\mathbf{C}_n^\partial \not\models \sigma_n$. Moreover,  for every $\mathbf{M} \in \Com\IdOM$  with $\mathbf{M}\not\models \sigma_n$ and elements  $a_1,\dots,a_{n-1} \in M$ witnessing the failure of $\sigma_n$ via the valuation $x_k \mapsto a_k$, the subalgebra $\Sg(a_1,\dots,a_{n-1})$ is isomorphic to $\mathbf{C}_n^\partial$ via the map $a_k \mapsto k$.  
\item   Let  $n\geq 2$ be odd.  Then we have  $\mathbf{C}_n \not\models \sigma_n$.  Moreover, for every $\mathbf{M} \in \Com\IdOM$ with $\mathbf{M}\not\models \sigma_n$ and elements $a_1,\dots,a_{n-1} \in M$ witnessing the failure of $\sigma_n$ via the valuation $x_k \mapsto a_k$, the subalgebra $\Sg(a_1,\dots,a_{n-1})$ is isomorphic to $\mathbf{C}_n$ via the map $a_k \mapsto k$.
\end{enumerate}
\end{lemma}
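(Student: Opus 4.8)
The plan is to prove (i) and (ii) simultaneously by induction on $n$, since the two statements interleave: the inductive construction of $\sigma_{n+1}$ from $\sigma_n$ passes between even and odd indices, and the algebras are linked by $\mathbf{C}_{n+1}^\partial = \mathbf{C}_2^\partial \boxplus \mathbf{C}_n$ and $\mathbf{C}_{n+1} = \mathbf{C}_2 \boxplus \mathbf{C}_n^\partial$. The base case $n=2$ is immediate: $\sigma_2 = (x_1 \leq e)$ fails in $\mathbf{C}_2^\partial$, whose non-identity element lies above $e$, and any witness $a_1$ of a failure in some $\mathbf{M} \in \Com\IdOM$ satisfies $a_1 > e$, so $\Sg(a_1) = \{e,a_1\} \cong \mathbf{C}_2^\partial$. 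Throughout I will use that members of $\Com\IdOM$ are chains and that, by \Cref{lemma:Idemp}, the product of two elements lies in $\{a,b\}$, equals their join when it is $\geq e$ and their meet when it is $\leq e$.

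For the inductive step with $n+1$ even (so $n$ is odd) I would first record a short computation: evaluating $\sigma_n$ at $x_k \mapsto k$ inside $\mathbf{C}_n$, using $x_ix_j = \max(i,j)$ and the order of $\mathbf{C}_n$, the term $s_n = x_1 \meet x_2x_3 \meet \cdots$ collapses to the least positive element $1$, while $t_n = e \join x_1x_2 \join \cdots$ collapses to $e$. For the non-satisfaction claim $\mathbf{C}_{n+1}^\partial \not\models \sigma_{n+1}$, I use $\mathbf{C}_{n+1}^\partial = \mathbf{C}_2^\partial \boxplus \mathbf{C}_n$: under $x_k \mapsto k$ the values of $s_n$ and $t_n$ are computed entirely inside the convex summand $\mathbf{C}_n$ (so meets and joins do not escape it) and agree with the values just found, whereas $x_{n-1}x_n = \max(n-1,n) = n$ is the top absorbing element contributed by the $\mathbf{C}_2^\partial$ summand. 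Hence the left-hand side $s_n \meet x_{n-1}x_n$ equals $s_n$, and $\sigma_{n+1}$ reduces to $s_n \leq t_n$, which fails in $\mathbf{C}_n$ by the induction hypothesis.

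The substantial part is the ``moreover'' statement. Given $\mathbf{M} \in \Com\IdOM$ with witnesses $a_1,\dots,a_n$ of the failure of $\sigma_{n+1} = s_n \meet x_{n-1}x_n \leq t_n$, the failure unwinds in the chain $\mathbf{M}$ to $\min(\bar s, a_{n-1}a_n) > \bar t$, where $\bar s, \bar t$ are the values of $s_n, t_n$ at $x_k \mapsto a_k$. In particular $\bar s > \bar t$, so $a_1,\dots,a_{n-1}$ witness the failure of $\sigma_n$, and the induction hypothesis yields $\Sg(a_1,\dots,a_{n-1}) \cong \mathbf{C}_n$ via $a_k \mapsto k$; transporting the computation above through this isomorphism gives $\bar s = a_1$ and $\bar t = e$, so the remaining content of the failure is precisely $a_{n-1}a_n > e$. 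Since $a_{n-1}$ corresponds to the bottom absorbing element $n-1$ of $\mathbf{C}_n$, \Cref{lemma:Idemp} forces $a_{n-1}a_n = a_n$ and $a_n > e$. I would then locate $a_n$ purely by monotonicity of multiplication: if $a_n \leq a_{n-2}$ (the top positive element of $\Sg(a_1,\dots,a_{n-1})$), then $a_n = a_{n-1}a_n \leq a_{n-1}a_{n-2} = a_{n-1} < e$, a contradiction, so $a_n$ lies strictly above every element of $\Sg(a_1,\dots,a_{n-1})$; a symmetric monotonicity argument (for $a_k \leq e$ using $a_{n-1} \leq a_k$, and for positive $a_k$ using that the product of positives is the join) shows $a_na_k = a_n$ for every $k$. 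Thus $a_n$ is a new top absorbing element, whence $\Sg(a_1,\dots,a_n) = \Sg(a_1,\dots,a_{n-1}) \cup \{a_n\} \cong \mathbf{C}_2^\partial \boxplus \mathbf{C}_n = \mathbf{C}_{n+1}^\partial$, and the isomorphism extends the previous one by $a_n \mapsto n$.

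The case $n+1$ odd is handled by the order-dual argument, which can be read off by applying the duality $(-)^\partial$: it interchanges $\mathbf{C}_m \leftrightarrow \mathbf{C}_m^\partial$, meets and joins, and top and bottom elements, and turns the left-hand meet with $x_{n-1}x_n$ of the even case into the right-hand join of the odd case, so the same reasoning goes through, now producing a new \emph{bottom} absorbing element and the decomposition $\mathbf{C}_2 \boxplus \mathbf{C}_n^\partial = \mathbf{C}_{n+1}$. I expect the only genuine obstacle to be the ``moreover'' direction, and within it the step that locates $a_n$ relative to $\Sg(a_1,\dots,a_{n-1})$: the failure of $\sigma_{n+1}$ directly yields only $a_{n-1}a_n > e$, and it is the monotonicity of multiplication—rather than the equation itself—that forces $a_n$ to be extremal and absorbing. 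The evaluation of $s_n$ and $t_n$ in $\mathbf{C}_n$ and the transport of these values to $\mathbf{M}$ along the induction hypothesis is routine but must be carried out carefully to pin down $\bar s$ and $\bar t$.
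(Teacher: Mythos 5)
Your proof is correct and follows essentially the same route as the paper's: a joint induction on $n$ in which the failure of the longer equation is cut down (via total order) to a failure of the shorter one, the induction hypothesis identifies $\Sg(a_1,\dots,a_{n-1})$ exactly, and monotonicity of multiplication together with \Cref{lemma:Idemp} forces the new witness to be a new extremal absorbing element, yielding the decomposition $\mathbf{C}_2^\partial \boxplus \mathbf{C}_n$ (resp.\ its dual). The only differences are cosmetic: the paper makes the odd-from-even step explicit and treats the non-satisfaction claims by a standalone direct computation, while you make the even-from-odd step explicit and check the absorption equations $a_n a_k = a_n$ in slightly more detail.
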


\begin{proof}
For the first part of (i) note that for $n=2$ we clearly have $1 > e$ in~$\mathbf{C}_2^\partial$, and for $n> 2$ even  and $1,\dots, n-1 \in {C}_n^\partial$  we get, by \Cref{eq:Cnd}, that
\begin{align*}
 1 \meet 2\cdot 3 \meet  \dots \meet (n-2)\cdot (n-1) &= 1  \\
 &> e \\
&= e \join 1\cdot 2 \join \dots \join (n-3)\cdot (n-2).
\end{align*}
Thus  $\mathbf{C}_n^\partial \not\models \sigma_n$. 
Similarly we get for the first part of (ii) that for $n\geq 2$ odd, $\mathbf{C}_n \not\models \sigma_n$.

Now we prove the second parts of (i) and (ii) together by induction on $n\geq 2$. For $n=2$ we have $\sigma_2 = (x_1 \leq e)$ and  if $\mathbf{M} \in \Com\IdOM$ and $a_1 \in M$ such that $a_1\nleq e$, then, since $\mathbf{M}$ is totally ordered, we get $a_1>e$ and the subalgebra $\Sg(a_1) = \alg{\set{e,a_1}, \leq, \cdot,e}$ is clearly isomorphic to $\mathbf{C}_2^\partial$.

Now suppose that $n>2$ is odd and  (i) holds for $n-1$. Then for $\sigma_{n-1} = (s_{n-1} \leq t_{n-1})$,  $\sigma_n$ has the following form
\begin{equation*}
s_{n-1} \leq t_{n-1} \join x_{n-2}x_{n-1}
\end{equation*}
Let $\mathbf{M}\in \Com\IdOM$ and $a_1,\dots, a_{n-1} \in M$ witnessing the failure of $\sigma_n$. Then, since $\mathbf{M}$ is totally ordered we have
\begin{equation*}
s_{n-1}^{\bf M}(a_1,\dots,a_{n-1})  > t_{n-1}^{\bf M}(a_1,\dots, a_{n-1}) \join a_{n-2}a_{n-1}
\end{equation*}
Hence we get
\begin{equation*}
s_{n-1}^{\bf M}(a_1,\dots,a_{n-1})  > t_{n-1}^{\bf M}(a_1,\dots, a_{n-1})
\end{equation*}
So the elements $a_1,\dots,a_{n-2}$ witness the failure of $\sigma_{n-1}$ via the map $x_k \mapsto a_k$ and the induction hypothesis yields that $\Sg(a_1,\dots, a_{n-2})$  is isomorphic to~$\mathbf{C}_{n-1}^\partial$ via the map $a_k \mapsto k$. Thus, by \Cref{eq:Cnd}, we get  that 
\begin{equation*}
\sg(a_1,\dots, a_{n-2}) =  \set{a_{n-3} < \dots a_{4} < a_{2} < e < a_1 < a_3 < \dots < a_{n-2}}
\end{equation*}
and $a_i\cdot a_j = a_{\max(i,j})$ for $i,j \in \set{1,\dots,n-1}$. Now since $s_{n-1}$ is a meet containing $x_1$ we get
\begin{equation*}
a_{n-2} \geq a_1 > a_{n-2}a_{n-1},
\end{equation*}
yielding $e > a_{n-1}$ and $a_{n-2}a_{n-1} = a_{n-1}$. Thus, since $a_{n-2} \cdot a_{n-3} = a_{n-2}$, we get $a_{n-1} < a_{n-3}$ and, by \Cref{eq:Cn},  $\Sg(a_1,\dots, a_{n-1})$ is isomorphic to $\mathbf{C}_{n}$ via the map $a_k \mapsto k$. 

The case where $n$ is even and (ii) holds for $n-1$ is very similar.
\end{proof}
For $n\geq 2$ and two disjoint sets $\set{x_1,\dots, x_{n-1}}$ and $\set{y_1,\dots,y_{n-1}}$ of distinct variables let $\sigma_n(x_1,\dots,x_{n-1}) = (s_n \leq t_n)$ and  $\sigma_n^\partial(y_1,\dots, y_{n-1})  = (t_n^\partial \leq s_n^\partial)$, and define $\gamma_n \coloneqq (s_n t_n^\partial  \leq t_n s_n^\partial)$. Note that $s_n^\partial$ and $t_n^\partial$ are strictly speaking not the duals of $s_n$ and $t_n$, since we change the variables.

\begin{thm}\label{thm:axiomatizations CId}
\hfill
\begin{enumerate}[label =\textup{(\arabic*)}]
\item If $n\geq 2$ is even, then the equations $\sigma_n$ and $\sigma_n^\partial$ axiomatize the subvarieties $V(\mathbf{C}_n)$ and $V(\mathbf{C}_n^\partial)$, respectively, relative to $\Com\Id$.
\item If $n\geq 2$ is odd, then the equations $\sigma_n$ and $\sigma_n^\partial$ axiomatize the subvarieties $V(\mathbf{C}_n^\partial)$ and $V(\mathbf{C}_n)$, respectively, relative to $\Com\Id$.
\item For $n\geq 2$ the equation $\gamma_{n+1}$ axiomatizes the subvariety $V(\mathbf{C}_n, \mathbf{C}_n^\partial)$ relative to $\Com\Id$.
\end{enumerate}
\end{thm}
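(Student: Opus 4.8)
The plan is to use the fact (\Cref{thm:subvariety lattice} together with \Cref{cor:HS=IS}) that a subvariety of $\Com\Id$ is completely determined by the set of finite subdirectly irreducibles it contains, which by \Cref{thm:e-sums commutative case} is drawn from $\set{\mathbf{C}_k, \mathbf{C}_k^\partial \mid k \geq 2}$. Thus for each of the three equations it suffices to compute exactly which $\mathbf{C}_k$ and $\mathbf{C}_k^\partial$ satisfy it and to check that the resulting set of subdirectly irreducibles is the order ideal of $\alg{\Com\Id_\ast,\leq_{IS}}$ corresponding (via \Cref{fig:IS-order on CId}) to the claimed subvariety. Throughout I write $\mathbf{A}_n$ for the size-$n$ subdirectly irreducible singled out by \Cref{l:axiom}, i.e.\ $\mathbf{A}_n = \mathbf{C}_n^\partial$ if $n$ is even and $\mathbf{A}_n = \mathbf{C}_n$ if $n$ is odd.

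For (1) and (2) the engine is \Cref{l:axiom}. Its first part gives $\mathbf{A}_n \not\models \sigma_n$, and since the failure of an equation is inherited by every extension, any finite subdirectly irreducible containing a copy of $\mathbf{A}_n$ also refutes $\sigma_n$; conversely the second part shows that a refutation of $\sigma_n$ forces a subalgebra isomorphic to $\mathbf{A}_n$. Hence a finite subdirectly irreducible $\mathbf{S}$ satisfies $\sigma_n$ if and only if $\mathbf{A}_n \not\leq_{IS} \mathbf{S}$. Reading off \Cref{fig:IS-order on CId}, for $n$ even the algebras with $\mathbf{C}_n^\partial \not\leq_{IS} \mathbf{S}$ are exactly $\set{\mathbf{C}_k \mid k \leq n} \cup \set{\mathbf{C}_k^\partial \mid k < n}$, which is precisely the principal order ideal determined by $\mathbf{C}_n$; so $\sigma_n$ axiomatizes $V(\mathbf{C}_n)$, and the odd case is identical with the roles of $\mathbf{C}_n$ and $\mathbf{C}_n^\partial$ interchanged. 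The statements about $\sigma_n^\partial$ then follow from self-duality of $\Com\Id$: since $\sigma_n^\partial$ is, up to renaming variables, the dual equation of $\sigma_n$, we have $\mathbf{M} \models \sigma_n^\partial$ iff $\mathbf{M}^\partial \models \sigma_n$, so the subvariety defined by $\sigma_n^\partial$ is the order dual of the one defined by $\sigma_n$; as $(\mathbf{C}_n)^\partial = \mathbf{C}_n^\partial$ this gives the claimed swap.

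For (3) the target $V(\mathbf{C}_n, \mathbf{C}_n^\partial)$ has as its finite subdirectly irreducibles exactly $\set{\mathbf{C}_k, \mathbf{C}_k^\partial \mid k \leq n}$, so I must show that $\gamma_{n+1}$ is satisfied by every $\mathbf{C}_k, \mathbf{C}_k^\partial$ with $k \leq n$ and refuted by both size-$(n+1)$ subdirectly irreducibles. The positive direction rests on the observation that $\gamma_{n+1}$ is a semantic consequence of $\sigma_{n+1}$ together with $\sigma_{n+1}^\partial$ in any ordered monoid: if a valuation makes $s_{n+1} \leq t_{n+1}$ and $t_{n+1}^\partial \leq s_{n+1}^\partial$, then order-preservation of $\cdot$ yields $s_{n+1} t_{n+1}^\partial \leq t_{n+1} t_{n+1}^\partial \leq t_{n+1} s_{n+1}^\partial$, which is $\gamma_{n+1}$. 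Now $\mathbf{C}_n$ and $\mathbf{C}_n^\partial$ both satisfy $\sigma_{n+1}$ and $\sigma_{n+1}^\partial$: by the characterization from (1)/(2) this only requires $\mathbf{A}_{n+1} \not\leq_{IS} \mathbf{C}_n, \mathbf{C}_n^\partial$, which holds on cardinality grounds since $\mathbf{A}_{n+1}$ has $n+1$ elements. Hence $\mathbf{C}_n, \mathbf{C}_n^\partial \models \gamma_{n+1}$, and so does every subalgebra, covering all $k \leq n$.

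The negative direction is where the shape of $\gamma_{n+1}$ does the work, and I expect it to be the main point to phrase cleanly. The idea is to decouple $\gamma_{n+1}$ by a constant substitution: evaluating all the $y$-variables at $e$ collapses $s_{n+1}^\partial$ and $t_{n+1}^\partial$ to $e$ and reduces $\gamma_{n+1}$ to $s_{n+1} \leq t_{n+1}$, i.e.\ to $\sigma_{n+1}$; dually, evaluating all the $x$-variables at $e$ reduces it to $t_{n+1}^\partial \leq s_{n+1}^\partial$, i.e.\ to $\sigma_{n+1}^\partial$. By the first part of \Cref{l:axiom}, $\sigma_{n+1}$ fails in $\mathbf{A}_{n+1}$, and $\sigma_{n+1}^\partial$ fails in the order dual $\mathbf{A}_{n+1}^\partial$ (the other size-$(n+1)$ subdirectly irreducible, using that $\sigma^\partial$ fails in a structure iff $\sigma$ fails in its order dual). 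Feeding the witnessing valuation of $\sigma_{n+1}$ into the first substitution refutes $\gamma_{n+1}$ in $\mathbf{A}_{n+1}$, and feeding the witnessing valuation of $\sigma_{n+1}^\partial$ into the second refutes it in $\mathbf{A}_{n+1}^\partial$; thus both $\mathbf{C}_{n+1}$ and $\mathbf{C}_{n+1}^\partial$ refute $\gamma_{n+1}$. Since every finite subdirectly irreducible of size greater than $n+1$ contains one of these as a subalgebra (\Cref{fig:IS-order on CId}), the refutation propagates upward, so the finite subdirectly irreducibles satisfying $\gamma_{n+1}$ are exactly $\set{\mathbf{C}_k, \mathbf{C}_k^\partial \mid k \leq n}$, giving $\gamma_{n+1} \leftrightarrow V(\mathbf{C}_n, \mathbf{C}_n^\partial)$.
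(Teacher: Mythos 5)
Your proposal is correct and follows essentially the same route as the paper: both rest on \Cref{l:axiom} together with the subvariety--ideal correspondence of \Cref{thm:CId lattice} (plus duality) for (1) and (2), and for (3) both establish that $\gamma_{n+1}$ is interderivable with $\set{\sigma_{n+1},\sigma_{n+1}^\partial}$ via order-preservation of the product in one direction and the substitutions $y_i \mapsto e$, $x_i \mapsto e$ in the other. The only cosmetic difference is that the paper concludes (3) directly from $V(\mathbf{C}_n,\mathbf{C}_n^\partial) = V(\mathbf{C}_{n+1}) \cap V(\mathbf{C}_{n+1}^\partial)$, whereas you re-run the computation of which finite subdirectly irreducibles satisfy the equation; both are valid instances of the same argument.
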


\begin{proof}
First note that it follows from \Cref{thm:CId lattice} that if an equation holds in  $\mathbf{C}_n$ but not in $\mathbf{C}_n^\partial$, then it axiomatizes $V(\mathbf{C}_n)$ and, dually, if an equation holds in  $\mathbf{C}_n^\partial$ but not in $\mathbf{C}_n$, then it axiomatizes  $V(\mathbf{C}_n^\partial)$. Thus (1) and (2) follow immediately from \Cref{l:axiom} and the fact that $\mathbf{C}_n^\partial$  is the order dual of $\mathbf{C}_n$. For (3) note that  if two varieties $\var{V}_1$ and $\var{V}_2$ are axiomatized by sets of equations $\Sigma_1$ and $\Sigma_2$, respectively, then $\var{V}_1 \cap \var{V}_2$ is axiomatized by $\Sigma_1 \cup \Sigma_2$. Thus $V(\mathbf{C}_n, \mathbf{C}_n^\partial) = V(\mathbf{C}_{n+1}) \cap V(\mathbf{C}_{n+1}^\partial)$ is axiomatized by the set $\set{\sigma_{n+1}, \sigma_{n+1}^\partial}$. But also for every idempotent ordered monoid $\bf M$ and $a,b,c,d \in M$ we have that $a\leq b$ and $c\leq d$ implies $ac\leq bd$. So, by definition of $\gamma_{n+1}$, we get that  $V(\mathbf{C}_n, \mathbf{C}_n^\partial) \models \gamma_{n+1}$. On the other hand if we substitute in $\gamma_{n+1}$ the variables $y_1,\dots,y_n$ by $e$, we obtain $\sigma_{n+1}$ and if we substitute the variables $x_1,\dots,x_n$ by $e$,  we obtain $\sigma_{n+1}^\partial$. So $\gamma_{n+1}$ axiomatizes $V(\mathbf{C}_n, \mathbf{C}_n^\partial)$ relative to $\Com\Id$.
\end{proof}

\begin{remark}
It follows from \Cref{thm:CId lattice} and  the fact that for every $n\in \mathbb{N}\setminus\set{0}$ we have  $V(\mathbf{C}_n) = ISP(\mathbf{C}_n)$ that every finite member of  $\Com\Id$ embeds into a power of $\mathbf{C}_n$ for some  $n\in \mathbb{N}\setminus\set{0}$. Moreover, clearly every finite distributive $\ell$-monoid with an absorbing bottom element is the residual-free reduct of a fully distributive residuated lattice\footnote{A residuated lattice is called fully distributive if its residual-free reduct is a distributive $\ell$-monoid (see e.g., \cite{Cardona2017}).}. Thus the class of Sugihara monoids and the class of commutative idempotent fully distributive residuated lattices satisfy the same universal first-order sentences in the language $\set{\meet,\join,\cdot,e}$. This slightly generalizes Corollary 25 of  \cite{Raftery2007}. Furthermore it follows from \Cref{thm:axiomatizations CId} that non-isomorphic finite Sugihara chains can be distinguished by equations in the language $\set{\meet,\join,\cdot,e}$.
\end{remark}

\section{Amalgamation}\label{sec7}
In this section we use the nested sum representation to study the amalgamation property for idempotent ordered monoids. 

Let $\var{K}$ be a class of algebras of the same type. A \emph{span} in $\var{K}$ is a pair $\pair{i_1\colon \mathbf{A} \hookrightarrow \mathbf{B}, i_2 \colon \mathbf{A}\hookrightarrow \mathbf{C}}$ of embeddings between algebras $\mathbf{A},\mathbf{B},\mathbf{C} \in \var{K}$. The class $\var{K}$ has the \emph{amalgamation property} if for every span $\pair{i_1\colon \mathbf{A} \hookrightarrow \mathbf{B}, i_2 \colon \mathbf{A}\hookrightarrow \mathbf{C}}$   in~$\var{K}$ there exists a $\mathbf{D}\in \var{K}$ and embeddings $j_1\colon \mathbf{B} \hookrightarrow \mathbf{D}$, $j_2\colon \mathbf{C} \hookrightarrow \mathbf{D}$ such that $j_1\circ i_1 = j_2\circ i_2$, i.e., the following diagram commutes:
\[\begin{tikzcd}
	& {\bf B} \\
	{\bf A} && {\bf D} \\
	& {\bf C}
	\arrow["{i_1}", hook, from=2-1, to=1-2]
	\arrow["{j_1}", dashed, hook, from=1-2, to=2-3]
	\arrow["{i_2}"', hook, from=2-1, to=3-2]
	\arrow["{j_2}"', dashed, hook, from=3-2, to=2-3]
\end{tikzcd}\]
The triple $\pair{\mathbf{D},j_1,j_2}$ is called an \emph{amalgam} of the span. We say that $\var{K}$ has the \emph{strong amalgamation property} if for every span $\pair{i_1\colon \mathbf{A} \hookrightarrow \mathbf{B}, i_2 \colon \mathbf{A}\hookrightarrow \mathbf{C}}$ there exists an amalgam $\pair{\mathbf{D},j_1,j_2}$ such that  $j_1[B] \cap j_2[C] = (j_1\circ i_1)[A]$ which we call a \emph{strong amalgam} of the span.

\begin{prop}
The variety $\DLM$ does not have the amalgamation property.
\end{prop}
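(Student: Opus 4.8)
The plan is to exhibit a single span of finite totally ordered idempotent monoids---all of which lie in $\DLM$---and to show that it admits no amalgam anywhere in $\DLM$. Since the totally ordered commutative idempotent monoids have the strong amalgamation property (\Cref{c:com amalgamation}), any commutative span already amalgamates inside $\DLM$, so the witnessing span must be genuinely non-commutative; by \Cref{l:dubreil-jacotin} this forces at least one of the summands $\mathbf{G}_3$ or $\mathbf{D}_3$ of \Cref{ex:simple chains} to appear. Concretely I would take one of the two ``forbidden spans'' underlying \Cref{thm:amalgamation of chains}: a span $\pair{i_1\colon \mathbf{A}\hookrightarrow\mathbf{B},\, i_2\colon \mathbf{A}\hookrightarrow\mathbf{C}}$ in which $\mathbf{B}$ realises a \emph{left-handed} and $\mathbf{C}$ a \emph{right-handed} absorption pattern over a shared non-commutative base, arranged so that the two extensions cannot be reconciled by any order on a common chain.

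To pass from the totally ordered setting to all of $\DLM$ I would argue by contradiction. Suppose $\pair{\mathbf{D},j_1,j_2}$ is an amalgam with $\mathbf{D}\in\DLM$. By the Holland-style representation (\Cref{thm:representation}) we may assume $\mathbf{D}=\mathbf{End}(\pair{\Omega,\leq})$ for some chain $\pair{\Omega,\leq}$, with $e=\mathrm{id}_\Omega$, multiplication given by composition, and the pointwise order. The images under $j_1$ and $j_2$ of the generators of $\mathbf{B}$ and $\mathbf{C}$ are then order-preserving self-maps of $\Omega$ that are idempotent (being images of idempotents) and satisfy all the defining product- and order-relations of $\mathbf{B}$ and $\mathbf{C}$; in particular each such idempotent $f$ is a retraction onto its fixed set $\mathrm{Fix}(f)=\mathrm{Im}(f)$, and the absorption identities translate into containments between these fixed sets together with displacement conditions ($f\geq e$ or $f\leq e$). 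The aim is a finite, purely order-theoretic case analysis showing that no point of $\Omega$ can simultaneously support the left-handed constraints coming from $\mathbf{B}$ and the right-handed constraints coming from $\mathbf{C}$, contradicting the existence of such a $\mathbf{D}$.

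The main obstacle is precisely the step that separates $\DLM$ from $\IdOM$: the amalgam $\mathbf{D}$ need be neither idempotent, nor commutative, nor totally ordered, so the chain-specific tools are unavailable---\Cref{lemma:Idemp} (forcing $ab\in\set{a,b}$), the bound that each $\mathscr{D}$-class has at most two elements (\Cref{l:dubreil-jacotin}), and even any reduction to a totally ordered quotient all break down in general. This is not a mere technicality. The seemingly natural span obtained by adjoining to $\mathbf{C}_2$ a $\mathscr{D}$-partner in the two opposite ways (yielding $\mathbf{G}_3$ on one side and $\mathbf{D}_3$ on the other) has \emph{no} amalgam in $\IdOM$, since three distinct elements would then share a single $\mathscr{D}$-class; yet it \emph{does} amalgamate in $\DLM$, because two suitable incomparable idempotent endomorphisms of a three-element chain realise both absorption patterns at once over the shared $\mathbf{C}_2$. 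Consequently the span must be engineered so that its relations are order-rigid enough to be unrealisable already among endomorphisms of an arbitrary chain, and the crux of the argument is verifying this rigidity through the pointwise representation rather than through any chain-internal structure theory. Once that incompatibility is established at a single witnessing point of $\Omega$, the assumed embeddings $j_1,j_2$ cannot coexist, so the span has no amalgam in $\DLM$ and the variety fails the amalgamation property.
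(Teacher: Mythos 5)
Your proposal is not a proof but a programme, and its central piece is missing: you never exhibit the witnessing span. Worse, your own (correct) preliminary analysis shows that the search space you set up contains no obvious candidate. Within spans of totally ordered idempotent monoids, the commutative ones amalgamate by \Cref{c:com amalgamation}, the compatible non-commutative ones amalgamate already in $\IdOM \subseteq \DLM$ by \Cref{thm:compatible spans have strong amalgams}, and the incompatible ones are exactly those restricting to $\mathcal{S}_1$ or $\mathcal{S}_2$ (\Cref{thm:amalgamation of chains}) --- and the paper shows, in the example following \Cref{p:bad span Id and Sem}, that $\mathcal{S}_1$ and $\mathcal{S}_2$ \emph{do} have amalgams in $\DLM$, realized inside $\mathbf{End}(\mathbf{3})$, a fact you yourself reproduce. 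You then assert that ``the span must be engineered so that its relations are order-rigid enough to be unrealisable already among endomorphisms of an arbitrary chain,'' but no such engineering is carried out, and nothing in your argument rules out the possibility that \emph{every} span of idempotent ordered monoids amalgamates in $\DLM$ via endomorphism monoids of chains; the $\mathbf{End}(\mathbf{3})$ example is evidence pointing in exactly that direction. So the proposed strategy is not merely incomplete --- it may be unrealizable in principle within the idempotent setting, and you offer no argument to the contrary. (The reduction via \Cref{thm:representation} to working inside some $\mathbf{End}(\pair{\Omega,\leq})$ is fine as far as it goes; the gap is entirely in producing the span and the promised order-theoretic incompatibility.)

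The paper's actual proof is completely different and sidesteps the idempotent machinery altogether: it observes that the invertible elements of any distributive $\ell$-monoid form a subalgebra that is the inverse-free reduct of an $\ell$-group, and that monoid homomorphisms preserve invertibility. Hence if $\DLM$ had the amalgamation property, any span of inverse-free reducts of $\ell$-groups would have an amalgam in $\DLM$, and restricting that amalgam to its invertible elements would yield an amalgam among $\ell$-group reducts, forcing the variety of $\ell$-groups to have the amalgamation property --- contradicting Pierce's theorem. This transfer argument is short, needs no concrete span, and crucially exploits \emph{non}-idempotent (indeed group) structure, which is precisely the structure your approach forgoes; in an idempotent algebra the only invertible element is $e$, so no analogue of this reduction is available along the lines you propose.
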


\begin{proof}
The set of invertible elements of a distributive $\ell$-monoid gives rise to a subalgebra which is the inverse-free reduct of an $\ell$-group. Suppose for a contradiction that $\DLM$ has the amalgamation property. Then every span of inverse-free reducts of $\ell$-groups has an amalgam and, by taking the subalgebra of invertible elements of the amalgam, we can assume that this amalgam is the inverse-free reduct of an $\ell$-group, since monoid homomorphisms map invertible elements to invertible elements.  Hence, it follows that the variety of $\ell$-groups has the amalgamation property, contradicting \cite[Theorem 3.1]{Pierce1972}.
\end{proof}

It is well-known that the variety of distributive lattices does not have the strong amalgamation property. We will now show that this failure of the strong amalgamation property is inherited by every non-trivial subvariety of $\DLM$.

\begin{lemma}\label{l:atoms}
The only two atoms in the lattice of subvarieties of $\DLM$ are $V(\mathbf{C}_2)$ and $V(\mathbf{C}_2^\partial)$. Moreover,  $V(\mathbf{C}_2)$ consists of all algebras $\alg{D,\meet,\join,\meet,e}$, where $\alg{D,\meet,\join}$ is a distributive lattice with greatest element $e$, and  $V(\mathbf{C}_2^\partial)$  consists of all algebras $\alg{D,\meet,\join,\join,e}$, where $\alg{D,\meet,\join}$ is a distributive lattice with least element $e$.
\end{lemma}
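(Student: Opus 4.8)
The plan is to prove the two assertions separately. For the \emph{atomicity} of $V(\mathbf{C}_2)$ and $V(\mathbf{C}_2^\partial)$, I would use that $\mathbf{C}_2$ and $\mathbf{C}_2^\partial$ are simple two-element algebras (\Cref{ex:simple chains}). Since $\DLM$ is congruence-distributive, \Cref{l:jonsson} gives that every subdirectly irreducible member of $V(\mathbf{C}_2)$ lies in $HS(\mathbf{C}_2)$; as $\mathbf{C}_2$ is simple and its only proper subalgebra is trivial, the only non-trivial such algebra is $\mathbf{C}_2$ itself, so $V(\mathbf{C}_2)$ has no proper non-trivial subvariety and is therefore an atom, and dually for $V(\mathbf{C}_2^\partial)$. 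They are distinct, since $\mathbf{C}_2 \models xy \approx x\meet y$ while $\mathbf{C}_2^\partial \not\models xy \approx x \meet y$.

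The heart of the argument is to show these are the \emph{only} atoms, i.e. that every non-trivial subvariety $\var{V}$ of $\DLM$ contains $V(\mathbf{C}_2)$ or $V(\mathbf{C}_2^\partial)$. Fixing a non-trivial $\mathbf{M}\in\var{V}$ and an element $x \neq e$, I would first replace $x$ by $a := x\meet e$ or $a := x \join e$ so as to obtain an element strictly comparable to $e$; since $x \meet e = x \join e = e$ forces $x = e$, at least one of $a < e$, $a > e$ is available. I would treat the case $a<e$ (the case $a>e$ follows by duality, as $\DLM$ is self-dual and duality swaps the roles of $\mathbf{C}_2$ and $\mathbf{C}_2^\partial$). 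The key structural observation is that every power satisfies $a^{n} \le a < e$, so the subalgebra $\Sg(\set{a})$ has universe $\set{e}\cup\set{a^n \mid n\ge 1}$ and is the chain $e > a \ge a^2 \ge \cdots$; in particular every element of this subalgebra is $\le e$. I would then check that the set $I$ of its elements strictly below $e$ is absorbing under multiplication (for $s<e$ and $t\le e$ one has $st \le se = s < e$, and symmetrically) and is a lattice ideal, so collapsing $I$ to a single class defines a congruence whose quotient is exactly $\mathbf{C}_2$ (absorbing bottom, product $\meet$). Hence $\mathbf{C}_2 \in HS(\mathbf{M}) \subseteq \var{V}$, and dually $\mathbf{C}_2^\partial \in \var{V}$ when $a>e$. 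Combined with atomicity, this shows $\var{V}$ equals $V(\mathbf{C}_2)$ or $V(\mathbf{C}_2^\partial)$ whenever it is an atom.

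For the \emph{Moreover} clause, let $\var{W}$ be the class of algebras $\alg{D,\meet,\join,\meet,e}$ with $\alg{D,\meet,\join}$ distributive and $e$ the greatest element; equivalently $\var{W}$ is the subvariety of $\DLM$ defined by $xy\approx x\meet y$ and $x\meet e\approx x$. Since $\mathbf{C}_2\in\var{W}$, we get $V(\mathbf{C}_2)\subseteq\var{W}$. For the converse I would separate points of any $\mathbf{A}\in\var{W}$ by prime filters of its distributive-lattice reduct: given $c\neq d$, say $c\not\le d$, there is a prime filter $F$ with $c\in F$ and $d\notin F$, and since $e$ is the top it lies in every proper filter; the lattice homomorphism $\mathbf{A}\to\mathbf{C}_2$ sending the elements of $F$ to the top $e$ and the rest to $\bot$ then also preserves $e$ and the product (both products being meets), hence is a $\DLM$-homomorphism separating $c$ and $d$. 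Collecting these yields an embedding $\mathbf{A}\hookrightarrow \mathbf{C}_2^{\,I}$, so $\var{W}\subseteq ISP(\mathbf{C}_2)=V(\mathbf{C}_2)$, giving equality; the statement for $V(\mathbf{C}_2^\partial)$ follows by duality.

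The main obstacle I anticipate is the ``only atoms'' direction for \emph{non-idempotent} $\mathbf{M}$: one cannot assume that the descending sequence $e>a\ge a^2\ge\cdots$ stabilizes at an idempotent absorbing element inside $\mathbf{M}$. The device that circumvents this is to avoid searching for an idempotent altogether and instead pass to the quotient of $\Sg(\set{a})$ by the ideal of all elements below $e$; verifying that this collapse respects multiplication (and not merely the lattice structure) is the one genuinely non-routine step, and it rests on the inequality $st\le s$ for $t\le e$, which holds because multiplication is order-preserving.
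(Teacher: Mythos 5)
Your proposal is correct and takes essentially the same approach as the paper: the paper likewise passes to $a = a_0\meet e$ (or dually $a_0\join e$), observes that $\Sg(a)$ is the descending chain $\set{a^n \mid n\in\mathbb{N}}$, and maps it onto $\mathbf{C}_2$ by sending all $a^n$ with $n>0$ to $\bot$ --- your ideal-collapse congruence is exactly the kernel of that homomorphism. For the ``moreover'' clause the paper simply cites $V(\mathbf{C}_2)=ISP(\mathbf{C}_2)$ together with the identification of $\mathbf{C}_2$ as the two-element distributive lattice with product equal to meet; your prime-filter separation argument just fills in the details left implicit there.
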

\begin{proof}
Let $\var{V}$ be a non-trivial subvariety of $\DLM$. Then there is an $\mathbf{M}\in \var{V}$ that is non-trivial, i.e., there is an $a_0\in M$ with $a_0\neq e$. But then either $a_0\meet e < e$ or $a_0\join e > e$. If $a = a_0\meet e < e$, then we consider the subalgebra $\bf A$ of~$\bf M$ generated by $a$. We have $A = \set{a^{n} \mid n\in \mathbb{N}}$ and  $a^{n+1} \leq a^n$, so the map $\phi\colon A \to C_2$ defined by $\phi(e) = e$ and $\phi(a^n) = \bot$ for $n>0$ is a homomorphism. Hence $\mathbf{C}_2 \in \var{V}$. Otherwise if $a_0 \join e > e$ we consider $a= a_0 \join e$ and, arguing dually, we get $\mathbf{C}_2^\partial \in \var{V}$.

The second part follows from the fact that $\mathbf{C}_2 = \alg{\set{\bot,e},\meet,\join,\meet,e}$, where $\alg{\set{\bot,e},\meet,\join}$ is the two element distributive lattice, and $V(\mathbf{C}_2) = ISP(\mathbf{C}_2)$. Dually the claim for $V(\mathbf{C}_2^\partial)$ follows.
\end{proof}

\begin{prop}
No non-trivial subvariety of $\DLM$ has the strong amalgamation property.
\end{prop}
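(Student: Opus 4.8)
The plan is to reduce the failure of the strong amalgamation property for an arbitrary non-trivial subvariety $\var{V}$ of $\DLM$ to the well-known failure for distributive lattices, using the two atoms identified in \Cref{l:atoms}. By \Cref{l:atoms}, every non-trivial subvariety $\var{V}$ contains $\mathbf{C}_2$ or $\mathbf{C}_2^\partial$; by self-duality of $\DLM$ and duality of the strong amalgamation property it suffices to treat the case $\mathbf{C}_2 \in \var{V}$. Recall again from \Cref{l:atoms} that $V(\mathbf{C}_2) = ISP(\mathbf{C}_2)$ consists of the algebras $\alg{D,\meet,\join,\meet,e}$ where $\alg{D,\meet,\join}$ is a distributive lattice with top element $e$ and the monoid operation is just the lattice meet. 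So inside $\var{V}$ we have access to a faithful copy of all (bounded-above) distributive lattices, with product equal to meet.

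First I would recall the standard witness to the failure of strong amalgamation for distributive lattices: take $\mathbf{A}$ to be the two-element chain $\set{e > \bot}$, and let $\mathbf{B} = \mathbf{C} = \alg{\set{e > m > \bot}, \meet,\join,\meet, e}$ be the three-element chain (with product $=\meet$) into which $\mathbf{A}$ embeds via the inclusion fixing $e$ and $\bot$ and with the middle element $m$ named $b$ in $\mathbf{B}$ and $c$ in $\mathbf{C}$. All three are members of $V(\mathbf{C}_2) \subseteq \var{V}$. The key point is that in \emph{any} amalgam $\pair{\mathbf{D},j_1,j_2}$ the images $j_1(b)$ and $j_2(c)$ must both lie strictly between $j_1(\bot) = j_2(\bot)$ and $j_1(e) = j_2(e) = e$, and hence $j_1(b)$ and $j_2(c)$ are two elements of $D$ that are \emph{not} in the common image $(j_1\circ i_1)[A] = \set{e,\bot}$.

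The heart of the argument is then to show $j_1(b) = j_2(c)$ is forced, which would violate the strong-amalgam condition $j_1[B] \cap j_2[C] = (j_1 \circ i_1)[A]$ since that intersection would properly contain $\set{e,\bot}$. In a pure distributive-lattice amalgam this equality is \emph{not} forced, so the genuine content is to exploit that the product in $\mathbf{D}$ interacts with the order via \Cref{lemma:Idemp}, or more simply the defining monoid/$\ell$-monoid axioms. I expect the cleanest route is: since $b \leq e$ in $\mathbf{B}$ we have $b \cdot b = b$ and $e$ is the top; using that $e$ is the monoid identity and the top element, $x \cdot y$ for $x,y \leq e$ behaves like a meet-type operation on the negative cone. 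Comparing $j_1(b)$ and $j_2(c)$ via the products $j_1(b)\cdot j_2(c)$ computed in $\mathbf{D}$ and pushed back through the commuting square should force their equality (both equal $j_1(b)\cdot j_2(c)$, which by the negative-cone behaviour must coincide with each of them). The main obstacle will be making this forcing rigorous for an arbitrary $\mathbf{D} \in \var{V}$ rather than only for $\mathbf{D} \in V(\mathbf{C}_2)$: a general $\var{V}$ may contain badly behaved products, so I would restrict attention to the subalgebra of $\mathbf{D}$ generated by $j_1[B]\cup j_2[C]$, whose elements all lie in the negative cone below $e$, and argue there the product restricts to the lattice meet (by \Cref{lemma:Idemp}(iii), since all relevant products are $\leq e$), thereby genuinely reducing to the distributive-lattice computation where the coincidence of two incomparable-but-order-pinned elements sitting in a common amalgam fails strong amalgamation. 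Concluding, no non-trivial subvariety of $\DLM$ has the strong amalgamation property.
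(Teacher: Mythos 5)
There is a genuine gap, and it sits exactly where you flagged your own uncertainty: the forcing claim $j_1(b)=j_2(c)$ is simply false, and in fact your chosen span \emph{has} a strong amalgam inside $V(\mathbf{C}_2)\subseteq \var{V}$ itself. Take $\mathbf{D} = \alg{\set{\bot,b,c,e},\meet,\join,\meet,e}$ to be the four-element Boolean lattice with $\bot < b,c < e$, $b$ and $c$ incomparable, product equal to meet and identity the top element $e$; by \Cref{l:atoms} this lies in $V(\mathbf{C}_2)$, the two inclusion maps are $\ell$-monoid embeddings extending your span, and $j_1[B]\cap j_2[C] = \set{\bot,e} = (j_1\circ i_1)[A]$. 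So two three-element chains amalgamated over their common two-element subchain strongly amalgamate, both as distributive lattices and as $\ell$-monoids. The monoid structure cannot rescue the argument: even when the product on the negative cone is literally the meet, $j_1(b)\cdot j_2(c) = j_1(b)\meet j_2(c)$ is \emph{strictly below} both elements when they are incomparable, so it never forces their equality. (A secondary problem: \Cref{lemma:Idemp} concerns \emph{idempotent} distributive $\ell$-monoids, whereas a general non-trivial subvariety of $\DLM$ --- e.g.\ the one generated by inverse-free reducts of abelian $\ell$-groups --- need not be idempotent, so you cannot invoke part (iii) for an arbitrary amalgam $\mathbf{D}\in\var{V}$, even on the subalgebra generated by the images.)

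The repair is to turn your span upside down, which is what the paper does: take the \emph{base} to be the three-element chain $\mathbf{C} = \alg{\set{0,1,2},\meet,\join,\join,0}$ and the two \emph{legs} to be embeddings into the four-element diamond $\alg{\set{\bot,a,b,\top},\meet,\join,\join,\bot}$, sending the middle element $1$ to $a$ in one leg and to $b$ in the other (the paper works with $V(\mathbf{C}_2^\partial)$, so product is join and $e$ is the bottom; your $\mathbf{C}_2$ case is the order dual, exactly as you set up). This is the classical Fried--Gr\"atzer configuration witnessing the failure of strong amalgamation for distributive lattices: in any amalgam $\pair{\mathbf{E},j_1,j_2}$ one has $j_1(a)=j_2(b)$, and then $j_1(b)$ and $j_2(a)$ are both relative complements of this element in the interval $[j_1(\bot),j_1(\top)]$; uniqueness of relative complements in a distributive lattice forces $j_1(b)=j_2(a)$, an element of $j_1[D]\cap j_2[D]$ outside the image of the base, by injectivity. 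With this span, your overall reduction strategy goes through verbatim and is precisely the paper's proof: the maps are $\ell$-monoid embeddings (a lattice embedding preserving the identity automatically preserves the join-product), and the lattice reduct of any strong amalgam in $\var{V}$ would be a strong amalgam in the variety of distributive lattices, a contradiction --- no analysis of the product in $\mathbf{E}$ is needed at all.
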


\begin{proof}
Let $\var{V}$ be a non-trivial subvariety of $\DLM$.
By \Cref{l:atoms}, we may assume, without loss of generality, that $V(\mathbf{C}_2^\partial) \subseteq \var{V}$, otherwise $V(\mathbf{C}_2) \subseteq \var{V}$ and the argument is dual.  We consider the three-element chain $\mathbf{C} = \alg{\set{0,1,2},\meet,\join,\join,0}$ with $0<1<2$, and  the four-element distributive lattice   $\mathbf{D} =\alg{\set{\bot,a,b,\top},\meet,\join,\join,\bot}$  with $\bot<a<\top$, $\bot<b<\top$, and $a$ and $b$ incomparable.  Then, by \Cref{l:atoms}, $\mathbf{D}, \mathbf{C} \in \var{V}$. Define the maps $\phi \colon C \to D$ by $\phi(0) = \bot$, $\phi(1) = a$, $\phi(2) = \top$, and $\psi\colon C \to D$ by $\psi(0) =\bot$, $\psi(1) = b$, $\psi(2) = \top$. It is easy to see that $\phi$ and $\psi$ are $\ell$-monoid embeddings. Moreover, it is well-known (see e.g., \cite{Fried1990}) that the span $\pair{\phi,\psi}$ considered as a span in the variety of distributive lattices does not have a strong amalgam, so it does not have a strong amalgam in $\var{V}$,  since the lattice reduct of a strong amalgam in $\var{V}$ would be a strong amalgam in the variety of distributive lattices.
\end{proof}

Let  $\mathcal{S} = \pair{i_1\colon \mathbf{L} \hookrightarrow \mathbf{M}$, $i_2 \colon \mathbf{L}\hookrightarrow \mathbf{N}}$ and $\mathcal{S}' = \pair{j_1\colon \mathbf{L}'\hookrightarrow \mathbf{M}'$, $J_2 \colon \mathbf{L}'\hookrightarrow \mathbf{N}'}$  be  spans of idempotent ordered monoids. We say that $\mathcal{S}$ \emph{restricts to} $\mathcal{S}'$ if the algebras $\mathbf{L}'$, $\mathbf{M}'$, and $\mathbf{N}'$ are subalgebras of $\mathbf{L}$, $\mathbf{M}$, and $\mathbf{N}$, respectively, and ${i_1\!\!\restriction_{L'}} = j_1$ and ${i_2\!\!\restriction_{L'}} = j_2$. Moreover, we extend this notion in the obvious way to the case where the algebras $\mathbf{L}'$, $\mathbf{M}'$, and $\mathbf{N}'$ embed into $\mathbf{L}$, $\mathbf{M}$, and $\mathbf{N}$, respectively, by identifying $\mathbf{L}'$, $\mathbf{M}'$, and $\mathbf{N}'$ with the images of the  respective embeddings.

Recall from \Cref{sec5} that  $\mathbf{C}_2$  embeds into $\mathbf{G}_3$  and   $\mathbf{D}_3$ via the obvious inclusion maps. Moreover, clearly these are the only possible embeddings from $\mathbf{C}_2$ into   $\mathbf{G}_3$  and  $\mathbf{D}_3$, respectively. We denote the resulting span by $\mathcal{S}_1 := \pair{\mathbf{C}_2 \hookrightarrow \mathbf{G}_3,\mathbf{C}_2\hookrightarrow \mathbf{D}_3}$  and we define the span $\mathcal{S}_2 :=\pair{\mathbf{C}_2^\partial \hookrightarrow \mathbf{G}_3,\mathbf{C}_2^\partial \hookrightarrow \mathbf{D}_3}$ dually. From here on we will denote the top element of $\mathbf{G}_3$ by $\top$ and the top element of $\mathbf{D}_3$ by $\top^\ast$.
 We call a span  $\pair{i_1\colon \mathbf{L} \hookrightarrow \mathbf{M}$, $i_2 \colon \mathbf{L}\hookrightarrow \mathbf{N}}$ of idempotent ordered monoids a \emph{compatible span} if it does not restrict to any of the spans $\mathcal{S}_1$ or $\mathcal{S}_2$ up to permuting the embeddings in the span.

\begin{prop}\label{thm:compatible spans have strong amalgams}
Let $\mathbf{L}, \mathbf{M}, \mathbf{N}$ be idempotent ordered monoids. Then every compatible span $\pair{i_1\colon \mathbf{L} \hookrightarrow \mathbf{M}$, $i_2 \colon \mathbf{L}\hookrightarrow \mathbf{N}}$ has an amalgam in the class $\IdOM$ of idempotent ordered monoids. In particular, every span of  commutative idempotent ordered monoids has a strong amalgam.
\end{prop}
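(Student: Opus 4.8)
The plan is to reduce the amalgamation problem to a combinatorial problem about the index chains of the nested sum decompositions together with a purely local amalgamation problem for the four building blocks. First I would apply \Cref{thm:general e-sum decomposition} to write $\mathbf{L} = \bigboxplus_{a\in A}\mathbf{L}_a$, $\mathbf{M}=\bigboxplus_{b\in B}\mathbf{M}_b$, and $\mathbf{N}=\bigboxplus_{c\in C}\mathbf{N}_c$ over chains $A,B,C$ with all summands in $\set{\mathbf{C}_2,\mathbf{C}_2^\partial,\mathbf{G}_3,\mathbf{D}_3}$. By \Cref{l:embeddings to summands}(1), the embeddings $i_1,i_2$ are encoded by order-embeddings $f_1\colon A\to B$ and $f_2\colon A\to C$ such that $\mathbf{L}_a$ is a subalgebra of both $\mathbf{M}_{f_1(a)}$ and $\mathbf{N}_{f_2(a)}$ for every $a\in A$.

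Next I would carry out the local analysis. Since both $\mathbf{M}_{f_1(a)}$ and $\mathbf{N}_{f_2(a)}$ contain the nontrivial $\mathbf{L}_a$ as a subalgebra, and the only incomparable pairs in the subalgebra order of \Cref{fig:subalgebra quasi-order} are $\set{\mathbf{C}_2,\mathbf{C}_2^\partial}$ and $\set{\mathbf{G}_3,\mathbf{D}_3}$, the only way neither of $\mathbf{M}_{f_1(a)},\mathbf{N}_{f_2(a)}$ could embed into the other would be $\set{\mathbf{M}_{f_1(a)},\mathbf{N}_{f_2(a)}}=\set{\mathbf{G}_3,\mathbf{D}_3}$ with $\mathbf{L}_a\in\set{\mathbf{C}_2,\mathbf{C}_2^\partial}$ (the pair $\set{\mathbf{C}_2,\mathbf{C}_2^\partial}$ having no common nontrivial subalgebra is impossible). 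But such an $a$ would, upon restricting the span to the summand $\mathbf{L}_a$, exhibit a restriction to $\mathcal{S}_1$ or $\mathcal{S}_2$ up to permuting the embeddings, contradicting compatibility. Hence for every $a$ one of $\mathbf{M}_{f_1(a)},\mathbf{N}_{f_2(a)}$ is a subalgebra of the other; writing $\mathbf{D}_{[a]}$ for the larger of the two, both blocks embed into $\mathbf{D}_{[a]}$, and since the embeddings among the four simple blocks are unique, these two embeddings agree on $\mathbf{L}_a$. This is the local amalgam.

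Then I would glue. Using the classical strong amalgamation property of the class of linear orders, I form the pushout chain $D$ from the disjoint union of $B$ and $C$ by identifying $f_1(a)$ with $f_2(a)$ and extending both orders to a total order on $D$ (the relevant cuts in $A$ are comparable, so this is well defined, and $\iota_B[B]\cap\iota_C[C]$ is exactly the glued image of $A$). To each point of $D$ I assign a summand: $\mathbf{D}_{[a]}$ at each glued point, $\mathbf{M}_b$ at each point coming from $B\setminus f_1[A]$, and $\mathbf{N}_c$ at each point coming from $C\setminus f_2[A]$, and set $\mathbf{D}=\bigboxplus_{d\in D}\mathbf{D}_d$. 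The inclusions $\iota_B\colon B\to D$ and $\iota_C\colon C\to D$ are then order-embeddings for which every summand of $\mathbf{M}$ (resp. of $\mathbf{N}$) is a subalgebra of the corresponding summand of $\mathbf{D}$, so \Cref{l:embeddings to summands}(2) yields $\ell$-monoid embeddings $j_1\colon\mathbf{M}\to\mathbf{D}$ and $j_2\colon\mathbf{N}\to\mathbf{D}$. Commutativity $j_1\circ i_1 = j_2\circ i_2$ holds because on each $\mathbf{L}_a$ both composites are the unique block embedding of $\mathbf{L}_a$ into $\mathbf{D}_{[a]}$, giving the desired amalgam in $\IdOM$.

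I expect the main obstacle to be not any deep difficulty but the bookkeeping at the glued points: verifying that the two block embeddings genuinely agree on $\mathbf{L}_a$ and that the assignment $d\mapsto\mathbf{D}_d$ together with the pushout order really forms a nested sum over $\set{\mathbf{C}_2,\mathbf{C}_2^\partial,\mathbf{G}_3,\mathbf{D}_3}$, both of which are controlled by the uniqueness of block embeddings and by \Cref{l:embeddings to summands}. Finally, for the commutative case, every commutative idempotent ordered monoid decomposes with summands only in $\set{\mathbf{C}_2,\mathbf{C}_2^\partial}$, so $\mathbf{G}_3$ and $\mathbf{D}_3$ never appear and the span is automatically compatible. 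Since $\mathbf{C}_2$ does not embed into $\mathbf{C}_2^\partial$ nor conversely, at every glued point one is forced to have $\mathbf{M}_{f_1(a)}=\mathbf{N}_{f_2(a)}=\mathbf{L}_a$; hence $j_1[M]\cap j_2[N]$ meets each summand of $\mathbf{D}$ exactly in the image of $\mathbf{L}$, and since the pushout of chains introduces no identifications off the glued part, the amalgam is strong and, being built from commutative blocks, lies in $\Com\IdOM$.
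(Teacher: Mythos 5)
Your proposal is correct and follows essentially the same route as the paper's proof: nested sum decomposition, translation of the embeddings into order-embeddings of the index chains via \Cref{l:embeddings to summands}, amalgamation of the chains (the paper cites Fra\"iss\'e for exactly the pushout you describe), the pointwise choice of the larger block at glued points justified by compatibility ruling out $\set{\mathbf{G}_3,\mathbf{D}_3}$ and the subalgebra relation ruling out $\set{\mathbf{C}_2,\mathbf{C}_2^\partial}$, and the same treatment of the commutative case. Your explicit verification of $j_1\circ i_1 = j_2\circ i_2$ via uniqueness of block embeddings is a minor elaboration of what the paper absorbs into its ``without loss of generality $A = B\cap C$ with inclusion maps'' convention.
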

\begin{proof}
Let $\mathbf{A},\mathbf{B},\mathbf{C}$ be chains,  $\mathbf{L} = \bigboxplus_{a\in A} \mathbf{L}_a$, $\mathbf{M} = \bigboxplus_{b\in B} \mathbf{M}_b$, $\mathbf{N} = \bigboxplus_{c\in C} \mathbf{N}_c$ such that $\mathbf{L}_a,\mathbf{M}_b,\mathbf{N}_c\in \set{\mathbf{C}_2, \mathbf{C}_2^\partial, \mathbf{G}_3,\mathbf{D}_3}$, and
let $\pair{i_1\colon \mathbf{L} \to \mathbf{M}$, $i_2 \colon \mathbf{L}\to \mathbf{N}}$ be a compatible span. Then, by \Cref{l:embeddings to summands}, there are corresponding order-embeddings $f\colon \mathbf{A} \to \mathbf{B}$ and $g\colon \mathbf{A} \to \mathbf{C}$. Without loss of generality we may assume that $A = B\cap C$ and $f,g$ are the inclusion maps. By \cite[Section 9.2]{Fraisse1954}, there exists an extension $\bf D$ of the chains $\bf B$ and $\bf C$ with $D = B\cup C$. We define for every $a\in A$ the algebra $\mathbf{P}_a$ as the largest algebra of the set $\set{\mathbf{M}_a,\mathbf{N}_a}$. Note that the algebras $\mathbf{M}_a$ and $\mathbf{N}_a$ cannot be two \emph{different} algebras of the same cardinality, since clearly  $\set{\mathbf{M}_a,\mathbf{N}_a} = \set{\mathbf{C}_2,\mathbf{C}_2^\partial}$ is not possible because $\mathbf{L}_a$ is a non-trivial subalgebra of $\mathbf{M}_a$ and $\mathbf{N}_a$, and $\set{\mathbf{M}_a,\mathbf{N}_a} = \set{\mathbf{G}_3,\mathbf{D}_3}$  would contradict the assumption that the span is compatible. Hence $\mathbf{P}_a$ is well-defined for each $a\in A$ and we get, in particular, that both  $\mathbf{M}_a$ and $\mathbf{N}_a$ embed into  $\mathbf{P}_a$ and the embeddings agree on the images of $\mathbf{L}_a$ under $i_1$ and $i_2$.
Moreover, we define the algebras $\mathbf{P}_d$ for $d\in D\setminus A$ by
\begin{equation*}
\mathbf{P}_d =
\begin{cases}
\mathbf{M}_d & \text{if } d\in B\setminus A, \\
\mathbf{N}_d & \text{if } d\in C\setminus A.
\end{cases}
\end{equation*}
Then, by construction, for every $b\in B$, $\mathbf{M}_b$ embeds into $\mathbf{P}_b$ and for every $c\in C$, $\mathbf{N}_c$ embeds into $\mathbf{P}_c$. Define $\mathbf{P} =  \bigboxplus_{d\in D} \mathbf{P}_d$. Then, by construction,  \Cref{l:embeddings to summands} yields that the maps  $j_1 \colon \mathbf{M} \hookrightarrow \mathbf{P}$ and $j_2 \colon \mathbf{N} \hookrightarrow \mathbf{P}$ corresponding to the inclusions $\mathbf{B} \hookrightarrow \mathbf{D}$ and $\mathbf{C} \hookrightarrow \mathbf{D}$ are embeddings and together with the algebra $\bf P$ they form an amalgam for the span. 

If $\mathbf{L}$, $\mathbf{M}$, and $\mathbf{N}$ are commutative, then the algebras $\mathbf{G}_3$ and $\mathbf{D}_3$ cannot occur in their $e$-sum decomposition, i.e., the span is always compatible. Moreover, in the construction of $\mathbf{P}$ we have $\mathbf{P}_a  = \mathbf{N}_a = \mathbf{M}_a$ for every $a\in A$, so $\mathbf{P}$ is a strong amalgam.
\end{proof}

\begin{cor}\label{c:com amalgamation}
The class $\Com\IdOM$ of commutative idempotent ordered monoids has the strong amalgamation property.
\end{cor}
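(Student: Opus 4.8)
The plan is to obtain the corollary directly from \Cref{thm:compatible spans have strong amalgams}, whose closing clause already states that every span of commutative idempotent ordered monoids admits a strong amalgam; the only genuine task is to confirm that this amalgam again lies in $\Com\IdOM$, so that the strong amalgamation property holds within the class.

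First I would note that every span $\pair{i_1\colon \mathbf{L}\hookrightarrow \mathbf{M}, i_2\colon \mathbf{L}\hookrightarrow \mathbf{N}}$ in $\Com\IdOM$ is automatically compatible. By \Cref{thm:e-sums commutative case} the nested sum decompositions of $\mathbf{L}$, $\mathbf{M}$, and $\mathbf{N}$ use only the summands $\mathbf{C}_2$ and $\mathbf{C}_2^\partial$, since $\mathbf{G}_3$ and $\mathbf{D}_3$ are non-commutative and hence cannot embed into a commutative ordered monoid. Both forbidden spans $\mathcal{S}_1$ and $\mathcal{S}_2$ have $\mathbf{G}_3$ and $\mathbf{D}_3$ as codomains, so no span whose codomains are commutative can restrict to either of them, even after permuting the embeddings. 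Thus the span is compatible and \Cref{thm:compatible spans have strong amalgams} provides an amalgam $\mathbf{P} = \bigboxplus_{d\in D}\mathbf{P}_d$; moreover, as recorded in its proof, in the commutative case $\mathbf{P}_a = \mathbf{M}_a = \mathbf{N}_a$ for each $a\in A$ and each remaining $\mathbf{P}_d$ is one of $\mathbf{M}_d, \mathbf{N}_d$, so every summand lies in $\set{\mathbf{C}_2,\mathbf{C}_2^\partial}$ and the amalgam is strong.

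It then remains to verify that $\mathbf{P}\in \Com\IdOM$. A nested sum of commutative summands is itself commutative: products across distinct summands satisfy $a\cdot b = b\cdot a$ by the definition of the nested sum, while products inside a single summand commute because $\mathbf{C}_2$ and $\mathbf{C}_2^\partial$ are commutative. Hence $\mathbf{P}$ is a commutative idempotent ordered monoid and the strong amalgam lives in $\Com\IdOM$. The argument is essentially bookkeeping, so I anticipate no real obstacle; the only point requiring (routine) verification is this last one, that $\mathbf{P}$ inherits commutativity from its summands.
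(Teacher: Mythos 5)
Your proposal is correct and takes essentially the same route as the paper, which obtains the corollary directly from \Cref{thm:compatible spans have strong amalgams} after noting (in that proposition's proof) that spans of commutative algebras are always compatible and that the construction gives $\mathbf{P}_a = \mathbf{M}_a = \mathbf{N}_a$ on the common part, making the amalgam strong. Your additional explicit check that $\mathbf{P}$ is itself commutative---since all its summands lie in $\set{\mathbf{C}_2,\mathbf{C}_2^\partial}$ and nested sums of commutative summands are commutative---is a point the paper leaves implicit, and you verify it correctly.
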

The next example shows that the amalgam constructed in the proof of \Cref{thm:compatible spans have strong amalgams} is not necessarily a strong amalgam and even if we restrict to compatible spans a strong amalgam does not always exist. 
\begin{exmp}
Consider the span $\pair{ \mathbf{C}_2 \hookrightarrow \mathbf{G}_3,\mathbf{C}_2\hookrightarrow \mathbf{G}_3}$. Clearly it is compatible, but it does not have a strong amalgam in $\IdOM$. To see this let $\pair{\mathbf{M}, j_1 \colon \mathbf{G}_3 \to \mathbf{M}, j_2 \colon \mathbf{G}_3 \to \mathbf{M}}$ be an amalgam of the span in $\IdOM$. Then, by \Cref{l:embeddings to summands},   $j_1(\mathbf{G}_3)$ and $j_2(\mathbf{G}_3)$ are components of the nested sum decomposition of $\mathbf{M}$ isomorphic to $\mathbf{G}_3$. But we have $j_1(\bot) \in  j_1(G_3) \cap  j_2(G_3)$, so we get $j_1(G_3) =  j_2(G_3)$, i.e., the amalgam is not strong.
\end{exmp}
Note that every span $\pair{i_1\colon \boldsymbol{0} \hookrightarrow \mathbf{L}, i_2 \colon \boldsymbol{0} \hookrightarrow \mathbf{M}}$ is compatible. So we get the following result.

\begin{cor}
The class $\IdOM$ of idempotent ordered monoids has the joint embedding property, i.e., each pair $\bf L$, $\bf M$ of idempotent ordered monoids embeds into a common idempotent ordered monoid  $\bf N$.
\end{cor}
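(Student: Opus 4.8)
The plan is to recognize the joint embedding property as the special case of the amalgamation property in which the apex of the span is the trivial algebra, and then to invoke \Cref{thm:compatible spans have strong amalgams} directly. First I would observe that the trivial algebra $\mathbf{0}$ embeds into every idempotent ordered monoid via the map sending its unique element $e$ to the identity $e$; this is an embedding of idempotent ordered monoids since $e$ is the monoid identity. Given arbitrary $\mathbf{L}, \mathbf{M} \in \IdOM$, this furnishes the span $\pair{i_1 \colon \mathbf{0} \hookrightarrow \mathbf{L}, i_2 \colon \mathbf{0} \hookrightarrow \mathbf{M}}$, and an amalgam $\pair{\mathbf{N}, j_1, j_2}$ of this span is exactly a common idempotent ordered monoid $\mathbf{N}$ into which $\mathbf{L}$ and $\mathbf{M}$ embed via $j_1$ and $j_2$; the commutativity condition $j_1 \circ i_1 = j_2 \circ i_2$ is automatic, as both composites send $e$ to $e$.

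The key step is then to verify that this span is \emph{compatible} in the sense defined above, so that \Cref{thm:compatible spans have strong amalgams} applies. By definition, compatibility fails only if the span restricts, up to permuting the two embeddings, to one of the forbidden spans $\mathcal{S}_1 = \pair{\mathbf{C}_2 \hookrightarrow \mathbf{G}_3, \mathbf{C}_2 \hookrightarrow \mathbf{D}_3}$ or $\mathcal{S}_2 = \pair{\mathbf{C}_2^\partial \hookrightarrow \mathbf{G}_3, \mathbf{C}_2^\partial \hookrightarrow \mathbf{D}_3}$. In any such restriction the apex would be a subalgebra of $\mathbf{0}$, hence trivial, whereas the apices $\mathbf{C}_2$ and $\mathbf{C}_2^\partial$ of $\mathcal{S}_1$ and $\mathcal{S}_2$ each contain a non-identity element. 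Therefore no restriction to $\mathcal{S}_1$ or $\mathcal{S}_2$ is possible, and the span $\pair{\mathbf{0} \hookrightarrow \mathbf{L}, \mathbf{0} \hookrightarrow \mathbf{M}}$ is compatible.

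With compatibility established, \Cref{thm:compatible spans have strong amalgams} produces an amalgam $\pair{\mathbf{N}, j_1, j_2}$ in $\IdOM$, and reading off $j_1 \colon \mathbf{L} \hookrightarrow \mathbf{N}$ and $j_2 \colon \mathbf{M} \hookrightarrow \mathbf{N}$ completes the argument. I do not expect any genuine obstacle here: all the substantive work is already carried by \Cref{thm:compatible spans have strong amalgams}, and the only thing left to check is the trivial-apex compatibility observation, which the preceding paragraph dispatches immediately. The sole point that warrants a sentence of care is confirming that $\mathbf{0}$ really does embed into an arbitrary idempotent ordered monoid, which holds because every such monoid has an identity element $e$.
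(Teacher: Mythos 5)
Your proposal is correct and matches the paper's argument exactly: the paper likewise notes that every span with trivial apex $\mathbf{0}$ is compatible (since the forbidden spans $\mathcal{S}_1$, $\mathcal{S}_2$ have non-trivial apices, no restriction to them is possible) and then applies \Cref{thm:compatible spans have strong amalgams}. Your additional checks (that $\mathbf{0}$ embeds into every idempotent ordered monoid and that $j_1 \circ i_1 = j_2 \circ i_2$ holds automatically) are correct and merely make explicit what the paper leaves implicit.
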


The next proposition shows that being compatible is not only a sufficient condition for spans of idempotent ordered monoids to have an amalgam in the class of ordered monoids but also a necessary condition. 

\begin{prop}\label{prop:not compatible no amalgam}
The spans $\mathcal{S}_1$ and $\mathcal{S}_2$ do not have an amalgam in the class $\OM$ of ordered monoids.
\end{prop}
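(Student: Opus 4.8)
The plan is to treat $\mathcal{S}_1$ by a direct contradiction and then recover $\mathcal{S}_2$ by duality. Suppose, toward a contradiction, that $\pair{\mathbf{P}, j_1\colon \mathbf{G}_3 \hookrightarrow \mathbf{P}, j_2\colon \mathbf{D}_3 \hookrightarrow \mathbf{P}}$ is an amalgam of $\mathcal{S}_1$ in $\OM$. Commutativity of the amalgamation diagram forces $j_1$ and $j_2$ to agree on the image of $\mathbf{C}_2$; since $\mathbf{C}_2$ embeds by sending its nonidentity element to the bottom of $\mathbf{G}_3$ and to the bottom of $\mathbf{D}_3$, the two bottom elements acquire a common image, which I denote $p$, and both embeddings fix $e$. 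Writing $t := j_1(\top)$ and $s := j_2(\top^\ast)$ for the images of the two top elements, I would record from the (left-absorbing) multiplication of $\mathbf{G}_3$ and the (right-absorbing) multiplication of $\mathbf{D}_3$ the identities $pt = p$, $tp = t$, $ps = s$, and $sp = p$, together with the strict chain inequalities $p < e < t$ and $p < e < s$.

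The heart of the matter is that $\mathbf{G}_3$ and $\mathbf{D}_3$ disagree about which of their tops absorbs $p$ from the left and which from the right, and a single total order cannot accommodate both. Since $\mathbf{P}$ is totally ordered, either $t \le s$ or $s \le t$. If $t \le s$, then right-multiplication by $p$ together with compatibility of $\cdot$ with $\le$ yields $t = tp \le sp = p$, contradicting $p < t$. If $s \le t$, then left-multiplication by $p$ yields $s = ps \le pt = p$, contradicting $p < s$. In either case we reach a contradiction, so $\mathcal{S}_1$ has no amalgam in $\OM$.

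For $\mathcal{S}_2$ I would invoke self-duality of $\OM$: an amalgam of a span dualizes to an amalgam of the order-dual span, and conversely, so a span has an amalgam in $\OM$ if and only if its dual does. Because $\mathbf{G}_3^\partial \cong \mathbf{G}_3$ and $\mathbf{D}_3^\partial \cong \mathbf{D}_3$ while $\mathbf{C}_2^\partial$ is by definition the order dual of $\mathbf{C}_2$, and because the relevant embeddings are the unique ones, the dual of $\mathcal{S}_1$ is isomorphic to $\mathcal{S}_2$; hence $\mathcal{S}_2$ inherits the non-existence of an amalgam. Alternatively one can rerun the argument of the previous paragraph verbatim, now with the common image $q$ of the two top elements and the two bottom images $u := j_1(\bot)$ and $v := j_2(\bot)$, using $qu = q$, $uq = u$, $qv = v$, $vq = q$ and $u, v < e < q$.

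I expect the only genuinely delicate point to be bookkeeping. First, one must transcribe the noncommutative products of $\mathbf{G}_3$ and $\mathbf{D}_3$ with the correct handedness. Second, compatibility of the order with multiplication in $\OM$ is only non-strict, so the contradiction must be engineered by clashing a derived non-strict inequality ($t \le p$, respectively $s \le p$) against a strict inequality ($p < t$, respectively $p < s$) coming from the chain structure of the summands, rather than by manipulating strict inequalities under translation.
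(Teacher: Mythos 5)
Your proof is correct and follows essentially the same route as the paper's: a case split on whether $j_1(\top)\leq j_2(\top^\ast)$ or conversely, then multiplying by the common image of the bottom elements on the appropriate side so that the left-absorbing law of $\mathbf{G}_3$ and the right-absorbing law of $\mathbf{D}_3$ clash (the paper phrases the contradiction as $j_1(\top)=j_1(\bot)$ against injectivity, you as $t\leq p$ against the strict inequality $p<t$, which is equivalent), with $\mathcal{S}_2$ handled by duality in both. Your spelled-out justification that the dual of $\mathcal{S}_1$ is isomorphic to $\mathcal{S}_2$ (via $\mathbf{G}_3^\partial\cong\mathbf{G}_3$, $\mathbf{D}_3^\partial\cong\mathbf{D}_3$, and uniqueness of the embeddings) is a correct elaboration of the paper's terse appeal to duality.
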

\begin{proof}
Suppose for a contradiction that there exists an amalgam $\bf E$ in the class of ordered monoids for the span $\mathcal{S}_1
$ with embeddings $j_1 \colon \mathbf{G}_3\hookrightarrow \mathbf{E}$ and $j_2 \colon \mathbf{D}_3 \hookrightarrow \mathbf{E}$, then,  either $j_1(\top) \leq j_2(\top^\ast)$ or $j_2(\top^\ast) \leq j_1(\top)$. If we have $j_1(\top) \leq j_2(\top^\ast)$, then we get
\begin{align*}
j_1(\top) = j_1(\top\cdot \bot) &=  j_1(\top)\cdot j_1(\bot) \\
&=  j_1(\top)\cdot j_2(\bot) \\
& \leq  j_2(\top^\ast)\cdot j_2(\bot) \\
&= j_2(\top^\ast \cdot \bot) \\
&= j_2(\bot) \\
 &= j_1(\bot).
\end{align*}
So, since $j_1$ is order-preserving, $j_1(\top) = j_1(\bot)$, contradicting the injectivity of $j_1$. Similarly $j_2(\top^\ast) \leq j_1(\top)$, yields $j_2(\top^\ast) = j_2(\bot)$ which is again a contradiction. Hence $\mathcal{S}_1$ does not have an amalgam in $\OM$. For the span $\mathcal{S}_2$ the claim follows by duality.
\end{proof}
\begin{cor}
The classes $\OM$ and $\IdOM$ do not have the amalgamation property.
\end{cor}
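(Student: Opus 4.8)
The plan is to derive this immediately from \Cref{prop:not compatible no amalgam}, using the fact that the forbidden spans $\mathcal{S}_1$ and $\mathcal{S}_2$ live simultaneously in both classes under consideration. The key observation is that $\mathcal{S}_1 = \pair{\mathbf{C}_2 \hookrightarrow \mathbf{G}_3, \mathbf{C}_2 \hookrightarrow \mathbf{D}_3}$ is a span of \emph{idempotent} ordered monoids, and since every idempotent ordered monoid is in particular an ordered monoid, it is also a span in $\OM$.

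First I would handle $\OM$ directly: the span $\mathcal{S}_1$ is a span in $\OM$, and by \Cref{prop:not compatible no amalgam} it has no amalgam in $\OM$. This is precisely a witness to the failure of the amalgamation property for $\OM$. Next I would treat $\IdOM$ by a containment argument. Suppose toward a contradiction that $\IdOM$ had the amalgamation property. Then the span $\mathcal{S}_1$, which is a span in $\IdOM$, would admit an amalgam $\pair{\mathbf{D}, j_1, j_2}$ with $\mathbf{D} \in \IdOM$. But since $\IdOM \subseteq \OM$, this same triple would be an amalgam of $\mathcal{S}_1$ in $\OM$, contradicting \Cref{prop:not compatible no amalgam}. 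Hence $\IdOM$ fails the amalgamation property as well.

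There is no genuine obstacle here: the entire content has already been discharged in \Cref{prop:not compatible no amalgam}, and what remains is the routine bookkeeping that an amalgam in a subclass is \emph{a fortiori} an amalgam in the overlying class. The only point that deserves an explicit word is the direction of the containment $\IdOM \subseteq \OM$, which guarantees that the nonexistence of an amalgam in the larger class $\OM$ forces nonexistence in the smaller class $\IdOM$ too (the reverse implication would not hold, which is why one argues via $\OM$). The span $\mathcal{S}_2$ could be used interchangeably with $\mathcal{S}_1$ throughout, but a single span suffices to exhibit the failure for each class.
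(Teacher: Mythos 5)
Your proposal is correct and matches the paper's (implicit) argument: the corollary follows immediately from \Cref{prop:not compatible no amalgam} because $\mathcal{S}_1$ is a span in $\IdOM$, hence in $\OM$, with no amalgam in $\OM$, and the containment $\IdOM \subseteq \OM$ transfers the failure to $\IdOM$. Your explicit remark about the direction of the containment is exactly the right bookkeeping point, and nothing further is needed.
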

From \Cref{prop:not compatible no amalgam} and \Cref{thm:compatible spans have strong amalgams} we get the following result.
\begin{thm}\label{thm:amalgamation of chains}
For a span $\mathcal{S} = \pair{i_1\colon \mathbf{L} \hookrightarrow \mathbf{M}$, $i_2 \colon \mathbf{L}\hookrightarrow \mathbf{N}}$ of  idempotent ordered monoids the following are equivalent:
\begin{enumerate}[label =\textup{(\arabic*)}]
\item $\mathcal{S}$ has an amalgam in $\OM$.
\item $\mathcal{S}$ has an amalgam in $\IdOM$.
\item $\mathcal{S}$ is compatible.
\end{enumerate}
\end{thm}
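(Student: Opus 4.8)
The plan is to establish the cycle $(3) \Rightarrow (2) \Rightarrow (1) \Rightarrow (3)$, assembling the two propositions immediately preceding the theorem. Two of the three implications are essentially free. For $(2) \Rightarrow (1)$, I would simply observe that every idempotent ordered monoid is an ordered monoid, so $\IdOM \subseteq \OM$, and hence any amalgam of $\mathcal{S}$ in $\IdOM$ is a fortiori an amalgam in $\OM$. For $(3) \Rightarrow (2)$, I would invoke \Cref{thm:compatible spans have strong amalgams} directly, which produces an amalgam in $\IdOM$ for every compatible span.

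The only implication requiring real work is $(1) \Rightarrow (3)$, which I would prove by contraposition. Assume $\mathcal{S} = \langle i_1 \colon \mathbf{L} \hookrightarrow \mathbf{M}, i_2 \colon \mathbf{L} \hookrightarrow \mathbf{N}\rangle$ is not compatible; then by definition, up to permuting the two embeddings, $\mathcal{S}$ restricts to $\mathcal{S}_1$ or to $\mathcal{S}_2$. I would treat the case of $\mathcal{S}_1 = \langle \mathbf{C}_2 \hookrightarrow \mathbf{G}_3, \mathbf{C}_2 \hookrightarrow \mathbf{D}_3\rangle$, the case $\mathcal{S}_2$ being identical; permuting the embeddings is harmless since the amalgamation diagram is symmetric in its two legs. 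Unwinding the definition of ``restricts to'', there are subalgebra inclusions $\mathbf{C}_2 \le \mathbf{L}$, $\mathbf{G}_3 \le \mathbf{M}$, $\mathbf{D}_3 \le \mathbf{N}$ such that $i_1$ and $i_2$ restrict on $\mathbf{C}_2$ to the two embeddings of $\mathcal{S}_1$. If $\mathcal{S}$ had an amalgam $\langle \mathbf{D}, j_1, j_2\rangle$ in $\OM$, I would restrict $j_1$ to $\mathbf{G}_3$ and $j_2$ to $\mathbf{D}_3$; these remain embeddings into $\mathbf{D} \in \OM$, and since $j_1 \circ i_1 = j_2 \circ i_2$ holds on all of $\mathbf{L}$ it holds in particular on $\mathbf{C}_2$, so $\langle \mathbf{D}, j_1\!\!\restriction_{G_3}, j_2\!\!\restriction_{D_3}\rangle$ would be an amalgam of $\mathcal{S}_1$ in $\OM$, contradicting \Cref{prop:not compatible no amalgam}.

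I expect the restriction step in $(1) \Rightarrow (3)$ to be the only point demanding care: one must confirm that cutting the amalgamating embeddings down to the designated subalgebras yields maps that are still embeddings and that still commute over the shared copy of $\mathbf{C}_2$. This is routine once the definition of restriction is made explicit, since restrictions of embeddings to subalgebras are again embeddings and the equation $j_1 \circ i_1 = j_2 \circ i_2$ is inherited by any subalgebra of $\mathbf{L}$. No new construction is needed beyond those already carried out in \Cref{thm:compatible spans have strong amalgams} and \Cref{prop:not compatible no amalgam}.
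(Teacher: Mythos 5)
Your proposal is correct and matches the paper's approach exactly: the paper derives the theorem immediately from \Cref{thm:compatible spans have strong amalgams} (giving $(3) \Rightarrow (2)$), the inclusion $\IdOM \subseteq \OM$ (giving $(2) \Rightarrow (1)$), and \Cref{prop:not compatible no amalgam} (giving $(1) \Rightarrow (3)$ by contraposition). Your careful unwinding of the restriction step, including the harmlessness of permuting the legs of the span, correctly fills in the routine details the paper leaves implicit.
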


So to check whether a span of idempotent ordered monoids has an amalgam in the class of ordered monoids it suffices to check if it restricts (up to permuting the embeddings in the span) to one of the spans $\mathcal{S}_1$ and $\mathcal{S}_2$.  The next proposition shows that the spans $\mathcal{S}_1$ and $\mathcal{S}_2$ do also not have an amalgam in the bigger classes $\Sem\DLM$ and $\Id$.

\begin{prop}\label{p:bad span Id and Sem}
The spans $\mathcal{S}_1$ and $\mathcal{S}_2$ do not have an amalgam in the varieties  $\Sem\DLM$ and $\Id$.
\end{prop}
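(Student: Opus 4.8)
The plan is to argue by contradiction for each variety separately, extracting from a hypothetical amalgam three elements whose relations cannot be simultaneously realized. Suppose first that $\pair{\mathbf{E},j_1,j_2}$ is an amalgam of $\mathcal{S}_1$ in the variety in question, with embeddings $j_1\colon \mathbf{G}_3 \hookrightarrow \mathbf{E}$ and $j_2\colon \mathbf{D}_3\hookrightarrow \mathbf{E}$ agreeing on the shared subalgebra $\mathbf{C}_2$. Writing $q = j_1(\bot) = j_2(\bot)$, $p = j_1(\top)$, and $r = j_2(\top^\ast)$, injectivity and order-preservation give $q < e < p$ and $q < e < r$, while preservation of products yields $pq = p$, $qp = q$, $rq = q$, and $qr = r$. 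I would also record that $p \neq r$: otherwise $p = pq = rq = q$, contradicting $p\neq q$. These relations are the obstruction, and the two ambient varieties force it by genuinely different mechanisms.

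For $\Id$ I would exploit idempotency through \Cref{lemma:Idemp}. Since $e\leq r$ and $e\leq p$ we have $e \leq p\meet r \leq rp$, so \Cref{lemma:Idemp}(ii) forces $rp = p\join r$. Now I multiply this identity by $q$ on the right: distributing gives $(p\join r)q = pq \join rq = p\join q = p$, whereas associativity together with $pq=p$ gives $(rp)q = r(pq) = rp = p\join r$; hence $p = p\join r$, that is $r\leq p$. Multiplying $rp = p\join r$ by $q$ on the left, distributing gives $q(p\join r) = qp\join qr = q\join r = r$, while associativity with $qr = r$ gives $q(rp) = (qr)p = rp = p\join r$; hence $r = p\join r$, that is $p\leq r$. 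Therefore $p = r$, contradicting $p\neq r$, so $\mathcal{S}_1$ has no amalgam in $\Id$.

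For $\Sem\DLM$ idempotency is unavailable, so instead I would use that, by \Cref{cor:subdirect irred in Sem}, $\mathbf{E}$ embeds into a product $\prod_k \mathbf{E}_k$ of ordered monoids with surjective projections $\pi_k$, and run a per-factor version of the total-order argument of \Cref{prop:not compatible no amalgam}. Fixing $k$ and setting $P = \pi_k(p)$, $Q = \pi_k(q)$, $R = \pi_k(r)$, these satisfy the same multiplicative relations together with $Q \leq e \leq P,R$. As $\mathbf{E}_k$ is totally ordered, either $P \leq R$, whence $P = PQ \leq RQ = Q$ and so $P = Q$, or $R \leq P$, whence $R = QR \leq QP = Q$ and so $R = Q$; in either case $P\meet R = Q$. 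Since this holds in every factor, $p\meet r = q$. But $e\leq p$ and $e\leq r$ give $e \leq p\meet r = q$, contradicting $q < e$; thus $\mathcal{S}_1$ has no amalgam in $\Sem\DLM$ either.

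Finally, the span $\mathcal{S}_2$ is handled by the order-dual of this argument: since $\Id$ and $\Sem\DLM$ are self-dual, $\mathbf{G}_3^\partial \cong \mathbf{G}_3$ and $\mathbf{D}_3^\partial \cong \mathbf{D}_3$, and $\mathbf{C}_2^\partial$ is the order dual of $\mathbf{C}_2$, any amalgam of $\mathcal{S}_2$ would dualize to an amalgam of $\mathcal{S}_1$. The anticipated difficulty is conceptual rather than computational: there is no single argument covering both varieties, one case resting on the multiplicative rigidity supplied by idempotency and the other on the subdirect decomposition into chains. The key step, and the one I expect to be the crux, is recognizing that idempotency forces $rp = p\join r$, since it is precisely this identity that, read against the distributive computation of $(rp)q$ and $q(rp)$, collapses $p$ and $r$.
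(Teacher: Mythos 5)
Your proof is correct, and it splits into one half that matches the paper and one half that genuinely deviates. For $\Id$, your argument is essentially the paper's: both hinge on \Cref{lemma:Idemp}(ii) converting the product of the two positive images into their join ($rp = p\join r$ in your notation) and then multiplying by the common negative element $q$, using the absorbing relations $pq=p$, $qp=q$, $rq=q$, $qr=r$ to force a comparison between $p$ and $r$; your two-sided version extracts both $r\leq p$ and $p\leq r$ by distributing $q$ on each side, where the paper derives only $j_2(\top^\ast)\leq j_1(\top)$ and then reuses the computation from \Cref{prop:not compatible no amalgam} --- a cosmetic difference. For $\Sem\DLM$, you take a different route: the paper fixes a \emph{single} factor of the (subdirect) product on which $j_1(\bot)\neq j_1(e)$ survives, invokes simplicity of $\mathbf{G}_3$ and $\mathbf{D}_3$ to conclude that the projected maps are embeddings, and thereby manufactures an amalgam in $\OM$ contradicting \Cref{prop:not compatible no amalgam}; you instead run the total-order dichotomy in \emph{every} factor with possibly non-injective images, showing $\pi_k(p)\meet\pi_k(r)=\pi_k(q)$ in each coordinate (both branches of your case analysis check out, since $P\leq R$ forces $P=Q$ and $R\leq P$ forces $R=Q$), and then pull back the lattice identity $p\meet r = q$, which clashes with $e\leq p\meet r$ and $q\neq e$. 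What your version buys is independence from the simplicity observation --- you never need to know that homomorphisms out of $\mathbf{G}_3$ and $\mathbf{D}_3$ are either injective or trivial --- at the cost of replaying the \Cref{prop:not compatible no amalgam} computation with non-strict inequalities rather than quoting that proposition outright, which is what makes the paper's version shorter. Your duality reduction for $\mathcal{S}_2$, via $\mathbf{G}_3^\partial\cong\mathbf{G}_3$, $\mathbf{D}_3^\partial\cong\mathbf{D}_3$, and self-duality of the ambient varieties, coincides with the paper's one-line appeal to duality.
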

\begin{proof}
For $\Sem\DLM$ suppose for a contradiction that the span $\mathcal{S}_1$ has an amalgam $\mathbf{E} \in\Sem\DLM$ with inclusion maps $\mathbf{G}_3 \hookrightarrow \mathbf{E}$ and $\mathbf{D}_3 \hookrightarrow \mathbf{E}$. Then, since $\mathbf{E}$ is semilinear, we have without loss of generality $\mathbf{E} = \prod_{i\in I} \mathbf{E}_i$ for a family of ordered monoids $\set{\mathbf{E}_i}_{i\in I}$. So there is an $i\in I$ and there are homomorphisms $j_1\colon \mathbf{G}_3 \to \mathbf{E}_i$ and $j_2 \colon \mathbf{D}_3 \to \mathbf{E}_i$ such that $j_1\!\!\restriction_{C_2} = j_2\!\!\restriction_{C_2}$ and $j_1(e)  \neq j_1(\bot)$, so also $j_2(e) \neq j_2(\bot)$. But, since $\mathbf{G}_3$ and $\mathbf{D}_3$ are simple, $j_1$ and $j_2$ are embeddings. Hence $\mathbf{E}_i$ is an amalgam for the span which is an ordered monoid, contradicting \Cref{prop:not compatible no amalgam}. For the span $\mathcal{S}_2$ the claim follows by duality.

For $\Id$ suppose again for a contradiction that the span $\mathcal{S}_1$ has an amalgam $\mathbf{E} \in \Id$ with embeddings $j_1\colon \mathbf{G}_3 \hookrightarrow \mathbf{E}$ and $j_2\colon\mathbf{D}_3 \hookrightarrow \mathbf{E}$ and $\mathbf{E} \in \Id$. Then $j_1(e)\leq j_1(\top), j_2(\top^\ast)$, so 
\[
j_1(\top)\cdot j_2(\top^\ast) = j_1(\top)\join j_2(\top^\ast) =  j_2(\top^\ast)\cdot j_1(\top),
\]
 by \Cref{lemma:Idemp} (ii). Thus we get
 \[
 j_1(\top)\cdot j_2(\top^\ast)\cdot  j_2(\bot) =  j_2(\top^\ast)\cdot j_1(\top)\cdot j_1(\bot) = \ j_2(\top^\ast)\cdot j_1(\top).
 \]
  But we also have 
 \begin{align*}
  j_1(\top)\cdot j_2(\top^\ast) \cdot j_2(\bot) = j_1(\top)\cdot j_2(\bot) = j_1(\top)\cdot j_1(\bot)  = j_1(\top).
 \end{align*}
Hence $j_1(\top)\join j_2(\top^\ast) = j_1(\top) $, i.e., $j_2(\top^\ast) \leq j_1(\top)$, and as in the proof of \Cref{prop:not compatible no amalgam} we get a contradiction. For the span $\mathcal{S}_2$ the claim follows by duality.
\end{proof}

\begin{cor}
No subvariety of $\Sem\DLM$ or $\Id$ that contains the algebras $\mathbf{G}_3$ and $\mathbf{D}_3$ has the amalgamation property. In particular, the varieties $\Sem\DLM$, $\Sem\Id$, and $\Id$ do not have the amalgamation property.
\end{cor}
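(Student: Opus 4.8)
The plan is to obtain the corollary as a direct consequence of \Cref{p:bad span Id and Sem}, so essentially no new work is required beyond tracking which algebras lie in the relevant subvarieties. First I would fix a subvariety $\var{V}$ of $\Sem\DLM$ or of $\Id$ with $\mathbf{G}_3,\mathbf{D}_3\in \var{V}$. Since $\mathbf{C}_2$ is a subalgebra of $\mathbf{G}_3$ (see \Cref{ex:simple chains}) and varieties are closed under subalgebras, we automatically have $\mathbf{C}_2\in \var{V}$. Hence the forbidden span $\mathcal{S}_1 = \pair{\mathbf{C}_2\hookrightarrow \mathbf{G}_3, \mathbf{C}_2\hookrightarrow \mathbf{D}_3}$ is a genuine span inside $\var{V}$.

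The key observation is that any amalgam of $\mathcal{S}_1$ taken within $\var{V}$ would in particular be an amalgam within the ambient variety $\Sem\DLM$ (respectively $\Id$), since $\var{V}$ is a subclass of it and the notion of amalgam only asks for an algebra in the class together with two embeddings making the diagram commute. By \Cref{p:bad span Id and Sem}, however, $\mathcal{S}_1$ has no amalgam in $\Sem\DLM$ or in $\Id$. Therefore $\mathcal{S}_1$ has no amalgam in $\var{V}$ either, and so $\var{V}$ fails the amalgamation property.

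For the final assertion I would note that $\mathbf{G}_3$ and $\mathbf{D}_3$ are idempotent ordered monoids, hence lie in each of $\Sem\Id$, $\Sem\DLM$, and $\Id$; thus the argument above applies verbatim to all three. The only genuine difficulty was already resolved in \Cref{p:bad span Id and Sem} (building on the order-theoretic obstruction of \Cref{prop:not compatible no amalgam}): namely, showing that in any putative amalgam the top elements $j_1(\top)$ and $j_2(\top^\ast)$ of the copies of $\mathbf{G}_3$ and $\mathbf{D}_3$ are forced to collapse, contradicting injectivity. Here nothing of that sort remains, so the main -- and indeed only -- point is the bookkeeping ensuring that $\mathcal{S}_1$ really is a span inside every such subvariety and that amalgamation in a subclass would entail amalgamation in the ambient class.
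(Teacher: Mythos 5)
Your proposal is correct and matches the paper's (implicit) argument exactly: the corollary is stated as an immediate consequence of \Cref{p:bad span Id and Sem}, with precisely the bookkeeping you supply --- $\mathbf{C}_2 \in \var{V}$ since varieties are closed under subalgebras, so $\mathcal{S}_1$ is a span in $\var{V}$, and an amalgam in $\var{V}$ would be an amalgam in the ambient variety $\Sem\DLM$ or $\Id$, contradicting the proposition. The observation that $\mathbf{G}_3$ and $\mathbf{D}_3$ lie in $\Sem\Id \subseteq \Sem\DLM \cap \Id$ correctly handles the ``in particular'' clause.
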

The next example shows that the spans $\mathcal{S}_1$ and $\mathcal{S}_2$ have  amalgams in the variety $\DLM$.
\begin{exmp}
Consider the chain $\mathbf{3} = \alg{\set{0,1,2}, \leq}$ with the natural order $0<1<2$ and the distributive $\ell$-monoid $\mathbf{End}(\mathbf{3}) = \alg{\End(\mathbf{3}),\meet,\join,\circ, id_3}$, where we recall that $\End(\mathbf{3})$ is the set of order-preserving maps on $\mathbf{3}$, $\circ$ is composition, and meet and join are defined point-wise. Let $\pair{k_0,k_1,k_2}$ denote the member of $\End(\mathbf{3})$ that maps $0$ to $k_0$, $1$ to $k_1$, and $2$ to $k_2$, e.g., $\pair{0,1,2}$ denotes $id_3$. We define $j_1 \colon \mathbf{G}_3 \to \mathbf{End}(\mathbf{3})$ by $j_1(\bot) = \pair{0,0,2}$, $j_1(e) = id_3$, $j_1(\top) = \pair{1,1,2}$ and $j_2\colon \mathbf{D}_3 \to\mathbf{End}(\mathbf{3})$ by $j_2(\bot) = \pair{0,0,2}$, $j_2(e) = id_3$, $j_2(\top^\ast) = \pair{0,2,2}$. 
Then $\pair{\mathbf{End}(\mathbf{3}),j_1,j_2}$ is an amalgam of the span $\mathcal{S}_1$ in $\DLM$. To see this first note that clearly both maps are order-preserving and all elements in the images of $j_1$ and $j_2$ are idempotent. Hence, it remains to show that the maps preserve the products of $\bot, \top$ and $\bot,\top^\ast$, respectively.  For this we calculate:
\begin{align*}
j_1(\top) \circ j_1(\bot) &= \pair{1,1,2} \circ \pair{0,0,2} = \pair{1,1,2} =  j_1(\top) = j_1(\top\bot), \\
j_1(\bot) \circ j_1(\top) &= \pair{0,0,2} \circ \pair{1,1,2} = \pair{0,0,2} =  j_1(\bot) = j_1(\bot\top), \\
j_2(\top^\ast) \circ j_2(\bot) &= \pair{0,2,2} \circ \pair{0,0,2} = \pair{0,0,2} =  j_2(\bot) = j_2(\top^\ast\bot), \\
j_2(\bot) \circ j_2(\top) &= \pair{0,0,2} \circ \pair{0,2,2} = \pair{0,2,2} =  j_2(\top^\ast) = j_2(\bot\top^\ast). 
\end{align*}
Therefore,  $\pair{\mathbf{End}(\mathbf{3}),j_1,j_2}$  is an amalgam for $\mathcal{S}_1$ in $\DLM$ and, by duality, also $\mathcal{S}_2$ has an amalgam in $\DLM$.
\end{exmp}

An algebra $\mathbf{A}$ is said to have the \emph{congruence extension property} if for every subalgebra $\bf B$ of $\bf A$ and congruence $\Psi \in \con(\mathbf{B})$ there exists  a congruence $\Theta \in \con(\mathbf{A})$ such that $\Psi = B^2 \cap \Theta$. 
We say that a variety $\var{V}$ has the \emph{congruence extension property} if all of its members have the congruence extension property.

\begin{prop}\label{prop:CEP}
The  algebras  $\mathbf{C}_4$, $\mathbf{C}_4^\partial$, $\mathbf{M} \boxplus \mathbf{N}$, and $\mathbf{N} \boxplus \mathbf{M}$ for $\mathbf{M} \in \set{\mathbf{C}_2,\mathbf{C}_2^\partial}$, $\mathbf{N} \in \set{\mathbf{G}_3,\mathbf{D}_3}$ do not have the congruence extension property.
\begin{proof}
We first prove that $\mathbf{C}_4^\partial$ does not have the congruence extension property (the proof for $\mathbf{C}_4$ is dual). Recall that the algebra $\mathbf{C}_4^\partial = \alg{\set{2,e,1,3},\cdot,e, \leq}$ with $2<e<1<3$. Consider the subalgebra $\mathbf{M} = \Sg(\set{e,1,3})$ of $\mathbf{C}_4^\partial$. The order on~$\bf M$ is $e<1<3$.
Clearly the relation $\Psi = \Delta_{M} \cup \set{\pair{1,3},\pair{3,1}}$ is a non-trivial congruence of $\bf M$. Let $\Theta$ be the congruence of $\mathbf{C}_4^\partial$ generated by $\Psi$. Then we have $\pair{2,3} = \pair{2\cdot 1, 2 \cdot 3} \in \Theta$. Hence, by the convexity of the congruence classes,   we get  $\Theta = {C}_4^\partial\times {C}_4^\partial $. 

For $\mathbf{C}_2 \boxplus \mathbf{G}_3$ with the bottom element of $\mathbf{C}_2$ renamed to $1$  consider the subalgebra $\mathbf{M} = \Sg(1,\bot,e)$. The order on $\mathbf{M}$ is $1 < \bot < e$ and the relation $\Psi = \Delta_2\cup\set{\pair{1,\bot},\pair{\bot,1}}$ is a congruence on $\mathbf{M}$. Let $\Theta$ be the congruence on  $\mathbf{C}_2 \boxplus \mathbf{G}_3$ generated by $\Psi$. Then we have  $\pair{1,\top} = \pair{\top\cdot 1, \top \cdot \bot} \in \Theta$. Thus, by the convexity of the congruence classes, we get $\Theta = (C_2 \cup G_3)\times (C_2 \cup G_3)$. The proofs for the other cases are very similar.
\end{proof}
\end{prop}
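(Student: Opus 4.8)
The plan is to use the standard criterion for the failure of the congruence extension property: to show that an algebra $\mathbf{A}$ lacks the property it suffices to exhibit a single subalgebra $\mathbf{B}$ together with a congruence $\Psi \in \con(\mathbf{B})$ such that the congruence $\Theta$ of $\mathbf{A}$ generated by $\Psi$ satisfies $B^2 \cap \Theta \supsetneq \Psi$. Indeed, any congruence $\Theta'$ of $\mathbf{A}$ with $\Psi \subseteq \Theta'$ must contain $\Theta$, so $B^2 \cap \Theta' \supseteq B^2 \cap \Theta \supsetneq \Psi$; hence no congruence of $\mathbf{A}$ restricts to exactly $\Psi$. Thus for each algebra in the list I would produce such a witnessing pair $(\mathbf{B},\Psi)$ and check the strict inclusion, relying throughout on the fact that, since distributive $\ell$-monoids are congruence-distributive, their congruence classes are order-convex.

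For $\mathbf{C}_4^\partial = \alg{\set{2,e,1,3},\cdot,e,\leq}$ with $2<e<1<3$ I would take $\mathbf{B} = \Sg(\set{e,1,3})$, whose universe is $\set{e,1,3}$ ordered $e<1<3$, and set $\Psi = \Delta_{B} \cup \set{\pair{1,3},\pair{3,1}}$; from the product $m\cdot k = \max(m,k)$ one checks immediately that $\Psi$ is a congruence of $\mathbf{B}$. The key move is to multiply the related pair on the left by the strictly negative element $2\notin B$: since $2\cdot 1 = 2$ and $2\cdot 3 = 3$, we obtain $\pair{2,3}\in\Theta$, so $1,2,3$ lie in one $\Theta$-class, and convexity together with $2<e<1$ forces $e$ into that class as well. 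Hence $\Theta$ is the total congruence and $B^2\cap\Theta = B^2 \supsetneq \Psi$; the failure for $\mathbf{C}_4$ then follows by duality.

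For the nested-sum algebras I would run the same scheme. In $\mathbf{C}_2\boxplus\mathbf{G}_3$, say, renaming the bottom of $\mathbf{C}_2$ to $1$ so that the order is $1<\bot<e<\top$, I would take $\mathbf{B} = \Sg(\set{1,\bot,e})$ (ordered $1<\bot<e$) and $\Psi = \Delta_{B}\cup\set{\pair{1,\bot},\pair{\bot,1}}$. Here the escaping multiplier is the top $\top$ of the $\mathbf{G}_3$-summand: using $\top\cdot 1 = 1$ (from the nested-sum product) and $\top\cdot\bot = \top$ (left-absorption in $\mathbf{G}_3$) yields $\pair{1,\top}\in\Theta$, and convexity again collapses the entire chain $1<\bot<e<\top$. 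The remaining instances of $\mathbf{M}\boxplus\mathbf{N}$ and $\mathbf{N}\boxplus\mathbf{M}$ are treated identically, the only variation being whether absorption occurs on the left or the right and whether one multiplies by a top or a bottom element. I would reduce the bookkeeping by appealing to the dualities $\mathbf{C}_2\leftrightarrow\mathbf{C}_2^\partial$ and $\mathbf{G}_3\leftrightarrow\mathbf{D}_3$ and to the left/right symmetry $\mathbf{G}_3\cong\mathbf{D}_3$ under $a\ast b := b\cdot a$, so that one representative case per family suffices.

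The routine part is verifying that each candidate $\Psi$ is genuinely a congruence and that the arithmetic of the generated congruence behaves as claimed. The main obstacle, and the only step requiring real care, is choosing the escaping multiplier correctly in the non-commutative summands: because $\mathbf{G}_3$ and $\mathbf{D}_3$ distinguish left from right absorption, one must multiply the collapsed pair on the side on which its two elements behave differently, so that one coordinate stays fixed while the other is absorbed to an extremal element; one then invokes convexity to jump from the resulting escaped pair to the total congruence. Tracking these sides consistently across the $\mathbf{M}\boxplus\mathbf{N}$ versus $\mathbf{N}\boxplus\mathbf{M}$ orderings is essentially the whole content of the argument.
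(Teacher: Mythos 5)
Your proposal is correct and takes essentially the same route as the paper's proof: the same witnessing pairs (the subalgebra $\set{e,1,3}$ of $\mathbf{C}_4^\partial$ with $1$ and $3$ identified, and $\Sg(1,\bot,e)$ in $\mathbf{C}_2\boxplus\mathbf{G}_3$ with $1$ and $\bot$ identified), the same escaping multiplications ($2\cdot 1 = 2$, $2\cdot 3 = 3$, respectively $\top\cdot 1 = 1$, $\top\cdot\bot=\top$) followed by collapse via convexity, and the same dispatch of the remaining cases by duality and symmetry, with your explicit statement of the restriction criterion making precise what the paper leaves implicit. One small correction: order-convexity of congruence classes comes from the lattice reduct (compatibility with $\meet$ and $\join$), not from congruence-distributivity, which on its own implies nothing about convexity.
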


\begin{cor}
$\DLM$, $\Id$, $\Sem\DLM$, $\Sem\Id$, and $\Com\Id$ do not have the congruence extension property.
\end{cor}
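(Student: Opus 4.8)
The plan is to observe that the congruence extension property is a property required of \emph{every} member of a variety, so to refute it for a variety it suffices to exhibit a single member that fails it. \Cref{prop:CEP} supplies a whole family of such members, and the key observation is that one of them, namely $\mathbf{C}_4$, already lies in the smallest of the five varieties under consideration, so that a single witness discharges all five cases at once.

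First I would record the inclusions among the relevant varieties. Since every commutative distributive $\ell$-monoid is semilinear, we have $\Com\Id \subseteq \Sem\Id$; moreover $\Sem\Id \subseteq \Id \subseteq \DLM$ and $\Sem\Id \subseteq \Sem\DLM \subseteq \DLM$. Hence $\Com\Id$ is contained in each of the five listed varieties $\DLM$, $\Id$, $\Sem\DLM$, $\Sem\Id$, and $\Com\Id$. Next I would note that $\mathbf{C}_4 = \mathbf{C}_2 \boxplus \mathbf{C}_2^\partial$ is a commutative idempotent ordered monoid, being a nested sum of the commutative algebras $\mathbf{C}_2$ and $\mathbf{C}_2^\partial$, and therefore $\mathbf{C}_4 \in \Com\Id$. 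Combined with the inclusions above, $\mathbf{C}_4$ is a member of all five varieties.

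The conclusion is then immediate: by \Cref{prop:CEP} the algebra $\mathbf{C}_4$ does not have the congruence extension property, so each of the five varieties contains a member lacking this property and consequently fails to have it itself.

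There is essentially no obstacle here, as the entire content is carried by \Cref{prop:CEP}. The only point requiring any care is the bookkeeping of which witness inhabits which variety, and it is precisely convenient that $\mathbf{C}_4$ sits in the bottom variety $\Com\Id$; the remaining witnesses of \Cref{prop:CEP} (the non-commutative nested sums involving $\mathbf{G}_3$ and $\mathbf{D}_3$) would be needed only to produce refutations internal to varieties that do not contain $\mathbf{C}_4$, which is not the situation for any of the five varieties at hand.
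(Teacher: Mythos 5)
Your proposal is correct and takes essentially the same route as the paper, which states the corollary as an immediate consequence of \Cref{prop:CEP}: one witness, $\mathbf{C}_4 \in \Com\Id$, lies in all five varieties via the inclusions you record, and a variety has the congruence extension property only if every member does. One small slip worth fixing: by the paper's definition $\mathbf{C}_4 = \mathbf{C}_2 \boxplus \mathbf{C}_3^\partial = \mathbf{C}_2 \boxplus \mathbf{C}_2^\partial \boxplus \mathbf{C}_2$ (the algebra you wrote, $\mathbf{C}_2 \boxplus \mathbf{C}_2^\partial$, is $\mathbf{C}_3$), but since $\mathbf{C}_4$ is still a nested sum of copies of $\mathbf{C}_2$ and $\mathbf{C}_2^\partial$ it is indeed commutative, so your conclusion $\mathbf{C}_4 \in \Com\Id$ and the whole argument stand unchanged.
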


\begin{lemma}\label{l:CEP CD locally finite}
A locally finite congruence-distributive variety $\var{V}$ has the congruence extension property if and only if all finite subdirectly irreducible algebras in $\var{V}$ have the congruence extension property.
\end{lemma}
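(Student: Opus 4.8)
The forward implication is immediate, since a finite subdirectly irreducible algebra of $\var{V}$ is in particular a member of $\var{V}$, and membership suffices for the congruence extension property to apply to it. The plan for the converse is to prove it in two stages: first I would reduce the congruence extension property for $\var{V}$ to the same property for its \emph{finite} members using local finiteness, and then reduce the finite case to finite subdirectly irreducibles using congruence-distributivity together with the Birkhoff subdirect representation.

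For the second stage, let $\mathbf{A} \in \var{V}$ be finite, let $\mathbf{B}$ be a subalgebra of $\mathbf{A}$, and let $\Psi \in \con(\mathbf{B})$. Since $\mathbf{A}$ is finite, there is a finite family $\{\theta_i\}_{i\in I}$ of meet-irreducible congruences with $\bigcap_{i\in I}\theta_i = \Delta_A$ and each quotient $\mathbf{A}/\theta_i$ a finite subdirectly irreducible member of $\var{V}$. Writing $\pi_i\colon \mathbf{A} \to \mathbf{A}/\theta_i$ for the quotient maps, the image $\mathbf{B}_i := \pi_i[\mathbf{B}]$ is a subalgebra of $\mathbf{A}/\theta_i$, and since $\mathbf{B}_i \cong \mathbf{B}/(\theta_i \cap B^2)$ and $\theta_i \cap B^2 \subseteq \Psi \join (\theta_i \cap B^2)$, the congruence $\Psi \join (\theta_i \cap B^2)$ on $\mathbf{B}$ descends to a congruence on $\mathbf{B}_i$. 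As $\mathbf{A}/\theta_i$ is a finite subdirectly irreducible member of $\var{V}$, by hypothesis it has the congruence extension property, so this induced congruence extends to some $\gamma_i \in \con(\mathbf{A}/\theta_i)$; pulling back gives $\tilde\theta_i := \pi_i^{-1}(\gamma_i) \in \con(\mathbf{A})$ with $\tilde\theta_i \cap B^2 = \Psi \join (\theta_i \cap B^2)$. Setting $\Theta := \bigcap_{i\in I}\tilde\theta_i$, I would then compute
\begin{equation*}
\Theta \cap B^2 = \bigcap_{i\in I}\bigl(\Psi \join (\theta_i \cap B^2)\bigr) = \Psi \join \bigcap_{i\in I}(\theta_i \cap B^2) = \Psi \join \bigl(\bigl(\bigcap_{i\in I}\theta_i\bigr) \cap B^2\bigr) = \Psi \join \Delta_B = \Psi,
\end{equation*}
where the second equality is the crucial step: it is the finite distributive law in $\Con(\mathbf{B})$, valid because $\var{V}$ is congruence-distributive and $\mathbf{B}$ is finite. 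Thus $\Theta$ witnesses the congruence extension property for $\Psi$, and so every finite member of $\var{V}$ has the congruence extension property.

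For the first stage I would argue contrapositively. If $\var{V}$ fails the congruence extension property, the failure is witnessed by some $\mathbf{A} \in \var{V}$, a subalgebra $\mathbf{B}$, a congruence $\Psi \in \con(\mathbf{B})$, and a pair $(a,b) \in B^2 \setminus \Psi$ lying in the congruence $\Theta^{\mathbf{A}}(\Psi)$ generated by $\Psi$ in $\mathbf{A}$. Membership in $\Theta^{\mathbf{A}}(\Psi)$ is certified by a finite sequence of Mal'cev steps involving only finitely many elements of $A$: the endpoints $a,b$, the intermediate elements of the chain, the parameters of the unary polynomials used, and the endpoints of the finitely many pairs of $\Psi$ invoked. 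Collecting these into a finite set $S$ and letting $\mathbf{A}' := \Sg(S)$, which is finite by local finiteness, together with $\mathbf{B}' := \mathbf{A}' \cap \mathbf{B}$ and $\Psi' := \Psi \cap (B')^2$, the same Mal'cev chain shows $(a,b) \in \Theta^{\mathbf{A}'}(\Psi') \setminus \Psi'$, so the finite member $\mathbf{A}'$ fails the congruence extension property. By the second stage applied contrapositively, some finite subdirectly irreducible member of $\var{V}$ then fails it, completing the proof.

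The main obstacle is the distributive-law step in the second stage: the restriction map $\alpha \mapsto \alpha \cap B^2$ on congruences preserves meets but not joins, so one cannot simply take $\Theta := \Theta^{\mathbf{A}}(\Psi)$ and hope its restriction returns exactly $\Psi$. Instead one must construct $\Theta$ as a \emph{meet} of carefully chosen extensions $\tilde\theta_i$ obtained from the subdirectly irreducible quotients, and then invoke distributivity of $\Con(\mathbf{B})$ to collapse the resulting intersection back to $\Psi$. This is precisely why both hypotheses are needed: congruence-distributivity to license the distributive law, and local finiteness to ensure the relevant congruence lattices are finite, so that the finite distributive law applies in place of the generally unavailable complete one.
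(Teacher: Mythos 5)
Your proposal is correct and takes essentially the same route as the paper, whose proof reduces the congruence extension property first to finitely generated (hence, by local finiteness, finite) members by citing \cite[Proposition 21]{Metcalfe2014}, and then to finite subdirectly irreducible members by citing \cite[Proposition 3.2]{Kiss1983} on preservation of the property under finite products in congruence-distributive varieties. Your Mal'cev-chain compactness argument and your subdirect-decomposition computation using finite distributivity of $\Con(\mathbf{B})$ (where what is really needed is finiteness of the index set $I$, guaranteed by finiteness of $\mathbf{A}$) are precisely in-line proofs of those two cited facts, so the only difference is that your version is self-contained.
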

\begin{proof}
By {\cite[Proposition 21]{Metcalfe2014}},  $\var{V}$ has the congruence extension property if and only if all finitely generated algebras in $\var{V}$ have the congruence extension property. Since we assume $\var{V}$ to be locally finite, this is equivalent to all finite algebras in $\var{V}$ having the congruence extension property.  But all finite algebras in $\var{V}$ embed into finite products of finite subdirectly irreducible algebras and, since $\var{V}$ is congruence-distributive, it follows, by \cite[Proposition 3.2]{Kiss1983}, that the congruence extension property is preserved by finite products. So $\var{V}$ having the congruence extension property is equivalent to all  finite subdirectly irreducible algebras in $\var{V}$ having the congruence extension property.
\end{proof}

\begin{prop}\label{l:CEP for small}
Let $\var{V}$ be a subvariety of $\Sem\Id$. Then $\var{V}$ has the congruence extension property if and only if  $\var{V}$ is a  subvariety of $V(\mathbf{C}_3,\mathbf{C}_3^\partial,\mathbf{G}_3,\mathbf{D}_3)$.
\end{prop}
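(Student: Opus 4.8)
The plan is to reduce everything to finite subdirectly irreducible algebras and then play the classification of \Cref{thm:subdirect-equivalence} against the non-extendability results of \Cref{prop:CEP}. Since $\Sem\Id$, and hence every subvariety $\var{V}$, is congruence-distributive and locally finite, \Cref{l:CEP CD locally finite} tells us that $\var{V}$ has the congruence extension property if and only if all of its finite subdirectly irreducible members do. I would first record the elementary observation that the congruence extension property of a single algebra is inherited by its subalgebras: if $\mathbf{A}$ has it and $\mathbf{C} \leq \mathbf{B} \leq \mathbf{A}$, then any $\Psi \in \con(\mathbf{C})$ extends to some $\Theta \in \con(\mathbf{A})$, and $B^2 \cap \Theta$ is a congruence on $\mathbf{B}$ restricting to $\Psi$ on $\mathbf{C}$. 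This makes the algebras of \Cref{prop:CEP} genuine obstructions: no algebra containing one of them as a subalgebra can have the congruence extension property.

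For the right-to-left direction I would show that $V(\mathbf{C}_3, \mathbf{C}_3^\partial, \mathbf{G}_3, \mathbf{D}_3)$ itself has the congruence extension property, whence every subvariety inherits it. By \Cref{l:CEP CD locally finite} it suffices to check the finite subdirectly irreducibles of this variety. Using \Cref{l:jonsson} together with the equality of $\leq_{HS}$ and $\leq_{IS}$ for idempotent ordered monoids (\Cref{l:IS=HS} and \Cref{cor:HS=IS}), these are exactly the subdirectly irreducible subalgebras of $\mathbf{C}_3, \mathbf{C}_3^\partial, \mathbf{G}_3, \mathbf{D}_3$, namely $\mathbf{C}_2$, $\mathbf{C}_2^\partial$, $\mathbf{C}_3$, $\mathbf{C}_3^\partial$, $\mathbf{G}_3$, and $\mathbf{D}_3$. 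Each has at most three elements, so verifying the congruence extension property is a short finite inspection of their few subalgebras and congruences; the only nontrivial checks are for $\mathbf{C}_3$ and $\mathbf{C}_3^\partial$, where one confirms that the nontrivial congruence of each two-element subalgebra is the restriction either of the monolith or of the full congruence.

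For the left-to-right direction, assume $\var{V}$ has the congruence extension property, so every finite subdirectly irreducible $\mathbf{M} \in \var{V}$ has it too. The key step is the combinatorial claim that a finite subdirectly irreducible which does \emph{not} embed into any of $\mathbf{C}_3, \mathbf{C}_3^\partial, \mathbf{G}_3, \mathbf{D}_3$ must contain one of the algebras of \Cref{prop:CEP} as a subalgebra. Writing $\mathbf{M} \cong \bigboxplus_{i=1}^n \mathbf{M}_i$ as in \Cref{thm:subdirect-equivalence}, one checks via \Cref{cor:IS embedding} that the algebras embedding into one of the four generators are precisely the trivial algebra together with $\mathbf{C}_2$, $\mathbf{C}_2^\partial$, $\mathbf{G}_3$, $\mathbf{D}_3$, $\mathbf{C}_3$, $\mathbf{C}_3^\partial$; hence $n \geq 2$. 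If some summand $\mathbf{M}_j$ lies in $\set{\mathbf{G}_3, \mathbf{D}_3}$, then a consecutive pair containing it restricts to a subalgebra of $\mathbf{M}$ that either equals one of the forbidden $\mathbf{M}' \boxplus \mathbf{N}'$ or $\mathbf{N}' \boxplus \mathbf{M}'$ (with $\mathbf{M}' \in \set{\mathbf{C}_2, \mathbf{C}_2^\partial}$, $\mathbf{N}' \in \set{\mathbf{G}_3, \mathbf{D}_3}$), or, when both entries of the pair lie in $\set{\mathbf{G}_3, \mathbf{D}_3}$, contains such an algebra after replacing one $\mathbf{G}_3/\mathbf{D}_3$ summand by its subalgebra $\mathbf{C}_2$ via \Cref{cor:IS embedding}. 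Otherwise every summand lies in $\set{\mathbf{C}_2, \mathbf{C}_2^\partial}$, and by validity of the decomposition they strictly alternate; since $n = 2$ would give $\mathbf{C}_3$ or $\mathbf{C}_3^\partial$, we have $n \geq 3$ and the first three summands form $\mathbf{C}_4$ or $\mathbf{C}_4^\partial$. In every case $\mathbf{M}$ contains an algebra of \Cref{prop:CEP}, contradicting heredity of the property. Hence every finite subdirectly irreducible of $\var{V}$ embeds into one of the four generators, and by the ideal correspondence of \Cref{thm:subvariety lattice} this yields $\var{V} \subseteq V(\mathbf{C}_3, \mathbf{C}_3^\partial, \mathbf{G}_3, \mathbf{D}_3)$.

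The main obstacle I anticipate is precisely the combinatorial claim in the last paragraph: one must be certain that the finite list in \Cref{prop:CEP} really catches every minimal ``too large'' subdirectly irreducible. The subtle point is the case of two adjacent $\mathbf{G}_3/\mathbf{D}_3$ summands, which is not literally on the list and becomes a forbidden subalgebra only after shrinking one summand to $\mathbf{C}_2$; getting the case split clean and exhaustive in terms of the nested sum decomposition is the part that needs care.
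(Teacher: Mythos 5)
Your proof is correct and follows essentially the same route as the paper: both directions reduce to finite subdirectly irreducibles via \Cref{l:CEP CD locally finite}, the right-to-left direction verifies the congruence extension property for the six subdirectly irreducible members of $V(\mathbf{C}_3,\mathbf{C}_3^\partial,\mathbf{G}_3,\mathbf{D}_3)$, and the left-to-right direction uses the obstruction algebras of \Cref{prop:CEP}. The only difference is that you spell out in full the case analysis showing every larger finite subdirectly irreducible contains one of those obstructions as a subalgebra (including the adjacent $\mathbf{G}_3$/$\mathbf{D}_3$ case, handled correctly by shrinking one summand to $\mathbf{C}_2$ via \Cref{cor:IS embedding}), a claim the paper merely asserts by citing \Cref{thm:subdirect-equivalence} and \Cref{cor:counting subd}.
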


\begin{proof}
For the right-to-left direction note that up to isomorphism the subdirectly irreducible members of $V(\mathbf{C}_3,\mathbf{C}_3^\partial,\mathbf{G}_3,\mathbf{D}_3)$ are $\mathbf{C}_2$, $\mathbf{C}_2^\partial$, $\mathbf{C}_3$, $\mathbf{C}_3^\partial$, $\mathbf{G}_3$, and $\mathbf{D}_3$ which clearly all have the congruence extension property. So, by  \Cref{l:CEP CD locally finite}, we get that $V(\mathbf{C}_3,\mathbf{C}_3^\partial,\mathbf{G}_3,\mathbf{D}_3)$  has the congruence extension property. The left-to-right direction follows from \Cref{prop:CEP} together with the fact that, by \Cref{thm:subdirect-equivalence} and \Cref{cor:counting subd},  any bigger finite subdirectly irreducible algebra in $\Sem\Id$ contains one of the algebras from \Cref{prop:CEP} as a subalgebra. 
\end{proof}

\begin{cor}
Let $\var{V}$ be a subvariety of $\Com\Id$. Then $\var{V}$ has the congruence extension property if and only if  $\var{V}$ is a  subvariety of $V(\mathbf{C}_3,\mathbf{C}_3^\partial)$.
\end{cor}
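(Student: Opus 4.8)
The plan is to obtain this as an immediate consequence of \Cref{l:CEP for small} by intersecting the relevant variety with $\Com\Id$. First I would observe that $\Com\Id$ is a subvariety of $\Sem\Id$, so \Cref{l:CEP for small} applies verbatim to every subvariety $\var{V}$ of $\Com\Id$: such a $\var{V}$ has the congruence extension property if and only if $\var{V}$ is a subvariety of $V(\mathbf{C}_3,\mathbf{C}_3^\partial,\mathbf{G}_3,\mathbf{D}_3)$. Hence it suffices to prove the identity
\[
\Com\Id \cap V(\mathbf{C}_3,\mathbf{C}_3^\partial,\mathbf{G}_3,\mathbf{D}_3) = V(\mathbf{C}_3,\mathbf{C}_3^\partial),
\]
since then the condition ``$\var{V} \subseteq V(\mathbf{C}_3,\mathbf{C}_3^\partial,\mathbf{G}_3,\mathbf{D}_3)$'' for a subvariety $\var{V}$ of $\Com\Id$ reduces to ``$\var{V} \subseteq V(\mathbf{C}_3,\mathbf{C}_3^\partial)$''.

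The inclusion from right to left is clear, since $\mathbf{C}_3$ and $\mathbf{C}_3^\partial$ are commutative and lie in $V(\mathbf{C}_3,\mathbf{C}_3^\partial,\mathbf{G}_3,\mathbf{D}_3)$. For the reverse inclusion I would argue at the level of subdirectly irreducibles. As recorded in the proof of \Cref{l:CEP for small}, the subdirectly irreducible members of $V(\mathbf{C}_3,\mathbf{C}_3^\partial,\mathbf{G}_3,\mathbf{D}_3)$ are, up to isomorphism, exactly $\mathbf{C}_2$, $\mathbf{C}_2^\partial$, $\mathbf{C}_3$, $\mathbf{C}_3^\partial$, $\mathbf{G}_3$, and $\mathbf{D}_3$ (this uses \Cref{l:jonsson} together with \Cref{cor:HS=IS}). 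Since subdirect irreducibility is an intrinsic property of an algebra, every subdirectly irreducible member of $\Com\Id \cap V(\mathbf{C}_3,\mathbf{C}_3^\partial,\mathbf{G}_3,\mathbf{D}_3)$ is a commutative algebra from this list. As $\mathbf{G}_3$ and $\mathbf{D}_3$ are non-commutative (indeed, by \Cref{l:dubreil-jacotin} they are precisely the algebras witnessing non-commutativity), the only remaining candidates are $\mathbf{C}_2$, $\mathbf{C}_2^\partial$, $\mathbf{C}_3$, and $\mathbf{C}_3^\partial$, all of which belong to $V(\mathbf{C}_3,\mathbf{C}_3^\partial)$ because $\mathbf{C}_2$ and $\mathbf{C}_2^\partial$ embed into $\mathbf{C}_3 = \mathbf{C}_2 \boxplus \mathbf{C}_2^\partial$. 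Since a variety is generated by its subdirectly irreducible members, this shows $\Com\Id \cap V(\mathbf{C}_3,\mathbf{C}_3^\partial,\mathbf{G}_3,\mathbf{D}_3) \subseteq V(\mathbf{C}_3,\mathbf{C}_3^\partial)$, establishing the displayed identity.

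Finally, I would combine the pieces: if $\var{V} \subseteq \Com\Id$ has the congruence extension property then $\var{V} \subseteq V(\mathbf{C}_3,\mathbf{C}_3^\partial,\mathbf{G}_3,\mathbf{D}_3)$ by \Cref{l:CEP for small}, whence $\var{V} \subseteq \Com\Id \cap V(\mathbf{C}_3,\mathbf{C}_3^\partial,\mathbf{G}_3,\mathbf{D}_3) = V(\mathbf{C}_3,\mathbf{C}_3^\partial)$; conversely, any subvariety of $V(\mathbf{C}_3,\mathbf{C}_3^\partial)$ is contained in $V(\mathbf{C}_3,\mathbf{C}_3^\partial,\mathbf{G}_3,\mathbf{D}_3)$ and hence has the congruence extension property, again by \Cref{l:CEP for small}. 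The argument is essentially routine given the earlier results; the only point requiring care, and the one I view as the crux, is the passage through subdirectly irreducibles to verify that intersecting with $\Com\Id$ removes exactly $\mathbf{G}_3$ and $\mathbf{D}_3$ and thereby collapses $V(\mathbf{C}_3,\mathbf{C}_3^\partial,\mathbf{G}_3,\mathbf{D}_3)$ down to $V(\mathbf{C}_3,\mathbf{C}_3^\partial)$.
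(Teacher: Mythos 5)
Your proposal is correct and matches the paper's intended derivation: the paper states this as an immediate corollary of \Cref{l:CEP for small} without a written proof, and the missing step is exactly your observation that intersecting with $\Com\Id$ discards the non-commutative subdirectly irreducibles $\mathbf{G}_3$ and $\mathbf{D}_3$, so that $\Com\Id \cap V(\mathbf{C}_3,\mathbf{C}_3^\partial,\mathbf{G}_3,\mathbf{D}_3) = V(\mathbf{C}_3,\mathbf{C}_3^\partial)$. Your verification of this identity via Jónsson's Lemma, \Cref{cor:HS=IS}, and the list of subdirectly irreducibles is exactly the bookkeeping the paper already performed in the proof of \Cref{l:CEP for small}, so nothing further is needed.
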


We call a variety $\var{V}$ \emph{residually small} if there is an upper bound on the cardinality of the universes of the subdirectly irreducible algebras in $\var{V}$. 
\begin{lemma}[{\cite[Corollary 2.11]{Kearnes1989}}]\label{l:AP and RS implies CEP}
Let $\var{V}$ be a congruence-distributive variety. If $\var{V}$ is residually small and has the  amalgamation property, then it has the congruence extension property.
\end{lemma}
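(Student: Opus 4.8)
The plan is to argue by contraposition: assuming that $\var{V}$ has the amalgamation property but fails the congruence extension property, I would manufacture subdirectly irreducible algebras in $\var{V}$ of unbounded cardinality, contradicting residual smallness. Congruence-distributivity enters twice: once to keep the analysis of which congruences restrict and extend under control (in a congruence-distributive variety the monolith of a subdirectly irreducible algebra is well behaved and persists along the directed constructions below), and once through Jónsson's Lemma (\Cref{thm:jonsson}), which guarantees that the subdirectly irreducibles I build actually live in $\var{V}$ and so witness its residual size.

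First I would reduce to the case of subdirectly irreducible quotients. Suppose $\mathbf{B}$ is a subalgebra of some $\mathbf{A}\in\var{V}$ and $\Psi\in\con(\mathbf{B})$ fails to extend, i.e.\ $\Theta^{\mathbf{A}}(\Psi)\cap B^2\supsetneq\Psi$, where $\Theta^{\mathbf{A}}(\Psi)$ denotes the congruence of $\mathbf{A}$ generated by $\Psi$. Choosing a witnessing pair $\pair{a,b}\in(\Theta^{\mathbf{A}}(\Psi)\cap B^2)\setminus\Psi$ and using that $\con(\mathbf{B})$ is algebraic, I may replace $\Psi$ by a completely meet-irreducible congruence $\Phi\geq\Psi$ with $\pair{a,b}\notin\Phi$; then $\mathbf{S}\coloneqq\mathbf{B}/\Phi$ is subdirectly irreducible, the pair $\pair{a,b}$ still witnesses that $\Phi$ does not extend, and the problem is reduced to extending a congruence with subdirectly irreducible quotient.

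The heart of the construction is a graph embedding combined with amalgamation. Writing $q\colon\mathbf{B}\to\mathbf{S}$ for the quotient map, the assignment $\delta\colon b\mapsto\pair{b,q(b)}$ is an embedding $\mathbf{B}\hookrightarrow\mathbf{B}\times\mathbf{S}$, and the kernel $\eta$ of the second projection $\mathbf{B}\times\mathbf{S}\to\mathbf{S}$ satisfies $\eta\cap\delta[B]^2=\delta[\Phi]$. Applying the amalgamation property to the span $\pair{\mathbf{B}\hookrightarrow\mathbf{A},\ \delta\colon\mathbf{B}\hookrightarrow\mathbf{B}\times\mathbf{S}}$ yields $\mathbf{D}\in\var{V}$ with embeddings $u\colon\mathbf{A}\hookrightarrow\mathbf{D}$ and $v\colon\mathbf{B}\times\mathbf{S}\hookrightarrow\mathbf{D}$ agreeing on $\mathbf{B}$. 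Since homomorphisms preserve generated congruences, the bad pair gives $\pair{u(a),u(b)}\in\Theta^{\mathbf{D}}(v[\eta])$, whereas $\pair{u(a),u(b)}$ lies inside $v[\mathbf{B}\times\mathbf{S}]$ and is \emph{not} in $v[\eta]$ (as $q(a)\neq q(b)$). Hence the congruence $v[\eta]$ of the subalgebra $v[\mathbf{B}\times\mathbf{S}]$, whose quotient is again the subdirectly irreducible $\mathbf{S}$, fails to extend to $\mathbf{D}$: the original configuration has been reproduced one level up.

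The main obstacle — and the step demanding the most care — is converting this self-reproducing failure into an actual violation of residual smallness. The reproduced configuration at each stage has the same quotient $\mathbf{S}$, so a single amalgamation does not yet enlarge any subdirectly irreducible; instead I would iterate the graph-amalgam, building a tower $\mathbf{A}=\mathbf{D}_0\hookrightarrow\mathbf{D}_1\hookrightarrow\cdots$ together with an ascending chain of copies of $\mathbf{S}$ whose monoliths are successively forced to collapse, and then analyse a suitable completely meet-irreducible quotient of the directed colimit. Congruence-distributivity is what lets me track the monoliths through the colimit and guarantee that this quotient is subdirectly irreducible, while Jónsson's Lemma keeps it inside $\var{V}$. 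The delicate points — and the reason both hypotheses are genuinely needed — are to verify that the accumulated failures cannot all be absorbed into a single bounded quotient, so that the colimit yields subdirectly irreducibles of cardinality exceeding any prescribed bound, contradicting residual smallness. It is precisely this cardinality blow-up, powered by amalgamation at every finite stage and controlled by congruence-distributivity, that I expect to be the technical heart of the argument.
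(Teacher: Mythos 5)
You should first note that the paper contains no proof of this lemma at all: it is imported verbatim as \cite[Corollary 2.11]{Kearnes1989}, so your proposal has to stand on its own rather than be matched against an in-paper argument. The parts of your sketch that are actually carried out are correct: the reduction to a completely meet-irreducible $\Phi$ (since $\pair{a,b}\in\Theta^{\mathbf{A}}(\Psi)\subseteq\Theta^{\mathbf{A}}(\Phi)$ and $\pair{a,b}\notin\Phi$, the failure persists), the graph embedding $\delta(b)=\pair{b,q(b)}$ with $\eta\cap\delta[B]^2=\delta[\Phi]$, and the amalgamation step showing that $v[\eta]$ fails to extend to $\mathbf{D}$ all check out. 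But this only reproduces a failure whose separating quotient is the \emph{same} algebra $\mathbf{S}$, and the step that would turn this into a contradiction with residual smallness — the iteration, the colimit, and the claim that some completely meet-irreducible quotient of the colimit has cardinality exceeding any prescribed bound — is exactly the content of the theorem, and it is left as a declared intention rather than an argument. Nothing in the sketch rules out the scenario you yourself flag: every bad pair produced along the tower is individually separated by a quotient of size $\abs{S}$ (via its own copy of $\mathbf{S}$), so there is no visible mechanism forcing the accumulated failures into a \emph{single} subdirectly irreducible quotient of large size. Without that mechanism the contradiction with residual smallness is simply not obtained, and this is a genuine gap, not a routine verification.

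A second, related defect is that congruence distributivity never does any identifiable work in your sketch. Invoking J\'onsson's Lemma (\Cref{thm:jonsson}) to ``keep the colimit inside $\var{V}$'' is misplaced: any variety is closed under directed colimits, so no lemma is needed there, and J\'onsson's Lemma concerns subdirectly irreducibles of $V(\var{K})$ for a generating class $\var{K}$, which is irrelevant when one already works inside $\var{V}$. Since the implication AP ${}+{}$ RS ${}\Rightarrow{}$ CEP is known to depend on such a hypothesis (Kearnes's actual argument runs through the structure theory of residually small congruence-modular varieties — commutator-theoretic consequences of residual smallness together with absolute retracts/essential extensions — with the distributive case as a corollary, not through a cardinality blow-up of the kind you propose), a correct proof must use distributivity at a concrete step; your sketch gestures at ``tracking monoliths'' but never exhibits one. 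To repair the proposal you would need either to supply the missing blow-up mechanism with an explicit use of distributivity, or to abandon the tower construction in favor of the commutator-theoretic route of the cited paper.
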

Recall that a variety is called \emph{finitely generated} if it is generated by a finite set of finite algebras.
\begin{prop}\label{prop:amalgam-c4}
No finitely generated subvariety of $\DLM$ that contains any of the algebras. $\mathbf{C}_4$, $\mathbf{C}_4^\partial$, $\mathbf{M} \boxplus \mathbf{N}$, or $\mathbf{N} \boxplus \mathbf{M}$ for $\mathbf{M} \in \set{\mathbf{C}_2,\mathbf{C}_2^\partial}$, $\mathbf{N} \in \set{\mathbf{G}_3,\mathbf{D}_3}$  has the amalgamation property.
\end{prop}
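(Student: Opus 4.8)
The plan is to argue by contradiction, exploiting the fact from \Cref{l:AP and RS implies CEP} that for a congruence-distributive variety the amalgamation property together with residual smallness forces the congruence extension property, and then to invoke \Cref{prop:CEP} to produce a member failing that property. In outline: finitely generated plus congruence-distributive yields residual smallness, which combined with the assumed amalgamation property yields the congruence extension property for the whole variety, contradicting the presence of one of the listed algebras.

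First I would fix a finitely generated subvariety $\var{V}$ of $\DLM$ containing one of the algebras $\mathbf{C}_4$, $\mathbf{C}_4^\partial$, $\mathbf{M} \boxplus \mathbf{N}$, or $\mathbf{N} \boxplus \mathbf{M}$ with $\mathbf{M} \in \set{\mathbf{C}_2,\mathbf{C}_2^\partial}$ and $\mathbf{N} \in \set{\mathbf{G}_3,\mathbf{D}_3}$, and assume for contradiction that $\var{V}$ has the amalgamation property. Since $\var{V}$ is a subvariety of $\DLM$, it is congruence-distributive. The key preparatory step is to show that $\var{V}$ is residually small: writing $\var{V} = V(\var{K})$ for a finite set $\var{K}$ of finite algebras, \Cref{l:jonsson} guarantees that every subdirectly irreducible member of $\var{V}$ lies in $HS(\var{K})$, and each such algebra, being a homomorphic image of a subalgebra of some member of $\var{K}$, has cardinality at most $\max_{\mathbf{A}\in \var{K}} \abs{A}$. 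This uniform bound on the sizes of the subdirectly irreducibles is exactly residual smallness.

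Having secured residual smallness and congruence-distributivity, I would apply \Cref{l:AP and RS implies CEP} to the assumed amalgamation property to conclude that $\var{V}$ has the congruence extension property, i.e., that \emph{every} member of $\var{V}$ has it. This contradicts \Cref{prop:CEP}, since the algebra from the hypothesis that sits inside $\var{V}$ fails the congruence extension property. Note that the failure for $\mathbf{C}_4^\partial$ and for the reversed nested sums is already incorporated into the statement of \Cref{prop:CEP}, so no separate duality argument is needed. The only genuine content lies in the residual smallness step, and even there the work is routine once \Cref{l:jonsson} is invoked; the remainder is a direct assembly of \Cref{prop:CEP} and \Cref{l:AP and RS implies CEP}, so I anticipate no substantive obstacle beyond correctly combining these three results.
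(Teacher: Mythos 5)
Your proposal is correct and follows essentially the same route as the paper: the paper likewise derives residual smallness of $\var{V}$ from \Cref{l:jonsson} (the finite subdirectly irreducibles in $HS(\var{K})$ have uniformly bounded size), invokes \Cref{l:AP and RS implies CEP} to conclude that amalgamation would force the congruence extension property, and contradicts this via the failure of that property for the listed algebras established in \Cref{prop:CEP}. The only cosmetic difference is that the paper states the argument contrapositively rather than by explicit contradiction, and routes the CEP failure through its \Cref{l:CEP for small}, which rests on the same \Cref{prop:CEP} you cite directly.
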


\begin{proof}
Let $\var{V}$ be a finitely generated subvariety of $\DLM$ that contains one of the algebras. Then, by \Cref{l:CEP for small}, $\var{V}$ does not have the congruence extension property. Moreover, since $\var{V}$ is finitely generated, it follows from \Cref{l:jonsson} that $\var{V}_\ast$ contains only finitely many subdirectly irreducible algebras, yielding that it is residually small. Thus the claim follows from \Cref{l:AP and RS implies CEP}.
\end{proof}

\begin{cor}
No proper subvariety of $\Com\Id$ that contains $\mathbf{C}_4$ or $\mathbf{C}_4^\partial$ has the amalgamation property.
\end{cor}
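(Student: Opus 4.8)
The plan is to reduce this statement directly to \Cref{prop:amalgam-c4} by observing that the hypothesis of \emph{properness}, combined with the explicit description of the subvariety lattice, forces finite generation. The key point is that \Cref{thm:CId lattice} (equivalently, \Cref{cor:subv-Com}) exhibits every proper subvariety of $\Com\Id$ as one of the varieties appearing in Figure~\ref{fig:Subvariety lattice of CId}, namely the trivial variety or $V(\mathbf{C}_n)$, $V(\mathbf{C}_n^\partial)$, or $V(\mathbf{C}_n,\mathbf{C}_n^\partial)$ for some $n\geq 2$. Each of these is generated by at most two finite algebras, so the first step I would carry out is to record that \emph{every} proper subvariety of $\Com\Id$ is finitely generated.

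With this in hand, let $\var{V}$ be a proper subvariety of $\Com\Id$ that contains $\mathbf{C}_4$ or $\mathbf{C}_4^\partial$ (note that the trivial variety is automatically excluded, since it contains neither). Since $\Com\Id$ is a subvariety of $\DLM$, the variety $\var{V}$ is in particular a finitely generated subvariety of $\DLM$ containing $\mathbf{C}_4$ or $\mathbf{C}_4^\partial$. The second and final step is then simply to apply \Cref{prop:amalgam-c4}, which states precisely that no such finitely generated subvariety of $\DLM$ has the amalgamation property. This yields the conclusion.

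There is no serious obstacle here, as all of the technical content is supplied by \Cref{thm:CId lattice} and \Cref{prop:amalgam-c4}. The only point worth emphasizing is the essential role of the word \emph{proper}: the full variety $\Com\Id$ also contains $\mathbf{C}_4$ but is not finitely generated, so \Cref{prop:amalgam-c4} does not apply to it, and indeed the amalgamation problem for $\Com\Id$ itself is not resolved by this argument. Thus the hypothesis of properness is used solely to invoke finite generation via the lattice description.
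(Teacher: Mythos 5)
Your proof is correct and takes essentially the same route the paper intends: the corollary is left without explicit proof precisely because, by \Cref{thm:CId lattice} (or \Cref{cor:subv-Com}), every proper subvariety of $\Com\Id$ is finitely generated, so \Cref{prop:amalgam-c4} applies directly. Your observation that properness is essential---since $\Com\Id$ itself contains $\mathbf{C}_4$ but is not finitely generated, leaving its amalgamation problem open---is also accurate and consistent with the paper's closing discussion.
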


Let  $\var{V}$ be a variety. We denote by $\var{V}_{\mathrm{FSI}}$ the class of finitely subdirectly irreducible members of $\var{V}$, i.e., $\var{V}_{\mathrm{FSI}}$ contains all algebras $\mathbf{A}\in \var{V}$ such that $\Delta_A$ is meet-irreducible in $\Con(\mathbf{A})$.  
Moreover, we say that a class of algebras $\var{K}$ has the \emph{one-sided amalgamation property} if for every span $\pair{i_1\colon \mathbf{A} \to \mathbf{B}, i_2\colon \mathbf{A} \to \mathbf{C}}$ in $\var{V}$ there exists an algebra $\mathbf{D}\in \var{K}$, a homomorphism $j_1 \colon \mathbf{B} \to \mathbf{D}$, and an embedding $j_2 \colon \mathbf{C} \to \mathbf{D}$ such that $j_1\circ i_1  = j_2\circ i_2$. The triple $\pair{\mathbf{D},j_1,j_2}$ is called a \emph{one-sided amalgam} of the span. We note that the amalgamation property implies the one-sided amalgamation property.

\begin{lemma}[Relativized  Jónsson's Lemma {\cite[Lemma 1.5.]{Czelakowski1990}}]\label{l:relat-jonsson}
Let $\var{K}$ be a class of algebras. Then $Q(\var{K})_{\mathrm{FSI}} \subseteq ISP_U(\var{K})$.
In particular,  if $\var{K}$ is a finite set of finite algebras and at least one member of $\var{K}$ has the trivial algebra as a subalgebra, then  $Q(\var{K})_{\mathrm{FSI}} \subseteq IS(\var{K})$.
\end{lemma}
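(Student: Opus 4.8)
The plan is to adapt the classical proof of Jónsson's Lemma, replacing the complete meet-irreducibility of $\Delta_A$ that underlies subdirect irreducibility by the weaker finite meet-irreducibility encoded in the $\mathrm{FSI}$ hypothesis, together with the fact that principal congruences generate. First I would invoke the standard description $Q(\var{K}) = ISPP_U(\var{K})$ of the generated quasivariety (see \cite{Burris_Sankappanavar_1981}). Thus any $\mathbf{A} \in Q(\var{K})_{\mathrm{FSI}}$ admits an embedding $g\colon \mathbf{A}\hookrightarrow \prod_{i\in I}\mathbf{C}_i$ with each $\mathbf{C}_i\in P_U(\var{K})$; setting $\theta_i = \ker(\pi_i\circ g)$, injectivity of $g$ gives $\bigcap_{i\in I}\theta_i = \Delta_A$.

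The crux is to replace this family of quotients by a single ultraproduct that still separates the points of $\mathbf{A}$. For distinct $a,b\in A$ put $J_{a,b} = \set{i\in I \mid \pair{a,b}\notin \theta_i}$, and I would show that $\set{J_{a,b} \mid a\neq b}$ has the finite intersection property. Given $\pair{a_1,b_1},\dots,\pair{a_n,b_n}$ with each $a_k\neq b_k$, the $\mathrm{FSI}$ hypothesis (finite meet-irreducibility of $\Delta_A$) yields $\bigcap_{k=1}^n \Theta^{\bf A}(a_k,b_k)\neq \Delta_A$, so there are distinct $c,d$ with $\pair{c,d}\in\Theta^{\bf A}(a_k,b_k)$ for every $k$. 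As $\bigcap_i\theta_i = \Delta_A$ and $c\neq d$, choose $i_0$ with $\pair{c,d}\notin\theta_{i_0}$; were $\pair{a_k,b_k}\in\theta_{i_0}$ for some $k$, then $\Theta^{\bf A}(a_k,b_k)\subseteq\theta_{i_0}$ would force $\pair{c,d}\in\theta_{i_0}$, a contradiction, so $i_0\in\bigcap_k J_{a_k,b_k}$. Extending $\set{J_{a,b}\mid a\neq b}$ to an ultrafilter $\mathcal{U}$ on $I$, the composite of $g$ with the quotient map $\prod_i\mathbf{C}_i\to\prod_i\mathbf{C}_i/\mathcal{U}$ is injective, since $a\neq b$ gives $J_{a,b}\in\mathcal{U}$; hence $\mathbf{A}$ embeds into the ultraproduct $\prod_i\mathbf{C}_i/\mathcal{U}$.

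Finally I would collapse the two levels of ultraproducts: each $\mathbf{C}_i$ is an ultraproduct over $\var{K}$, and since an ultraproduct of ultraproducts is again isomorphic to an ultraproduct, i.e. $P_U P_U(\var{K})\subseteq IP_U(\var{K})$, we obtain $\prod_i\mathbf{C}_i/\mathcal{U}\in IP_U(\var{K})$ and therefore $\mathbf{A}\in ISP_U(\var{K})$, proving the first inclusion. For the special case, when $\var{K}$ is a finite set of finite algebras every ultraproduct is isomorphic to a factor: splitting $I$ into the finitely many isomorphism classes, the class lying in $\mathcal{U}$ selects a single finite algebra whose ultrapower is isomorphic to it, so $P_U(\var{K}) = I(\var{K})$ and hence $ISP_U(\var{K}) = IS(\var{K})$. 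The one point where the extra hypothesis is needed is the degenerate trivial algebra, which is $\mathrm{FSI}$ and lies in $Q(\var{K})$; assuming that some member of $\var{K}$ has the trivial algebra as a subalgebra places it in $S(\var{K})$, so that $Q(\var{K})_{\mathrm{FSI}}\subseteq IS(\var{K})$ holds without exception. I expect the finite-intersection-property step to be the main obstacle, since it is precisely where the weaker $\mathrm{FSI}$ hypothesis must be leveraged, through the interplay between the separating congruences $\theta_i$ and the principal congruences $\Theta^{\bf A}(a_k,b_k)$.
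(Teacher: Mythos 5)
Your proof is correct, and there is in fact no in-paper argument to compare it against: the paper imports this lemma verbatim from Czelakowski and Dziobiak \cite[Lemma 1.5]{Czelakowski1990} without proof. What you have written is a sound, self-contained reconstruction of the standard argument behind the cited result. Each step checks out: the Mal'cev-style description $Q(\var{K}) = ISPP_U(\var{K})$; the finite intersection property of the sets $J_{a,b}$, which is precisely where the FSI hypothesis enters (each $\Theta^{\bf A}(a_k,b_k)$ is nontrivial since $a_k\neq b_k$, so meet-irreducibility of $\Delta_A$, iterated, makes their finite meet nontrivial, and minimality of principal congruences correctly transfers the witness $\pair{c,d}$ to show $i_0\in\bigcap_k J_{a_k,b_k}$); the ultrafilter extension giving injectivity of the composite into $\prod_i\mathbf{C}_i/\mathcal{U}$; the collapse $P_UP_U(\var{K})\subseteq IP_U(\var{K})$ by the iterated-ultraproduct theorem; and, in the finite case, the fact that an ultraproduct of a finite family of finite algebras is isomorphic to one of its factors, so $ISP_U(\var{K})=IS(\var{K})$. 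Note also that your argument proves the statement with the paper's \emph{absolute} notion of FSI (meet-irreducibility of $\Delta_A$ in the full congruence lattice), whereas the source works with relative congruences; since absolute FSI implies relative FSI, both routes cover the statement as given.

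One point deserves to be made explicit. Under this paper's convention trivial algebras count as FSI (it later lists $\boldsymbol{0}$ among the members of $V(\mathbf{C}_2)_{\mathrm{FSI}}$), and your ultrafilter argument genuinely requires $\mathbf{A}$ to be nontrivial: for trivial $\mathbf{A}$ there are no sets $J_{a,b}$, the index set $I$ may be empty, and if no member of $\var{K}$ has a one-element subalgebra the trivial algebra need not lie in $ISP_U(\var{K})$ at all. So the first inclusion should be read with trivial algebras set aside (the convention in the source), or with the remark that in the $\ell$-monoid signature $\set{e}$ is always a subuniverse, so triviality is harmless there. Your explicit use of the extra hypothesis to place $\boldsymbol{0}$ in $IS(\var{K})$ in the finite case is exactly the right move and explains why that hypothesis appears in the statement; it would strengthen the write-up to flag the same degenerate case in the first inclusion as well.
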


\begin{thm}[{\cite[Corollary 3.5.]{Fussner2022}}]\label{thm:fin-subd-amalg}
Let $\var{V}$ be a congruence-distributive variety with the congruence extension property such that $\var{V}_{\mathrm{FSI}}$ is closed under subalgebras. Then $\var{V}$ has the amalgamation property if and only if $\var{V}_{\mathrm{FSI}}$ has the one-sided amalgamation property. 
\end{thm}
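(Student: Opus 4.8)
The plan is to prove both implications separately, with the forward direction being comparatively routine and the reverse direction carrying the real content.

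For the forward direction, suppose $\var{V}$ has the amalgamation property and let $\pair{i_1\colon \mathbf{A}\to \mathbf{B}, i_2\colon \mathbf{A}\to \mathbf{C}}$ be a span in $\var{V}_{\mathrm{FSI}}$. Applying amalgamation in $\var{V}$ yields an amalgam $\pair{\mathbf{D}, j_1, j_2}$ with $\mathbf{D}\in \var{V}$, but $\mathbf{D}$ need not be finitely subdirectly irreducible. To repair this I would pass to a suitable quotient: by Zorn's Lemma choose $\theta\in \Con(\mathbf{D})$ maximal with respect to $\theta \cap j_2[C]^2 = \Delta_{j_2[C]}$. Writing $q\colon \mathbf{D}\to \mathbf{D}/\theta$ for the quotient map, the composite $q\circ j_2$ is injective, hence an embedding, while $q\circ j_1$ is a homomorphism, and the two still agree on $A$. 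The crucial point is that $\mathbf{D}/\theta \in \var{V}_{\mathrm{FSI}}$: if $\theta = \alpha \meet \beta$ with $\alpha,\beta > \theta$, then maximality forces each of $\alpha,\beta$ to identify some pair of distinct elements of $j_2[C]$; pulling $\alpha$ and $\beta$ back along the embedding $j_2$ gives two nontrivial congruences of $\mathbf{C}$ whose meet is $\Delta_C$, contradicting that $\mathbf{C}$ is finitely subdirectly irreducible. Thus $\pair{\mathbf{D}/\theta,\, q\circ j_1,\, q\circ j_2}$ is a one-sided amalgam in $\var{V}_{\mathrm{FSI}}$.

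For the reverse direction I would first isolate the quotient construction above as a lemma: \emph{if $\mathbf{A}_0\leq \mathbf{E}$ with $\mathbf{E}\in\var{V}$ and $\mathbf{A}_0 \in \var{V}_{\mathrm{FSI}}$, then there is a quotient $\mathbf{E}\twoheadrightarrow \mathbf{E}'$ with $\mathbf{E}'\in \var{V}_{\mathrm{FSI}}$ whose restriction to $\mathbf{A}_0$ is an embedding}. Its proof is exactly the maximality argument just given, and this is precisely where closure of $\var{V}_{\mathrm{FSI}}$ under subalgebras is used, to know that the relevant images of $\mathbf{A}$ are finitely subdirectly irreducible. Now take an arbitrary span $\mathbf{B}\hookleftarrow\mathbf{A}\hookrightarrow\mathbf{C}$ in $\var{V}$, identified with inclusions so that $B\cap C = A$. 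I would construct the amalgam as a product $\mathbf{D}=\prod_{\lambda\in\Lambda}\mathbf{D}_\lambda$ of members of $\var{V}_{\mathrm{FSI}}$, with one coordinate for each pair of distinct elements of $B$ and each pair of distinct elements of $C$ that must be separated. For a pair in $B$: choose a subdirectly irreducible quotient $\mathbf{B}\twoheadrightarrow\mathbf{B}_\lambda$ separating it, let $\mathbf{A}_\lambda\leq\mathbf{B}_\lambda$ be the image of $\mathbf{A}$ (finitely subdirectly irreducible, by closure under $S$), extend the trace congruence $\ker(\mathbf{B}\to\mathbf{B}_\lambda)\cap A^2$ from $\mathbf{A}$ to $\mathbf{C}$ using the congruence extension property, and apply the lemma to obtain $\mathbf{C}\twoheadrightarrow\mathbf{C}_\lambda\in\var{V}_{\mathrm{FSI}}$ into which $\mathbf{A}_\lambda$ still embeds. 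This produces a genuine span $\mathbf{B}_\lambda \hookleftarrow \mathbf{A}_\lambda \hookrightarrow \mathbf{C}_\lambda$ in $\var{V}_{\mathrm{FSI}}$, which I would feed to the one-sided amalgamation property oriented so that $\mathbf{B}_\lambda$ embeds into $\mathbf{D}_\lambda$; composing yields homomorphisms $g_\lambda\colon\mathbf{B}\to\mathbf{D}_\lambda$ and $h_\lambda\colon\mathbf{C}\to\mathbf{D}_\lambda$ that agree on $A$, with $g_\lambda$ still separating the chosen $B$-pair. Pairs in $C$ are handled symmetrically, orienting the one-sided amalgam to embed the $\mathbf{C}$-side. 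Setting $g=(g_\lambda)_\lambda$ and $h=(h_\lambda)_\lambda$ then gives embeddings $\mathbf{B}\hookrightarrow\mathbf{D}$ and $\mathbf{C}\hookrightarrow\mathbf{D}$ that agree on $A$, i.e.\ an amalgam in $\var{V}$.

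The main obstacle is the orchestration in this last step. One-sided amalgamation only guarantees an embedding on \emph{one} leg, so the index set must be arranged so that every pair in $B$ and every pair in $C$ is separated at a coordinate whose one-sided amalgam is oriented to preserve that separation, while the two maps remain equal on $A$ at every coordinate. Making this cohere hinges on the interplay of three ingredients: the congruence extension property (to transport the trace of $\mathbf{A}$ across the span), the lemma (which returns us to $\var{V}_{\mathrm{FSI}}$ while keeping $\mathbf{A}_\lambda$ embedded), and the freedom to orient each one-sided amalgam. Congruence-distributivity is the ambient hypothesis that legitimises the standard congruence-lattice manipulations used throughout, while the combinatorial substance of the argument rests on the congruence extension property and on closure of $\var{V}_{\mathrm{FSI}}$ under subalgebras.
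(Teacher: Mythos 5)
This statement is not proved in the paper at all: it is imported verbatim as \cite[Corollary 3.5]{Fussner2022} (Fussner--Metcalfe), so there is no in-paper proof to compare against. Your proposal is correct and is essentially the argument of the cited source: the forward direction via a Zorn-maximal congruence $\theta$ with $\theta \cap j_2[C]^2 = \Delta_{j_2[C]}$, whose quotient is finitely subdirectly irreducible by pulling a decomposition $\theta = \alpha \meet \beta$ back along $j_2$; and the reverse direction via the same ``FSI reflection'' lemma, separation of pairs by subdirectly irreducible quotients, transport of the trace congruence across the span by the congruence extension property, closure of $\var{V}_{\mathrm{FSI}}$ under subalgebras to keep the images of $\mathbf{A}$ finitely subdirectly irreducible, suitably oriented one-sided amalgams at each coordinate, and a product to assemble the two embeddings. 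One small remark: your argument never actually uses congruence-distributivity (your closing sentence only gestures at it) --- the reflection lemma and both directions need only Zorn plus the correspondence theorem --- but since it is merely an additional hypothesis in the statement, leaving it idle is harmless.
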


\begin{lemma}\label{l:IS-FSI}
For $n\in \set{2,3}$ we have  $V(\mathbf{C}_n)_{\mathrm{FSI}} = IS(\mathbf{C}_n)$, $V(\mathbf{C}_n^\partial)_{\mathrm{FSI}} = IS(\mathbf{C}_n^\partial)$,   $V(\mathbf{C}_n,\mathbf{C}_n^\partial)_{\mathrm{FSI}} = IS( \mathbf{C}_n,\mathbf{C}_n^\partial)$, $V(\mathbf{G}_3)_{\mathrm{FSI}} = IS(\mathbf{G}_3)$, $V(\mathbf{D}_3)_{\mathrm{FSI}} = IS(\mathbf{D}_3)$, and $V(\mathbf{G}_3, \mathbf{D}_3)_{\mathrm{FSI}} = IS(\mathbf{G}_3, \mathbf{D}_3)$.
\end{lemma}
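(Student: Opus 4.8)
The plan is to prove, for each of the six generating sets $\var{K}$, the two inclusions $IS(\var{K}) \subseteq V(\var{K})_{\mathrm{FSI}}$ and $V(\var{K})_{\mathrm{FSI}} \subseteq IS(\var{K})$. The crucial preliminary step, which is what makes the relativized Jónsson's Lemma (\Cref{l:relat-jonsson}) applicable, is to show that for each such $\var{K}$ the generated variety and quasivariety coincide, i.e. $V(\var{K}) = Q(\var{K})$.

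First I would establish $V(\var{K}) = Q(\var{K})$. Each generator lies in $\IdOM$ and $\var{K}$ is a finite set of finite algebras, so \Cref{l:jonsson} gives $V(\var{K}) = IP_S(HS(\var{K}))$. By \Cref{l:IS=HS} every homomorphic image of an idempotent ordered monoid embeds back into it, and since subalgebras of the generators again lie in $\IdOM$, this yields $HS(\var{K}) = IS(\var{K})$. Hence $V(\var{K}) = IP_S(IS(\var{K})) \subseteq ISP(\var{K})$, because a subdirect product is in particular a subalgebra of a product. As any ultraproduct of a finite set of finite algebras is isomorphic to a member of $\var{K}$, we have $P_U(\var{K}) \subseteq I(\var{K})$ and therefore $Q(\var{K}) = ISP(\var{K})$. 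Combining these, $V(\var{K}) \subseteq Q(\var{K}) \subseteq V(\var{K})$, so $V(\var{K}) = Q(\var{K})$.

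For $V(\var{K})_{\mathrm{FSI}} \subseteq IS(\var{K})$, I would note that being finitely subdirectly irreducible is an absolute property of an algebra, so $V(\var{K})_{\mathrm{FSI}} = Q(\var{K})_{\mathrm{FSI}}$ by the previous step. Each generator contains the trivial algebra $\mathbf{0} = \Sg(\emptyset)$ as a subalgebra, so the second part of \Cref{l:relat-jonsson} applies and gives $Q(\var{K})_{\mathrm{FSI}} \subseteq IS(\var{K})$. For the reverse inclusion $IS(\var{K}) \subseteq V(\var{K})_{\mathrm{FSI}}$, I would use \Cref{cor:IS embedding} to list the subalgebras of the generators: up to isomorphism they are among $\mathbf{0}$, $\mathbf{C}_2$, $\mathbf{C}_2^\partial$, and the generator itself. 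The trivial algebra is FSI, the algebras $\mathbf{C}_2, \mathbf{C}_2^\partial, \mathbf{G}_3, \mathbf{D}_3$ are simple, and $\mathbf{C}_3, \mathbf{C}_3^\partial$ are subdirectly irreducible by \Cref{thm:subdirect-equivalence} (their nested sum decompositions contain no forbidden consecutive repetition). Since these are all finite, subdirect irreducibility means that $\Con(\cdot)$ has a unique atom, so $\Delta$ has a unique upper cover and is in particular meet-irreducible, whence each member of $IS(\var{K})$ is FSI and lies in $V(\var{K})$. The two inclusions together give all six equalities.

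The main obstacle is conceptual rather than computational: \Cref{l:relat-jonsson} bounds only the FSI members of the quasivariety, whereas the statement concerns the full variety, and FSI classes can a priori contain infinite algebras (as the chains in distributive lattices show). The collapse $HS(\var{K}) = IS(\var{K})$ coming from \Cref{l:IS=HS} is exactly what forces $V(\var{K}) = Q(\var{K})$ and thereby transfers the quasivariety bound to the variety; recognizing and exploiting this collapse is the key point of the argument.
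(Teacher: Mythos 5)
Your proof is correct and follows essentially the same route as the paper's: both establish $V(\var{K})=Q(\var{K})$ via \Cref{l:jonsson} together with the collapse $HS(\var{K})=IS(\var{K})$ from \Cref{l:IS=HS}, then apply the relativized J\'onsson Lemma (\Cref{l:relat-jonsson}) for $V(\var{K})_{\mathrm{FSI}}\subseteq IS(\var{K})$, and for the converse observe that every subalgebra of a generator is trivial or subdirectly irreducible (via \Cref{thm:subdirect-equivalence}). Your write-up merely makes explicit the details the paper leaves implicit, such as the ultraproduct collapse $P_U(\var{K})\subseteq I(\var{K})$ and the trivial subalgebra $\set{e}$ needed to invoke the second part of \Cref{l:relat-jonsson}.
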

\begin{proof}
For the left-to-right inclusions note that all the varieties are generated by the respective algebras as quasi-varieties, by \Cref{l:jonsson} and \Cref{l:IS=HS}. Thus the left-to-right inclusion follows from \Cref{l:relat-jonsson}.

For the right-to-left inclusions note that for $n\in \set{2,3}$ every member of $IS( \mathbf{C}_n,\mathbf{C}_n^\partial)$ is either trivial or subdirectly irreducible. The same is also true  for  $V(\mathbf{G}_3)$, $V(\mathbf{D}_3)$ and $V(\mathbf{G}_3, \mathbf{D}_3)$.
\end{proof}

\begin{prop}\label{prop:amalgam-c3-join}
None of the  varieties $V(\mathbf{M},\mathbf{N})$ generated by  distinct algebras $\mathbf{M}, \mathbf{N} \in \set{\mathbf{C}_3,\mathbf{C}_3^\partial,\mathbf{G}_3,\mathbf{D}_3}$   has the amalgamation property.
\end{prop}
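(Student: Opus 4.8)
The plan is to reduce the failure of amalgamation to a failure of \emph{one-sided} amalgamation inside the class of finitely subdirectly irreducible algebras, using \Cref{thm:fin-subd-amalg}. Each variety $\var{V} = V(\mathbf{M},\mathbf{N})$ under consideration is a subvariety of $V(\mathbf{C}_3,\mathbf{C}_3^\partial,\mathbf{G}_3,\mathbf{D}_3)$ and is congruence-distributive, so by \Cref{l:CEP for small} it has the congruence extension property. The case $V(\mathbf{G}_3,\mathbf{D}_3)$ is immediate, since it contains both $\mathbf{G}_3$ and $\mathbf{D}_3$ and hence fails to have the amalgamation property by the corollary to \Cref{p:bad span Id and Sem}. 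For the remaining five varieties I would first record that, arguing exactly as in the proof of \Cref{l:IS-FSI} (using that $V(\mathbf{M},\mathbf{N}) = Q(\mathbf{M},\mathbf{N})$ together with \Cref{l:relat-jonsson}), one has $\var{V}_{\mathrm{FSI}} = IS(\mathbf{M},\mathbf{N})$; in particular $\var{V}_{\mathrm{FSI}}$ is closed under subalgebras, so \Cref{thm:fin-subd-amalg} applies and it suffices, in each case, to exhibit a span of finitely subdirectly irreducible algebras admitting no one-sided amalgam in $\var{V}_{\mathrm{FSI}}$.

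Next I would cut down the number of cases using duality. Since $(\mathbf{C}_3)^\partial = \mathbf{C}_3^\partial$, while $\mathbf{G}_3$ and $\mathbf{D}_3$ are self-dual (their multiplication is left- respectively right-absorbing independently of the order, so reversing the order yields an isomorphic algebra), the variety $V(\mathbf{C}_3^\partial,\mathbf{G}_3)$ is the order dual of $V(\mathbf{C}_3,\mathbf{G}_3)$, and $V(\mathbf{C}_3^\partial,\mathbf{D}_3)$ is the order dual of $V(\mathbf{C}_3,\mathbf{D}_3)$. As the amalgamation property is preserved under order duality, it suffices to treat $V(\mathbf{C}_3,\mathbf{C}_3^\partial)$, $V(\mathbf{C}_3,\mathbf{G}_3)$ and $V(\mathbf{C}_3,\mathbf{D}_3)$.

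For these three I would use a single span. Write $\mathbf{C}_3 = \{b < e < t\}$, where $b$ is the two-sided absorbing bottom, and let $\mathbf{X} \in \{\mathbf{C}_3^\partial,\mathbf{G}_3,\mathbf{D}_3\}$ be the second generator, with top element $\top_X$. Consider the span $\langle\, \mathbf{C}_2^\partial \hookrightarrow \mathbf{C}_3,\ \mathbf{C}_2^\partial \hookrightarrow \mathbf{X}\,\rangle$ sending the non-identity element of $\mathbf{C}_2^\partial$ to $t$ and to $\top_X$ respectively. Suppose it had a one-sided amalgam $\langle \mathbf{D}, j_1, j_2\rangle$ with $\mathbf{D}\in\var{V}_{\mathrm{FSI}} = IS(\mathbf{C}_3,\mathbf{X})$, $j_1\colon\mathbf{C}_3\to\mathbf{D}$ a homomorphism and $j_2\colon\mathbf{X}\hookrightarrow\mathbf{D}$ an embedding. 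Since $\mathbf{X}$ is three-element and does not embed into $\mathbf{C}_3$ (either $\mathbf{X}=\mathbf{C}_3^\partial$, which by \Cref{cor:IS embedding} does not embed into $\mathbf{C}_3$, or $\mathbf{X}$ is non-commutative while $\mathbf{C}_3$ is commutative), we must have $\mathbf{D}\cong\mathbf{X}$; as $\mathbf{X}$ admits only the identity automorphism, we may take $j_2=\mathrm{id}$, so that $j_1(t)=\top_X$. Because $b$ is absorbing we have $b\cdot t = t\cdot b = b$, whence $y := j_1(b)$ satisfies $y\cdot\top_X = \top_X\cdot y = y$ with $y\le e$. But in each of $\mathbf{C}_3^\partial$, $\mathbf{G}_3$, $\mathbf{D}_3$ the top element $\top_X$ is absorbing on at least one side (on both in $\mathbf{C}_3^\partial$, on the left in $\mathbf{G}_3$, on the right in $\mathbf{D}_3$), so $y\cdot\top_X=\top_X$ or $\top_X\cdot y=\top_X$; either way $y=\top_X > e$, contradicting $y\le e$. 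Hence no one-sided amalgam exists, and $\var{V}$ lacks the amalgamation property.

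The main obstacle is choosing the span correctly. The symmetric-looking span $\langle\, \mathbf{C}_2 \hookrightarrow \mathbf{C}_3,\ \mathbf{C}_2 \hookrightarrow \mathbf{X}\,\rangle$ mapping $\mathbf{C}_2$ to the bottom elements \emph{does} admit a one-sided amalgam (one may collapse $t$ to $e$), so the asymmetry of $\mathbf{C}_3$ must be exploited: its bottom element is globally absorbing, hence rigid, and it is precisely this that turns the condition on $j_1(b)$ into an unsatisfiable two-sided fixed-point requirement. A secondary point to verify carefully is the identification $\var{V}_{\mathrm{FSI}} = IS(\mathbf{M},\mathbf{N})$ for the mixed pairs of generators, which is not literally contained in \Cref{l:IS-FSI} but follows by the same reasoning.
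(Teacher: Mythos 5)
Your proof is correct, and it shares the paper's overall skeleton -- establish the congruence extension property via \Cref{l:CEP for small}, identify $V(\mathbf{M},\mathbf{N})_{\mathrm{FSI}} = IS(\mathbf{M},\mathbf{N})$, and then refute the one-sided amalgamation property of $\var{V}_{\mathrm{FSI}}$ so that \Cref{thm:fin-subd-amalg} applies; for $V(\mathbf{C}_3,\mathbf{C}_3^\partial)$ your span and contradiction coincide with the paper's. Where you genuinely diverge is in the four mixed cases $V(\mathbf{M},\mathbf{N})$ with $\mathbf{M}\in\set{\mathbf{G}_3,\mathbf{D}_3}$, $\mathbf{N}\in\set{\mathbf{C}_3,\mathbf{C}_3^\partial}$: the paper uses the span $\pair{\mathbf{C}_2\hookrightarrow\mathbf{M}, \mathbf{C}_2\hookrightarrow\mathbf{N}}$, placing the simple non-commutative algebra on the homomorphism leg, and kills $j_1$ by simplicity plus the non-embeddability of $\mathbf{M}$ into the commutative $\mathbf{N}$; you instead reduce to two mixed cases by order duality (correctly noting that $\mathbf{G}_3$ and $\mathbf{D}_3$ are self-dual) and then run a single uniform span $\pair{\mathbf{C}_2^\partial\hookrightarrow\mathbf{C}_3, \mathbf{C}_2^\partial\hookrightarrow\mathbf{X}}$ for all three remaining varieties, extracting the contradiction from the two-sided fixed-point equation $y\cdot\top_X = \top_X\cdot y = y$ with $y\le e$ against the one-sided absorbing top of $\mathbf{X}$. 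Your route buys uniformity (one span, one computation, covering the commutative case as a special instance) at the cost of the duality bookkeeping; the paper's route avoids duality but needs two structurally different contradictions. Your closing caveat is also well placed: the identification $V(\mathbf{M},\mathbf{N})_{\mathrm{FSI}} = IS(\mathbf{M},\mathbf{N})$ for the mixed pairs is indeed not literally in the statement of \Cref{l:IS-FSI} (the paper cites that lemma for it anyway), and your re-derivation via \Cref{l:relat-jonsson} together with the observation that every member of $IS(\mathbf{M},\mathbf{N})$ is trivial or subdirectly irreducible is exactly what is needed, so on this point you are, if anything, more careful than the published argument. Your remark that the bottom-element span $\pair{\mathbf{C}_2\hookrightarrow\mathbf{C}_3, \mathbf{C}_2\hookrightarrow\mathbf{X}}$ does admit a one-sided amalgam (collapse $t$ to $e$) is also accurate and correctly explains why the orientation and choice of span matter.
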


\begin{proof}
For $V(\mathbf{G}_3,\mathbf{D}_3)$ the claim is immediate from \Cref{p:bad span Id and Sem}.
Otherwise note that, by \Cref{l:CEP for small}, in any case the variety has the congruence extension property and, by \Cref{l:IS-FSI}, we have that $V(\mathbf{M},\mathbf{N})_{\mathrm{FSI}} = IS(\mathbf{M},\mathbf{N})$ is closed under subalgebras. 

For  $V(\mathbf{C}_3,\mathbf{C}_3^\partial)$  we have that $IS(\mathbf{C}_3,\mathbf{C}_3^\partial)$ consists up to isomorphism of the algebras $\boldsymbol{0}$, $\mathbf{C}_2$, $\mathbf{C}_2^\partial,\mathbf{C}_3$, and~$\mathbf{C}_3^\partial$.   
Suppose for a contradiction that the variety $V(\mathbf{C}_3,\mathbf{C}_3^\partial)$ has the amalgamation property and consider  the span $\pair{i_1 \colon \mathbf{C}_2^\partial \hookrightarrow \mathbf{C}_3, i_2\colon  \mathbf{C}_2^\partial \hookrightarrow \mathbf{C}_3^\partial }$.  Then it follows from \Cref{thm:fin-subd-amalg} that there exists an algebra $\mathbf{D} \in \set{\boldsymbol{0}, \mathbf{C}_2,\mathbf{C}_2^\partial,\mathbf{C}_3, \mathbf{C}_3^\partial}$, a homomorphism $j_1 \colon \mathbf{C}_3 \to \mathbf{D}$, and an embedding $j_2 \colon \mathbf{C}_3^\partial \to \mathbf{D}$ such that $j_1\circ i_1 = j_2 \circ i_2$. 
Since $j_2$ is an embedding, we must have $\mathbf{D} = \mathbf{C}_3^\partial$ and $j_2 = id_{C_3^\partial}$. If we use the notation $C_2^\partial = \set{e < \top}$, $C_3 = \set{2 < e < 1}$ and $C_3^\partial = \set{1^\partial < e < 2^\partial}$, we get
\begin{equation*}
2^\partial = j_2(2^\partial ) = j_2(i_2(\top)) = j_1(i_1(\top)) = j_1(1) .
\end{equation*}
Thus we also  get
\begin{equation*}
 j_1(2) = j_1(2 \cdot 1) = j_1(2)j_1(1) = j_1(2)\cdot  2^\partial = 2^\partial,
\end{equation*}
yielding $j_1(e) = e < 2^\partial =  j_1(2)$, contradicting the fact that homomorphisms are order-preserving.

For $V(\mathbf{M},\mathbf{N})$ with $\mathbf{M} \in \set{\mathbf{G}_3,\mathbf{D}_3}$ and  $\mathbf{N} \in \set{\mathbf{C}_3,\mathbf{C}_3^\partial}$  we note that the class $IS(\mathbf{M},\mathbf{N})$ consists up to isomorphisms of the algebras  $\boldsymbol{0}$, $\mathbf{C}_2$, $\mathbf{C}_2^\partial,\mathbf{M}$, and~$\mathbf{N}$.   Suppose for a contradiction that  $V(\mathbf{M},\mathbf{N})$ has the amalgamation property and consider the span $\pair{i_1 \colon \mathbf{C}_2 \hookrightarrow \mathbf{M},i_2\colon \mathbf{C}_2 \hookrightarrow \mathbf{N}}$. Then, by \Cref{thm:fin-subd-amalg}, there exists an algebra $\mathbf{D} \in \set{\boldsymbol{0}, \mathbf{C}_2,\mathbf{C}_2^\partial,\mathbf{M}, \mathbf{N}}$, a homomorphism $j_1 \colon \mathbf{M} \to \mathbf{D}$, and an embedding $j_2 \colon \mathbf{N} \to \mathbf{D}$ such that $j_1\circ i_1 = j_2 \circ i_2$. Because $j_2$ is an embedding we get $\mathbf{D} = \mathbf{N}$, by cardinality reasons. But also, since $\mathbf{M}$ is simple and not isomorphic to $\mathbf{N}$,  the map $j_1$ needs to be constant with $j_2(M) = \set{e}$, contradicting $j_1\circ i_1 = j_2 \circ i_2$.
\end{proof}

\begin{prop}\label{prop:amalgam-small}
The varieties $V(\mathbf{C}_2)$, $V(\mathbf{C}_2^\partial)$,  $V(\mathbf{C}_2,\mathbf{C}_2^\partial)$, $V(\mathbf{C}_3)$,  $V(\mathbf{C}_3^\partial)$, $V(\mathbf{G}_3)$, and $V(\mathbf{D}_3)$ have the amalgamation property.
\end{prop}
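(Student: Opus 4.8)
The plan is to reduce everything to \Cref{thm:fin-subd-amalg}. Each of the seven varieties is congruence-distributive, being a variety of distributive $\ell$-monoids, and each is a subvariety of $V(\mathbf{C}_3,\mathbf{C}_3^\partial,\mathbf{G}_3,\mathbf{D}_3)$: its generators all lie in $\set{\mathbf{C}_2,\mathbf{C}_2^\partial,\mathbf{C}_3,\mathbf{C}_3^\partial,\mathbf{G}_3,\mathbf{D}_3}$, and each of these is a subalgebra of one of $\mathbf{C}_3,\mathbf{C}_3^\partial,\mathbf{G}_3,\mathbf{D}_3$. Hence, by \Cref{l:CEP for small}, each has the congruence extension property. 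Moreover, by \Cref{l:IS-FSI}, each $\var{V}_{\mathrm{FSI}}$ is of the form $IS(\mathcal{K})$ for the set $\mathcal{K}$ of generators of $\var{V}$, and is therefore closed under subalgebras. All hypotheses of \Cref{thm:fin-subd-amalg} being satisfied, it then suffices to show that each $\var{V}_{\mathrm{FSI}}$ has the one-sided amalgamation property.

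For the six singly generated varieties I would prove the stronger statement that $IS(\mathbf{K})$ has the full amalgamation property, where $\mathbf{K}$ is the generator. The crucial observation is that each of $\mathbf{C}_2,\mathbf{C}_2^\partial,\mathbf{C}_3,\mathbf{C}_3^\partial,\mathbf{G}_3,\mathbf{D}_3$ is \emph{rigid} and contains exactly one subalgebra of each isomorphism type; consequently any two embeddings of a fixed algebra $\mathbf{A}$ into $\mathbf{K}$ have the same image and, by rigidity, coincide. Given a span $\pair{i_1\colon\mathbf{A}\hookrightarrow\mathbf{B},\,i_2\colon\mathbf{A}\hookrightarrow\mathbf{C}}$ in $IS(\mathbf{K})$, both $\mathbf{B}$ and $\mathbf{C}$ embed into $\mathbf{K}$, say via $j_1$ and $j_2$; then $j_1\circ i_1$ and $j_2\circ i_2$ are two embeddings of $\mathbf{A}$ into $\mathbf{K}$ and hence equal, so $\pair{\mathbf{K},j_1,j_2}$ is an amalgam. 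In particular the one-sided amalgamation property holds.

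The variety $V(\mathbf{C}_2,\mathbf{C}_2^\partial)$ requires separate treatment, since its class $\var{V}_{\mathrm{FSI}}=\set{\mathbf{0},\mathbf{C}_2,\mathbf{C}_2^\partial}$ contains no algebra admitting embeddings of both $\mathbf{C}_2$ and $\mathbf{C}_2^\partial$, so the previous argument (and full amalgamation) fails and only the one-sided property can hold. Here I would split on the apex $\mathbf{A}$ of a span $\pair{i_1\colon\mathbf{A}\hookrightarrow\mathbf{B},\,i_2\colon\mathbf{A}\hookrightarrow\mathbf{C}}$. If $\mathbf{A}$ is non-trivial, then $\mathbf{A}\in\set{\mathbf{C}_2,\mathbf{C}_2^\partial}$ and, since neither of $\mathbf{C}_2,\mathbf{C}_2^\partial$ embeds into the other, necessarily $\mathbf{B}\cong\mathbf{C}\cong\mathbf{A}$, so the span is trivially amalgamated. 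If $\mathbf{A}=\mathbf{0}$, I would take $\mathbf{D}=\mathbf{C}$, $j_2=\mathrm{id}_{\mathbf{C}}$, and let $j_1\colon\mathbf{B}\to\mathbf{C}$ be the constant map onto $e$; this is a homomorphism of idempotent ordered monoids, and it agrees with $j_2\circ i_2$ on the unique element $e$ of $\mathbf{A}$. Note that the same constant-map device uniformly disposes of every span with trivial apex in all seven classes.

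The reductions are routine, so the only genuinely delicate point is the rigidity observation underlying the singly generated case: it is exactly what makes the compatibility condition $j_1\circ i_1=j_2\circ i_2$ automatic, and it is exactly what fails for the joins $V(\mathbf{M},\mathbf{N})$ of \Cref{prop:amalgam-c3-join}, where two non-isomorphic three-element generators coexist in the finitely subdirectly irreducible class and obstruct amalgamation.
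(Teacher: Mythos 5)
Your proposal is correct and takes essentially the same route as the paper: reduce to \Cref{thm:fin-subd-amalg} via the congruence extension property (\Cref{l:CEP for small}) and the closure of each $\var{V}_{\mathrm{FSI}}$ under subalgebras (\Cref{l:IS-FSI}), then verify one-sided amalgamation for the finitely subdirectly irreducible classes. Your rigidity-plus-unique-subalgebra argument is just a spelled-out justification of the paper's observation that each algebra in $IS(\mathbf{K})$ admits a \emph{unique} embedding into the generator $\mathbf{K}$, and your constant-map device for the trivial-apex spans in $V(\mathbf{C}_2,\mathbf{C}_2^\partial)$ is exactly the paper's collapsing homomorphisms $f$ and $g$.
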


\begin{proof}
To shorten the notation we let $\var{U} = V(\mathbf{C}_2)$, $\var{V} = V(\mathbf{C}_2,\mathbf{C}_2^\partial)$, and $\var{W} = V(\mathbf{C}_3)$. By \Cref{l:IS-FSI}, $\var{U}_{\mathrm{FSI}} = IS(\mathbf{C}_2)$, $\var{V}_{\mathrm{FSI}} = IS (\mathbf{C}_2,\mathbf{C}_2^\partial)$, and $\var{W}_{\mathrm{FSI}} = IS(\mathbf{C}_3)$ are closed under subalgebras. So, by \Cref{thm:fin-subd-amalg}, it suffices to show that $\var{U}_{\mathrm{FSI}}$, $\var{V}_{\mathrm{FSI}}$, and $\var{W}_{\mathrm{FSI}}$ have the one-sided amalgamation property.

For $\var{U}$ note that $\var{U}_{\mathrm{FSI}} =  IS(\mathbf{C}_2)$ consists up to isomorphism of the algebras $\mathbf{C}_2$ and $\boldsymbol{0}$.  Thus every span in $\var{U}_{\mathrm{FSI}}$ has $\mathbf{C}_2$ with the respective inclusion maps as a one-sided-amalgam. So $\var{U}_{\mathrm{FSI}}$ has the amalgamation property.

For  $\var{V}$ note that up to isomorphism $\var{V}_{\mathrm{FSI}}=  IS(\mathbf{C}_2,\mathbf{C}_2^\partial)$ consists of the algebras $\boldsymbol{0}$, $\mathbf{C}_2$, and $\mathbf{C}_2^\partial$. Thus  the only spans in~$\var{V}_{\mathrm{FSI}}$ for which a one-sided amalgam is not clear are the spans $\pair{i_1,i_2}$ and $\pair{i_2,i_1}$, where $i_1 \colon \boldsymbol{0} \hookrightarrow \mathbf{C}_2$ and $i_2 \colon \boldsymbol{0}\hookrightarrow \mathbf{C}_2^\partial$ are the obvious embeddings. But  for the span $\pair{i_1,i_2}$ the maps $f\colon \mathbf{C}_2 \to \mathbf{C}_2^\partial$, $f(e) = f(\bot) = e$ and $id_{C_3^\partial}$ form a one-sided amalgam, and for the span $\pair{i_2,i_1}$ the maps $id_{C_2}$ and $g\colon \mathbf{C}_2^\partial \to \mathbf{C}_2$, $g(e) = g(\top) = e$ form a one-sided amalgam. Thus $\var{V}_{\mathrm{FSI}}$ has the one-sided amalgamation property.

For $\var{W}$ again note that $\var{W}_{\mathrm{FSI}} =  IS(\mathbf{C}_3)$ consists up to isomorphism of  the algebras $\boldsymbol{0}$, $\mathbf{C}_2$, $\mathbf{C}_2^\partial$, and $\mathbf{C}_3$. Let  $\pair{i_1\colon \mathbf{A} \hookrightarrow \mathbf{B},i_2\colon \mathbf{A}\hookrightarrow \mathbf{C}}$ be a span in $\var{W}_{\mathrm{FSI}}$, i.e., without loss of generality $\mathbf{A},\mathbf{B},\mathbf{C} \in \set{\boldsymbol{0}, \mathbf{C}_2,\mathbf{C}_2^\partial,\mathbf{C}_3}$. Then  the inclusion maps $j_1 \colon \mathbf{B} \hookrightarrow \mathbf{C}_3$ and $j_2\colon \mathbf C \hookrightarrow \mathbf{C}_3$ form an amalgam, since for every $\mathbf{A} \in \set{\boldsymbol{0}, \mathbf{C}_2,\mathbf{C}_2^\partial,\mathbf{C}_3} $ there exists a unique embedding into $\mathbf{C}_3$. Hence~$\var{W}_{\mathrm{FSI}}$ has the amalgamation property.

For $V(\mathbf{C}_2^\partial)$  and  $V(\mathbf{C}_3^\partial)$ the claim follows by duality and for   $V(\mathbf{D}_3)$ and $V(\mathbf{G}_3)$ the proofs are very similar to  the proof  for $V(\mathbf{C}_3)$, also using the uniqueness of embeddings.
\end{proof}

Combining Propositions~\ref{prop:amalgam-c4}, \ref{prop:amalgam-c3-join}, and \ref{prop:amalgam-small} we get the following characterization for amalgamation in non-trivial finitely generated subvarieties of $\Sem\Id$. 

\begin{thm}\label{thm:subvariety amalgamation}
The non-trivial finitely generated subvarieties of $\Sem\Id$  that have the amalgamation property are $V(\mathbf{C}_2)$, $V(\mathbf{C}_2^\partial)$,  $V(\mathbf{C}_2, \mathbf{C}_2^\partial)$, $V(\mathbf{C}_3)$, $V(\mathbf{C}_3^\partial)$, $V(\mathbf{G}_3)$, and $V(\mathbf{D}_3)$.
\end{thm}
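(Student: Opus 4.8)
The plan is to prove the two inclusions separately. The easy direction---that each of the seven listed varieties has the amalgamation property---is exactly \Cref{prop:amalgam-small}, so nothing further is needed there. The work lies in the converse: I would fix a non-trivial finitely generated subvariety $\var{V}$ of $\Sem\Id$ with the amalgamation property and show it must be one of the seven. Since $\var{V}$ is finitely generated, \Cref{l:jonsson} guarantees that it has only finitely many finite subdirectly irreducible members and is generated by them, so it suffices to pin down the possible sets of finite subdirectly irreducibles, each of which is a totally ordered idempotent monoid by \Cref{cor:subdirect irred in Sem}.

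First I would use \Cref{prop:amalgam-c4} to bound the size of these subdirectly irreducibles. As a variety, $\var{V}$ is closed under subalgebras; if it contained any of $\mathbf{C}_4$, $\mathbf{C}_4^\partial$, or $\mathbf{M}\boxplus\mathbf{N}$, $\mathbf{N}\boxplus\mathbf{M}$ (for $\mathbf{M}\in\set{\mathbf{C}_2,\mathbf{C}_2^\partial}$, $\mathbf{N}\in\set{\mathbf{G}_3,\mathbf{D}_3}$), then \Cref{prop:amalgam-c4} would contradict the amalgamation property. Hence $\var{V}$ contains none of these and---exactly as in the proof of \Cref{l:CEP for small}, via \Cref{thm:subdirect-equivalence} and \Cref{cor:counting subd}---every finite subdirectly irreducible of $\var{V}$ has at most three elements, so lies in $\set{\mathbf{C}_2,\mathbf{C}_2^\partial,\mathbf{C}_3,\mathbf{C}_3^\partial,\mathbf{G}_3,\mathbf{D}_3}$ and $\var{V}\subseteq V(\mathbf{C}_3,\mathbf{C}_3^\partial,\mathbf{G}_3,\mathbf{D}_3)$. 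I would then split on the number of three-element subdirectly irreducibles of $\var{V}$. If there are none, then, being non-trivial, $\var{V}$ is one of $V(\mathbf{C}_2)$, $V(\mathbf{C}_2^\partial)$, $V(\mathbf{C}_2,\mathbf{C}_2^\partial)$; if there is exactly one, say $\mathbf{M}$, then since $\var{V}$ is generated by subdirectly irreducibles drawn from $\set{\mathbf{C}_2,\mathbf{C}_2^\partial,\mathbf{M}}$ and contains $\mathbf{M}$, we get $\var{V}=V(\mathbf{M})$, which is one of $V(\mathbf{C}_3)$, $V(\mathbf{C}_3^\partial)$, $V(\mathbf{G}_3)$, $V(\mathbf{D}_3)$.

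The remaining---and genuinely delicate---case is when $\var{V}$ has two or more three-element subdirectly irreducibles, where the goal is to derive the failure of amalgamation. The obstacle is that \Cref{prop:amalgam-c3-join} only rules out amalgamation for the varieties $V(\mathbf{M},\mathbf{N})$ generated by exactly two three-element algebras, and amalgamation is not inherited by larger varieties. To close this gap I would argue as follows. If both $\mathbf{G}_3$ and $\mathbf{D}_3$ are subdirectly irreducible in $\var{V}$, then the span $\mathcal{S}_1$ lies in $\var{V}$ and has no amalgam in $\Sem\DLM\supseteq\var{V}$ by \Cref{p:bad span Id and Sem}, so $\var{V}$ fails amalgamation at once. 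Otherwise $\var{V}$ contains a pair $\set{\mathbf{M},\mathbf{N}}$ of distinct three-element subdirectly irreducibles with $\mathbf{N}\in\set{\mathbf{C}_3,\mathbf{C}_3^\partial}$, and I would reuse the very span constructed for this pair in \Cref{prop:amalgam-c3-join}. Since $\var{V}\subseteq V(\mathbf{C}_3,\mathbf{C}_3^\partial,\mathbf{G}_3,\mathbf{D}_3)$, it has the congruence extension property by \Cref{l:CEP for small}, and by the argument of \Cref{l:IS-FSI} (using \Cref{l:relat-jonsson} and \Cref{l:IS=HS}) its class $\var{V}_{\mathrm{FSI}}$ equals $IS$ of its subdirectly irreducibles, hence is closed under subalgebras and consists only of algebras with at most three elements. \Cref{thm:fin-subd-amalg} then reduces the amalgamation property of $\var{V}$ to the one-sided amalgamation property of $\var{V}_{\mathrm{FSI}}$. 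The key point---which is exactly what makes the \Cref{prop:amalgam-c3-join} argument transfer to the larger $\var{V}$---is that the embedding in the span forces any one-sided amalgam target to contain a three-element algebra and hence to be isomorphic to it; so the target is the same specific algebra as in \Cref{prop:amalgam-c3-join}, the contradiction derived there applies verbatim, and $\var{V}$ fails amalgamation. This eliminates the two-or-more case and completes the characterization.
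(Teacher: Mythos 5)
Your proposal is correct, and it is in fact more careful than the paper's own proof, which consists of the single sentence ``Combining Propositions~\ref{prop:amalgam-c4}, \ref{prop:amalgam-c3-join}, and \ref{prop:amalgam-small} we get the following characterization.'' The skeleton is the same: \Cref{prop:amalgam-small} for the positive direction, \Cref{prop:amalgam-c4} (via the argument of \Cref{l:CEP for small}) to bound the finite subdirectly irreducibles of an amalgamable $\var{V}$ by three elements, and then an analysis of which subsets of $\set{\mathbf{C}_2,\mathbf{C}_2^\partial,\mathbf{C}_3,\mathbf{C}_3^\partial,\mathbf{G}_3,\mathbf{D}_3}$ can occur. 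But you correctly identify a point the paper glosses over: \Cref{prop:amalgam-c3-join} as stated only excludes the varieties $V(\mathbf{M},\mathbf{N})$ generated by \emph{exactly two} distinct three-element algebras, and since the amalgamation property does not pass to supervarieties, this leaves the cases $V(\mathbf{C}_3,\mathbf{C}_3^\partial,\mathbf{G}_3)$ and $V(\mathbf{C}_3,\mathbf{C}_3^\partial,\mathbf{D}_3)$ untreated (the varieties containing both $\mathbf{G}_3$ and $\mathbf{D}_3$, including $V(\mathbf{C}_3,\mathbf{C}_3^\partial,\mathbf{G}_3,\mathbf{D}_3)$, are indeed killed outright by \Cref{p:bad span Id and Sem}, exactly as you argue). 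Your repair is sound: the extension of \Cref{l:IS-FSI} to an arbitrary finite set $\var{K}$ of these six algebras goes through because $V(\var{K})=Q(\var{K})$ by \Cref{l:jonsson} and \Cref{l:IS=HS}, so $\var{V}_{\mathrm{FSI}}=IS(\var{K})$ by \Cref{l:relat-jonsson}; the hypotheses of \Cref{thm:fin-subd-amalg} hold by \Cref{l:CEP for small}; and in the resulting one-sided amalgam $\pair{\mathbf{D},j_1,j_2}$ of the span from \Cref{prop:amalgam-c3-join}, the embedding $j_2$ of a three-element algebra into a member $\mathbf{D}$ of $\var{V}_{\mathrm{FSI}}$ with $\abs{D}\leq 3$ is automatically surjective, so $\mathbf{D}$ is forced to be the same algebra as in \Cref{prop:amalgam-c3-join} regardless of which extra three-element algebras $\var{V}_{\mathrm{FSI}}$ contains, and the contradiction there (order-preservation for the pair $\set{\mathbf{C}_3,\mathbf{C}_3^\partial}$, simplicity of $\mathbf{G}_3$ or $\mathbf{D}_3$ in the mixed pairs) transfers verbatim. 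So your write-up not only reproves the theorem along the paper's lines but also supplies the two missing cases that the paper's cited propositions do not literally cover.
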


\begin{cor}
The non-trivial proper subvarieties of $\Com\Id$ with the amalgamation property are $V(\mathbf{C}_2)$, $V(\mathbf{C}_2^\partial)$,  $V(\mathbf{C}_2, \mathbf{C}_2^\partial)$, $V(\mathbf{C}_3)$, and $V(\mathbf{C}_3^\partial)$.
\end{cor}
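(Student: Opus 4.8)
The plan is to obtain the statement as a direct specialization of \Cref{thm:subvariety amalgamation} to the commutative setting, the only genuine input being the explicit description of the subvariety lattice already established. First I would record the consequence of \Cref{thm:CId lattice} (see \Cref{fig:Subvariety lattice of CId}) that every proper subvariety of $\Com\Id$ is of the form $V(\mathbf{C}_n)$, $V(\mathbf{C}_n^\partial)$, or $V(\mathbf{C}_n,\mathbf{C}_n^\partial)$ for some $n\geq 2$. In particular, each proper subvariety is \emph{finitely generated}, being generated by one or two finite algebras; the only subvariety of $\Com\Id$ that is not finitely generated is $\Com\Id$ itself, which occupies the top of the lattice and is therefore improper. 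This observation is exactly what makes the finitely generated classification applicable to every proper subvariety at once.

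Since $\Com\Id \subseteq \Sem\Id$, each such proper subvariety is a non-trivial finitely generated subvariety of $\Sem\Id$, so \Cref{thm:subvariety amalgamation} applies verbatim. That theorem asserts that the non-trivial finitely generated subvarieties of $\Sem\Id$ with the amalgamation property are exactly $V(\mathbf{C}_2)$, $V(\mathbf{C}_2^\partial)$, $V(\mathbf{C}_2,\mathbf{C}_2^\partial)$, $V(\mathbf{C}_3)$, $V(\mathbf{C}_3^\partial)$, $V(\mathbf{G}_3)$, and $V(\mathbf{D}_3)$. The remaining task is then purely to intersect this seven-element list with the collection of subvarieties of $\Com\Id$.

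The final step is to discard the two varieties not contained in $\Com\Id$: the algebras $\mathbf{G}_3$ and $\mathbf{D}_3$ are non-commutative, as their two non-identity elements fail to commute (cf.\ \Cref{l:dubreil-jacotin}), so $V(\mathbf{G}_3)$ and $V(\mathbf{D}_3)$ are not subvarieties of $\Com\Id$. The other five, namely $V(\mathbf{C}_2)$, $V(\mathbf{C}_2^\partial)$, $V(\mathbf{C}_2,\mathbf{C}_2^\partial)$, $V(\mathbf{C}_3)$, and $V(\mathbf{C}_3^\partial)$, are generated by commutative algebras and hence lie in $\Com\Id$, giving precisely the claimed characterization. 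I do not anticipate any serious obstacle: the whole argument is a bookkeeping specialization, and the one point that requires care is the observation drawn from \Cref{thm:CId lattice} that no proper subvariety of $\Com\Id$ escapes being finitely generated, so that \Cref{thm:subvariety amalgamation} is genuinely applicable to all of them rather than only to those already known to be finitely generated.
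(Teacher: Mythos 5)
Your proposal is correct and matches the paper's own (implicit) argument: the corollary is stated there without separate proof precisely because, as you observe, \Cref{thm:CId lattice} shows every non-trivial proper subvariety of $\Com\Id$ is generated by one or two finite algebras, so \Cref{thm:subvariety amalgamation} applies and it only remains to discard $V(\mathbf{G}_3)$ and $V(\mathbf{D}_3)$ as non-commutative. Your extra care in noting that no proper subvariety of $\Com\Id$ fails to be finitely generated is exactly the right point to make explicit.
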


Let us define for $n\in \mathbb{N}$ the algebras 
\begin{equation*}
\mathbf{G}_{2n+1} \coloneqq \bigboxplus_{i=1}^n \mathbf{G}_3 \quad \text{and} \quad \mathbf{D}_{2n+1} \coloneqq \bigboxplus_{i=1}^n \mathbf{D}_3.
\end{equation*}
Note that $\mathbf{G}_1 = \mathbf{D}_1 = \mathbf{0}$.
Moreover,  for  $n>1$ we assume that  $G_{2n+1} = \set{\bot_1,\dots, \bot_n,e,\top_n,\dots, \top_1}$, where for $i\in \set{1,\dots, n}$ the elements $\bot_i$ and $\top_i$ are the bottom and top element of the $i$-th copy of $\mathbf{G}_3$, respectively. Analogously we assume that for $n>1$ we have  $D_{2n+1} = \set{\bot_1,\dots, \bot_n,e,\top_n,\dots, \top_1}$.

\begin{lemma}\label{l:non-fin-gen-comm}
If $\var{V}$ is a subvariety of $\Sem\Id$ that is not finitely generated, then $\Com\Id \subseteq \var{V}$.
\end{lemma}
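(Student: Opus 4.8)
The plan is to translate the hypothesis ``not finitely generated'' into a statement about the nested sum decompositions of the finite subdirectly irreducibles of $\var{V}$ and then to extract, by a pigeonhole/counting argument, one of three families of arbitrarily large subdirectly irreducibles, each of which forces $\Com\Id \subseteq \var{V}$. First I would record the reduction. Since $\Sem\Id$ is locally finite and congruence-distributive of finite type, every subvariety equals the variety generated by its finite subdirectly irreducible members: the corollary stating that each subvariety of $\Sem\Id$ is generated as a \emph{quasivariety} by these members gives $\var{V} = Q(\var{V}_\ast) \subseteq V(\var{V}_\ast) \subseteq \var{V}$, hence $\var{V} = V(\var{V}_\ast)$. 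Therefore, were $\var{V}_\ast$ finite, $\var{V}$ would be finitely generated. As it is not, $\var{V}$ has infinitely many pairwise non-isomorphic finite subdirectly irreducibles, and since by \Cref{cor:counting subd} there are only finitely many of each cardinality, $\var{V}$ contains finite subdirectly irreducibles of unbounded size. By \Cref{thm:subdirect-equivalence}, each such algebra is a nested sum $\mathbf{N} = \bigboxplus_{i=1}^{m}\mathbf{N}_i$ with $\mathbf{N}_i \in \set{\mathbf{C}_2,\mathbf{C}_2^\partial,\mathbf{G}_3,\mathbf{D}_3}$ in which no two consecutive summands are equal to $\mathbf{C}_2$ and none are equal to $\mathbf{C}_2^\partial$.

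The heart of the argument is a counting step. For such an $\mathbf{N}$, let $g$ and $d$ be the numbers of summands equal to $\mathbf{G}_3$ and $\mathbf{D}_3$, and let $c$ be the maximal length of a block of consecutive summands lying in $\set{\mathbf{C}_2,\mathbf{C}_2^\partial}$. The $g+d$ summands in $\set{\mathbf{G}_3,\mathbf{D}_3}$ split the remaining summands into at most $g+d+1$ such blocks, each of length at most $c$, so $m \le (g+d) + c\,(g+d+1)$. Consequently, across the family of finite subdirectly irreducibles of $\var{V}$, if all three of $g$, $d$, $c$ were bounded then $m$ would be bounded, contradicting unboundedness of the sizes. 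Hence at least one of $g$, $d$, $c$ is unbounded over this family.

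Next I would translate each case into a long canonical chain as a subalgebra, using \Cref{cor:IS embedding}. If $g$ is unbounded, then for each $l$ some $\mathbf{N}\in\var{V}$ has at least $l$ summands equal to $\mathbf{G}_3$; the order-embedding $f$ selecting these positions, together with the identity embeddings $\mathbf{G}_3 \hookrightarrow \mathbf{G}_3$, yields $\mathbf{G}_{2l+1} \in S(\mathbf{N}) \subseteq \var{V}$. Symmetrically, unbounded $d$ gives $\mathbf{D}_{2l+1}\in\var{V}$ for all $l$. If instead $c$ is unbounded, then for each $k$ some $\mathbf{N}\in\var{V}$ has a block of $k$ consecutive $\mathbf{C}_2/\mathbf{C}_2^\partial$ summands; by subdirect irreducibility this block alternates, so it is (via \Cref{cor:IS embedding}) a subalgebra isomorphic to $\mathbf{C}_{k+1}$ or $\mathbf{C}_{k+1}^\partial$, whence $\var{V}$ contains $\mathbf{C}_n$ or $\mathbf{C}_n^\partial$ for arbitrarily large $n$.

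Finally I would close each case. Because $\mathbf{C}_2$ and $\mathbf{C}_2^\partial$ embed into both $\mathbf{G}_3$ and $\mathbf{D}_3$, \Cref{cor:IS embedding} gives $\mathbf{C}_n,\mathbf{C}_n^\partial \leq_{IS} \mathbf{G}_{2l+1}$ and $\mathbf{C}_n,\mathbf{C}_n^\partial \leq_{IS} \mathbf{D}_{2l+1}$ whenever $l \ge n-1$; and $\mathbf{C}_k,\mathbf{C}_k^\partial \leq_{IS} \mathbf{C}_n$ and $\leq_{IS}\mathbf{C}_n^\partial$ for $k<n$, as recorded before \Cref{fig:IS-order on CId}. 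In every case it follows that $\set{\mathbf{C}_n,\mathbf{C}_n^\partial \mid n\ge 2} \subseteq \var{V}$. Since these are exactly the non-trivial subdirectly irreducible members of $\Com\Id$ and $\Com\Id$ is generated as a variety by its finite subdirectly irreducibles, we conclude $\Com\Id \subseteq \var{V}$. The only genuinely delicate point is the counting inequality $m \le (g+d)+c(g+d+1)$ and the observation that a maximal $\mathbf{C}_2/\mathbf{C}_2^\partial$-block is forced to alternate, and hence to be a copy of some $\mathbf{C}_{k+1}$ or $\mathbf{C}_{k+1}^\partial$; everything else is routine bookkeeping with \Cref{cor:IS embedding}.
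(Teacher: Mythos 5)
Your proof is correct and follows essentially the same route as the paper: local finiteness yields finite subdirectly irreducibles of $\var{V}$ with unboundedly many summands in their nested sum decompositions, from which arbitrarily large $\mathbf{C}_n$ and $\mathbf{C}_n^\partial$ are extracted using \Cref{cor:IS embedding}, the no-consecutive-repetition condition of \Cref{thm:subdirect-equivalence}, and $\mathbf{C}_2,\mathbf{C}_2^\partial \leq_{IS} \mathbf{G}_3,\mathbf{D}_3$, whence $\Com\Id \subseteq \var{V}$. The only divergence is presentational: your three-way pigeonhole on $g$, $d$, and the maximal $\mathbf{C}_2/\mathbf{C}_2^\partial$-block length $c$ (with the detour through $\mathbf{G}_{2l+1}$ and $\mathbf{D}_{2l+1}$ in two of the cases) spells out, correctly, the extraction step that the paper compresses into a single sentence.
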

\begin{proof}
Suppose that $\var{V}$ is a subvariety of $\Sem\Id$ that is not finitely generated. Then, since $\var{V}$ is locally finite, there exists an indexed family of finite subdirectly irreducible algebras $\set{\mathbf{M}_i}_{i \in \mathbb{N}}$ with $\lvert M_i \rvert < \lvert M_{i+1} \rvert$ for each $i\in \mathbb{N}$, i.e., the family contains finite subdirectly irreducible algebras which strictly increase in size. By \Cref{cor:subdirect irred in Sem}, each subdirectly irreducible member of $\var{V}$ is totally ordered. So let $c(\mathbf{M}_i)$ be the number of components in the nested sum decomposition of $\mathbf{M}_i$. Note that, since the biggest possible components are $\mathbf{G}_3$ and $\mathbf{D}_3$, we have $\lvert M_i \rvert \leq 1 + 2 c(\mathbf{M}_i)$. Therefore, without loss of generality we may also assume that also $c(\mathbf{M}_i) < c(\mathbf{M}_{i+1})$ for each $i\in \mathbb{N}$.  Thus, by \Cref{thm:subdirect-equivalence} and the fact that $\mathbf{C}_2,\mathbf{C}_2^\partial \leq_{IS} \mathbf{G}_3,\mathbf{D}_3$, for every $n\geq 1$ there exists an $i\in \mathbb{N}$ such that $\mathbf{C}_n \leq_{IS} \mathbf{M}_i$. Hence, $\Com\Id \subseteq \var{V}$, by \Cref{thm:CId lattice}.
\end{proof}

\begin{prop}
Let $\var{V}$ be a subvariety of $\Sem\Id$. If $\var{V}$ is not finitely generated and each span of totally ordered members has an amalgam in $\var{V}$, then $\var{V} \in \set{\Com\Id, V(\set{\mathbf{G}_{2n+1}\mid n\in \mathbb{N}}), V(\set{\mathbf{D}_{2n+1}\mid n\in \mathbb{N}})}$.
\end{prop}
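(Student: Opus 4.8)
The plan is to combine the structural reduction of \Cref{l:non-fin-gen-comm} with the two forbidden spans to pin down $\var{V}$. Since $\var{V}$ is not finitely generated, \Cref{l:non-fin-gen-comm} gives $\Com\Id \subseteq \var{V}$; in particular $\mathbf{C}_2, \mathbf{C}_2^\partial \in \var{V}$. I would first observe that $\var{V}$ cannot contain both $\mathbf{G}_3$ and $\mathbf{D}_3$: if it did, then $\mathcal{S}_1 = \pair{\mathbf{C}_2 \hookrightarrow \mathbf{G}_3, \mathbf{C}_2 \hookrightarrow \mathbf{D}_3}$ would be a span of totally ordered members of $\var{V}$, and the hypothesis would give it an amalgam in $\var{V} \subseteq \Sem\DLM$, contradicting \Cref{p:bad span Id and Sem}. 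This splits the argument into three cases according to whether $\var{V}$ contains neither, only $\mathbf{G}_3$, or only $\mathbf{D}_3$.

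In the first case, every subdirectly irreducible member $\mathbf{M}$ of $\var{V}$ is totally ordered by \Cref{cor:subdirect irred in Sem}, and it is commutative: if $ab \neq ba$ for some $a,b \in M$, then $\Sg(a,b) \cong \mathbf{G}_3$ or $\mathbf{D}_3$ by \Cref{l:dubreil-jacotin}, forcing one of these simple algebras into $S(\var{V}) = \var{V}$, a contradiction. Hence $\var{V}$ is generated by commutative algebras, so $\var{V} \subseteq \Com\Id$, and with the reverse inclusion already in hand, $\var{V} = \Com\Id$. The remaining two cases are interchanged by the left--right symmetry $a \ast b := b\cdot a$, which sends each idempotent ordered monoid to its opposite, swaps $\mathbf{G}_3$ and $\mathbf{D}_3$, fixes $\mathbf{C}_2, \mathbf{C}_2^\partial$ and $\Com\Id$, and carries $\mathbf{G}_{2n+1}$ to $\mathbf{D}_{2n+1}$ while preserving embeddings and amalgams; so I would treat only the case $\mathbf{G}_3 \in \var{V}$, $\mathbf{D}_3 \notin \var{V}$ and conclude $\var{V} = V(\set{\mathbf{G}_{2n+1}\mid n \in \mathbb{N}})$.

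For that case, the inclusion $\var{V} \subseteq V(\set{\mathbf{G}_{2n+1}\mid n \in \mathbb{N}})$ is the easy half: since $\mathbf{D}_3 \notin \var{V}$, no finite subdirectly irreducible of $\var{V}$ has a $\mathbf{D}_3$-summand, so each is a nested sum of $m$ copies of $\mathbf{C}_2, \mathbf{C}_2^\partial, \mathbf{G}_3$ and therefore embeds into $\mathbf{G}_{2m+1}$ by \Cref{cor:IS embedding}; the subvariety correspondence (\Cref{thm:subvariety lattice}) then yields the inclusion. The heart is the reverse inclusion, for which it suffices to show that every finite nested sum $\mathbf{K}$ of copies of $\mathbf{C}_2, \mathbf{C}_2^\partial, \mathbf{G}_3$ lies in $\var{V}$, and I would prove this by induction on the number $k$ of $\mathbf{G}_3$-summands of $\mathbf{K}$. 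For $k=0$, $\mathbf{K}$ is commutative, so $\mathbf{K} \in \Com\Id \subseteq \var{V}$. For $k=1$, writing $\mathbf{K} = \mathbf{P} \boxplus \mathbf{G}_3 \boxplus \mathbf{Q}$ with $\mathbf{P},\mathbf{Q}$ commutative, I would amalgamate $\mathbf{G}_3 \in \var{V}$ with the commutative algebra $\mathbf{P} \boxplus \mathbf{C}_2 \boxplus \mathbf{Q} \in \Com\Id$ over the common subalgebra $\mathbf{C}_2$ (embedded as the bottom of $\mathbf{G}_3$ and as the displayed middle $\mathbf{C}_2$); for $k \geq 2$ I would split the $\mathbf{G}_3$-summands alternately, writing $\mathbf{K}$ as the canonical amalgam (from \Cref{thm:compatible spans have strong amalgams}) of two nested sums with $\lceil k/2\rceil$ and $\lfloor k/2\rfloor$ copies of $\mathbf{G}_3$, both $<k$ and hence in $\var{V}$ by induction, over a commutative base. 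In each case the hypothesis produces \emph{some} amalgam $\mathbf{D}' \in \var{V}$.

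The crux, and the step I expect to be the main obstacle, is to show that $\mathbf{K}$ embeds into \emph{every} such amalgam $\mathbf{D}'$, that is, that the subalgebra of $\mathbf{D}'$ generated by the two images is already isomorphic to $\mathbf{K}$. The cross-products and comparabilities between a recovered top $t = j_1(\top)$ and the elements supplied by the other embedding are not given outright, but I would argue that they are forced by associativity together with \Cref{lemma:Idemp}. The model computation is that from the two summand relations $q\cdot s = q$ and $r\cdot q = r$ sharing a bottom $q$ one obtains $r\cdot s = r\cdot(q\cdot s) = (r\cdot q)\cdot s = r$, whereupon \Cref{lemma:Idemp}(ii) converts $r\cdot s = r \join s$ into the comparability $s \leq r$; running this over all shared bottoms pins down the entire multiplication table and the total order of the generators and distinguishes them (for instance $r \neq s$ because $r\cdot q = r$ while $s\cdot q = s$), identifying the generated subalgebra with $\mathbf{K}$. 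As this subalgebra lies in $\var{V}$, we get $\mathbf{K} \in \var{V}$, completing the induction and establishing $V(\set{\mathbf{G}_{2n+1}\mid n \in \mathbb{N}}) \subseteq \var{V}$. The bookkeeping required to run the associativity argument uniformly across all summand positions (lower versus upper, outer versus inner) is the most technical part, but every instance reduces to the two identities just displayed.
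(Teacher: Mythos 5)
Your opening reductions agree with the paper's proof: \Cref{l:non-fin-gen-comm} gives $\Com\Id \subseteq \var{V}$, the forbidden span $\mathcal{S}_1$ together with \Cref{p:bad span Id and Sem} rules out having both $\mathbf{G}_3$ and $\mathbf{D}_3$, the commutative case and the opposite-monoid symmetry are fine, and the inclusion $\var{V} \subseteq V(\set{\mathbf{G}_{2n+1}\mid n\in \mathbb{N}})$ is the easy half in the paper as well. The gap is exactly at your self-identified crux, and it is fatal as stated: for your $k=1$ span it is \emph{not} true that every amalgam contains a copy of $\mathbf{K}$, because the mixed-sign cross-relation is not forced. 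Take $\mathbf{P} = \mathbf{0}$ and $\mathbf{Q} = \mathbf{C}_2$ with element $q < e$, so your span is $\pair{\mathbf{C}_2 \hookrightarrow \mathbf{G}_3, \mathbf{C}_2 \hookrightarrow \mathbf{C}_3}$ with $C_3 = \set{c < q < e}$, the shared element being the bottom $\bot$ of $\mathbf{G}_3$ identified with $c$, and $\mathbf{K} = \mathbf{G}_3 \boxplus \mathbf{C}_2$ with universe $\set{\bot < q < e < \top}$. In $\mathbf{E} = (\mathbf{G}_3 \boxplus \mathbf{C}_2)\times \mathbf{G}_3$ put $j_1(x) = (x,x)$ for $x \in G_3$ and $j_2(c) = (\bot,\bot)$, $j_2(q) = (q,\bot)$, $j_2(e) = (e,e)$; both maps are embeddings agreeing on the shared $\mathbf{C}_2$, and $\mathbf{E} \in V(\set{\mathbf{G}_{2n+1}\mid n\in\mathbb{N}})$ since $\mathbf{G}_3\boxplus\mathbf{C}_2$ embeds into $\mathbf{G}_5$ by \Cref{cor:IS embedding} --- and $V(\set{\mathbf{G}_{2n+1}\mid n\in\mathbb{N}})$ itself satisfies all hypotheses of the proposition, so it is a legitimate candidate for $\var{V}$. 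Here $j_2(q)\cdot j_1(\top) = (q\top,\bot\top) = (\top,\bot)$, an element incomparable with the identity, so the relation $q'\cdot t = t$ you need fails; moreover the subalgebra of $\mathbf{E}$ generated by the two images, namely the seven elements $(\bot,\bot) < (q,\bot) < (e,\bot) < (\top,\bot)$ together with $(e,e)$, $(\top,e)$, $(\top,\top)$, contains \emph{no} subalgebra isomorphic to $\mathbf{K}$: any chain-subalgebra through $(e,e)$ avoids $(\top,\bot)$, its only available non-commuting pair is $\set{(\bot,\bot),(\top,\top)}$, and neither $(q,\bot)$ nor $(e,\bot)$ is absorbed by $(\top,\top)$, as $(q,\bot)(\top,\top) = (\top,\bot)$. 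Your bridging computation only yields the one-sided product $t \cdot j_2(q') = t$, and \Cref{lemma:Idemp}(ii)/(iii) converts products into comparabilities only when both factors lie on the same side of $e$; the pair $(q',t)$ with $b < q' < e < t$ is precisely the configuration left free, as the example shows. The same defect infects your alternating split for $k \geq 2$.

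The paper avoids this by orienting its spans so that every deviation in a totally ordered image would force a cascading collapse through the shared element. Its base step amalgamates $\mathbf{C}_3$ with $\mathbf{G}_3$ over the shared \emph{top} $\mathbf{C}_2^\partial$ (via $\top \mapsto 1$ and $\top \mapsto \top_G$), so the new commutative element is created \emph{outside}, yielding $\mathbf{C}_2 \boxplus \mathbf{G}_3$ rather than your $\mathbf{G}_3 \boxplus \mathbf{C}_2$; its induction step amalgamates $\mathbf{C}_2\boxplus\mathbf{G}_3$ with $\mathbf{G}_{2n+1}$ identifying the two-sidedly absorbing outer bottom $1$ with the innermost bottom $\bot_n$, so each new $\mathbf{G}_3$ is created \emph{inside}. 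In those configurations the dangerous coordinatewise identification is total: for instance in $\mathbf{C}_2\boxplus\mathbf{G}_3$ one has $\top\cdot 1 = 1$ but $\top\cdot\bot = \top$, so $\Theta(1,\bot)$ is the full congruence, and semilinearity then forces all cross-products and comparabilities. In your configuration the dangerous identification is $c \sim q$ on the commutative side, and collapsing the two negative elements of $\mathbf{C}_3$ \emph{is} a homomorphism onto $\mathbf{C}_2$ --- exactly the freedom the second coordinate of $\mathbf{E}$ exploits. To repair your argument you would have to replace your spans by ones of the paper's shape (shared top with new material outside, or outer-bottom-to-innermost-bottom with new material inside), at which point you have reconstructed the paper's induction; note also that it suffices to obtain $\mathbf{G}_{2n+1} \in \var{V}$, since every nested sum of $m$ summands from $\set{\mathbf{C}_2,\mathbf{C}_2^\partial,\mathbf{G}_3}$ embeds into $\mathbf{G}_{2m+1}$.
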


\begin{proof}
Since $\var{V}$ is not finitely generated, by \Cref{l:non-fin-gen-comm}, we have $\Com\Id \subseteq \var{V}$. If $\var{V} \neq \Com\Id$, then not all members of $\var{V}$ are commutative. Hence either $\mathbf{G}_3 \in \var{V}$ or $\mathbf{D}_3 \in \var{V}$, but not both, by \Cref{p:bad span Id and Sem}. Assume, without loss of generality, that $\mathbf{G}_3 \in \var{V}$. Then every finite subdirectly irreducible member of $\var{V}$ is isomorphic to a finite nested sum of the algebras $\mathbf{C}_2,\mathbf{C}_2^\partial,\mathbf{G}_3$, so clearly $\var{V} \subseteq V(\set{\mathbf{G}_{2n+1}\mid n\in \mathbb{N}})$, by \Cref{l:embeddings to summands}. For the reverse inclusion we prove by induction that for every $n\in \mathbb{N}$ we have $\mathbf{G}_{2n+1} \in \var{V}$. Before we start the induction note that, since $\Com\Id \subseteq \var{V}$, we have $\mathbf{C}_3 \in \var{V}$ and, by assumption, the span $\pair{i_1 \colon \mathbf{C}_2^\partial \hookrightarrow \mathbf{C}_3, i_2\colon \mathbf{C}_2^\partial \hookrightarrow \mathbf{G}_3}$ has an amalgam $\pair{\mathbf{A},j_1\colon \mathbf{C}_3 \to \mathbf{A}, j_2 \colon \mathbf{G}_3 \to \mathbf{A}}$ in $\var{V}$. Moreover, using the notation $C_3 = \set{2<e<1}$ we get
\begin{equation*}
 j_1(2)  \cdot  j_2(\bot) =   j_1(2) \cdot j_1(1) \cdot j_2(\bot) =  j_1(2) \cdot j_2(\top) \cdot j_2(\bot) = j_1(2)\cdot j_2(\top) = j_1(2).
\end{equation*} 
But, since $j_1(2),j_2(\bot) \leq e$, by \Cref{lemma:Idemp}, we have $j_1(2) \cdot j_2(\bot) = j_1(2) \meet j_2(\bot)$, yielding $j_1(2) \leq j_2(\bot)$. Moreover, since  $j_1(2)$ commutes with $j_1(1) = j_2(\top)$ and $j_2(\bot)$ does not commute with $j_2(\top)$, we have $j_1(2) \neq j_2(\bot)$, yielding $j_1(2) < j_2(\bot)$. Summarizing, we have $j_1(2) < j_2(\bot) < e < j_2(\top)$ and $j_1(2)$ absorbs any element in $\set{j_2(\bot),e,j_2(\top)}$. Thus,  $\mathbf{A}$ contains a copy of $\mathbf{C}_2 \boxplus \mathbf{G}_3$  as a subalgebra. Hence $\mathbf{C}_2 \boxplus \mathbf{G}_3 \in \var{V}$. 

The base case $n=1$ of the induction is clear. Suppose that $\mathbf{G}_{2n+1} \in \var{V}$ and  that $\mathbf{C}_2 \boxplus \mathbf{G}_3$ has the universe $\set{1 < \bot < e < \top}$. Now consider the span $\pair{i_1\colon \mathbf{C}_2 \to \mathbf{C}_2 \boxplus \mathbf{G}_3, i_2\colon  \mathbf{C}_2 \to \mathbf{G}_{2n+1}}$ with $i_1(\bot) = 1$ and $i_2(\bot) = \bot_n$.  Then, by assumption, we know that  the span has an amalgam $\pair{\mathbf{B},j_1\colon \mathbf{C}_2 \boxplus \mathbf{G}_3 \to \mathbf{B}, j_2\colon \mathbf{G}_{2n+1} \to \mathbf{B}}$. Since $1\cdot \bot = 1\cdot \top = 1$ in $\mathbf{C}_2 \boxplus \mathbf{G}_3$ and $j_2(\bot_n) = j_1(1)$, we get $j_2(\bot_n) \cdot  j_1(\bot) = j_2(\bot_n)$. Thus as above $j_2(\bot_n) < j_1(\bot)$, since $j_2(\bot_n),j_1(\bot) \leq e$ and $j_2(\bot_n) = j_1(1)$ commutes with $j_1(\top)$. On the other hand we have
\begin{equation*}
j_2(\top_n)\cdot j_2(\top) = j_2(\top_n)\cdot j_2(\bot_n) \cdot  j_1(\top) 
=  j_2(\top_n)\cdot j_2(\bot_n)
= j_2(\top_n).
\end{equation*}
But, since $e \leq j_2(\top_n), j_1(\top)$, by \Cref{lemma:Idemp}, $j_2(\top_n)\cdot j_1(\top) = j_2(\top_n) \join j_1(\top)$, yielding $j_1(\top) \leq j_2(\top_n)$. Moreover, since $j_1(\top)$ commutes with $j_2(\bot_n) = j_1(1)$ and $j_2(\top_n)$ does not, we get $j_2(\top_n) \neq j_1(\top)$, i.e., $j_2(\top) < j_1(\top_n)$.  Summarizing, we have $j_2(\bot_n) < j_1(\bot) < e < j_1(\top) < j_2(\top_n)$, constituting a copy of $\mathbf{G}_{5}$ in $\mathbf{A}$. But also, for every $k<n$, $\bot_k$ and $\top_k$ absorb $\bot_n$ and $\top_n$. Hence it follows from the above that $j_2(\bot_k)$ and $j_2(\top_k)$ absorb $j_1(\bot)$ and $j_1(\top)$. Moreover, we have 
\[
j_2(\bot_1) < \dots < j_2(\bot_n) < j_1(\bot) < e < j_1(\top) < j_2(\top_n) < \dots < j_1(\top_1).
\]
Thus, $\mathbf{B}$ contains a subalgebra isomorphic to $ \mathbf{G}_{2n+1} \boxplus \mathbf{G}_3 = \mathbf{G}_{2(n+1)+1}$ and we get that $\mathbf{G}_{2(n+1) + 1} \in \var{V}$. This concludes the proof.
\end{proof}

\begin{cor}\label{c:amalgamation infinite}
Let $\var{V}$ be a subvariety of $\Sem\Id$ that is not finitely generated such that $\var{V} \notin \set{\Com\Id, V(\set{\mathbf{G}_{2n+1}\mid n\in \mathbb{N}}), V(\set{\mathbf{D}_{2n+1}\mid n\in \mathbb{N}})}$. Then $\var{V}$ does not have the amalgamation property.
\end{cor}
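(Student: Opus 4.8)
The plan is to obtain this corollary directly from the preceding Proposition by contraposition, so that essentially no new construction is required. The key observation I would use is that the amalgamation property of $\var{V}$ is a statement about \emph{all} spans in $\var{V}$, and in particular about those spans whose three algebras happen to be totally ordered. Since $\var{V}$ is a subvariety of $\Sem\Id$, its totally ordered members are exactly the idempotent ordered monoids lying in $\var{V}$, and any span between such algebras is a span in $\var{V}$. Hence, if $\var{V}$ has the amalgamation property, then in particular every span of totally ordered members of $\var{V}$ has an amalgam in $\var{V}$, which is precisely the second hypothesis of the preceding Proposition.

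With this reduction in hand, I would argue by contradiction. Suppose $\var{V}$ were a non-finitely-generated subvariety of $\Sem\Id$ with $\var{V} \notin \set{\Com\Id, V(\set{\mathbf{G}_{2n+1}\mid n\in \mathbb{N}}), V(\set{\mathbf{D}_{2n+1}\mid n\in \mathbb{N}})}$ that nonetheless had the amalgamation property. Then both hypotheses of the preceding Proposition would be met: $\var{V}$ is not finitely generated by assumption, and every span of totally ordered members has an amalgam in $\var{V}$ by the amalgamation property. The Proposition would then force $\var{V} \in \set{\Com\Id, V(\set{\mathbf{G}_{2n+1}\mid n\in \mathbb{N}}), V(\set{\mathbf{D}_{2n+1}\mid n\in \mathbb{N}})}$, contradicting the standing assumption on $\var{V}$. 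Therefore $\var{V}$ cannot have the amalgamation property.

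The only point requiring (minimal) care is the passage from the full amalgamation property to the restricted statement about spans of totally ordered algebras, and this is immediate once one notes that totally ordered members of $\var{V}$ are themselves members of $\var{V}$. I do not expect any genuine obstacle here: all of the real difficulty, namely the inductive amalgamation arguments producing a copy of $\mathbf{C}_2 \boxplus \mathbf{G}_3$ and then successively of $\mathbf{G}_{2(n+1)+1}$ from amalgams of suitably chosen spans, is already discharged in the preceding Proposition, and the corollary merely repackages that result in contrapositive form.
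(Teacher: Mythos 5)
Your proposal is correct and is exactly the argument the paper intends: the corollary is stated without proof precisely because it is the contrapositive of the preceding proposition, using the observation that the amalgamation property for $\var{V}$ in particular supplies amalgams in $\var{V}$ for all spans of totally ordered members of $\var{V}$. Your one point of ``minimal care'' --- that totally ordered members of $\var{V}$ are members of $\var{V}$, so such spans are spans in $\var{V}$ --- is the right (and only) thing to check.
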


Note that the class $\Com\IdOM$ of commutative idempotent ordered monoids contains all finitely subdirectly irreducible members of $\Com\Id$, is closed under subalgebras, and has the strong amalgamation property. Similarly the classes containing the totally ordered members of the varieties  $ V(\set{\mathbf{G}_{2n+1}\mid n\in \mathbb{N}})$ and $ V(\set{\mathbf{D}_{2n+1}\mid n\in \mathbb{N}})$, respectively, contain all finitely subdirectly irreducible members, are closed under subalgebras, and have the amalgamation property, by using the construction in the proof of \Cref{thm:compatible spans have strong amalgams}.
 But the varieties $\Com\Id$, $V(\set{\mathbf{G}_{2n+1}\mid n\in \mathbb{N}})$, and  $V(\set{\mathbf{D}_{2n+1}\mid n\in \mathbb{N}})$ do not have the congruence extension property, so there is no obvious way of extending the amalgamation property from their totally ordered members to the whole variety. Hence, it remains open whether these three varieties have the amalgamation property or not. 

\section*{Acknowledgment}
The author is grateful to George Metcalfe, Nick Galatos, and Wesley Fussner for several helpful comments and suggestions on earlier versions of this work, and to the anonymous referee for their comments and suggestions, which have helped to improve the paper.

%%%%%%%%%%%%%%%%%%%%%%%%%%%%%%%%%%%%%%%%%%%%%%
\bibliographystyle{plain}

\end{document}